\newcommand{\etype}[1]{\renewcommand{\labelenumi}{(#1{enumi})}}
\def\eroman{\etype{\roman} \dispace}
\def\ealph{\etype{\alph} \dispace }
\def\dispace{\setlength{\itemsep}{2pt}}
\newcommand{\ds}[1]{\ {#1} \ }
\newcommand{\dss}[1]{\quad {#1} \quad }
\def\Iff{\ \Leftrightarrow \ }
\newcommand{\Qthmref}[1]{\cite[Theorem~{#1}]{QF1}}
\newcommand{\Qpropref}[1]{\cite[~Proposition~{#1}]{QF1}}
\newcommand{\Qsecref}[1]{\cite[~\S{#1}]{QF1}}
\newcommand{\Qdefref}[1]{\cite[Definition~{#1}]{QF1}}
\def\sm{\setminus}
\def\00{ \{ 0 \}}
\def\tlA{\widetilde{A}}
\def\tlC{\widetilde{C}}
\def\tlT{\widetilde{T}}
\def\tlD{\widetilde{D}}
\def\htT{\widehat{T}}
\def\htQ{\widehat{Q}}
\def\sm{\setminus}
\def\Dir{\dss \Rightarrow}
\def\nucong{\cong_\nu}
\def\noi{\noindent}
\def\pSkip{\vskip 1.5mm \noindent}
\def\semiring0{semiring$^{\dagger}$}
\newcommand{\conv}{\operatorname{conv}}
\newcommand{\Ray}{\operatorname{Ray}}
\newcommand{\ray}{\operatorname{ray}}
\newcommand{\CS}{\operatorname{CS}}
\newcommand{\an}{\operatorname{an}}
\newcommand{\QL}{\operatorname{QL}}
\newcommand{\sat}{\rm sat}
\newcommand{\Max}{\rm Max}
\def\Max{\mathop{\rm Max}\limits}
\def\pipe {\ds |}
\def\tT{\mathcal T}
\def\tG{\mathcal G}
\newtheorem{thm}{Theorem} [section]
\newtheorem*{thm*}{Theorem}
\newtheorem{cor}[thm]{Corollary}
\newtheorem{lem}[thm]{Lemma}
\newtheorem{prop}[thm]{Proposition}
\newtheorem*{claim*} {Claim}
\newtheorem{proc}[thm]{Procedure}
\newtheorem*{theorem13.5'} {Theorem 13.5$'$}
\newtheorem{acknowledgment*}[thm] {Acknowledgment}
\newtheorem{examp}[thm]{Example}
    \newtheorem*{remarks*} {Remarks}
 \newtheorem{comm}[thm]{Comment}
 \newtheorem*{remark*}{Remark}
 \newtheorem{defn}[thm]{Definition}
\newtheorem{construction}[thm]{Construction}
\newtheorem{schol}[thm]{Scholium}
\newtheorem{notation}[thm]{Notation}
\newtheorem*{notation*} {Notation}
\newtheorem{rem}[thm]{Remark}
 \renewcommand{\sectionmark}[1]{}
\newcommand{\bfem}[1]{\textbf{#1}}
\newcommand{\veps}{\varepsilon}
\newcommand{\al}{\alpha}
\newcommand{\gm}{\gamma}
\newcommand{\Gm}{\Gamma}
\newcommand{\tlGm}{\widetilde{\Gm}}
\newcommand{\lm}{\lambda}
\begin{document}

\title[Quasilinear convexity and quasilinear stars]{Quasilinear convexity and quasilinear stars \\[2mm] in the ray space of  \\[2mm] a supertropical  quadratic form}
   \author[Z. Izhakian]{Zur Izhakian}
\address{Institute  of Mathematics,
 University of Aberdeen, AB24 3UE,
Aberdeen,  UK.}
    \email{zzur@abdn.ac.uk}
\author[M. Knebusch]{Manfred Knebusch}
\address{Department of Mathematics,
NWF-I Mathematik, Universit\"at Regensburg 93040 Regensburg,
Germany} \email{manfred.knebusch@mathematik.uni-regensburg.de}

\subjclass[2010]{Primary 15A03, 15A09, 15A15, 16Y60; Secondary
14T05, 15A33, 20M18, 51M20}

\date{\today}


\keywords{Tropical algebra, supertropical modules, bilinear forms,
quadratic forms,  quadratic pairs, ray spaces, convex sets, quasilinear sets, Cauchy-Schwartz ratio.}




\begin{abstract} Relying on rays, we search for submodules of a module $V$ over a supertropical semiring  on which a given anisotropic quadratic form is quasilinear.  Rays are  classes of a  certain equivalence relation on $V$, that  carry a notion of convexity, which is consistent with quasilinearity. A criterion for quasilinearity is specified by a Cauchy-Schwartz ratio which paves the way to a  convex geometry on $\Ray(V)$, supported by a ``supertropical trigonometry''.  Employing  a (partial) quasiordering on $\Ray(V)$, this approach allows for producing convex quasilinear sets of rays, as well as paths, containing a given  quasilinear set in a systematic way.  Minimal paths are endowed with a surprisingly rich combinatorial structure, delivered to  the graph determined by pairs of quasilinear rays -- apparently a fundamental object in the theory of supertropical quadratic forms.
\end{abstract}

\maketitle

{ \small \tableofcontents}

\numberwithin{equation}{section}
\section{Introduction}

This paper is a continuation  of \cite{QF1,QF2}, where quadratic forms on a free supertropical module were introduced and classified, as well as their bilinear companions, providing a tropical version of  trigonometry. As  the Cauchy-Schwarz inequality does not always hold on this setting, the so-called CS-ratio plays a major role in this theory. With this CS-ratio, for a suitable equivalence relation, whose equivalence classes are termed  rays, a type of convex geometry on ray-spaces arises. The paper proceeds the study of this geometry.

In sequel to \cite{IzhakianKnebuschRowen2010Linear,QF1,QF2,VR1,UB}, our underlining structure is taken to be a \textbf{supertropical semiring} ~ $R$ (\Qdefref{0.3} and \cite[\S3]{IKR1}); that is  a semiring $R$ where $e:=1+1$ is  idempotent
(i.e., $e+e = e$) and,   for
all~ $x,y\in R$,  $x+y\in\{x,y\}$ whenever $ex \ne ey$, otherwise  $x+y=ey$.
The ideal $eR$ of $R$ is a bipotent semiring (with unit element $e$),  i.e., $u+v$
is either $u$ or $v$, for any $u,v\in eR$. The total
ordering
\begin{equation*}\label{eq:0.5}
u\le v \dss \Leftrightarrow u+v=v
\end{equation*}
of $eR$, together with the map
$x\mapsto ex$,  determines the addition of $R$:
\begin{equation*}\label{eq:0.6}
x+y=\begin{cases} y&\ \text{if}\ ex<ey,\\
x&\ \text{if}\ ex>ey,\\
ey&\ \text{if}\ ex=ey.
\end{cases}
\end{equation*}
Taking  $y=0$,
$ex=0  \Rightarrow x=0$.
The elements of $\tT(R):=R\sm(eR)$ are called \bfem{tangible}, while those of $\tG(R):=(eR)\sm\{0\}$ are called \bfem{ghost} elements. The
zero $0 = e0$ is regarded mainly as a ghost.  $R$
itself is said to be  \bfem{tangible}, if it is generated by $\tT(R)$ as
a semiring, namely  iff $e\tT(R)=\tG(R).$ When
$\tT(R)\ne\emptyset,$ discarding  the ``superfluous'' ghost elements,
$R':=\tT(R)\cup e\tT(R)\cup\{0\}$
is the largest tangible sub-semiring of $R$.

An $R$-module $V$ over a commutative semiring $R$ (with 1) is defined in the familiar way.
 A \bfem{quadratic form} on $V$ is a function
$q: V\to R$ satisfying
\begin{equation}\label{eq:0.1}
q(ax)=a^2q(x)
\end{equation}
for any $a\in R$, $x\in V,$ such that
\begin{equation}\label{eq:0.2}
q(x+y)= q(x)+q(y)+b(x,y)
\end{equation}
for any $x,y\in V$, where $b:V\times V\to R$ is a bilinear form, called a \bfem{companion} of $q$, not necessarily uniquely
determined by $q$.
 The pair $(q,b)$ is called a \bfem{quadratic pair} on $V.$

A quadratic form $q$ with unique companion is called \bfem{rigid}. This is equivalent to  $q(\veps_i)=0$ for all~
$\veps_i$ of a fixed base $\{ \veps_i\pipe i\in I \}$ of $V$, by \Qthmref{3.5}. $q$ is  \bfem{quasilinear}, if
 $b=0$ is one of its  companions, i.e.,
$q(x+y)=q(x)+q(y)$ for all $x,y\in V.$ These are  the ``diagonal''
forms
\begin{equation*}\label{eq:0.8}
q\bigg(\sum_ix_i\veps_i\bigg)=\sum_i q(\veps_i)x_i^2,
\end{equation*}
since  $(\lm+\mu)^2=\lm^2+\mu^2$ for all
$\lm,\mu\in R,$ cf. \Qpropref{0.5}.

Any quadratic form $q$ on a
free $R$-module can be written as a sum
\begin{equation}\label{eq:0.9}
q=q_{QL}+\rho,
\end{equation}
where $q_{QL}$ is a quasilinear (uniquely determined by $q$) and
$\rho$ is rigid (but not unique), called the
\bfem{quasilinear part} of $q$ and a \bfem{rigid complement}
of $q_{\QL}$ in $q$, cf. \Qsecref{4}.

Aiming to detect  on which parts of
the underlying $R$-module $V$ a quadratic form is quasilinear, \cite{QF2} studies the behavior of a quadratic pair $(q,b)$ on varying
  pairs of
non-zero vectors $(x,y)$ in  $V$,
 mostly for $R$ a tangible supersemifield or $eR$ a (bipotent) semifield.
\cite[Theorem~2.7]{QF2} determines
when a quadratic form  is tangible or rigid.

 A  pair $(x,y)$ is called
\textbf{excessive}, if $b(x,y)^2$ exceeds $ q(x)q(y),$ in the sense \cite[Definition ~2.8]{QF2}, in particular when $q(x) = 0$ or $q(y) = 0.$ However, by \cite[Corollary 2.9]{QF2},  either $(x,y)$ is excessive or the restriction $q|Rx+Ry$ of $q$ is quasilinear.
Nevertheless, this dichotomy
does not depend on the companion $b$ of $q,$ although
$b$ takes part in defining excessiveness.


The CS-ratio\footnote{``CS'' is an acronym of
``Cauchy-Schwarz''.} of  \textbf{anisotropic} $x,y \in V$, i.e., $q(x)\ne0,$
$q(y)\ne0,$ is defined as
\begin{equation}\label{eq:0.10}
\CS(x,y):=\frac{eb(x,y)^2}{eq(x)q(y)}\in eR.
\end{equation}
It  serves as a tropical analogy to the familiar formula
$ \cos(x,y) = \frac {\langle x,y\rangle} {\| x\| \; \| y\| } $
in euclidian geometry, and leads to a version  of ``\textit{tropical trigonometry}''.
(By squaring a formula there is no loss of information, since for any supertropical semiring  the map $\lm \mapsto \lm^2$  is an injective endomorphism \cite[Proposition 0.5]{QF1}.)
For any anisotropic vector $w$, the function $x\mapsto
\CS(x,w)$ is subadditive \cite[Theorem 3.6]{QF2}.

A CS-ratio
$\CS(x,y)$ can take values larger than $e = 1_{eR}$, which does not happen
in euclidian geometry; thereby features of noneuclidian geometry arise.
These features are closely related to excessiveness, and are of main interest.
When the set $eR$ is densely ordered, the pair
$(x,y)$ is excessive iff $\CS(x,y)>e.$
 When $eR$ is discrete, $(x,y)$ is excessive if either $\CS(x,y)>c_0,$
with $c_0$ the smallest element of $eR$ larger than $e,$  or
$\CS(x,y)=c_0$ and  $q(x)$ or $q(y)$ is
tangible. The pair $(x,y)$ is \textbf{exotic quasilinear}, if $\CS(x,y) = c_0$ and  both $q(x)$ and $q(y)$ are
ghost \cite[Theorems 2.7 and 2.14]{QF2}.  This behavior bears relevance to
problems of an arithmetical nature for classical quadratic forms,
subject to tropicalization.


A projective version of the theory  is obtained  from the
equivalence relation on $V \sm \00$
  whose classes are called \textbf{rays} and defined as   \cite[\S6]{QF2}:
  Vectors $x,y$ in $V \sm \00$
belong to the same ray iff $\lm x = \mu y$ for some $\lm, \mu \in R \sm \00$.  ($\lm, \mu$ need not be invertible as in the usual projective equivalence.) When  $x$ and
$y$ are anisotropic, $\CS(x,y)$ depends only on the rays $X,Y$ of $x,y$ and provides   a well defined CS-ratio $\CS(X,Y)$ for
anisotropic rays $X,Y$, i.e., rays $X,Y$ in $V \sm q^{-1}(0).$
In terms of rays,  subadditivity \cite[Theorem 3.6]{QF2} is better described by employing \textbf{intervals}  $[X,Y]$,  determined  by rays $X,Y$. Given an anisotropic ray $Z \in [X,Y]$ and arbitrary $W$,
\cite[Theorem ~7.7]{QF2} compares
$\CS(Z,W)$ with $\CS(X,W)+\CS(Y,W)$ while
\cite[~Theorem~8.8]{QF2} provides the uniqueness of the boundary of
$[X,Y]$.

 On the set $\Ray(V)$ of all rays, called the \textbf{ray space} of $V$, a natural notion of convexity appears: a subset $A \subset \Ray(V)$ is \textbf{convex}, if $[X,Y] \subset A$ for any $X,Y \in A$.
 With this notion, given a quadratic pair $(q,b)$, many problems of trigonometrical nature arise, some of which  are addressed in this paper.

 By studying the CS-ratio $\CS(X,Y)$ for $X \in S ,Y \in T$ in given  disjoint subsets $S,T$ of $\Ray(V)$, we obtain in \S\ref{sec:2} rather subtle separation results for the convex hulls of $S$ and ~$T$ (in the obvious sense) from the Subadditivity Theorem \cite[Theorem 3.6]{QF2}, compiled in Corollary ~\ref{cor:2.4}.

We call a \textbf{pair $(X,Y)$ of rays} in $V$ \textbf{quasilinear} (w.r. to $q$), if the restriction
 $q | Rx + Ry$ is quasilinear for any $x \in X$, $y \in Y$, and call a \textbf{subset} $C \subset \Ray(V)$ \textbf{quasilinear}, if any pairs $(X,Y) $ in $C$ are quasilinear.
   It turns our  that the convex hull of $C$  is again quasilinear. In particular, if $C$ is a quasilinear subset of a  convex set $A \subset \Ray(V)$, then the maximal quasilinear subsets  of $A$ containing $C$ 
   are convex. These objects are of central interest in~ \S\ref{sec:3}--\S\ref{sec:5}.

%

The \textbf{QL-star} $\QL(X)$ of a ray $X$ (with respect to $q$) is the set of all $Y \in \Ray(V)$ for which the pair $(X,Y)$ is quasilinear, equivalently, the interval $[X,Y]$ is quasilinear. The QL-stars determine the quasilinear behavior of $q$ on the ray space. But, making this explicit, a major difficulty arises  since  a QL-star often is not quasilinear, and in some cases is not even convex, as explained in \S\ref{sec:3} and \S\ref{sec:13}.

In \S\ref{sec:4} we introduce a (partial) quasiordering on $\Ray(V)$, i.e., a transitive and reflexive relation $\preceq_{\QL}$, defined  by
$$ X  \preceq_{\QL} Y \dss\Leftrightarrow \QL(X) \subset \QL(Y).$$
Given a quasilinear  subset $C$ of $\Ray(V)$, this relation is employed to obtain new quasilinear sets containing $C$ in a systematic  way. Namely, suppose $D$  is  a subset of $\Ray(V)$ such that for every $Z \in D $ there is some $X \in C$ with $X \preceq_{\QL} Z$, then the convex hull $C' $ of $C \cup D$ is again a quasilinear set, called a \textbf{QL-enlargement} of $C$. In particular we obtain  a (unique)
\textbf{maximal QL-enlargement} $E(C)$ of $C$ by taking $D$ to be the union of the up-sets
$X^\uparrow = \{ Z \in \Ray(V) \ds | X \preceq_{\QL} Z \} $ with $X$ running through $C$.\footnote{For systematic reasons $X^\uparrow$ is denoted as $\sat_{\QL} $ in \S\ref{sec:4}.}

In \S\ref{sec:5} we study the family $(C_i \ds | i \in I)$ of maximal quasilinear sets in $\Ray(V)$ containing a given quasilinear set $C$. We determine the union $\bigcup_{i \in I} C_i$ and the intersection
$\tlC = \bigcap_{i \in I} C_i$ of these $C_i$, and prove that $C$ itself is maximal  quasilinear (if and) only if $C$ is the intersection of the  QL-stars of all rays in $C$. (One direction is trivial). I turns out that the maximal enlargement $E(C)$ is contained in $\tlC$.

It is a fundamental matter in the theory of supertropical quadratic forms  to study the \textbf{quasilinear graph} $\Gm_{\QL}(V,q)$ of $q: V \to R$, whose vertices and edges are respectively the  rays and the quasilinear pairs of rays, in particular to describe the path components of $\Gm_{\QL}(V,q)$ and to extract information about the diameters. Due to time and space limitation, we leave these topics to a future study, but in \S\ref{sec:14}--\S\ref{sec:18} we provide a preparation for this.

In \S\ref{sec:14} we define \textbf{enlargements} of a given \textbf{QL-path} $(X_0, \dots, X_n)$ (i.e., a path in $\Gm_{\QL}(V,q)$) by employing suitable QL-enlargements of the intervals $[X_i, X_{i+1}]$. We then develop in  \S\ref{sec:15} a procedure for replacing $(X_0, \dots, X_n)$ ba a shorter path from $X_0$ to $X_n$. This provides a rich interplay between QL-paths and the up-sets and down-sets of the quasiordering $\preceq_{\QL}$ on~ $\Ray(V). $

We can retreat to the case that $T= (X_0,\dots, X_n)$ is a \textbf{direct QL-path}, i.e., no pair $(X_i,X_j)$ in~$T$ with $j > i +1 $ is quasilinear,  and then observe in \S\ref{sec:15} that an upset $Y^\uparrow$ can meet~ $T$ in at most two rays $X_p, X_q$, which are then adjacent , $q = p \pm  1$ (Theorem \ref{thm:15.4}).  When this happens, we call $\{ X_p, X_q\} $ a \textbf{twin pair} in $T$, and call $Y$
an \textbf{anchor of} $\{ X_p, X_q\} $. A ray~ $X_r$ in $T$ which does not appear in a twin pair of~ $T$ is called a \textbf{single} of $T$. We regard any ray ~$Y$ with $X_r \in Y^\uparrow$ (i.e., $X_r\preceq_{\QL} Y$) as an \textbf{anchor of} $Y$.

This gives us a sequence $S = (Y_0, \dots Y_m)$, often not unique, which list anchors of all rays in $T$ in an economic way in the sense that  there is a  minimal monotonic surjective map $\pi:[0,n] \twoheadrightarrow [0,m] $ such that some anchor of each $X_i$ is listed in $(Y_0, \dots, Y_{\pi(i)}).$ (Necessarily $\pi(0) = 0$, $\pi(n) =m$.) We call $S$ an \textbf{anchor set of the QL-path $T$}. It remains a widely open problem to find out which sequences $S$ appear as anchor sets of direct QL-paths, although we obtain some relevant information (cf. Theorem \ref{thm:15.9}).

We are better off if $T$ is a \textbf{minimal QL-path}, i.e., a path of shortest length from $X_0$ to ~$X_n$.
Given an anchor set $S$ of $T = (X_0, \dots, X_n)$,  we search in \S\ref{sec:16}--\S\ref{sec:18}  for partitions  of~ $T$ into subpaths (which, of course, are again minimal) which together with their anchors in $S$ obey a simple and transparent combinatoric.  Our main thrust is on subsequences $(X_p, \dots, X_q)$, $q \geq p +2$, of $T$ such that any two adjacent rays $X_{p+i}$  $X_{p+i+1}$ ($0 \leq i < q-p$) are twins in $T$. We call these subsequences \textbf{flocks}. Given an anchor set $S$ of $T$, we obtain in \S\ref{sec:18} a modification $T'$ of $T$, determined by $S$ in a unique way,  which is again a QL-path from $X_0$ to~ $X_n$ of length $n$, and has a partition into maximal flocks, isolated twin pairs (i.e., twin pairs not appearing in a flock),and singles (i.e., rays not appearing in a twin pair). This partition of $T'$ and the QL-path $T'$ itself are uniquely determined by the anchor set $S$ of $T$. Then, surprisingly, it turns out that  all this procedure does not depend on the choice of $S$, and so is determined by ~$T$ alone.

Here  ample space is left for  further study. In particular, it remains a mystery which sequences of maximal flocks, isolated twin pairs, and singles show up, while  running through all minimal paths  from $X_0$ to $X_n$.

\begin{notation}\label{notation:0.1}
$R^*$ denotes the group of units of a semiring $R.$ In a supertropical semiring~$R$
\begin{enumerate} \dispace
\item[$\bullet$] $\tT(R):=R\sm eR$ is the set of tangible
elements $\ne 0$,

 \item[$\bullet$] $\tG(R):=eR\sm \{0\} $ is the
set of ghost elements $\ne 0,$

\item[$\bullet$] $\nu_R$ denotes the ghost map $R \to eR,$ $a\mapsto
ea.$
\end{enumerate}
When it is clear from the context, we write $\tT,$ $\tG,$
$\nu$ for  $\tT(R),$ $\tG(R),$
$\nu_R,$ and $ea=\nu(a)$ for $a \in R.$ $a \leq_\nu b$  (resp. $a <_\nu b$) stands for $ea  \leq eb$ (resp. $ea  < eb$), the
$\nu$-equivalence  $a \nucong b$  means that $ea  = eb$.
\end{notation}

\section{Convex sets in the ray space}\label{sec:1}
We assume that $V$ is an $R$-module over a supertropical semiring $R$ whose ghost ideal $eR$ is a (bipotent) semifield. We compile some facts about convex sets and intervals in $\Ray(V)$ without yet involving a quadratic form.

\begin{defn}\label{def:1.1} $ $
\begin{enumerate}\ealph
  \item  A subset $M$ of $\Ray(V)$ is \textbf{convex} (in $\Ray (V)$), if for any two rays $X, Y \in M$ the closed interval $[X, Y]$ is contained in $M$.


\item The smallest  convex subset of $\Ray (V)$ containing a nonempty set $S \subset \Ray (V)$  (which obviously exists) is called the \textbf{convex hull} of $S$ (in $\Ray( V)$) and is denoted by $\conv (S)$.  When  $S = \{ X_1, \dots, X_n \}$ is finite, we write
$ \conv (S) = \conv (X_1, \dots, X_n) $,
for short.

\end{enumerate}
\end{defn}
\begin{examp}\label{exmp:1.2}
For any rays $X, Y$ in $\Ray(V)$ all the intervals $] X, Y [ \, , ] X, Y ], [ X, Y [ \, , [ X, Y ]$ (cf. \cite[\S6]{QF2}) are convex sets \cite[Proposition 8.1]{QF2}. Clearly $[X, Y] = \conv (X, Y)$.
\end{examp}

\begin{prop}\label{prop:1.3}

$ $ \begin{enumerate} \ealph
      \item
     If $U_1, \dots, U_n$ are ray-closed subsets of $V \setminus \{ 0 \}$, i.e., unions of full rays, then the set $U_1 + \dots + U_n$ is again ray-closed in $V$, consisting of all rays $\ray_V (\lm_1 u_1 + \dots + \lm_n u_n)$ with $u_i \in U_i$, $\lm_i \in R \setminus \{ 0 \}$. In particular, for any rays $X_1, \dots, X_n$ in $V$ the set $X_1 + \dots + X_n$ is ray-closed in $V$.
\item The convex hull of a finite set of rays $\{ X_1, \dots, X_n \}$ has the disjoint decomposition
\[ \conv (X_1, \dots, X_n) = \bigcup\limits_{i_1 < \dots < i_r} \Ray (X_{i_1} + \dots + X_{i_r}) \]
with $r \leq n$ and  $1 \leq i_1 < \dots < i_r \leq n$.
\end{enumerate}

\end{prop}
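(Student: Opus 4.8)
The plan is to deduce (b) from (a), so I first prove (a) by induction on $n$. The case $n=1$ is vacuous, and the step reduces everything to showing that the sum $U + U'$ of two ray-closed sets is ray-closed with the stated description of its rays. Since each $U_i$ is a union of rays, $\{\lambda u : u \in U_i\} = U_i$ for every fixed $\lambda \in R \sm \00$; here I use that $eR$ is a semifield, so $e\lambda \cdot eu \neq 0$ whenever $u \neq 0$, i.e. no nonzero scalar annihilates a nonzero vector, whence $\ray_V(\lambda u) = \ray_V(u)$. Thus $U + U' = \{u_1 + u_2 : u_i \in U_i\}$ is exactly the set of the $\lambda_1 u_1 + \lambda_2 u_2$, and the description part is immediate from distributivity; the real content is ray-closedness.

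The ray equivalence is generated by the two moves $w \mapsto \lambda w$ and $\lambda w \mapsto w$, since any $w' \sim w$ is joined to $w$ through the common vector $z = \lambda w' = \mu w$. Closure of $U + U'$ under $w \mapsto \lambda w$ is clear from distributivity and ray-closedness of $U, U'$, so the whole matter comes down to the implication $\lambda w' \in U + U' \Rightarrow w' \in U + U'$. If $R$ is a supersemifield this is trivial: $\lambda$ is invertible and one multiplies a representation $\lambda w' = u + u'$ by $\lambda^{-1}$. In general I would argue through the ghost map $\nu_R$: dividing the ghost identity $e\lambda\, ew' = eu + eu'$ by the invertible $e\lambda \in \tG$ gives $ew' = e\big((e\lambda)^{-1}u + (e\lambda)^{-1}u'\big)$ with $(e\lambda)^{-1}u \in U$ and $(e\lambda)^{-1}u' \in U'$, which settles the case of a ghost vector $w'$; for $w'$ with tangible coordinates one invokes the normal form of the ray $\ray_V(u+u')$ from \cite{QF2}. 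I expect this ``division'' step, i.e. the presence of non-invertible tangible scalars, to be the main obstacle of part (a).

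For (b), set $A := \bigcup_{\emptyset \ne I \subseteq \{1,\dots,n\}} \Ray\big(\sum_{i \in I} X_i\big)$, which by (a) is a union of ray-closed sets, and I prove $A = \conv(X_1,\dots,X_n)$ first. Each $X_i$ lies in $A$ (take $I = \{i\}$), so $\conv \subseteq A$ once $A$ is convex. Given $Z \in \Ray(\sum_{i\in I}X_i)$ and $Z' \in \Ray(\sum_{j \in J}X_j)$, I use $[Z,Z'] = \{Z,Z'\} \cup \Ray(Z+Z')$ (Example~\ref{exmp:1.2}) together with the identity $X_k + X_k = X_k$ (a ray absorbs sums of its own members, and contains arbitrarily small scalar multiples of any of them) to collapse the repeated indices and obtain $Z + Z' \subseteq \sum_{k \in I \cup J} X_k$; hence $[Z,Z'] \subseteq A$ and $A$ is convex. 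For the reverse inclusion I induct on $|I|$: writing $I = I' \cup \{i_r\}$ and a representing vector as $z = u + x_{i_r}$ with $u \in \sum_{i\in I'} X_i$, the ray $\ray_V(z)$ lies in $[\ray_V(u), X_{i_r}]$, and $\ray_V(u) \in \conv(X_i : i \in I') \subseteq \conv$ by the inductive hypothesis, so $\ray_V(z) \in \conv$.

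The remaining and, in my view, hardest task is disjointness. The natural approach is to attach to each ray $Z$ in $A$ its \emph{essential} index set, namely the indices genuinely occurring in a representation $z = \sum_{i\in I}\lambda_i u_i$ of minimal size realizing $Z$, and then to identify $\Ray(\sum_{i\in I}X_i)$ in the decomposition with the rays whose essential set equals $I$. Disjointness is then exactly the well-definedness of this set, i.e. that a ray cannot be realized essentially by two incomparable index sets; this is a meet-semilattice (``face-lattice'') property of the index sets. The obstacle is precisely here, and I would attack it by tracking, coordinate by coordinate, which summand attains the dominant ghost value and whether the resulting coordinate of $z$ is tangible or ghost, using the supertropical addition rule. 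This domination bookkeeping is where the combinatorial weight of the proposition lies, and it is also the point at which one must be careful that the $X_i$ are in sufficiently general position for the pieces to be genuinely disjoint.
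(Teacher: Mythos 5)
The paper's own proof of Proposition~\ref{prop:1.3} is a single sentence: everything is delegated to the description of intervals in \cite[Scholium~7.6]{QF2}, ``by an easy induction''. So what you are really attempting is to reconstruct the content of that scholium from scratch, and your formal reductions are the right ones: in (a), closure of $U+U'$ under $w \mapsto \lm w$ is indeed trivial, and the whole matter is the division step $\lm w' \in U+U' \Rightarrow w' \in U+U'$; in (b), your two inclusions establishing the covering $\conv(X_1,\dots,X_n)=\bigcup_{i_1<\dots<i_r}\Ray(X_{i_1}+\dots+X_{i_r})$ are sound, resting on part (a), on the interval description of Remark~\ref{rem:1.6}, and on the containments $\lm U_i \subseteq U_i$ and $X_k+X_k\subseteq X_k$. (Note that the \emph{equalities} you assert there are false in general --- $\lm U_i = U_i$ fails for a non-invertible tangible $\lm$, and $X_k+X_k=X_k$ fails for trivial $eR$ --- but only the containments are used, so this is cosmetic.) The genuine gap in (a) is exactly the case you defer: $\lm$ tangible and non-invertible, $w'$ not a ghost vector. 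That case is not a peripheral technicality; the invertible case and the ghost case are formal, so this remainder \emph{is} the assertion, and ``invoking the normal form of the ray from \cite{QF2}'' names no statement and no argument. What you are missing here is precisely what \cite[Scholium~7.6]{QF2} supplies and what the paper's proof quotes; as written, part (a) is unproven in your proposal.

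The second gap, disjointness in (b), is of a different kind: no bookkeeping of ``essential index sets'' can close it, because the decomposition is not disjoint as literally stated. Take $V$ free of rank $2$ with basis $\veps_1,\veps_2$, and put $X_1=\ray(\veps_1)$, $X_2=\ray(\veps_1+\veps_2)$; these are distinct rays. Choose $a,b\ne 0$ with $a+b=b$ (e.g.\ $ea<eb$, or $b$ ghost with $ea=eb$), and set $x_1=a\veps_1$, $x_2=b(\veps_1+\veps_2)$. Then $x_1+x_2=b\veps_1+b\veps_2=x_2$, hence $X_2\in\Ray(X_1+X_2)\cap\Ray(X_2)$, so the $r=1$ piece $\Ray(X_2)$ and the $r=2$ piece $\Ray(X_1+X_2)$ overlap. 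In other words, the sets $\Ray(X_{i_1}+\dots+X_{i_r})$ behave like \emph{closed} faces of a simplex: whenever one summand is absorbed by another they swallow boundary rays, and genuine disjointness holds only after the open-versus-closed interval bookkeeping carried out in \cite[\S 6--7]{QF2}, or under a general-position hypothesis excluding absorption. So your instinct that general position is needed is correct, and the defect here lies as much in the bald statement as in your proof; but it also means your proposed coordinate-domination attack cannot succeed unconditionally. For the paper's purposes the loss is small: what is used downstream (Corollary~\ref{cor:1.5}) is only the covering property, which you did establish modulo part (a).
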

\begin{proof}
The claims follow from the description of intervals in \cite[Scholium 7.6]{QF2} by an easy induction. \end{proof}

\begin{notation}\label{notat:1.4}

We denote by  $\tlA$ the subset of  $V \setminus \{ 0 \}$, obtained as the union of all rays in a subset $A \subset \Ray (V)$. In other terms, $\tlA$ is the unique ray-closed subset of $V \setminus \{ 0 \}$ with $\ray (\tlA) = A$.
\end{notation}

\noi
As the convex hull of a subset $A$ of $\Ray (V)$ is the union of all sets $\conv (X_1, \dots, X_r)$ with $r \in \mathbb{N}$, $X_1, \dots, X_r \in A$,   we derive  the following from Proposition~\ref{prop:1.3}.(b).

\begin{cor}\label{cor:1.5} Assume that $A_1, \dots, A_n$ are convex subsets of $\Ray (V)$. Let $C$ denote the convex hull of $A_1 \cup \dots \cup A_n$.
\begin{enumerate}
  \ealph
\item $C$ is the union of all sets $\conv (X_1, \dots, X_n)$ with $X_i \in A_i$,  $1 \leq i \leq n$.
\item $\tlC$ is the union of all sets $\tlA_{i_1} + \dots + \tlA_{i_r}$ with $r \leq n$, $1 \leq i_1 < \dots < i_r \leq n$.
\end{enumerate}
\end{cor}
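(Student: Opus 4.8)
The plan is to prove (a) by a double inclusion and then to read off (b) from (a) together with Proposition~\ref{prop:1.3}. Throughout I assume each $A_i \neq \emptyset$ (otherwise one simply reindexes over the nonempty members). Write $C'$ for the union of all sets $\conv(X_1, \dots, X_n)$ with $X_i \in A_i$, $1 \leq i \leq n$. Since every such $X_i$ lies in $A_1 \cup \dots \cup A_n$, each $\conv(X_1, \dots, X_n)$ is contained in $C = \conv(A_1 \cup \dots \cup A_n)$, so $C' \subseteq C$ is immediate. The substance is the reverse inclusion $C \subseteq C'$.

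For that I would use the description, recalled just before the corollary, of $C$ as the union of all $\conv(Y_1, \dots, Y_s)$ with $Y_1, \dots, Y_s \in A_1 \cup \dots \cup A_n$. Fix such a tuple and a ray $Z \in \conv(Y_1, \dots, Y_s)$; by Proposition~\ref{prop:1.3}(b) we have $Z \in \Ray(Y_{j_1} + \dots + Y_{j_t})$, so $Z = \ray_V(\sum_l \lambda_l y_{j_l})$ with $y_{j_l} \in Y_{j_l}$ and $\lambda_l \in R \setminus \{0\}$. The key step is a reduction lemma: if rays $X^{(1)}, \dots, X^{(m)}$ all lie in a convex set $A$ and $x^{(p)} \in X^{(p)}$, $\mu_p \in R \setminus \{0\}$, then $\ray_V(\mu_1 x^{(1)} + \dots + \mu_m x^{(m)}) \in A$. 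This holds because $\conv(X^{(1)}, \dots, X^{(m)}) \subseteq A$ by convexity of $A$, while Proposition~\ref{prop:1.3}(b) exhibits $\Ray(X^{(1)} + \dots + X^{(m)})$ as the top summand of this convex hull, and Proposition~\ref{prop:1.3}(a) places $\ray_V(\sum_p \mu_p x^{(p)})$ inside that summand. Now I group the terms of $Z$ by membership: assign each $Y_{j_l}$ to one $A_k$ containing it, let $K$ be the set of indices $k$ that occur, and set $x_k := \sum_{l : Y_{j_l} \in A_k} \lambda_l y_{j_l}$ for $k \in K$, so that $X_k := \ray_V(x_k) \in A_k$ by the lemma. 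Choosing arbitrary $X_k \in A_k$ for $k \notin K$, we obtain $Z = \ray_V\big(\sum_{k \in K} x_k\big) \in \Ray\big(\sum_{k \in K} X_k\big) \subseteq \conv(X_1, \dots, X_n) \subseteq C'$ by Proposition~\ref{prop:1.3}(a),(b). Hence $C \subseteq C'$ and (a) follows.

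For (b) I would apply the operator $\widetilde{(\cdot)}$ of Notation~\ref{notat:1.4} to the decomposition in (a). Forming the union of the rays commutes with unions of subsets of $\Ray(V)$, so $\tlC = \bigcup \widetilde{\conv(X_1, \dots, X_n)}$ over all $X_i \in A_i$. Using Proposition~\ref{prop:1.3}(b), the ray-closedness of $X_{i_1} + \dots + X_{i_r}$ from Proposition~\ref{prop:1.3}(a), and the identity $\widetilde{\Ray(U)} = U$ for ray-closed $U$, one gets $\widetilde{\conv(X_1, \dots, X_n)} = \bigcup_{i_1 < \dots < i_r}(X_{i_1} + \dots + X_{i_r})$. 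Interchanging the two unions and observing that, for a fixed index set $\{i_1, \dots, i_r\}$, the summand depends only on $X_{i_1}, \dots, X_{i_r}$ while the remaining $X_i$ range freely over the nonempty $A_i$, the union over $X_{i_1} \in A_{i_1}, \dots, X_{i_r} \in A_{i_r}$ of $X_{i_1} + \dots + X_{i_r}$ is exactly $\tlA_{i_1} + \dots + \tlA_{i_r}$, since $\tlA_{i_j} = \bigcup_{X \in A_{i_j}} X$ and the sum distributes over these unions. This is the asserted decomposition of $\tlC$.

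The main obstacle is the grouping argument in (a): one must verify that collapsing several rays lying in a common convex $A_k$ into a single combined ray does not leave $A_k$ — this is exactly the reduction lemma — and that the index sets $K$ arising from $Z$ can be completed to a full selection $X_1 \in A_1, \dots, X_n \in A_n$, where the nonemptiness of each $A_i$ is used. Once this reduction from arbitrary finite subsets of $\bigcup_i A_i$ to one ray per $A_i$ is in place, both parts are routine manipulations of the unions furnished by Proposition~\ref{prop:1.3}.
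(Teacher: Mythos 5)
Your proof is correct. Part (b) follows the paper's own route (Proposition~\ref{prop:1.3}(b) applied to the decomposition from (a), then passing to the ray-closed sets $\tlA_i$); you merely write out both inclusions where the paper makes only the forward one explicit and leaves the converse to the reader. In part (a), however, your structure genuinely differs from the paper's. The paper works top-down, two tuples at a time: for $X_i, Y_i \in A_i$ it uses the chain $\conv(\conv(X_1,\dots,X_n),\conv(Y_1,\dots,Y_n)) = \conv(X_1,\dots,X_n,Y_1,\dots,Y_n) = \conv([X_1,Y_1] \cup \dots \cup [X_n,Y_n])$ together with $[X_i,Y_i] \subset A_i$, the point being that the union of the sets $\conv(X_1,\dots,X_n)$ is closed under forming hulls of pairs of its members, hence convex, hence equal to $C$ by minimality of the convex hull. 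You work bottom-up and pointwise: $C$ is the union of hulls of finite subsets of $A_1 \cup \dots \cup A_n$, each of its points is a ray of a finite sum by Proposition~\ref{prop:1.3}, and your reduction lemma collapses all summands lying in a common $A_k$ into a single ray of $A_k$. The underlying mechanism (Proposition~\ref{prop:1.3} plus grouping rays inside a common convex set) is the same, but your lemma states explicitly the step the paper compresses away: as printed, the paper's chain ends in $\conv(A_1 \cup \dots \cup A_n) = C$, which by itself does not yet place the hull of two such simplices inside the union, so your write-up is the more self-contained of the two. You also flag the tacit hypothesis that every $A_i \ne \emptyset$, which the statement genuinely needs (otherwise the union in (a) is empty while $C$ need not be).
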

\begin{proof} (a): Given $X_1, \dots, X_n$, $Y_1, \dots, Y_n$ with $X_i, Y_i \in A_i$ $(1 \leq i \leq n)$ we have
$$ \begin{array}{lllll}
  \conv (\conv (X_1, \dots, X_n), \conv (Y_1, \dots, Y_n)) &  = \conv (X_1, \dots, X_n, Y_1, \dots, Y_n) \\
& = \conv ([X_1, Y_1] \cup \cdots \cup [X_n, Y_n]) \\
& \subset \conv (A_1 \cup \cdots \cup A_n),  \end{array}
$$
since $[X_i, Y_i]$ is the convex hull of $\{ X_i, Y_i \}$. 

\noindent
(b): By  Proposition \ref{prop:1.3}.(b)  for $X_i \in A_i$ $(1 \leq i \leq n)$ we have
$$\conv (X_1, \dots, X_n) = \bigcup\limits_{1 \leq i_1 < \dots < i_r \leq n} \Ray (X_{i_1} + \dots + X_{i_r})  \subset \bigcup\limits_{i_1 < \dots < i_r} \Ray (\tlA_{i_1} + \dots + \tlA_{i_r}).$$
This together with part (a) implies  the second claim.
\end{proof}

Alternatively,  Proposition \ref{prop:1.3} and  Corollary \ref{cor:1.5} can be derived from the following observation, which deserves independent interest.

\begin{rem}\label{rem:1.6}
The convex subsets $A$ of $\Ray (V)$ correspond uniquely to the ray-closed submodules $W$ of $V$ via
\[ W = \tlA \cup \{ 0 \} , \quad A = \Ray (W). \]
\end{rem}
\begin{proof}

 This is evident from the fact that for any $X, Y \in \Ray (V)$ the interval $[X, Y]$ is the set of all rays in the module $X_0 + Y_0 = (X + Y) \cup X \cup Y \cup \{ 0 \}$, cf. \cite[Remark~6.4]{QF2}.
\end{proof}

\section{CS-ratios for anisotropic rays}\label{sec:2}

We assume that $V$ is a module over a supertropical semiring $R$, whose ghost ideal $e R$ is a  (bipotent) \textbf{nontrivial semifield}\footnote{The term   ``nontrivial'' means that $\tG \ne \{ e \}$.}, and that a quadratic pair $(q, b)$ on $V$ is given  with $q$ \textbf{anisotropic}. In this situation, the CS-ratio $\CS (X, Y)$ is well defined for any two rays $X, Y$ in the ray space $\Ray (V)$, such that
\begin{equation}\label{eq:2.1}
  \CS (X, Y) = \CS (x, y) =  e \frac{b (x, y)^2}{ q (x) q(y)} \end{equation}
for any $x \in X, y \in Y$, cf. \cite{QF2}.

\medskip\noi
In what  follows we exploit a major result from \cite{QF2} on such CS-ratios, that is \cite[Theorem~7.9]{QF2}, first from part~(a)  there, and later from   parts~(b) and (c).

\begin{thm}\label{thm:2.1} Given  $\gm \in e R$, let $S$ and $T$ be subsets of the ray space $\Ray (V)$ with $\CS (X, Y) \leq \gm$ for all $X \in S$ and $Y \in T$. Then $\CS (Z, W) \leq \gm$ for all $Z$ and  $W$ in the convex hulls of $S$ and $T$ respectively.
\end{thm}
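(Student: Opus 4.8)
The plan is to peel the two-variable statement down to a one-variable statement and then dispatch the latter by iterating the subadditivity of the CS-ratio over the convex-hull decomposition of \propref{prop:1.3}. The only extra facts needed beyond subadditivity are the scale-invariance $\CS(\lm x,w)=\CS(x,w)$ and the symmetry $\CS(X,Y)=\CS(Y,X)$, both immediate from the defining formula \eqref{eq:2.1}, together with the fact that the values of $\CS$ lie in the bipotent semifield $eR$, so that a finite sum there is simply the maximum of its summands.

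Concretely, I would first isolate the one-sided lemma: \emph{for a fixed anisotropic ray $W$, if $\CS(X,W)\leq\gm$ for every $X$ in a set $S\subset\Ray(V)$, then $\CS(Z,W)\leq\gm$ for every $Z\in\conv(S)$.} Granting this, the theorem follows in two steps. Fixing an arbitrary $W\in T$, the hypothesis supplies $\CS(X,W)\leq\gm$ for all $X\in S$, so the lemma gives $\CS(Z,W)\leq\gm$ for all $Z\in\conv(S)$ and all $W\in T$. Then, fixing an arbitrary $Z\in\conv(S)$, symmetry yields $\CS(W,Z)=\CS(Z,W)\leq\gm$ for all $W\in T$; applying the lemma once more, now with $Z$ in the role of the fixed ray and $T$ in the role of $S$, extends the bound to all $W\in\conv(T)$. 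This delivers $\CS(Z,W)\leq\gm$ for all $Z\in\conv(S)$, $W\in\conv(T)$.

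For the lemma itself I would use that $\conv(S)=\bigcup\conv(X_1,\dots,X_n)$ over the finite subsets $\{X_1,\dots,X_n\}\subset S$, so it suffices to bound $\CS(Z,W)$ for $Z\in\conv(X_1,\dots,X_n)$. By \propref{prop:1.3} such a $Z$ is the ray of a vector $z=\lm_{i_1}x_{i_1}+\dots+\lm_{i_r}x_{i_r}$ with $x_{i_j}\in X_{i_j}$ and $\lm_{i_j}\in R\sm\{0\}$. Picking any representative $w$ of $W$ and iterating subadditivity of $x\mapsto\CS(x,w)$ from \cite[Theorem~3.6]{QF2} gives $\CS(z,w)\leq\sum_j\CS(\lm_{i_j}x_{i_j},w)$; this iteration is legitimate because $q$ is anisotropic, so every nonzero vector — in particular each partial sum of the $\lm_{i_j}x_{i_j}$ — lies in the domain where $\CS$ is defined. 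By scale-invariance $\CS(\lm_{i_j}x_{i_j},w)=\CS(X_{i_j},W)\leq\gm$, and since all these values sit in the bipotent semifield $eR$, the right-hand sum collapses to $\max_j\CS(X_{i_j},W)\leq\gm$.

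The step requiring the most care is precisely the passage from the two-term subadditivity inequality to the full convex hull: one must invoke \propref{prop:1.3} to realize every ray of the hull as a ray of an explicit finite $R$-linear combination of representatives, verify that anisotropy of $q$ keeps all intermediate sums in the domain of $\CS$ so that subadditivity can be chained, and exploit bipotence of $eR$ to reduce the finite sum to its maximum. I expect no real difficulty once these three ingredients are aligned; alternatively, one could reach the same conclusion by inducting directly on the interval structure, iterating the interval comparison \cite[Theorem~7.7]{QF2} through the building-up of the hull, but the subadditivity route above seems the most economical.
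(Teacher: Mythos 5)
Your proof is correct and follows essentially the same route as the paper: fix one variable at a time, represent a ray $Z \in \conv(S)$ via Proposition~\ref{prop:1.3}.(b) as the ray of a finite sum of representatives from rays of $S$, bound $\CS(Z,Y)$ by subadditivity, and then repeat on the other side using symmetry of $\CS$. The only (immaterial) difference is that the paper directly cites the ray-level weighted inequality \cite[Theorem~7.9.a]{QF2}, namely $\CS(Z,Y) \leq \sum_i \al_i \CS(X_i,Y)$ with $\al_i \in \tG$, $\sum_i \al_i = e$, whereas you re-derive the needed bound from the vector-level subadditivity \cite[Theorem~3.6]{QF2} combined with scale-invariance of $\CS$ and bipotence of $eR$.
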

\begin{proof} a): We first prove that $\CS (Z, Y) \leq \gm$ for $Z \in \conv (S)$, $Y \in T$. By Proposition~ \ref{prop:1.3}.(b)  we have rays $X_1, \dots, X_n \in S$ and vectors $x_i \in X_i$ $(1 \leq i \leq n)$ such that
\begin{equation}\label{eq:2.2}
 Z = \ray (x_1 + \dots + x_n) . \end{equation}
By \cite[Theorem 7.9.a]{QF2} there exist elements $\al_1, \dots, \al_n \in \tG$ with $\al_1 + \dots +\al_n = e$ such that
\begin{equation}\label{eq:2.3}
 \CS (Z, Y) \leq \sum\limits_{i = 1}^n \al_i \CS (X_i, Y), \end{equation}
and so
\[ \CS (Z, Y) \leq \sum\limits_{i = 1}^n \al_i \gm = \gm. \]

\noindent
b): Similarly, given a ray $Z \in \conv (S)$, we conclude that $\CS (Z, W) \leq \gm$ for every  $W \in \conv (T)$. \end{proof}

\begin{rem}\label{rem:2.2} $ $
\begin{enumerate} \ealph
  \item
    In the same way, we see that if $\CS (X, Y) < \gm$ for all $X \in S$, $Y \in T$, then $\CS (Z, W) < \gm$ for all $Z \in \conv (S)$, $Y \in \conv (T)$. This can also be deduced from Theorem~2.1 in a purely formal way, since every $Z \in \conv (S)$ is contained in the convex hull of some finite subset $S'$ of $S$.

\item  When, in contrary to our present assumption, $q$ is not anisotropic, we restrict $q$ to the submodule
\[ V_{\an} : = \{ x \in V \ds \vert q (x) \ne 0 \} \cup \{ 0 \} \]
of $V$, and obtain the same result as above for subsets $S$ and $T$ in the ray space of~ $V_{\an}$, which coincides with the convex subset $\Ray (V)_{\an}$ of $\Ray (V)$ consisting of all anisotropic rays in $V$. Here it is important to note that the  $R$-modules considered are not necessarily finitely generated, since even if $V$ is finitely generated, most often $V_{an}$ is not.
\end{enumerate}
\end{rem}

The proof of Theorem \ref{thm:2.1} was based on part (a) of \cite[Theorem~7.9]{QF2}. Next we exploit parts (b) and (c) of this theorem, which use the notion of $\nu$-\textit{quasilinearity} of pairs of rays. We briefly recall this notion from \cite{QF2}, assuming that $e R$ is a nontrivial semifield.

 A pair of vectors $x, y \in V$ is called $\nu$-\textbf{quasilinear}, if
\[ q (x+y) \cong_{\nu} q (x) + q (y), \]
or equivalently, if the pair $(x, y)$ is quasilinear with respect to the quadratic form $e q$. A {pair of rays} $X, Y$ in $V$ is said to be  $\nu$-\textbf{quasilinear}, if $(x, y)$ is $\nu$-quasilinear for all $x \in X$, $y \in Y$.
When the rays $X, Y$ are anisotropic -- a standard assumption in this section --  the pair $(X, Y)$ turns out to be $\nu$-quasilinear iff $\CS (X, Y) < c$ for every $c > e$. In other terms, is the semifield $e R$ is dense,   $\CS (X, Y) \leq e$, while, if $e R$ discrete,   $\CS (X, Y) \leq c_0$ where $c_0$ is the smallest element of $e R$ that is bigger than $e$, cf. \cite[Definition~7.3]{QF2}. 

\begin{thm}\label{thm:2.3}
 Assume again that $e R$ is a nontrivial semifield and that $q$ is anisotropic. Let~ $S$ and $T$ be subsets of $\Ray (V)$ such that for any two rays $X \in S$, $Y \in T$ the pair $(X, Y)$ is $\nu$-quasilinear and $\CS (X, Y)$ is contained in a given convex subset $\Gamma$ of $e R$.
\begin{enumerate} \ealph
\item Then every pair $(Z, W)$ with $Z \in \conv (S)$, $W \in \conv (T)$ is $\nu$-quasilinear, and $\CS (Z, W) \in \Gamma$.
\item If in addition all $\CS$-ratios $\CS (X, Y)$ with $X \in S$, $Y \in T$ are contained in a fixed square class $\Delta$ of $ e R$, cf. \cite[Definition~7.1]{QF1}, then the same holds for all $\CS$-ratios $\CS (Z, W)$ with $Z \in \conv (S)$, $W \in \conv (T)$.
\end{enumerate}
\end{thm}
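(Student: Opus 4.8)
The plan is to follow the two-step pattern of the proof of Theorem~\ref{thm:2.1}: first fix a single ray $Y \in T$ and let $Z$ range over $\conv(S)$, and then symmetrize in the second variable. Any $Z \in \conv(S)$ already lies in the convex hull of finitely many rays of $S$, so by Proposition~\ref{prop:1.3}.(b) it lies in some $\Ray(X_{i_1} + \dots + X_{i_r})$ with $X_{i_j} \in S$; thus, exactly as in \eqref{eq:2.2}, we may write $Z = \ray(x_1 + \dots + x_n)$ with $x_i \in X_i$ and $X_i \in S$. Hence it suffices to analyse a finite sum against a fixed ray $Y$ and then iterate the resulting statement once more in the $W$-variable.

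For this single-$Y$ step I would appeal directly to parts (b) and (c) of \cite[Theorem~7.9]{QF2}, the $\nu$-quasilinear refinements of the subadditivity that drove Theorem~\ref{thm:2.1}. Since every pair $(X_i, Y)$ is $\nu$-quasilinear, \cite[Theorem~7.9.b]{QF2} should give that the sum pair $(Z, Y)$ is again $\nu$-quasilinear and, simultaneously, bound $\CS(Z,Y)$ in terms of the $\CS(X_i, Y)$. Because $eR$ is bipotent, hence totally ordered, the convex-combination estimate $\CS(Z,Y) \le \sum_i \alpha_i \CS(X_i, Y)$ with $\alpha_i \in \tG$ and $\sum_i \alpha_i = e$ already yields the upper bound $\CS(Z,Y) \le \max_i \CS(X_i,Y)$, using $\alpha_i \le e$; the matching lower bound $\CS(Z,Y) \ge \min_i \CS(X_i,Y)$ is the content of the $\nu$-quasilinear refinement. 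As the $\CS(X_i,Y)$ all lie in the order-convex set $\Gamma$, these two bounds confine $\CS(Z,Y)$ to $\Gamma$, proving part (a) for the pair $(Z, Y)$.

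To complete part (a) I would symmetrize. Having shown that $(Z, Y)$ is $\nu$-quasilinear with $\CS(Z,Y) \in \Gamma$ for every $Z \in \conv(S)$ and $Y \in T$, I fix such a $Z$ and rerun the finite-sum argument with the two arguments interchanged --- permissible since both $\CS$ and the property ``$\nu$-quasilinear'' are symmetric. Writing $W = \ray(y_1 + \dots + y_m)$ with $y_j \in Y_j \in T$ and invoking \cite[Theorem~7.9.b]{QF2} a second time yields that $(Z, W)$ is $\nu$-quasilinear with $\CS(Z,W) \in \Gamma$. Part (b) is handled by the identical two passes, but with the appeal to \cite[Theorem~7.9.b]{QF2} supplemented by part (c), which is the statement recording the square class: it should guarantee that $\CS(Z,Y)$, and then $\CS(Z,W)$, remain in the fixed square class $\Delta$ whenever all the input ratios $\CS(X,Y)$ do.

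The step I expect to be the main obstacle is part (b), the invariance of $\Delta$. A square class of $eR$ is not order-convex, so the soft argument that settles $\Gamma$ --- that $\CS(Z,Y)$ is trapped between $\min_i$ and $\max_i$ of inputs already known to lie in $\Gamma$ --- has no analogue for $\Delta$: a value lying between two elements of a square class need not itself lie in that class. One must instead control the square class of $\CS(Z,Y)$ directly, tracking how the ghost coefficients $\alpha_i$ interact with the squaring built into the definition \eqref{eq:2.1} of the CS-ratio, which is precisely what \cite[Theorem~7.9.c]{QF2} is designed to deliver. The one technical fact used throughout is that in the bipotent semifield $eR$ every finite sum is an attained maximum, which is what lets the $\nu$-quasilinear refinement upgrade the one-sided subadditivity estimate to the two-sided confinement needed for $\Gamma$.
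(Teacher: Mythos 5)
Your proposal is correct and follows the paper's own proof in all essentials: the same reduction of $Z \in \conv(S)$ to a finite sum $\ray(x_1+\dots+x_n)$ via Proposition~\ref{prop:1.3}.(b), the same two-pass symmetrization, a two-sided confinement of $\CS(Z,Y)$ obtained from the exact convex-combination identity of \cite[Theorem~7.9]{QF2} plus order-convexity of $\Gamma$ for part (a), and for part (b) the squared-coefficient identity $\CS(Z,Y)=\sum_i \al_i^2 \CS(X_i,Y)$, whose attained maximum $\al_j^2\,\CS(X_j,Y)$ stays in $\Delta$ because $\al_j^2$ is a square. The only deviations are cosmetic: you interchange the roles of parts (b) and (c) of \cite[Theorem~7.9]{QF2} (the paper invokes (c) for part (a) and (b) for part (b)), and the paper makes explicit two steps you leave implicit --- replacing $\Gamma$ by $\Gamma \cap [0,e]$ (resp.\ $\Gamma \cap [0,c_0]$) so that $\CS(Z,W)\in\Gamma$ certifies $\nu$-quasilinearity, and deriving $e q(X)=\Delta\cdot q(Y)$ for all $X \in S$ (Eq.~\eqref{eq:2.5}) as the hypothesis under which \cite[Theorem~7.9.b]{QF2} applies.
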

\begin{proof} By assumption all $\CS$-ratios $\CS (X, Y)$ with $X \in S$, $Y \in T$ are contained in the convex set $\Gamma \cap [0, e]$, if $e R$ is dense,  or  in convex set $\Gamma \cap [0, c_0]$, when $ e R$ is discrete. Replacing ~$\Gamma$ by this smaller convex set, without loss of generality we may assume that $\Gamma \subset [0, e]$, respectively $\Gamma \subset [0, c_0]$.

To prove part (a) we follow the proof of Theorem \ref{thm:2.1}. We first pick rays $Z \in \conv (S)$ and $Y \in T$, for which we have rays $X_1, \dots, X_n \in S$ and vectors $x_i \in X_i$ such that $Z = \ray (x_1 + \dots + x_n)$. By \cite[Theorem~7.9.c]{QF2} there exist  $\al_1, \dots, \al_n$ in ~$\tG$ with $\al_1 + \dots + \al_n = e$ such that
\begin{equation}\label{eq:2.4}
 \CS (Z, Y) = \sum\limits_{i = 1}^n \al_i \CS (X, Y). \end{equation}
As the pairs $(X_i, Y)$ are $\nu$-quasilinear and all values $\CS (X_i, Y)$ are in ~$\Gamma$, also $\CS (Z, Y) \in \Gamma$, and so $(Z, Y)$ is $\nu$-quasilinear. Applying the same argument to a fixed $Z \in \conv (S)$ and varying $W \in \conv (T)$, we  conclude that $\CS (Z, W) \in \Gamma$ for all $Z \in \conv (S)$, $W \in \conv (T)$.

To prove part (b)  we recall from \cite[Remark~6.8]{QF2} that for any ray $X$ in $V$ the set $e q (X)$ is a square class of the semifield $e R$. If $X$ and $Y$ are rays in~$V$ and $x \in X$, $y \in Y$, then  Formula \eqref{eq:2.1}
tells us that $\CS (X, Y) \in eq (X) \cdot eq (Y)$. Assume that all $\CS$-ratios $\CS (X, Y)$ with $X \in S$, $Y \in T$ lie in a fixed square class $\Delta$ of $e R$. Given $Y \in T$, we conclude that
\begin{equation}\label{eq:2.5}
 eq (X) = \Delta \cdot q (Y) \end{equation}
for all $X \in S$. Then  \cite[Theorem~7.9.b]{QF2} applies for a given $Z \in  \conv (S)$, and instead of \eqref{eq:2.4} we obtain the  relation
\begin{equation}\label{eq:2.6}
 \CS (Z, Y) = \sum\limits_{i = 1}^n \al_i^2 \CS (X_i, Y)\end{equation}
with $X_i \in S$, $\al_i \in e R$, $\al_1 + \dots + \al_n = e$.
Namely, $\CS (Z, Y)$ is the maximum over the elements $\al_i^2 \CS (X_i, Y)$ in $e R$, whence $\CS (Z, Y) \in \Delta$. Since this holds for all $Z \in \conv (S)$, $Y \in T$,  repeating the argument, we obtain that $\CS (Z, W) \in \Delta$ for all $Z \in \conv (S)$, $W \in \conv (T)$, as claimed.
\end{proof}

We draw the following consequences from Theorems \ref{thm:2.1} and \ref{thm:2.3} about disjointness of convex hulls in the ray space.

\begin{cor}\label{cor:2.4}  $ $
\begin{enumerate}\ealph
  \item
 Assume that the pair $(q, b)$ is balanced, i.e., $b(x, x) = e q (x)$ for all $x \in V$, cf. \cite[\S1]{QF1}\footnote{Recall from \cite[\S 1]{QF1} that every quadratic form on $V$ has a balanced companion.}. Let $S$ and $T$ be subsets of $\Ray (V)$ with $\CS (X, Y) < e$ for all $X \in S$, $Y \in T$. Then the convex hulls of $S$ and $T$ are disjoint.

 \item  Assume that $e R$ is discrete. Let $S$ and $T$ be subsets of $\Ray (V)$ with $\CS (X, Y) = c_0$ for all $X \in S$, $Y \in T$. Then again $\conv (S) \cap \conv (T) = \emptyset$.
\item  Let $(q, b)$ be balanced. Assume that all $\CS$-ratios $\CS (X, Y)$ with $X \in S$, $Y \in T$ are contained in a fixed square class $\Delta \ne e R^2$, and furthermore  that every pair $(X, Y)$ with $X \in S$, $Y \in T$ is $\nu$-quasilinear. Then again $\conv (S) \cap \conv (T) = \emptyset$.
\end{enumerate}
\end{cor}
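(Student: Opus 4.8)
The plan is to prove each of the three disjointness assertions by contradiction, all through a single device. Suppose $\conv(S)\cap\conv(T)$ contained a ray $Z$. Then $Z$ lies both in $\conv(S)$ and in $\conv(T)$, so the \emph{diagonal} pair $(Z,Z)$ — with its first coordinate viewed as a ray of $\conv(S)$ and its second as a ray of $\conv(T)$ — falls under the propagation conclusions of Theorems~\ref{thm:2.1} and~\ref{thm:2.3} and of Remark~\ref{rem:2.2}. The contradiction will then arise by confronting the value these results force on $\CS(Z,Z)$ with the value $\CS(Z,Z)$ actually takes. Hence the one computation I need in advance is the evaluation of the diagonal CS-ratio; note that since $q$ is anisotropic, every ray is anisotropic and $\CS(Z,Z)$ is well defined.

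First I would evaluate $\CS(X,X)$ for an anisotropic ray $X$ with representative $x$. Since $x+x=ex$, homogeneity \eqref{eq:0.1} gives $q(x+x)=q(ex)=e^2q(x)=eq(x)$, whereas the companion identity \eqref{eq:0.2} gives $q(x+x)=q(x)+q(x)+b(x,x)=eq(x)+b(x,x)$. Equating the two expressions and applying the ghost map yields $eq(x)=eq(x)+eb(x,x)$, so $eb(x,x)\leq eq(x)$, i.e. $b(x,x)\leq_\nu q(x)$. Squaring and dividing in the semifield $eR$, formula \eqref{eq:2.1} then delivers the universal bound
\[ \CS(X,X)=\frac{eb(x,x)^2}{eq(x)^2}\leq e. \]
When $(q,b)$ is balanced, $b(x,x)=eq(x)$ turns the inequality into the equality $\CS(X,X)=e$, the unit of $eR$, which represents the trivial square class $eR^2$.

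For part~(a), Remark~\ref{rem:2.2}.(a) with $\gm=e$ propagates the \emph{strict} inequality: from $\CS(X,Y)<e$ on $S\times T$ it gives $\CS(Z',W)<e$ for all $Z'\in\conv(S)$, $W\in\conv(T)$, so in particular $\CS(Z,Z)<e$; this contradicts $\CS(Z,Z)=e$, which holds because $(q,b)$ is balanced. For part~(b), observe that $\CS(X,Y)=c_0$ makes every pair $(X,Y)$ $\nu$-quasilinear, since $c_0$ is exactly the borderline value in the discrete case, and that the singleton $\{c_0\}$ is an order-convex subset of $eR$. Theorem~\ref{thm:2.3}.(a) with $\Gamma=\{c_0\}$ then forces $\CS(Z',W)=c_0$ throughout $\conv(S)\times\conv(T)$ (the convex-combination formula keeps the value exactly at $c_0$, as $\sum_i\al_i c_0=ec_0=c_0$), whence $\CS(Z,Z)=c_0>e$, contradicting the universal bound $\CS(Z,Z)\leq e$. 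Here balancedness is not used.

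For part~(c), the $\nu$-quasilinearity of all pairs $(X,Y)$ places their ratios in the convex set $[0,e]$ (or $[0,c_0]$), while by hypothesis they also lie in the fixed square class $\Delta$; Theorem~\ref{thm:2.3}.(b) then gives $\CS(Z',W)\in\Delta$ for all $Z'\in\conv(S)$, $W\in\conv(T)$, and in particular $\CS(Z,Z)\in\Delta$. But balancedness gives $\CS(Z,Z)=e\in eR^2$, and since distinct square classes are disjoint and $\Delta\neq eR^2$, we reach a contradiction. The only genuinely new ingredient, and the one place where a little care is needed, is the diagonal evaluation of $\CS(X,X)$ in the second paragraph; everything else is the mechanical insertion of the pair $(Z,Z)$ into the already-established propagation theorems. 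The hard part is really just the idea of testing a putative common ray of the two hulls against itself.
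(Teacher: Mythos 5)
Your proposal is correct and follows essentially the same route as the paper: propagate the CS-ratio constraints to $\conv(S)\times\conv(T)$ via Remark~\ref{rem:2.2}.(a) / Theorem~\ref{thm:2.3}, then test a putative common ray $Z$ against itself, using $\CS(Z,Z)=e$ in the balanced cases (a), (c) and the universal bound $\CS(Z,Z)\leq e$ against $c_0>e$ in case (b). The only difference is cosmetic: you derive the diagonal identities $\CS(X,X)\leq e$ and $\CS(X,X)=e$ (balanced case) from the supertropical axioms, whereas the paper simply cites them from \cite[Eqs.~(1.7), (1.10)]{QF1}.
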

\begin{proof} a): It follows from Theorem \ref{thm:2.3}.(a), applied with $\Gamma = [0, e[$, that $\CS (Z, W) < e$ for all $Z \in \conv (S)$, $W \in \conv (T)$. But $\CS (Z, Z) = e$ for all $Z \in \Ray (V)$, and thus  $\conv (S) \cap \conv (T) = \emptyset$.
Alternatively, we obtain this result from Remark \ref{rem:2.2}.(a), applied with $\gm = e$.\pSkip
b): We obtain from Theorem \ref{thm:2.3}.(a) that $\CS (Z, W) = c_0$ for all $Z \in \conv (S)$, $W \in \conv (T)$, while $\CS (Z, Z) \leq e$ for every $Z \in \Ray (V)$ (cf. \cite[Eq. (1.7)]{QF1}).\pSkip
c): We know by Theorem \ref{thm:2.3}.(b) that $\CS (Z, W) \in \Delta$ for any $Z \in \conv (S)$, $W \in \conv (T)$, while $\CS (Z, Z) = e$ for every $Z \in \Ray (V)$. \end{proof}

\section{Quasilinear sets and QL-stars}\label{sec:3}

We dismiss  the assumption in  \S\ref{sec:2} that $q$ is anisotropic, and first only assume that $V$ is an  $R$-module
over a supertropical semiring
such that the pair $(R, V)$ is \textbf{ray-admissible}, i.e., for any $\lm, \mu \in R$ and any $v \in V$
\begin{equation}\label{eq:3.1}
 \lm \ne 0, \mu \ne 0 \dss \Dir \lm \mu \ne 0, \end{equation}
\begin{equation}\label{eq:3.2}
 \lm \ne 0, v \ne 0 \dss \Dir \lm v \ne 0, \end{equation}
so that the definition of rays in $V$ makes sense (cf. \cite[\S6]{QF2}). We briefly say  that the \textbf{$R$-module~ $V$ is ray-admissible}. (Note that \eqref{eq:3.1} means that the semiring $R$ has no zero-divisors.)

We will exploit the following result, proved in {\cite[Proposition 1.20]{QF1}, which   holds for any module $V$ over any semiring $R$.

\begin{thm}\label{thm:3.1} Assume that $q : V \to R$ and $b : V \to R$ are a quadratic and a symmetric bilinear form on $V$,   that $(x_i \ds \vert i \in I)$ is a family of vectors in $V$, and that $b$ accompanies $q$ on the set $S : = \bigcup\limits_{i \in I} R x_i$, i.e.,
\[ q (s+t) = q (s) + q(t) + b (s, t) \]
for any $s, t \in S$. Then $b$ accompanies $q$ on the submodule $\sum\limits_{i \in I} R x_i$ of $V$, generated by $S$.
\end{thm}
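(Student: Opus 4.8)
The plan is to reduce the whole statement to a single expansion formula for $q$ on finite sums, and then to defeat the absence of cancellation in $R$ by an absorption argument. Since $q$ is a quadratic form, it admits at least one companion bilinear form $b_0$ valid on all of $V$, so that $q(x+y)=q(x)+q(y)+b_0(x,y)$ for all $x,y\in V$. A routine induction on $n$, using only additivity of $b_0$ in its first slot, yields
\[ q(a_1+\dots+a_n)=\sum_{k=1}^n q(a_k)+\sum_{1\le k<l\le n} b_0(a_k,a_l) \]
for arbitrary $a_1,\dots,a_n\in V$ (the chosen ordering of pairs makes symmetry irrelevant here). The genuine content will be the companion expansion with $b$ in place of $b_0$ for sums drawn from $S$: for $a_1,\dots,a_n\in S$,
\[ q(a_1+\dots+a_n)=\sum_{k=1}^n q(a_k)+\sum_{1\le k<l\le n} b(a_k,a_l), \]
which I will refer to as $(\star)$.

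Granting $(\star)$, the theorem follows immediately. Every element of the submodule $\sum_{i\in I} Rx_i$ is a finite sum $\sum_j \lambda_j x_{i_j}$ of elements $\lambda_j x_{i_j}\in Rx_{i_j}\subseteq S$. So, writing $u=\sum_{k=1}^m a_k$ and $v=\sum_{l=1}^n c_l$ with $a_k,c_l\in S$, I would apply $(\star)$ to $u$, to $v$, and to the combined sum $u+v$. In the expansion of $q(u+v)$ the pairs split into three groups: the within-$a$ pairs reassemble (by $(\star)$ for $u$) into $q(u)$, the within-$c$ pairs into $q(v)$, and the cross pairs $\sum_{k,l} b(a_k,c_l)$ collapse, by bilinearity of $b$, into $b(u,v)$. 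This gives $q(u+v)=q(u)+q(v)+b(u,v)$, as required.

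To prove $(\star)$ I would compare it with the $b_0$-expansion, which holds automatically; thus it suffices to show $Q+\sum_{k<l}b_0(a_k,a_l)=Q+\sum_{k<l}b(a_k,a_l)$, where $Q:=\sum_{m}q(a_m)$. Here lies the sole obstacle: $R$ has no subtraction, so from the two companion identities one cannot conclude $b(a_k,a_l)=b_0(a_k,a_l)$, nor cancel the equal summands $q(a_k)$. The way around it is that for each fixed pair $a_k,a_l\in S$ both companions describe the same value, $q(a_k)+q(a_l)+b(a_k,a_l)=q(a_k+a_l)=q(a_k)+q(a_l)+b_0(a_k,a_l)$; adding the remaining terms $\sum_{m\ne k,l}q(a_m)$ to both sides produces $Q+b(a_k,a_l)=Q+b_0(a_k,a_l)$. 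Since the background term $Q$ already absorbs $q(a_k)$ and $q(a_l)$ for \emph{every} pair, these pairwise equalities can be combined one pair at a time: I would telescope through all $\binom{n}{2}$ pairs, at each stage replacing a single $b_0(a_k,a_l)$ by $b(a_k,a_l)$, each replacement being legitimate by the displayed relation together with the fact that adding a common element to both sides preserves equality. This yields $(\star)$. The main difficulty is therefore entirely the failure of cancellation, and the absorption trick anchored on $Q$ is exactly what circumvents it.
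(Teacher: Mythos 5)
Your proof is correct, but there is nothing in the paper to compare it with: the paper does not prove Theorem~\ref{thm:3.1} at all, it imports it from \cite[Proposition~1.20]{QF1}, so your argument serves as a self-contained replacement for that citation. Its overall shape is essentially forced: the hypothesis only controls $q$ on pairs drawn from $S$, so the only way to expand $q$ of a longer sum is through a companion defined on all of $V$, and such a $b_0$ exists by the paper's very definition of a quadratic form, Eq.~\eqref{eq:0.2}. The one delicate step, converting the $b_0$-cross-terms into $b$-cross-terms without subtraction, you handle correctly: the pairwise relation $Q + b_0(a_k,a_l) = Q + b(a_k,a_l)$ follows by adding $\sum_{m \ne k,l} q(a_m)$ to the two expansions of $q(a_k+a_l)$, and the telescoping through the $\binom{n}{2}$ pairs is legitimate because each step merely adds the same element (the current mixture of converted and unconverted cross-terms) to both sides of that relation --- addition preserves equality, and cancellation is never invoked. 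Two minor remarks: your derivation nowhere uses symmetry of $b$ or $b_0$, since all pairs are kept in a fixed order; and the final assembly rests only on bilinearity, $b(u,v)=\sum_{k,l}b(a_k,c_l)$, so the argument indeed works over an arbitrary semiring and an arbitrary (not necessarily free) module, which is exactly the generality the paper claims for this theorem.
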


\begin{defn}\label{def:3.2}Given a quadratic form $q$ on $V$, we call a (nonempty) subset $U$ of $\Ray (V)$ \textbf{quasilinear} (w.r. to $q$) if for any two rays $X, Y \in U$ the pair $(X, Y)$ is quasilinear, i.e.,
\[ q (x+y) = q (x) + q (y) \]
for any two vectors $x \in X$, $y \in Y$.
\end{defn}

\begin{thm}\label{thm:3.3}Assume that $q : V \to R$ is any quadratic form on the  (ray-admissible) $R$-module $V$ and that $S$ is a quasilinear subset of $\Ray (V)$ (w.r. to $q$). Then the convex hull $\conv (S)$ is again quasilinear (w.r. to $q$).
\end{thm}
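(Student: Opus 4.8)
The plan is to reduce the statement to Theorem~\ref{thm:3.1} by using the zero bilinear form as a common companion. The key observation is that a subset $M \subseteq V$ carries a quasilinear restriction of $q$ exactly when the symmetric bilinear form $b = 0$ accompanies $q$ on $M$, i.e. $q(s+t) = q(s) + q(t)$ for all $s,t \in M$. So the whole argument amounts to propagating the relation ``$b = 0$ accompanies $q$'' from the vectors underlying $S$ to the vectors underlying $\conv(S)$.

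First I would pass from rays to vectors. Let $\tlS \subseteq V \setminus \{0\}$ be the union of all rays in $S$ (Notation~\ref{notat:1.4}). Since every ray is stable under multiplication by scalars in $R \setminus \{0\}$, one has $\bigcup_{x \in \tlS} R x = \tlS \cup \{0\}$. I would then verify that $b = 0$ accompanies $q$ on this set: for $s,t \in \tlS$, say $s$ lies in the ray $X \in S$ and $t$ in the ray $Y \in S$, quasilinearity of $S$ says that the pair $(X,Y)$ (with possibly $X = Y$) is quasilinear, whence $q(s+t) = q(s) + q(t)$; the cases where $s$ or $t$ is $0$ are immediate from $q(0) = 0$. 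Now Theorem~\ref{thm:3.1}, applied to the family $(x)_{x \in \tlS}$ with $b = 0$, extends this to the conclusion that $b = 0$ accompanies $q$ on the submodule $W := \sum_{x \in \tlS} R x$ generated by $\tlS$; that is, $q(z + z') = q(z) + q(z')$ for all $z, z' \in W$.

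It then remains to show that every vector appearing in a ray of $\conv(S)$ already lies in $W$. Each $Z \in \conv(S)$ belongs to $\conv(X_1, \dots, X_n)$ for some finite subset $\{X_1, \dots, X_n\} \subseteq S$, and by Proposition~\ref{prop:1.3} the ray-closed set $\widetilde{\conv(X_1, \dots, X_n)}$ is the union of the set-sums $X_{i_1} + \dots + X_{i_r}$. Every element of such a set-sum is a finite sum $v_1 + \dots + v_r$ with $v_j \in X_{i_j} \subseteq \tlS$, hence lies in $W$. Consequently, for arbitrary $Z, Z' \in \conv(S)$ and any $z \in Z$, $z' \in Z'$ we get $z, z' \in W$, so $q(z + z') = q(z) + q(z')$; this says precisely that $(Z, Z')$ is quasilinear, and therefore $\conv(S)$ is quasilinear.

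The step I expect to be the main obstacle is this last containment $\widetilde{\conv(S)} \subseteq W$, because a submodule is in general not ray-closed: knowing merely that $Z = \ray(u)$ for some $u \in W$ does not place all of the ray $Z$ inside $W$, since recovering a representative from $\mu z = \lm u$ would require inverting $\mu$. What saves the argument is that Proposition~\ref{prop:1.3}(a) asserts that the set-sums $X_{i_1} + \dots + X_{i_r}$ are themselves ray-closed, so each vector of each ray of $\conv(S)$ is literally a sum of vectors drawn from $\tlS$ and thus lies in $W$ with no invertibility needed.
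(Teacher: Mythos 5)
Your proposal is correct and is essentially the paper's own proof: the paper disposes of the theorem with the single line ``Apply Theorem~\ref{thm:3.1} with $b=0$'', and your argument is exactly that application, with the ray-to-vector bookkeeping (via Notation~\ref{notat:1.4} and the ray-closedness statement of Proposition~\ref{prop:1.3}) spelled out explicitly. The obstacle you flag at the end --- that a submodule need not be ray-closed, so one must know $\widetilde{\conv(S)}\subseteq W$ --- is precisely the implicit content the paper leaves to the reader, and you resolve it the intended way.
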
  \begin{proof}Apply Theorem 3.1 with $b = 0$. \end{proof}

\begin{rem}\label{rem:3.4}
 We know that $\conv (X, Y) = [X, Y]$ for any two rays $X, Y$ in $V$. Theorem \ref{thm:3.3} tells us that, if the pair $(X, Y)$ is quasilinear, then the set $[X, Y]$ is quasilinear (in the sense of Definition \ref{def:3.2}). On the other hand, assuming that $R$ is supertropical with $eR$ a nontrivial semifield, in \cite[\S8]{QF2},   a closed interval $[X, Y]$ is \textbf{defined}  to be quasilinear, if the pair $(X, Y)$ is quasilinear\footnote{This makes sense since for a closed interval $I = [X, Y]$ the boundary rays $X, Y$ are uniquely determined by $I$ up to permutation \cite[Theorem~7.9]{QF2}}. But, as a consequence of Theorem~\ref{thm:3.3}, these notions of quasilinearity coincide  for closed intervals.
\end{rem}

We are ready for a key definition in this paper,  assuming that the $R$-module $V$  is ray-admissible, where  $R$ is any supertropical semiring.

\begin{defn}\label{def:3.5} The QL-\textbf{star} $\QL (X)$ of a ray $X$ in $V$ is the set of all $Y \in \Ray (V)$ such that the  pair $(X, Y)$ is quasilinear, equivalently, that the closed interval $[X, Y]$ is quasilinear.
\end{defn}

It is evident from Definition \ref{def:3.5}, that
\begin{equation}\label{eq:3.3}
 X \in \QL (X) \end{equation}
and, for any two rays $X, Y$ in $V$,
\begin{equation}\label{eq:3.4}
 X \in \QL (Y) \Iff Y \in \QL (X).\end{equation}
Often $\QL (X)$ is \textbf{not a quasilinear convex set} (cf. Remark~\ref{rem:3.7} below), and in rare cases $\QL (X)$ is not convex at all (cf. \S\ref{sec:13} below). But we have the following useful fact.

\begin{thm}\label{thm:3.6} Assume that $X, Y, Z$ are rays in $V$ with $Y \in \QL (X)$, $Z \in \QL (X)$ and  that the pair $(Y, Z)$ is quasilinear. Then $[Y, Z] \subset \QL (X)$, and moreover
\begin{equation}\label{eq:3,5}
 \conv (X, Y, Z) \subset \QL (X) \cap \QL (Y) \cap \QL (Z).\end{equation}
\end{thm}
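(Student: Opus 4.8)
The plan is to reduce the entire statement to Theorem~\ref{thm:3.3} by observing that the three hypotheses say \emph{exactly} that $\{X, Y, Z\}$ is a quasilinear subset of $\Ray(V)$. Indeed, $Y \in \QL(X)$ means the pair $(X, Y)$ is quasilinear, $Z \in \QL(X)$ means $(X, Z)$ is quasilinear, and $(Y, Z)$ is quasilinear by assumption; together with the reflexive pairs from \eqref{eq:3.3}, this is precisely the requirement in Definition~\ref{def:3.2} that every pair of rays drawn from $\{X, Y, Z\}$ be quasilinear. So the first move is to invoke Theorem~\ref{thm:3.3} with $S = \{X, Y, Z\}$ and conclude that the convex hull $\conv(X, Y, Z)$ is again quasilinear. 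This is the only substantive input; everything else is unwinding definitions.

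Next I would take an arbitrary ray $W \in \conv(X, Y, Z)$. Since each of $X$, $Y$, $Z$ lies in $\conv(X, Y, Z)$ (the convex hull contains the original set), and this hull is quasilinear, each of the pairs $(X, W)$, $(Y, W)$, $(Z, W)$ is quasilinear. By Definition~\ref{def:3.5} this means $W \in \QL(X)$, $W \in \QL(Y)$, and $W \in \QL(Z)$. As $W$ was arbitrary, this yields the stronger (``moreover'') assertion
\[ \conv(X, Y, Z) \subset \QL(X) \cap \QL(Y) \cap \QL(Z). \]
The first inclusion $[Y, Z] \subset \QL(X)$ then follows at once, since $[Y, Z] = \conv(Y, Z) \subset \conv(X, Y, Z) \subset \QL(X)$ by Example~\ref{exmp:1.2} and the inclusion just proved.

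There is essentially no obstacle here beyond correctly recognizing that the hypotheses assemble into the single statement ``$\{X, Y, Z\}$ is quasilinear,'' after which Theorem~\ref{thm:3.3} does all the work. The one point worth stating explicitly is the trivial membership $X, Y, Z \in \conv(X, Y, Z)$, which is what lets us read off the three QL-star memberships from quasilinearity of the hull; I would flag this so the reader sees that no extra hypothesis relating the three rays (beyond the pairwise quasilinearity already assumed) is needed.
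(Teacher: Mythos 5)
Your proposal is correct and follows essentially the same route as the paper: both proofs invoke Theorem~\ref{thm:3.3} to conclude that $\conv(X,Y,Z)$ is quasilinear from the three pairwise quasilinearity hypotheses, and then read off the memberships $W \in \QL(X) \cap \QL(Y) \cap \QL(Z)$ for arbitrary $W$ in the hull, the inclusion $[Y,Z] \subset \QL(X)$ following since $[Y,Z] = \conv(Y,Z) \subset \conv(X,Y,Z)$. Your explicit remark that reflexive pairs are covered by \eqref{eq:3.3} is a harmless extra precaution that the paper leaves implicit.
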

\begin{proof} All three intervals $[X, Y], [X, Z], [Y, Z]$ are quasilinear, and we conclude by  Theorem~\ref{thm:3.3} that the set $S  = \conv (X, Y, Z)$ is also quasilinear. It follows that for any $W \in S$ the interval $[X, W]$ is contained in $S$, whence $S \subset \QL (X)$. Furthermore $[Y, Z] \subset \QL(X)$, since ~$S$ is quasilinear. By symmetry also $S \subset \QL (Y)$ and $S \subset \QL (Z)$, and so $S$ is contained in the intersection of the QL-stars of $X, Y, Z$.
\end{proof}
\begin{rem}\label{rem:3.7}
Condition~\eqref{eq:3.5} is weaker than the condition that all three pairs $(X, Y)$, $(X, Z)$, $(Y, Z)$ are quasilinear, since for three rays $X, Y, Z$ in $V$ with $\QL (X) \cap \QL (Y) \cap \QL (Z) \ne \emptyset$ it  may happen that $\conv (X, Y, Z)$ is not quasilinear.   Take for example a free module $V$ of rank~4 with base $(\veps_i \ds \vert i \leq i \leq 4)$ over, say, a nontrivial tangible supersemifield~ $R$, and consider the quadratic form
\[ \left[ \begin{array}{cccc}
1 & \gm & \gm & 0 \\
  & 1      & \gm & 0 \\
  &        & 1      & 0 \\
  &        &        & 1
\end{array} \right] \]
for some $\gm >_{\nu} 1$. Then the four basic rays $X_i = \ray (\veps_i)$ have the property that $X_4 \in \QL (X_1) \cap \QL (X_2) \cap \QL (X_3)$, but $\conv (X_1, X_2, X_3)$ is not quasilinear.
Note also that $\QL (X_4) = \Ray (V)$, so $\QL (X_4)$ is convex but, of course, not quasilinear.
\end{rem}

We next analyze the situation that $\QL (X) \subset \QL (X')$ for given rays $X, X'$ in $V$. As a
 preparation,  we study intersections of QL-stars. For a  \textit{nonempty} subset $S$ of $\Ray (V)$ we set
\begin{equation}\label{eq:3.5}
 \QL (S) : = \bigcap\limits_{X \in S} \QL (X).\end{equation}
We read off from \eqref{eq:3.4} that
if $T$ is a second nonempty subset of $\Ray (V)$, then
\begin{equation}\label{eq:3.7}
S \subset \QL (T) \Iff T \subset \QL (S).\end{equation}
It may happen that $\QL (S)$ is empty, but otherwise the following holds.

\begin{lem}\label{lem:3.8} Assume that $\emptyset \neq  S \subset \Ray (V)$ and $\QL (S) \ne \emptyset$.
\begin{enumerate} \ealph
  \item Then
\begin{equation}\label{eq:3.8}
 S \subset \QL (\QL (S)). \end{equation}

  \item  If $S$ is quasilinear then
\begin{equation}\label{eq:3.9}
 S \subset \QL (S). \end{equation}

\end{enumerate}

\end{lem}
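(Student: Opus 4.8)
The plan is to obtain both inclusions directly from the symmetry \eqref{eq:3.7} and the reflexivity \eqref{eq:3.3} of the quasilinearity relation, with essentially no computation at all.

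For part (a), I would invoke the equivalence \eqref{eq:3.7} with the second set chosen to be $T := \QL(S)$. This choice is legitimate because both $S$ and $T$ are nonempty, the former by assumption and the latter precisely by the hypothesis $\QL(S) \ne \emptyset$. With this choice \eqref{eq:3.7} reads $S \subset \QL(\QL(S)) \Leftrightarrow \QL(S) \subset \QL(S)$, and since the right-hand inclusion is a tautology, the left-hand inclusion \eqref{eq:3.8} follows at once. The entire content is thus packaged into \eqref{eq:3.7}, which itself rests on the symmetry \eqref{eq:3.4} of quasilinearity of pairs of rays.

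For part (b), I would simply unwind the definition of a quasilinear set. If $S$ is quasilinear, then for every $X \in S$ and every $Y \in S$ the pair $(X,Y)$ is quasilinear, i.e. $Y \in \QL(X)$, the diagonal case $X = Y$ being covered by \eqref{eq:3.3}. Hence $S \subset \QL(X)$ for each $X \in S$, and intersecting over all $X \in S$ yields $S \subset \bigcap_{X \in S}\QL(X) = \QL(S)$, which is \eqref{eq:3.9}. (Under this hypothesis the standing assumption $\QL(S) \ne \emptyset$ is in fact automatic, since $S \subset \QL(S)$ and $S \ne \emptyset$.)

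I do not anticipate any genuine obstacle: both assertions are purely formal consequences of the symmetry and reflexivity of the quasilinearity relation, already recorded in \eqref{eq:3.3}, \eqref{eq:3.4} and \eqref{eq:3.7}. The only point demanding a little care is the nonemptiness needed to apply \eqref{eq:3.7} in part (a); once $\QL(S) \ne \emptyset$ is granted the argument is immediate, and part (b) requires nothing beyond the definitions.
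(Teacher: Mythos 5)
Your proposal is correct and follows essentially the same route as the paper: part (a) is exactly the symmetry relation \eqref{eq:3.7} applied with $T = \QL(S)$ (the paper phrases this elementwise, noting $X \in \QL(Y)$ for every $Y \in \QL(S)$, which is the same argument), and part (b) is the same direct unwinding of Definition~\ref{def:3.2}. Your added care about the nonemptiness hypothesis needed to invoke \eqref{eq:3.7} is a correct and welcome precision.
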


\begin{proof} (a): Given $X \in S$ we have $X \in \QL (Y)$ for every $Y \in \QL (S)$ (cf. \eqref{eq:3.7}), and so $S \subset \QL (\QL (S))$.\pSkip
(b): $S$ is contained in $\QL (X)$ for any $X \in S$, since $S$ is quasilinear, and so $S \subset \QL (S)$.
\end{proof}

\begin{lem}\label{lem:3.9}
 If $S$ and $T$ are subsets of $\Ray (V)$ with  both $\QL (S)$ and $\QL (T)$ nonempty, then
\[ \QL (S) \subset \QL (T) \dss \Iff T \subset \QL (\QL (S)). \]
\end{lem}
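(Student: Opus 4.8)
The plan is to recognize Lemma~\ref{lem:3.9} as a purely formal consequence of the symmetry relation \eqref{eq:3.7}, namely $S \subset \QL(T) \Iff T \subset \QL(S)$, which encodes that $\QL$ is an antitone Galois connection on the nonempty subsets of $\Ray(V)$. The entire statement should fall out by substituting $\QL(S)$ for the left-hand set in \eqref{eq:3.7}.

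First I would set $A := \QL(S)$. By hypothesis $A \neq \emptyset$, and since $\QL(T) \neq \emptyset$ the set $T$ is itself nonempty, so both $A$ and $T$ are legitimate arguments for \eqref{eq:3.7}. Applying \eqref{eq:3.7} with $A$ playing the role of $S$ gives at once
\[ A \subset \QL(T) \Iff T \subset \QL(A), \]
which reads $\QL(S) \subset \QL(T) \Iff T \subset \QL(\QL(S))$, exactly the claim.

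Should a self-contained argument be preferred to the bare appeal to \eqref{eq:3.7}, I would unfold both directions directly from \eqref{eq:3.4} and the definition $\QL(S)=\bigcap_{X\in S}\QL(X)$. For the forward implication, assume $\QL(S)\subset\QL(T)$ and take $Z\in T$; since $Z\in T$ we have $\QL(T)\subset\QL(Z)$, hence $\QL(S)\subset\QL(Z)$, so every $W\in\QL(S)$ satisfies $W\in\QL(Z)$, i.e. $Z\in\QL(W)$ by \eqref{eq:3.4}, whence $Z\in\bigcap_{W\in\QL(S)}\QL(W)=\QL(\QL(S))$. The reverse implication is the mirror image: assuming $T\subset\QL(\QL(S))$, pick $W\in\QL(S)$ and an arbitrary $Z\in T$; then $Z\in\QL(W)$, so $W\in\QL(Z)$ by \eqref{eq:3.4}, and letting $Z$ range over $T$ yields $W\in\QL(T)$, so $\QL(S)\subset\QL(T)$.

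I expect no genuine obstacle here; the one point demanding care is the bookkeeping around nonemptiness, because $\QL(\,\cdot\,)$ is defined in \eqref{eq:3.5} only for nonempty sets and an empty intersection would otherwise default to all of $\Ray(V)$. I would therefore make explicit that the hypotheses $\QL(S)\neq\emptyset$ and $\QL(T)\neq\emptyset$ are precisely what guarantees that $\QL(S)$ and $T$ are admissible inputs for \eqref{eq:3.7}, which is all the application requires.
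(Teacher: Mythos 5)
Your proposal is correct and is essentially the paper's own argument: the paper also proves the lemma by reducing it to the symmetry relation \eqref{eq:3.7}, merely unwinding it elementwise ($\QL(S)\subset\QL(T)\Iff\forall Y\in T:\QL(S)\subset\QL(Y)\Iff\forall Y\in T:Y\in\QL(\QL(S))$) where you substitute the pair $(\QL(S),T)$ into \eqref{eq:3.7} in one stroke. Your care about nonemptiness matches the paper's standing hypothesis, so there is nothing to fix.
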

\begin{proof} $\QL (S) \subset \QL (T) \Iff \forall Y \in T : \QL (S) \subset \QL (Y)
\Iff \forall Y \in T : Y \in  \QL (\QL (S))$ (cf. ~\eqref{eq:3.7}) $
\Iff T \subset \QL (\QL (S)).$ \end{proof}

\begin{prop}\label{prop:3.10}
If $S \subset \Ray (V)$ and $\QL (S) \ne \emptyset$, then $\QL (\QL (S))$ is the biggest set $T \supset S$ in $\Ray (V)$ such that $\QL (S) = \QL (T)$.
\end{prop}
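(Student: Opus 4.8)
The plan is to recognize this as the standard ``closure operator'' statement attached to the antitone Galois connection \eqref{eq:3.7}, namely $S \subset \QL(T) \Iff T \subset \QL(S)$. Writing $T_0 := \QL(\QL(S))$, I must verify two things: first, that $T_0$ itself belongs to the family of sets under consideration, i.e. $T_0 \supset S$ and $\QL(T_0) = \QL(S)$; and second, that $T_0$ is the largest such set, i.e. every $T \supset S$ with $\QL(T) = \QL(S)$ satisfies $T \subset T_0$. Throughout I will use the elementary observation, immediate from the definition of $\QL(S)$ as an intersection, that $\QL$ is order-reversing: $S \subset S'$ forces $\QL(S') \subset \QL(S)$.

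For the first part, $T_0 \supset S$ is exactly Lemma~\ref{lem:3.8}(a). To get $\QL(T_0) = \QL(S)$ I would argue both inclusions. Applying the order-reversing property to $S \subset T_0$ yields $\QL(T_0) \subset \QL(S)$. For the reverse inclusion I apply Lemma~\ref{lem:3.8}(a) again, but now to the set $\QL(S)$ in place of $S$: this gives $\QL(S) \subset \QL(\QL(\QL(S))) = \QL(T_0)$. Here one must first check the hypothesis of the lemma for $\QL(S)$, namely that $\QL(S) \ne \emptyset$ (which is assumed) and that $\QL(\QL(S)) \ne \emptyset$; the latter holds because $S \ne \emptyset$ and $S \subset \QL(\QL(S))$. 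Combining the two inclusions gives $\QL(T_0) = \QL(S)$, so $T_0$ does lie in the family.

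For maximality, suppose $T \supset S$ satisfies $\QL(T) = \QL(S)$; both $\QL(T)$ and $\QL(S)$ are then nonempty. In particular $\QL(S) \subset \QL(T)$, and Lemma~\ref{lem:3.9} translates this directly into $T \subset \QL(\QL(S)) = T_0$. This is precisely the required maximality, and it does not even make use of the hypothesis $T \supset S$.

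I do not expect any genuine obstacle: the argument is purely formal, amounting to the idempotency $\QL\QL\QL = \QL$ of the closure $\QL\QL$ together with the adjunction. The only point requiring care is the repeated verification that the relevant sets are nonempty before invoking Lemmas~\ref{lem:3.8} and~\ref{lem:3.9}, which is why the standing assumption $\QL(S) \ne \emptyset$ (and the implicit $S \ne \emptyset$) is essential.
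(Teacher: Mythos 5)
Your proof is correct and takes essentially the same route as the paper: both arguments are purely formal, resting on the antitonicity of $\QL$, Lemma~\ref{lem:3.8}(a), and Lemma~\ref{lem:3.9}. The paper is just more economical --- for any $T \supset S$ antitonicity gives $\QL(T) \subset \QL(S)$, so $\QL(T)=\QL(S)$ iff $\QL(S)\subset\QL(T)$, which Lemma~\ref{lem:3.9} converts in one stroke into $T \subset \QL(\QL(S))$ --- whereas your explicit check that $\QL(\QL(S))$ itself belongs to the family (via $\QL\QL\QL=\QL$ and the nonemptiness hypotheses) spells out what the paper leaves implicit.
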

\begin{proof} Let $T \supset S$. Then, of course, $\QL (T) \subset \QL (S)$ and thus $\QL (T) = \QL (S)$ iff $\QL (T) \supset \QL (S)$, which by Lemma \ref{lem:3.9}  occurs iff $T \subset \QL (\QL (S))$.
\end{proof}

Supported by this proposition, we call the set $\QL (\QL (S))$ the QL-\textbf{saturum of} $S$, provided that $\QL (S) \ne \emptyset$, and  write
\begin{equation}\label{eq:3.10}
 \sat_{\QL} (S) : = 
 \QL (\QL (S)). \end{equation}
We now  turn to handle conveniently inclusion relations between QL-stars, focusing on the case $S = \{ X \}$ for a single ray $X$ in $V$ in which  $X \in \QL (S)$, and thus certainly $\QL (S) \ne \emptyset$.
Writing
\begin{equation}\label{eq:11b}
\sat_{\QL} (X) : = \sat_{\QL} (\{ X \}) = \QL (\QL (X)),
\end{equation}
we obtain
\begin{thm}\label{thm:3.12} If $X$ and $X'$ are rays in $V$, then $\QL (X) \subset \QL (X')$ iff $X' \in \sat_{\QL} (X)$.\end{thm}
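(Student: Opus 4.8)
The plan is to recognize Theorem~\ref{thm:3.12} as the single-point specialization of Lemma~\ref{lem:3.9}. I would set $S = \{ X \}$ and $T = \{ X' \}$. From the definition \eqref{eq:3.5} of $\QL(\cdot)$ on sets one has $\QL(\{ X \}) = \QL(X)$ and $\QL(\{ X' \}) = \QL(X')$. Both of these intersections are nonempty: by reflexivity \eqref{eq:3.3} we have $X \in \QL(X)$ and $X' \in \QL(X')$, so the hypotheses of Lemma~\ref{lem:3.9} are satisfied. The lemma then yields
\[ \QL (X) \subset \QL (X') \Iff \{ X' \} \subset \QL (\QL (X)) \Iff X' \in \QL (\QL (X)), \]
and since $\QL(\QL(X)) = \sat_{\QL}(X)$ by \eqref{eq:11b}, this is exactly the asserted equivalence.

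Should a self-contained argument be preferred over citing Lemma~\ref{lem:3.9}, I would instead unroll the symmetry relation \eqref{eq:3.4} directly. For the forward direction, assuming $\QL(X) \subset \QL(X')$, I take any $Y \in \QL(X)$; then $Y \in \QL(X')$, hence $X' \in \QL(Y)$ by \eqref{eq:3.4}. Letting $Y$ range over $\QL(X)$ places $X'$ in $\bigcap_{Y \in \QL(X)} \QL(Y) = \QL(\QL(X)) = \sat_{\QL}(X)$. For the reverse direction, assuming $X' \in \sat_{\QL}(X) = \QL(\QL(X))$, I take any $Y \in \QL(X)$; membership of $X'$ in the intersection gives $X' \in \QL(Y)$, i.e.\ $Y \in \QL(X')$ again by \eqref{eq:3.4}, whence $\QL(X) \subset \QL(X')$.

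I expect essentially no obstacle here: the theorem is pure formalism of the Galois-type correspondence encoded in \eqref{eq:3.4} (equivalently \eqref{eq:3.7}), and the only point requiring care is the nonemptiness of $\QL(X)$ and $\QL(X')$, which is supplied for free by reflexivity \eqref{eq:3.3} and is precisely what licenses the use of Lemma~\ref{lem:3.9}. The genuine content — that inclusions between QL-stars are detected by membership in a saturum — already lives in the preceding lemmas, so Theorem~\ref{thm:3.12} merely records the convenient single-ray phrasing.
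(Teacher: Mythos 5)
Your proposal is correct and is exactly the paper's own argument: the paper proves Theorem~\ref{thm:3.12} simply as ``a special case of Lemma~\ref{lem:3.9},'' namely the case $S = \{X\}$, $T = \{X'\}$, with the nonemptiness hypothesis supplied by $X \in \QL(X)$ as in \eqref{eq:3.3} --- precisely the points you spell out. Your additional self-contained unrolling via \eqref{eq:3.4} is a fine (and equally valid) expansion, but it adds nothing beyond what the cited lemma already encodes.
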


\begin{proof}

 A special case of Lemma \ref{lem:3.9}.
\end{proof}

\section{QL-enlargements}\label{sec:4}

We introduce a (partial) quasiordering $\preceq_{\QL}$  on $\Ray (V)$, i.e., a reflexive and transitive binary relation but not necessarily  antisymmetric. For $X_1, X_2 \in \Ray (V)$ we set
\begin{equation}\label{eq:4.1}
 X_1 \preceq_{\QL} X_2  \Iff \QL (X_1) \subset \QL (X_2), \end{equation}
which induces the equivalence relation:
\begin{equation}\label{eq:4.2}
 X_1 \sim_{\QL} X_2 : \Iff X_1 \preceq_{\QL} X_2 \; \mbox{and} \; X_2 \preceq_{\QL} X_1 \Iff \QL (X_1) = \QL (X_2). \end{equation}
Then, by Theorem \ref{thm:3.12} we obtain
\begin{equation}\label{eq:4.3}
 X_1 \preceq_{\QL} X_2 \Iff X_2 \in \sat_{\QL} (X_1), \end{equation}
and so
\[ X_1 \sim_{\QL} X_2 \Iff X_2 \in \sat_{\QL} (X_1), X_1 \in \sat_{\QL} (X_2). \]

Using Lemma \ref{lem:3.9}, we arrive at a third description of the quasiordering $\preceq_{\QL}$, namely
\begin{equation}\label{eq:4.4}
 X_1 \preceq_{\QL} X_2 \Iff \sat_{\QL} (X_2) \subset \sat_{\QL} (X_1), \end{equation}
and so \begin{equation}\label{eq:4.5}
X_1 \sim_{\QL} X_2 \Iff \sat_{\QL} (X_1) = \sat_{\QL} (X_2). \end{equation}

\bigskip\noi
\textit{Caution.} In general it is \textit{not true that} $\sat_{\QL} (X)$ is the set of all $Y \in \Ray (V)$ with $X \sim_{\QL} Y$.

\begin{rem}\label{rem:3.12}
Given a convex subset $A$ of $\Ray(V)$ we can establish the present theory for quasilinear subsets of $A$ instead of $\Ray(V)$ by defining for $X \in A$ the \textbf{relative QL-star}
\begin{equation}\label{eq:3.12}
\QL^A(X) = \QL(X) \cap A
\end{equation}
and defining on $A$ the quasioredering
\begin{equation}\label{eq:3.14}
X \preceq_{\QL,A} Y \dss \Leftrightarrow = \QL^A(X) \subset \QL^A(Y)
\end{equation}
with associated equivalence relation
\begin{equation}\label{eq:3.14}
X \sim _{\QL,A} Y \dss \Leftrightarrow = \QL^A(X) = \QL^A(Y).
\end{equation}
But this is nothing really new, since the $R$-module $V$ can be replaced by the ray-closed $R$-submodule $W = \tlA \cup \{ 0 \}$ defined in \S\ref{sec:1}.
\end{rem}
This illustrates  that it can be useful to work with quadratic pairs on a ray-admissible $R$-module instead of, say, just a free $R$-module.

\pSkip

The relation $\preceq_{\QL}$ allows to produce new quasilinear convex sets in $\Ray (V)$ from old ones. (All this holds for $R$ any supertropical semiring.)

\begin{lem}\label{lem:4.2} Let $X_1, X_2, X'_1, X'_2$ be rays in $V$ such that $X_1 \preceq_{\QL} X'_1$, $X_2 \preceq_{\QL} X'_2$, and the pair $(X_1, X_2)$ is quasilinear. Then $\conv (X_1, X_2, X'_1, X'_2)$ is a quasilinear convex set. \end{lem}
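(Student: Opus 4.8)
The plan is to reduce everything to a finite, purely combinatorial check and then invoke Theorem~\ref{thm:3.3}. Concretely, it suffices to show that the four-element set $\{X_1, X_2, X'_1, X'_2\}$ is quasilinear in the sense of Definition~\ref{def:3.2}; once this is known, Theorem~\ref{thm:3.3} says that its convex hull $\conv(X_1, X_2, X'_1, X'_2)$ is again quasilinear, and convex hulls are convex by construction, which is exactly the assertion. So the whole proof comes down to verifying that each of the (at most) six pairs formed from these four rays is quasilinear, equivalently, that in each such pair one ray lies in the QL-star of the other; the diagonal pairs are handled at once by reflexivity.

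The only facts I would use are reflexivity $X \in \QL(X)$ from \eqref{eq:3.3}, the symmetry $X \in \QL(Y) \Iff Y \in \QL(X)$ from \eqref{eq:3.4}, and the unfolding of the hypotheses $X_1 \preceq_{\QL} X'_1$, $X_2 \preceq_{\QL} X'_2$ into the star-inclusions $\QL(X_1) \subset \QL(X'_1)$ and $\QL(X_2) \subset \QL(X'_2)$, via definition \eqref{eq:4.1}. With these in hand the pairs fall out in sequence. The pair $(X_1, X_2)$ is quasilinear by hypothesis. For $(X_1, X'_1)$, reflexivity gives $X_1 \in \QL(X_1) \subset \QL(X'_1)$, and symmetrically for $(X_2, X'_2)$. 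For the two cross pairs, the quasilinearity of $(X_1, X_2)$ gives, via symmetry, $X_2 \in \QL(X_1) \subset \QL(X'_1)$, so $(X_2, X'_1)$ is quasilinear, and likewise $X_1 \in \QL(X_2) \subset \QL(X'_2)$ shows $(X_1, X'_2)$ is quasilinear.

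The single step that is not a bare one-line inclusion is the pair $(X'_1, X'_2)$, and this is where the hypotheses genuinely combine. Reading the just-established quasilinearity of $(X_1, X'_2)$ through the symmetry \eqref{eq:3.4} gives $X'_2 \in \QL(X_1)$; feeding this into $\QL(X_1) \subset \QL(X'_1)$ then yields $X'_2 \in \QL(X'_1)$, so $(X'_1, X'_2)$ is quasilinear as well. This completes the six-pair check and hence the proof. I do not anticipate a real obstacle: the argument is a formal manipulation of the star-inclusions together with the symmetry relation, and its only subtle point is that no direct hypothesis linking $X'_1$ and $X'_2$ is available, so one must route through $X_1$ (using that $X_1$ is quasilinear with $X_2$, hence with $X'_2$, and that $X'_2$ thereby lands inside $\QL(X_1) \subset \QL(X'_1)$).
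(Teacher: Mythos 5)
Your proof is correct and follows essentially the same route as the paper: verify pairwise quasilinearity of the four rays by pushing memberships through the star-inclusions $\QL(X_1) \subset \QL(X'_1)$, $\QL(X_2) \subset \QL(X'_2)$ together with the symmetry \eqref{eq:3.4}, then invoke Theorem~\ref{thm:3.3}. The only (immaterial) differences are that you settle $(X'_1, X'_2)$ by routing through $X_1$ where the paper routes through $X_2$, and that you make explicit the reflexivity checks for $(X_1, X'_1)$ and $(X_2, X'_2)$ which the paper leaves tacit.
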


\begin{proof} Since $X_2 \in \QL (X_1)$ and $\QL (X_1) \subset \QL (X'_1)$, also $X_2 \in \QL (X'_1)$, i.e., $(X'_1, X_2)$ is quasilinear. Similarly, the pair $(X_1, X'_2)$ is quasilinear. From $X'_1 \in \QL (X_2)$ we infer that $X'_1 \in \QL (X'_2)$, and so $(X'_1, X'_2)$ is quasilinear. Now Theorem \ref{thm:3.3} implies that $\conv (X_1, X_2, X'_1, X'_2)$ is quasilinear. \end{proof}

\begin{thm}\label{thm:4.3} Given a quasilinear convex subset $C$ of $\Ray (V)$, assume that $D$ is a subset of $\Ray (V)$ such that for every $Z \in D$ there exists some $X \in C$ with $X \preceq_{\QL} Z$. Then $\conv (D)$ is quasilinear. \end{thm}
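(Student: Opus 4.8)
The plan is to reduce the claim to a statement about pairs of rays and then invoke Theorem~\ref{thm:3.3}. Since the convex hull of a quasilinear subset of $\Ray(V)$ is again quasilinear by Theorem~\ref{thm:3.3}, it suffices to prove that $D$ itself is quasilinear, i.e., that every pair $(Z_1, Z_2)$ of rays in $D$ is quasilinear. Once this is established, applying Theorem~\ref{thm:3.3} to the quasilinear set $D$ immediately yields that $\conv(D)$ is quasilinear.

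To show that $D$ is quasilinear I would fix two arbitrary rays $Z_1, Z_2 \in D$ and use the hypothesis to choose $X_1, X_2 \in C$ with $X_1 \preceq_{\QL} Z_1$ and $X_2 \preceq_{\QL} Z_2$. Because $C$ is quasilinear and $X_1, X_2 \in C$, the pair $(X_1, X_2)$ is quasilinear. This is exactly the input required by Lemma~\ref{lem:4.2}: applying it with the four rays $X_1, X_2$ and $X_1' := Z_1$, $X_2' := Z_2$ yields that $\conv(X_1, X_2, Z_1, Z_2)$ is a quasilinear convex set. In particular $Z_1$ and $Z_2$ lie in a common quasilinear set, so the pair $(Z_1, Z_2)$ is quasilinear.

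Since $Z_1, Z_2 \in D$ were arbitrary, $D$ is quasilinear, and Theorem~\ref{thm:3.3} then gives that $\conv(D)$ is quasilinear, as claimed. (I note that the very same transport argument, applied to a pair $X \in C$, $Z \in D$ via $X \preceq_{\QL} X$ and $X' \preceq_{\QL} Z$, shows that $C \cup D$ is quasilinear as well, which is the statement underlying the notion of QL-enlargement; but for the theorem as stated only $\conv(D)$ is needed.)

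The argument is short once one recognizes that Lemma~\ref{lem:4.2} is precisely the mechanism for \emph{transporting} a quasilinear pair $(X_1, X_2)$ in $C$ upward along the quasiordering $\preceq_{\QL}$ to the pair $(Z_1, Z_2)$. The only point demanding a little care is the verification that the hypotheses of Lemma~\ref{lem:4.2} are met without any hidden nondegeneracy assumption; in particular the degenerate cases $X_1 = X_2$ or $Z_1 = Z_2$ are harmless, since by \eqref{eq:3.3} any single ray forms a quasilinear pair with itself. There is no deeper obstacle here: all the substantive work has been front-loaded into Lemma~\ref{lem:4.2} and Theorem~\ref{thm:3.3}.
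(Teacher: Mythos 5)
Your proof is correct and follows essentially the same route as the paper's own argument: for $Z, W \in D$ choose $X, Y \in C$ with $X \preceq_{\QL} Z$, $Y \preceq_{\QL} W$, invoke Lemma~\ref{lem:4.2} to see that $\conv(X,Y,Z,W)$ is quasilinear (hence the pair $(Z,W)$ is quasilinear), and then apply Theorem~\ref{thm:3.3} to conclude that $\conv(D)$ is quasilinear. Your added remarks on the degenerate cases and on $C \cup D$ are harmless elaborations of the same argument.
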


\begin{proof}  Let $Z, W \in D$ be given, choose $X, Y \in C$ with $X \preceq_{\QL} Z$, $Y \preceq_{\QL} W$. Then $\conv (X, Y, Z, W)$ is a quasilinear convex set by Lemma \ref{lem:4.2}. Thus, the pair $(Z, W)$ is quasilinear and hence $\conv (D)$ is quasilinear, by Theorem~\ref{thm:3.3}. \end{proof}

Assume again that $C$ is a quasilinear convex subset of $\Ray (V)$ and $D$ is a subset of $\Ray (V)$ such that for every $Z \in D$ there is some $X \in C$ with $X \preceq_{\QL} Z$. Then we infer from Theorem~4.3 that
\begin{equation}\label{eq:4.6}
 C' : = \conv (D \cup C) \end{equation}
is \textit{a quasilinear convex set containing} $C$.

\begin{defn}\label{def:4.4} A set $C'$ of the form \eqref{eq:4.6} is called an \textbf{enlargement} of the quasilinear convex set $C$.
We also say that the \textbf{pair} $C \subset C'$ is a \textbf{quasilinear enlargement} (or $\QL$-\textbf{enlargement},  for short).
The  set  $D$ in \eqref{eq:4.6} is said to be a \textbf{mother set} of $C'$ (over $C$). Note that then $D \setminus C$ is also a mother set of $C'$. A \textbf{disjoint mother set} of $C'$ over $C$ is  a mother set $D_1$ of $C'$ with $D_1 \cap C = \emptyset$.   \end{defn}

The following is now obvious.

\begin{rem}\label{rem:4.5}
 Let  $C$ be a quasilinear convex (non-empty) subset of $\Ray (V)$.
\begin{itemize}
\item[a)] The maximal mother set occurring for any enlargement of $C$ is
\[ D_{\infty} : = \{ Z \in \Ray (V) \ds\vert \exists \, X \in C : X \preceq_{\QL} Z \}, \]
in other terms (cf. (3.9)),
\begin{equation}\label{eq:4.7}
 D_{\infty} = \bigcup\limits_{X \in C} \sat_{\QL} (X). \end{equation}
\item[b)] The subsets of $D_{\infty}$ are precisely all mother sets of all enlargements of $C$, and
\begin{equation}\label{eq:4.8}
 E(C) : = \conv (D_{\infty}) \end{equation}
is the unique maximal enlargement of $C$.
\end{itemize}\end{rem}

Not every convex set $C'$ with $C \subset C' \subset E (C)$ is an enlargement of $C$. But, this is true when $C$ is a singleton $\{ X_0 \}$, as a consequence of the next theorem and Corollary~\ref{cor:4.7} below.

\begin{thm}\label{thm:4.6} The $\QL$-saturum $\sat_{\QL} (X_0)$ of a ray $X_0$ in $V$ is a quasilinear convex subset of $\Ray (V)$.
\end{thm}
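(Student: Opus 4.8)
The plan is to first translate the saturum into order-theoretic language. By Theorem~\ref{thm:3.12}, for any ray $X'$ one has $X' \in \sat_{\QL}(X_0)$ exactly when $\QL(X_0) \subset \QL(X')$, that is, when $X_0 \preceq_{\QL} X'$. Thus I would identify $\sat_{\QL}(X_0)$ with the up-set $\{ X' \in \Ray(V) \ds | X_0 \preceq_{\QL} X' \}$, and it then suffices to show that this up-set is quasilinear and convex. Note it is nonempty, since $X_0$ belongs to it by reflexivity of $\preceq_{\QL}$.

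First I would prove quasilinearity. Let $X', X'' \in \sat_{\QL}(X_0)$, so that $\QL(X_0) \subset \QL(X')$ and $\QL(X_0) \subset \QL(X'')$. Since $X_0 \in \QL(X_0)$ by \eqref{eq:3.3}, the first inclusion gives $X_0 \in \QL(X')$, whence $X' \in \QL(X_0)$ by the symmetry \eqref{eq:3.4}. Feeding this into the second inclusion yields $X' \in \QL(X'')$, i.e. the pair $(X', X'')$ is quasilinear. Hence every pair of rays in $\sat_{\QL}(X_0)$ is quasilinear.

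Next I would prove convexity. Fix $X', X'' \in \sat_{\QL}(X_0)$ and an arbitrary ray $Z \in [X', X'']$; I must show $X_0 \preceq_{\QL} Z$, i.e. $\QL(X_0) \subset \QL(Z)$. Let $W \in \QL(X_0)$ be arbitrary. The inclusions $\QL(X_0) \subset \QL(X')$ and $\QL(X_0) \subset \QL(X'')$ place $W$ in both $\QL(X')$ and $\QL(X'')$, so by \eqref{eq:3.4} both $X'$ and $X''$ lie in $\QL(W)$. Since the pair $(X', X'')$ is quasilinear by the previous step, Theorem~\ref{thm:3.6} applied with center $W$ gives $[X', X''] \subset \QL(W)$, so $Z \in \QL(W)$ and therefore $W \in \QL(Z)$ again by \eqref{eq:3.4}. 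As $W$ was an arbitrary element of $\QL(X_0)$, this yields $\QL(X_0) \subset \QL(Z)$, i.e. $Z \in \sat_{\QL}(X_0)$, proving convexity.

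I expect the convexity step to be the main obstacle. The naive route fails: $\sat_{\QL}(X_0) = \QL(\QL(X_0)) = \bigcap_{W \in \QL(X_0)} \QL(W)$ is an intersection of QL-stars, yet a QL-star need not be convex (Remark~\ref{rem:3.7}), so convexity cannot be inferred from convexity of the factors. The key that unlocks the argument is that quasilinearity is established first; it is precisely the hypothesis that $(X', X'')$ be quasilinear in Theorem~\ref{thm:3.6} that upgrades the mere memberships $X', X'' \in \QL(W)$ to the interval containment $[X', X''] \subset \QL(W)$.
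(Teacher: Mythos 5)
Your proof is correct and takes essentially the same route as the paper's: both establish pairwise quasilinearity inside $\sat_{\QL}(X_0)$ and then prove convexity by fixing an arbitrary $W \in \QL(X_0)$ and producing a quasilinear convex hull containing $W$ and the interval $[X',X'']$, so that $W \in \QL(Z)$ for every $Z$ in that interval. The only cosmetic difference is that you package this step via Theorem~\ref{thm:3.6} with center $W$, whereas the paper invokes Theorem~\ref{thm:4.3} and then Theorem~\ref{thm:3.3} to see that $\conv(X_0, X_1, X_2, W)$ is quasilinear -- both arguments rest on the same underlying fact, Theorem~\ref{thm:3.3}.
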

\begin{proof} Let $X_1, X_2 \in \sat_{\QL} (X_0)$ and $Z \in [X_1, X_2]$. We verify that $Z \in \sat_{\QL} (X_0)$, which means that $\QL (X_0) \subset \QL (Z)$, cf. \eqref{eq:4.1}. Applying  Theorem~\ref{thm:4.3}   to $D : = \{ X_1, X_2 \}$, we see that $\conv (X_0, X_1, X_2)$ is quasilinear. Given $W \in \QL (X_0)$,  all pairs $(W, X_i)$, $0 \leq i \leq 2$, are quasilinear, since $\QL (X_0) \subset \QL (X_i)$. From Theorem~\ref{thm:3.3} it follows that $\conv (X_0, X_1, X_2, W)$ is quasilinear, and then, that $[W, Z]$ is quasilinear. Thus $W \in \QL (Z)$, which proves that $\QL (X_0) \subset \QL (Z)$, as desired. \end{proof}

The following is now obvious.

\begin{cor}\label{cor:4.7} If  $X$ is a ray in $V$, then
   $\sat_{\QL} (X)$  is the maximal enlargement of the quasilinear convex set $\{ X \}$. The convex sets $C \subset \sat_{\QL} (X)$ with $X \in C$ are precisely all enlargements of $\{ X \}$. \end{cor}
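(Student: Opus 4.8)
The plan is to deduce both assertions from Theorem~\ref{thm:4.6} by unwinding the definitions of \S\ref{sec:4}, since the real content --- the convexity of $\sat_{\QL}(X)$ --- has already been secured there. First I would note that the singleton $\{X\}$ is a quasilinear convex subset of $\Ray(V)$: it is convex because $[X,X]=\{X\}$, and quasilinear because $X\in\QL(X)$ by \eqref{eq:3.3}. Hence Remark~\ref{rem:4.5} applies with $C=\{X\}$. For this $C$ the maximal mother set \eqref{eq:4.7} collapses to a one-term union, $D_\infty=\sat_{\QL}(X)$, which by \eqref{eq:4.3} is precisely $\{Z\in\Ray(V)\ds\vert X\preceq_{\QL}Z\}$, as it must be. Now Theorem~\ref{thm:4.6} tells us that $\sat_{\QL}(X)$ is already convex, so the convex hull in \eqref{eq:4.8} is redundant and $E(\{X\})=\conv(D_\infty)=\sat_{\QL}(X)$. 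This gives the first assertion.

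For the second assertion I would argue both inclusions. Every enlargement of $\{X\}$ is, by Definition~\ref{def:4.4}, of the form $\conv(D\cup\{X\})$ for a mother set $D\subset D_\infty=\sat_{\QL}(X)$; since $X\in\sat_{\QL}(X)$ (reflexivity of $\preceq_{\QL}$ via \eqref{eq:4.3}) and $\sat_{\QL}(X)$ is convex, such an enlargement is a convex set contained in $\sat_{\QL}(X)$ and containing $X$. Conversely, given any convex set $C$ with $X\in C\subset\sat_{\QL}(X)$, I would take $D:=C$ as a candidate mother set: each $Z\in C$ lies in $\sat_{\QL}(X)$, so $X\preceq_{\QL}Z$ by \eqref{eq:4.3}, whence $D$ is a legitimate mother set over $\{X\}$. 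The associated enlargement is then $\conv(C\cup\{X\})=\conv(C)=C$, the last equality holding because $C$ is convex. Thus $C$ is indeed an enlargement of $\{X\}$, and the two descriptions coincide.

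I do not expect a genuine obstacle here: once Theorem~\ref{thm:4.6} guarantees convexity, everything is bookkeeping with \eqref{eq:4.3} and the definitions collected in Remark~\ref{rem:4.5}. The only points needing a moment's care are the identification of the one-term union $D_\infty$ with $\sat_{\QL}(X)$ and the reflexivity fact $X\in\sat_{\QL}(X)$, both immediate from \eqref{eq:4.3}. This is exactly why the authors may rightly call the statement ``obvious''.
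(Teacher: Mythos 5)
Your proposal is correct and follows exactly the route the paper intends: the paper offers no written proof (it declares the corollary ``now obvious'' after Theorem~\ref{thm:4.6} and Remark~\ref{rem:4.5}), and your argument is precisely the bookkeeping that justifies this — identifying $D_\infty=\sat_{\QL}(X)$ for $C=\{X\}$, using the convexity from Theorem~\ref{thm:4.6} to drop the convex hull in $E(\{X\})=\conv(D_\infty)$, and checking both inclusions for the second assertion (in particular taking $D:=C$ itself as mother set in the converse direction). No gaps; this is the intended proof, written out.
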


The $\QL$-enlargements $\{ X \} \subset C$ with $C$ convex in $\sat_{\QL} (X)$ may be regarded as the ``atoms'' (or perhaps better ``molecules'') in the set of all enlargements in the ray space $\Ray (V)$. To elaborate  this view  we introduce the concept of ``\textit{amalgamating}'' a family of enlargements.

\begin{prop}\label{prop:4.8}Let $(C_{i0} \subset C_i \ds \vert i \in I)$ be a family of $\QL$-enlargements in $\Ray (V)$. Define
\begin{equation}\label{eq:4.9}
 C_0 : = \conv \bigg( \bigcup\limits_{i \in I} C_{i0} \bigg), \qquad C : = \conv \bigg( \bigcup\limits_{i \in I} C_i \bigg). \end{equation}
Then $C_0 \subset C$ is again a $\QL$-enlargement. If $D_i$ is a mother set of $C_i$ over $C_{i0}$, then $D : = \bigcup\limits_{i \in I} D_i$ is a mother set of $C$ over $C_0$.
\end{prop}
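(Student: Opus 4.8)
The plan is to verify the two requirements of Definition~\ref{def:4.4} for the pair $C_0 \subset C$: that $C_0$ is a quasilinear convex set, and that $D := \bigcup_{i \in I} D_i$ is a mother set of $C$ over $C_0$. Both $C_0$ and $C$ are convex, being convex hulls, and $C_0 \subset C$ holds because $\bigcup_i C_{i0} \subset \bigcup_i C_i$. So the work splits into (i) identifying $C$ as $\conv(D \cup C_0)$ with $D$ satisfying the mother-set condition, and (ii) showing $C_0$ is quasilinear; granting these, Theorem~\ref{thm:4.3} applied to the base $C_0$ and mother set $D$ yields that $C = \conv(D \cup C_0)$ is quasilinear convex, which is exactly the assertion that $C_0 \subset C$ is a $\QL$-enlargement with mother set $D$.

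For (i) I would first record the elementary identity $\conv\big(\bigcup_i \conv (S_i)\big) = \conv\big(\bigcup_i S_i\big)$, valid for any family of subsets: ``$\subset$'' since each $S_i \subset \conv(S_i)$, and ``$\supset$'' since each $\conv(S_i)$ lies in the convex set $\conv(\bigcup_j S_j)$. Taking $S_i = D_i \cup C_{i0}$, so that $\conv(S_i) = C_i$ by \eqref{eq:4.6}, and using $\bigcup_i(D_i \cup C_{i0}) = D \cup \bigcup_i C_{i0}$, I obtain
\[ C = \conv\Big(\bigcup_i C_i\Big) = \conv\Big(D \cup \bigcup_i C_{i0}\Big) = \conv(D \cup C_0), \]
the last step because $\bigcup_i C_{i0} \subset C_0 \subset \conv\big(D \cup \bigcup_i C_{i0}\big)$. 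The membership condition is immediate: any $Z \in D$ lies in some $D_i$, hence there is $X \in C_{i0} \subset C_0$ with $X \preceq_{\QL} Z$. Thus $D$ is a mother set of $C$ over $C_0$, which is the second assertion of the proposition.

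The substantive point, and the step I expect to be the main obstacle, is (ii): the quasilinearity of $C_0 = \conv\big(\bigcup_i C_{i0}\big)$. By Theorem~\ref{thm:3.3} it suffices to know that the generating union $\bigcup_i C_{i0}$ is itself quasilinear; its convex hull $C_0$ is then quasilinear. This reduction is the heart of the matter, because a union of quasilinear convex sets need not be quasilinear --- already two singletons whose spanning interval is not quasilinear form a counterexample --- so the compatibility of the bases $C_{i0}$ across different indices is precisely what an amalgamable family must provide, and is where the hypotheses on the family enter. With $C_0$ known to be quasilinear convex, Theorem~\ref{thm:4.3} applied to $C_0$ and $D$ completes the proof, giving that $C = \conv(D \cup C_0)$ is a quasilinear convex set and hence that $C_0 \subset C$ is a $\QL$-enlargement with mother set $D$.
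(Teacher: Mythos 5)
Your step (i) is, almost verbatim, the paper's entire proof: the paper writes the chain
\[ \conv (D \cup C_0) \;=\; \conv \Big( D \cup \bigcup_{i} C_{i0} \Big) \;=\; \conv \Big( \bigcup_{i} (D_i \cup C_{i0}) \Big) \;=\; \conv \Big( \bigcup_{i} C_i \Big) \;=\; C, \]
and then observes, exactly as you do, that any $Z \in D$ lies in some $D_i$, hence admits some $X \in C_{i0} \subset C_0$ with $X \preceq_{\QL} Z$. Nothing further appears in the paper's proof.

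Consequently, the obstacle you isolate in (ii) is not a defect of your argument but of the paper's: quasilinearity of $C_0$, which Definition~\ref{def:4.4} presupposes for the base of a $\QL$-enlargement and which Theorem~\ref{thm:4.3} needs in order to make $C$ quasilinear, is never verified there -- and it cannot be, because no hypothesis ties the different $C_{i0}$ to one another. Your counterexample sketch is correct and can be made concrete with (the rank-2 part of) the form of Remark~\ref{rem:3.7}: for a free module with base $\veps_1, \veps_2$, $q(\veps_1) = q(\veps_2) = 1$ and $b(\veps_1, \veps_2) = \gamma$ with $\gamma >_{\nu} 1$, the singletons $\{ X_i \}$, $X_i = \ray(\veps_i)$, are each trivially $\QL$-enlargements of themselves, yet $C_0 = C = [X_1, X_2]$ is not quasilinear, so $C_0 \subset C$ is not a $\QL$-enlargement in the sense of Definition~\ref{def:4.4}. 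The proposition is therefore correct only in the formal sense actually proved (the mother-set identity and the domination condition), or under an added hypothesis such as: $\bigcup_i C_{i0}$ is quasilinear, or all the $C_i$ lie in a common quasilinear convex set. The latter holds in both places where the proposition is used (Scholium~\ref{sch:4.10} and Construction~\ref{constr:4.11}, where the bases are singletons $\{ X_i \}$ chosen inside one quasilinear convex set), so nothing downstream is affected. In short: you proved everything the paper proves, by the same route, and your diagnosis of the missing step is accurate; the cure is a hypothesis added to the statement, not a further argument that you failed to find.
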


\begin{proof} We have a chain of equalities of convex hulls:
\[ \conv (D \cup C_0) = \conv \bigg( D \cup \bigcup\limits_{i} C_{i0} \bigg) = \conv \bigg( \bigcup\limits_{i} (D_i \cup C_{i0}) \bigg) = \conv \bigg( \bigcup\limits_{i} C_i \bigg) = C. \]
Furthermore, for a given ray $Z \in D$, there exists some $i \in I$ with $Z \in D_i$, and so some ray $X \preceq_{\QL} Z$ with $X \in C_{i0}$. Thus $X \in C_0$. \end{proof}

\begin{defn}\label{def:4.9} $ $
\begin{itemize}
\item[a)] Given a family $(C_{i0} \subset C_i \ds \vert i \in I)$ of $\QL$-enlargements in $\Ray (V)$, we call the enlargement $C_0 \subset C$ obtained in Proposition~\ref{prop:4.8}, cf. \eqref{eq:4.9}, the \textbf{amalgamation} of this family. The amalgamation $C_0 \subset C$ is called \textbf{special}, if $C = \bigcup\limits_{i \in I} C_i$ (instead of $C = \conv \big( \bigcup\limits_i  C_i \big)$), and \textbf{very special}, if in addition $C_0 = \bigcup\limits_{i \in I} C_{0i}$.
    \item[b)] We call a  $\QL$-enlargement $C_0 \subset C$ \textbf{atomic}, if $C_0$ is one point set $\{ X \}$ in $\Ray (V)$. Note that then $C$ is a convex subset of $\sat_{\QL} (X)$ containing $X$ (Corollary \ref{cor:4.7}).
\end{itemize}
\end{defn}

Special amalgamations of families of atomic $\QL$-enlargements arise naturally as follows.

\begin{schol}
  \label{sch:4.10} Let $C$ be a quasilinear convex set in $\Ray (V)$. Choose a family $\{\sat_{\QL} (X_i) \ds \vert i \in~ I \}$ of $\QL$-saturations of rays $X_i \in C$ which covers the set $C$, i.e.,
\[ C \subset \bigcup\limits_{i \in I} \sat_{\QL} (X_i). \]
Let $C_i : = C \cap \sat_{\QL} (X_i)$ and $C_0 : = \conv (X_i \vert i \in I)$. Then $C_0 \subset C$ is a special amalgamation of the family $(\{ X_i \} \subset C_i \ds \vert i \in I)$.
\end{schol}

Families of atomic enlargements are of help to produce  $\QL$-enlargements $C_0 \subset C$ for a quasilinear convex set $C$ with $C_0$ ``small''. Note that for   inclusions $C_0 \subset C_1 \subset C$ of  quasilinear convex sets, where  $C_0 \subset C$ is a $\QL$-enlargement, the inclusion  $C_1 \subset C$ is an $\QL$-enlargement. We  say that the $\QL$-enlargement $C_0 \subset C$ \textbf{encompasses} the $\QL$-enlargement $C_1 \subset C$.

\begin{construction}\label{constr:4.11}

 Given a $\QL$-enlargement $C_1 \subset C$, we aim  for a family of atomic $\QL$-enlargements whose amalgamation encompasses $C_1 \subset C$. We first choose a mother set $D \supset C_1$ of $C$ and  a subset $S \subset C_1$ such that for every $Z \in D$ there is some $X \in S$ with $X \preceq_{\QL} Z$.  Taking a labeling $S = \{ X_i \ds \vert i \in I \}$ of $S$, we define $E_i : = C \cap \sat_{\QL} (X_i)$. Clearly $D \subset \bigcup\limits_{i \in I} E_i$, and so
\[ C = \conv (D) = \conv \bigg( \bigcup\limits_{i \in I} E_i \bigg). \]
Furthermore  $C_0 = \conv (X_i \ds \vert i \in I)$ for $C_0 : = \conv (S) \subset C_1$. Thus $C_0 \subset C$ is the amalgamation of the family $(\{ X_i \} \subset E_i \ds | {i \in I})$, and $C_0 \subset C_1$.
\end{construction}

\section{Maximal quasilinear sets}\label{sec:5}
Given a nonempty quasilinear subset $S$ of $\Ray (V)$, by Zorn's Lemma  there exists a maximal quasilinear set $C' \supset S$ in $\Ray (V)$, because the union of a chain of quasilinear sets obviously is again quasilinear. Since the convex hull of any quasilinear set $S$ is again quasilinear (Theorem~\ref{thm:3.3}), the maximal quasilinear set $C'$ is convex and moreover $C' \supset \conv (S)$. Thus in the search for maximal quasilinear sets containing $S$ we may assume from the beginning that $S$ is convex.
However,  many of the formal arguments below remain valid without this convexity assumption.

In what follows $C$ denotes  a fixed nonempty quasilinear subset of $\Ray (V)$ and  $(C_i \ds \vert i \in I)$ denotes the set of all maximal quasilinear sets of $\Ray (V)$ containing $C$.

\begin{thm}\label{thm:5.1}
$\QL (C) = \bigcup\limits_{i \in I} C_i$.
\end{thm}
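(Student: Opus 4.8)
The plan is to prove the equality by establishing the two inclusions separately, working directly from the definition $\QL(C) = \bigcap_{X \in C} \QL(X)$ together with the defining properties of the maximal quasilinear sets $C_i$. Neither direction should require anything beyond the definitions, the symmetry \eqref{eq:3.4} of quasilinear pairs, the reflexivity \eqref{eq:3.3}, and the Zorn's Lemma argument already recorded at the start of this section.

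For the inclusion $\bigcup_{i \in I} C_i \subseteq \QL(C)$, I would fix an index $i$ and a ray $Y \in C_i$. Since $C_i$ is quasilinear and contains $C$, for every $X \in C$ the two rays $X$ and $Y$ both lie in the quasilinear set $C_i$, so the pair $(X, Y)$ is quasilinear, i.e.\ $Y \in \QL(X)$. As this holds for all $X \in C$, we get $Y \in \bigcap_{X \in C} \QL(X) = \QL(C)$. Thus $C_i \subseteq \QL(C)$ for each $i$, and taking the union over $i$ yields $\bigcup_{i \in I} C_i \subseteq \QL(C)$.

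For the reverse inclusion $\QL(C) \subseteq \bigcup_{i \in I} C_i$, I would take an arbitrary $Y \in \QL(C)$ and first verify that $C \cup \{Y\}$ is quasilinear as a \emph{set}. Indeed, any pair of rays both lying in $C$ is quasilinear because $C$ is; the pair $(Y,Y)$ is quasilinear by \eqref{eq:3.3}; and for each $X \in C$ the hypothesis $Y \in \QL(C) \subseteq \QL(X)$, together with the symmetry \eqref{eq:3.4}, shows that $(X,Y)$ is quasilinear. Hence $C \cup \{Y\}$ is quasilinear, and by Zorn's Lemma (exactly as in the opening paragraph of this section) it is contained in some maximal quasilinear set $C'$. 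Since $C \subseteq C'$, this $C'$ is one of the $C_i$, and $Y \in C' = C_i \subseteq \bigcup_{j \in I} C_j$, completing the second inclusion.

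I do not anticipate a genuine obstacle: the statement is essentially a reformulation of the definition of $\QL(C)$, and the only nonformal ingredient is the existence of a maximal quasilinear extension, which is already furnished by Zorn's Lemma at the start of the section. The one point that deserves care is the verification that $C \cup \{Y\}$ is quasilinear \emph{as a whole}, rather than merely checking that $Y$ pairs quasilinearly with each individual element of $C$; this bookkeeping is handled cleanly by invoking the symmetry \eqref{eq:3.4} and reflexivity \eqref{eq:3.3}, so that all pairs among $C \cup \{Y\}$ are accounted for.
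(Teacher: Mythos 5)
Your proof is correct and follows essentially the same route as the paper: the first inclusion is immediate since each $C_i$ is a quasilinear set containing $C$, and the second proceeds by showing $C \cup \{Y\}$ is quasilinear and then invoking the Zorn's Lemma argument from the start of the section to place $Y$ in some maximal quasilinear set containing $C$. Your extra bookkeeping (checking the pair $(Y,Y)$ via \eqref{eq:3.3} and citing the symmetry \eqref{eq:3.4}) only makes explicit what the paper leaves implicit in its appeal to Definition \ref{def:3.2}.
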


\begin{proof}
 Since every $C_i$ is quasilinear and $C_i \supset C$, it is obvious that $C_i \subset \QL (C)$. If $X \in \QL(C)$ is given, then for every $Y \in C$ the pair $(X, Y)$ is quasilinear. Thus the set $C \cup \{ X \}$ is quasilinear, directly by Definition~3.2, and so $X \in C_i$ for some $i \in I$. \end{proof}

\begin{cor}\label{cor:5.2} $C$ is maximal quasilinear iff $\QL (C) = C$. \end{cor}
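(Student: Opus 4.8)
The plan is to derive both implications directly from \thmref{thm:5.1}, together with the inclusion $C \subset \QL (C)$, which holds for every quasilinear $C$ by \lemref{lem:3.8}.(b) (note that $\QL (C) \neq \emptyset$ precisely because it already contains $C$). Thus the entire content of the corollary lies in deciding when the reverse inclusion $\QL (C) \subset C$ holds, and I would record this reduction at the outset.

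For the implication ``$\QL (C) = C$ $\Rightarrow$ $C$ maximal quasilinear'' I would argue directly, without even invoking \thmref{thm:5.1}. Let $C'$ be any quasilinear set with $C \subset C'$, and pick an arbitrary $Y \in C'$. For every $X \in C$ both $X$ and $Y$ lie in $C'$, so the pair $(X, Y)$ is quasilinear because $C'$ is quasilinear; hence $Y \in \QL (X)$ for all $X \in C$, that is, $Y \in \QL (C) = C$. This shows $C' \subset C$, so $C' = C$, and $C$ is maximal.

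For the converse, suppose $C$ is maximal quasilinear. Here \thmref{thm:5.1} does the work: it gives $\QL (C) = \bigcup_{i \in I} C_i$, where $(C_i \mid i \in I)$ is the family of all maximal quasilinear sets containing $C$. Since $C$ is itself maximal quasilinear and contains $C$, it occurs among the $C_i$; and by maximality of $C$, any quasilinear $C_i \supset C$ must equal $C$. Hence every $C_i$ equals $C$, so $\QL (C) = C$. Equivalently, and more directly, for any $Y \in \QL (C)$ the set $C \cup \{ Y \}$ is quasilinear by Definition~\ref{def:3.2}, whence maximality forces $Y \in C$ and so $\QL (C) \subset C$.

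Because both implications are immediate consequences of \thmref{thm:5.1} (or of the defining property of $\QL (C)$), I do not anticipate any genuine obstacle. The only point deserving slight care is to note at the start that $C \subset \QL (C)$ always holds for a quasilinear $C$ by \lemref{lem:3.8}.(b), so that proving the corollary genuinely reduces to the reverse inclusion $\QL (C) \subset C$; this is exactly the nontrivial half, while $C \subset \QL (C)$ is the ``trivial direction'' alluded to in the introduction.
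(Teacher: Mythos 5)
Your proposal is correct and takes essentially the same route as the paper: the paper proves the corollary as the case $\vert I \vert = 1$ of Theorem~\ref{thm:5.1}, and your two implications are precisely what that specialization unpacks to (any quasilinear superset of $C$ lies inside $\QL(C)$, and any ray of $\QL(C)$ can be adjoined to $C$ while preserving quasilinearity). The direct arguments you add merely inline steps already contained in the proof of Theorem~\ref{thm:5.1}, so there is no substantive difference.
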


\begin{proof}
 This is the case $\vert I \vert = 1$ of the theorem. \end{proof}

We define
\begin{equation}\label{eq:5.1}
   \tlC : = \bigcap\limits_{i \in I} C_i. \end{equation}
This is the maximal quasilinear set containing $C$, such that $(C_i \ds \vert i \in I)$ as well is the family of all maximal quasilinear sets containing $\tlC$.

We denote  by $\Max (C)$ the set of maximal quasilinear subsets of $\Ray (V)$ which contain $C$, assuming tacitly that $C \ne \emptyset$. Thus, in the above notation,
\begin{equation}\label{eq:5.2} \Max (C) : =(C_i \ds \vert i \in I ). \end{equation}
We  have just observed that
\begin{equation}\label{eq:5.3} \Max (C) = \Max (\tlC ). \end{equation}
As seen by Theorem \ref{thm:5.1}, $\QL (C)$ is the union of all $E \in \Max (C)$, and thus \eqref{eq:5.3} implies that
\begin{equation}\label{eq:5.4} \QL (C) = \QL (\tlC ) \end{equation}
 for any (nonempty) quasilinear subset $C$ of $\Ray (V)$.

\begin{thm}\label{thm:5.3} For every quasilinear subset $C$ of $\Ray (V)$
\[ \QL (\QL (C)) = \QL (\QL (\tlC )) = \tlC . \]
\end{thm}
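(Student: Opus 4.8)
The plan is to prove the two equalities $\QL(\QL(C)) = \QL(\QL(\tlC))$ and $\QL(\QL(\tlC)) = \tlC$ separately, working from the definitions and the results already established in \S\ref{sec:3} and \S\ref{sec:5}. First I would observe that $\QL(\QL(C))$ is, by the notation \eqref{eq:3.10}, exactly $\sat_{\QL}(C)$, so the statement asserts that the saturum of any quasilinear set coincides with the intersection $\tlC$ of all maximal quasilinear sets containing it. Since $C \subset \tlC$ always holds and each is quasilinear, the left-hand equality should follow immediately from \eqref{eq:5.4}, which already states $\QL(C) = \QL(\tlC)$: applying $\QL$ to both sides gives $\QL(\QL(C)) = \QL(\QL(\tlC))$ at once. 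So the real content is the second equality.

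For $\QL(\QL(\tlC)) = \tlC$, I would prove two inclusions. The inclusion $\tlC \subset \QL(\QL(\tlC))$ is the content of Lemma~\ref{lem:3.8}.(a) (with $S = \tlC$), provided I first check $\QL(\tlC) \ne \emptyset$; but $\tlC$ is itself quasilinear (being an intersection of quasilinear sets, each containing the quasilinear $C$), so by Lemma~\ref{lem:3.8}.(b) we have $\tlC \subset \QL(\tlC)$, whence $\QL(\tlC) \supset \tlC \supset C \ne \emptyset$. This gives the forward inclusion cleanly. The reverse inclusion $\QL(\QL(\tlC)) \subset \tlC$ is where I expect the actual work. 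Here I would use \eqref{eq:5.4} to rewrite $\QL(\QL(\tlC)) = \QL(\QL(C))$, and then invoke Theorem~\ref{thm:5.1}, which identifies $\QL(C)$ with $\bigcup_{i} C_i$. The key point is to show that $\QL(\QL(C))$ lands inside every single $C_i$.

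The main obstacle, then, is establishing $\QL(\bigcup_i C_i) \subset C_j$ for each fixed $j \in I$. The inclusion $\QL\bigl(\bigcup_i C_i\bigr) = \bigcap_i \QL(C_i) \subset \QL(C_j)$ follows from the definition \eqref{eq:3.5}, so it suffices to show $\QL(C_j) \subset C_j$; but this is precisely Corollary~\ref{cor:5.2}, the statement that a maximal quasilinear set $C_j$ satisfies $\QL(C_j) = C_j$. Thus any $X \in \QL(\QL(C)) = \bigcap_i \QL(C_i)$ lies in $\QL(C_j) = C_j$ for every $j$, hence in $\bigcap_j C_j = \tlC$. Assembling the pieces: the left equality is $\QL$ applied to \eqref{eq:5.4}; the forward inclusion of the right equality is Lemma~\ref{lem:3.8}; and the reverse inclusion combines \eqref{eq:5.4}, Theorem~\ref{thm:5.1}, and Corollary~\ref{cor:5.2}. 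I would double-check the nonemptiness hypotheses required by Lemma~\ref{lem:3.8} and by the definition of $\sat_{\QL}$, since those are the only places the argument could quietly break down.
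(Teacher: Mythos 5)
Your proposal is correct and takes essentially the same route as the paper: the paper's entire proof is the chain of equalities $\QL(\QL(C)) = \QL\big(\bigcup_{i \in I} C_i\big) = \bigcap_{i \in I}\QL(C_i) = \bigcap_{i \in I} C_i = \tlC$ (Theorem~\ref{thm:5.1}, definition \eqref{eq:3.5}, Corollary~\ref{cor:5.2}), followed by applying \eqref{eq:5.4} to transfer the statement to $\tlC$ --- precisely the ingredients of your reverse inclusion. Your separate forward inclusion via Lemma~\ref{lem:3.8} is therefore redundant (every step of that chain is already an equality, so no second inclusion is needed), but it is harmless and mathematically sound.
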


\begin{proof}
 We conclude from Theorem \ref{thm:5.1} and Corollary \ref{cor:5.2} that
\[ \QL (\QL (C)) = \QL \bigg(\bigcup\limits_{i \in I} C_i\bigg)  = \bigcap\limits_{i \in I} \QL (C_i) = \bigcap\limits_{i \in I} C_i = \tlC . \]
Furthermore $\QL (\tlC ) = \QL (C)$ by \eqref{eq:5.4} and so $\QL (\QL (\tlC )) = \tlC $. \end{proof}

The set $\QL (\QL (C))$ had been named  the $\QL$-saturum of $C$ in \S \ref{sec:3}. Consequently, we  say that $C$ is QL-\textbf{saturated} if $C = \QL (\QL (C))$, i.e., $C = \tlC $.
That is, in notation \eqref{eq:3.10},
\begin{equation}\label{eq:5.5} \tlC  = \sat_{\QL} (C) \end{equation}
for every quasilinear subset $C$ of $\Ray (V)$.
Clearly, the QL-saturum of ~$C$ is convex.

\begin{thm}\label{thm:5.4} Given nonempty quasilinear subsets $C$ and $D$ of $\Ray (V)$, the following assertions are equivalent.
\begin{enumerate} \eroman
  \item  $\tlC  \subset \tlD$,
\item $\Max (D) \subset \Max (C)$,
\item $\QL (D) \subset \QL (C)$.

\end{enumerate}
\end{thm}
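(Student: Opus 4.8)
The plan is to prove the cycle of implications (i) $\Rightarrow$ (iii) $\Rightarrow$ (ii) $\Rightarrow$ (i), exploiting the machinery already assembled in the preceding theorems. The central facts at my disposal are the key identity $\QL(C) = \QL(\tlC)$ from \eqref{eq:5.4}, the equality $\Max(C) = \Max(\tlC)$ from \eqref{eq:5.3}, and the saturation identity $\QL(\QL(C)) = \tlC$ from Theorem~\ref{thm:5.3}. Since all three quantities $\tlC$, $\Max(C)$, $\QL(C)$ depend on $C$ only through $\tlC$, the entire equivalence can essentially be read off by toggling between a quasilinear set and its QL-saturum.

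First I would establish (iii) $\Rightarrow$ (i). Assume $\QL(D) \subset \QL(C)$. Applying the antitone operator $\QL$ reverses the inclusion, giving $\QL(\QL(C)) \subset \QL(\QL(D))$; by Theorem~\ref{thm:5.3} this is precisely $\tlC \subset \tlD$. The reverse direction (i) $\Rightarrow$ (iii) is equally immediate: from $\tlC \subset \tlD$ I apply $\QL$ to obtain $\QL(\tlD) \subset \QL(\tlC)$, and then \eqref{eq:5.4} rewrites both sides to yield $\QL(D) \subset \QL(C)$. So (i) and (iii) are equivalent by nothing more than antitonicity of $\QL$ together with Theorem~\ref{thm:5.3} and \eqref{eq:5.4}; I would present these two as a single biconditional.

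Next I would handle the equivalence of (i) and (ii). For (i) $\Rightarrow$ (ii), suppose $\tlC \subset \tlD$. A maximal quasilinear set $E \in \Max(D)$ contains $\tlD \supset \tlC \supset C$ (using \eqref{eq:5.3} to pass between $\Max(D)$ and $\Max(\tlD)$, and the fact that $\tlD$ is contained in each of its maximal oversets), hence $E \supset C$ and $E \in \Max(C)$; thus $\Max(D) \subset \Max(C)$. For the converse (ii) $\Rightarrow$ (i), I recall from Theorem~\ref{thm:5.1} that $\QL(C)$ is the union of all members of $\Max(C)$, and that $\tlC = \bigcap_{i \in I} C_i$ is the intersection over $\Max(C)$; so if $\Max(D) \subset \Max(C)$, then intersecting over the smaller family $\Max(C)$ gives a smaller intersection, i.e.\ $\tlC = \bigcap \Max(C) \subset \bigcap \Max(D) = \tlD$.

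The step I expect to require the most care is the passage (ii) $\Rightarrow$ (i): I must be sure that $\tlC$ really is the intersection of \emph{all} maximal quasilinear sets containing $C$ and not merely of some distinguished subfamily, and that shrinking the indexing family of maximal sets genuinely enlarges (rather than leaves unchanged) the intersection in the right direction. Both points are secured by \eqref{eq:5.1} and the defining property of $\Max(C)$, but the monotonicity of intersection under inclusion of index families must be applied with the correct orientation. Once this is pinned down, the whole theorem closes up as a triangle of implications, each a one-line application of an already-proven identity, and I would organize the write-up as (i)$\Leftrightarrow$(iii) followed by (i)$\Leftrightarrow$(ii) to minimize repetition.
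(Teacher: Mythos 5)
Your proof is correct, and it uses the same toolkit as the paper (Theorem~\ref{thm:5.1}, Corollary~\ref{cor:5.2}, Theorem~\ref{thm:5.3}, \eqref{eq:5.3}, \eqref{eq:5.4}, and the antitonicity of $\QL$ built into \eqref{eq:3.5}), but you organize the triangle differently, and this is worth noting. The paper pivots on (ii): it proves (i)~$\Leftrightarrow$~(ii) just as you do, but then handles (iii) by the pair (ii)~$\Rightarrow$~(iii) (unions over $\Max$, via Theorem~\ref{thm:5.1}) and (iii)~$\Rightarrow$~(ii), the latter being its longest step: for a maximal quasilinear $E$ it runs the chain $E \in \Max(D) \Rightarrow E = \QL(E) \subset \QL(D) \subset \QL(C) \Rightarrow \tlC = \QL(\QL(C)) \subset \QL(E) = E \Rightarrow E \in \Max(\tlC) = \Max(C)$, which needs Corollary~\ref{cor:5.2} twice. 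You instead pivot on (i) and replace that chain by the two-line observation that $\QL$ is antitone, so (iii) gives $\QL(\QL(C)) \subset \QL(\QL(D))$, which Theorem~\ref{thm:5.3} translates directly into $\tlC \subset \tlD$; dually, (i) plus \eqref{eq:5.4} gives (iii). Your route is arguably more streamlined, since it never needs to quantify over individual maximal sets when relating (i) and (iii); what the paper's route buys is that it exhibits (iii)~$\Rightarrow$~(ii) explicitly at the level of the families $\Max$, which is the form actually reused in Corollary~\ref{cor:5.5}.

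One small slip of wording: in your (ii)~$\Rightarrow$~(i) you call $\Max(C)$ the ``smaller family,'' but under hypothesis (ii) it is the \emph{larger} one ($\Max(D) \subset \Max(C)$), and the point is that intersecting over a larger family yields a smaller intersection. Your displayed inclusion $\tlC = \bigcap \Max(C) \subset \bigcap \Max(D) = \tlD$ is the correct one, so this is purely verbal, but since you yourself flagged the orientation of this monotonicity as the delicate point, state it the right way around in the write-up.
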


\begin{proof}
 (ii) $\Rightarrow$ (i): Evident,  since $\tlC $ and $\tlD$ are the intersections of the families $\Max (C)$ and $\Max (D)$,  respectively.\pSkip
 (i) $\Rightarrow$ (ii): $\tlC  \subset \tlD$ implies that $\Max (\tlC ) \supset \Max (\tlD)$ directly by the definition of $\Max (\tlC )$ and $\Max (\tlD)$ (cf.~\eqref{eq:5.2}). This means $\Max (C) \supset \Max (D)$ by \eqref{eq:5.3}.\pSkip
(ii) $\Rightarrow$ (iii): Immediate,  since $\QL (C)$ and $\QL (D)$ are the unions of the families of sets $\Max (C)$ and $\Max (D)$ (Theorem~\ref{thm:5.1}).\pSkip
(iii) $\Rightarrow$ (ii): Given a maximal quasilinear set $E$ in $\Ray (V)$ we have the following chain of implications:
$E \in \Max (D) \Leftrightarrow D \subset E \Rightarrow E = \QL (E) \subset \QL (D) \xLongrightarrow[\text{(iii)}]{\text{}} E \subset \QL (C) \Rightarrow \tlC  = \QL (\QL (C)) \subset \QL (E) = E \Rightarrow E \in \Max (\tlC ) = \Max (C)$. This proves that $\Max (D) \subset \Max (C)$. \end{proof}

We state a consequence of this theorem for the maximal QL-enlargement $E(C)$ of a convex quasilinear set   $C$ in $\Ray(V).$
\begin{cor}\label{cor:5.5}
$E(C) \subset \tlC$, and
$E(\tlC) =  \tlC$, i.e., $\tlC$ has no proper QL-enlargements.
\end{cor}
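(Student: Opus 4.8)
The plan is to prove the two assertions in turn, relying on the identity $\tlC = \QL(\QL(C)) = \sat_{\QL}(C)$ from Theorem~\ref{thm:5.3} together with the convexity of $\tlC$ recorded there, and on the elementary fact that the operator $\QL$ is \emph{order-reversing}: if $S \subset T$ then $\QL(T) = \bigcap_{Z \in T} \QL(Z) \subset \bigcap_{Z \in S} \QL(Z) = \QL(S)$.

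\emph{First I would prove} $E(C) \subset \tlC$. By Remark~\ref{rem:4.5} we have $E(C) = \conv(D_{\infty})$ with $D_{\infty} = \bigcup_{X \in C} \sat_{\QL}(X)$. Since $\tlC$ is convex, it suffices to check that $D_{\infty} \subset \tlC$, i.e.\ that $\sat_{\QL}(X) \subset \tlC$ for each $X \in C$. Fix such an $X$. From $X \in C$ we get $\QL(C) \subset \QL(X)$; applying the order-reversing operator $\QL$ twice, the composite $\QL \circ \QL$ preserves the inclusion, so $\sat_{\QL}(X) = \QL(\QL(X)) \subset \QL(\QL(C)) = \tlC$. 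Taking the union over $X \in C$ gives $D_{\infty} \subset \tlC$, and hence $E(C) = \conv(D_{\infty}) \subset \conv(\tlC) = \tlC$.

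\emph{Next I would deduce} $E(\tlC) = \tlC$. The set $\tlC$ is itself convex and quasilinear (being a convex intersection of the maximal quasilinear sets $C_i$, and any subset of a quasilinear set is quasilinear), so the inclusion just established applies with $C$ replaced by $\tlC$, yielding $E(\tlC) \subset \widetilde{\tlC}$. But $\widetilde{\tlC} = \sat_{\QL}(\tlC) = \QL(\QL(\tlC)) = \tlC$ by Theorem~\ref{thm:5.3} (equivalently, because $\Max(\tlC) = \Max(C)$ by \eqref{eq:5.3}, so $\tlC$ and $\widetilde{\tlC}$ are intersections of the very same family $(C_i \ds \vert i \in I)$). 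Hence $E(\tlC) \subset \tlC$. The reverse inclusion is automatic from Definition~\ref{def:4.4}: every enlargement $C' = \conv(D \cup C_0)$ of a convex quasilinear set $C_0$ contains $C_0$, so in particular $\tlC \subset E(\tlC)$. Combining the two inclusions gives $E(\tlC) = \tlC$, which is exactly the statement that $\tlC$ admits no proper QL-enlargement.

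The argument is short and I expect no genuine obstacle; the only points to watch are the double use of the order-reversing operator $\QL$ (two reversals compose to an order-preserving map) and the verification that $\tlC$ is a legitimate convex quasilinear set, so that the first part may be reapplied to it. Both are handled by Theorem~\ref{thm:5.3} and the definition of $\tlC$ as an intersection of maximal quasilinear sets.
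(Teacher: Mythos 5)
Your proof is correct and follows essentially the same route as the paper: both express $E(C)$ via Remark~\ref{rem:4.5} as the convex hull of $\bigcup_{X \in C}\sat_{\QL}(X)$, reduce to showing $\sat_{\QL}(X) \subset \tlC$ for each $X \in C$, and then obtain $E(\tlC)=\tlC$ by applying the first inclusion to $\tlC$ together with $\widetilde{\tlC}=\tlC$. The only cosmetic difference is that the paper gets the key inclusion $\sat_{\QL}(X)=\widetilde{\{X\}}\subset\tlC$ by citing Theorem~\ref{thm:5.4}, while you verify it directly from the order-reversing property of $\QL$ and the identity $\QL(\QL(C))=\tlC$ of Theorem~\ref{thm:5.3}, which is the same argument unwound.
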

\begin{proof} $E(C)$ is the convex hull of the union of the sets $\sat_{\QL} (X) = \widetilde{\{ X \} } $ with $X$ running through $C$ (cf. Remark \ref{rem:4.5}). Since by Theorem \ref{thm:5.4} $\widetilde{\{ X \} } \subset \tlC$ for each $X \in C$, we conclude that $E(C) \subset \tlC$. It follows that
$E(\tlC) = {(\tlC)}^{\sim} = \tlC$.
\end{proof}

\section{Convexity of QL-stars}\label{sec:13}

We return to the assumptions made in \S \ref{sec:2}. To repeat, $V$ is a module over a supertropical semiring $R$, whose ghost ideal $eR$ is a nontrivial semifield, and $(q, b)$ is a quadratic pair on ~$V$ with $q$ anisotropic. Thus, for any two rays $X, Y$ in $V$ we have a well defined CS-ratio $\CS (X, Y)$, such that
\[ \CS (X, Y) = \CS (x, y) = \frac{e b (x, y)^2}{e q (x) q (y)} \]
for $x \in X$, $y \in Y$. In addition, we assume that $e \mathcal{T} = \mathcal{G}$, but we do not require  that every element of $\mathcal{T}$ is a unit in $R$, which would mean that $R$ is a tangible supersemifield.

We ask, in which cases a given $\QL$-star $\QL (X)$ is convex in $\Ray (V)$. There is no serious problem if $eR$ is a dense semifield.

\begin{thm}\label{thm:13.1}
 If $eR$ is a dense semifield, then $\QL (X)$ is convex for \textbf{every} $X \in \Ray (V)$.\end{thm}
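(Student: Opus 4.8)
The plan is to reduce the quasilinearity of a pair of anisotropic rays to a single numerical inequality on the CS-ratio, and then to invoke the Subadditivity result of Theorem~\ref{thm:2.1}. The key is the dictionary, valid here because $q$ is anisotropic and $eR$ is dense: for any two rays $X,Y$ the pair $(X,Y)$ is quasilinear if and only if $\CS(X,Y) \leq e$. Indeed, by \cite[Corollary 2.9]{QF2} every anisotropic pair is either excessive or quasilinear, and in the dense case excessiveness is exactly the condition $\CS(X,Y) > e$ (see the discussion preceding Theorem~\ref{thm:2.3}, together with \cite[Theorems 2.7 and 2.14]{QF2}). Moreover, in the dense case the exotic quasilinear phenomenon -- the sole obstruction to identifying quasilinearity with $\nu$-quasilinearity -- does not occur, so the two notions coincide for anisotropic rays. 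Consequently
\[ \QL (X) = \{ Y \in \Ray (V) \ds | \CS (X, Y) \leq e \}. \]

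With this description in hand, convexity is immediate. Given $Y, Z \in \QL (X)$, I would set $S = \{ X \}$ and $T = \{ Y, Z \}$ and take $\gm = e$. Then $\CS (X', Y') \leq e$ for all $X' \in S$ and $Y' \in T$, so Theorem~\ref{thm:2.1} yields $\CS (X, W) \leq e$ for every $W \in \conv (T) = [Y, Z]$ (and the single $X \in \conv (S) = \{ X \}$). Hence $[Y, Z] \subset \QL (X)$, which is precisely the assertion that $\QL (X)$ is convex.

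The only delicate point I expect is the first step: justifying cleanly that in the dense case the quasilinearity of an anisotropic pair is governed precisely by $\CS (X, Y) \leq e$, with no boundary ambiguity at $e$. This rests on combining the dichotomy of \cite[Corollary 2.9]{QF2} with the dense-case characterization of excessiveness, and on the fact that the exotic quasilinear case is a purely discrete-semifield phenomenon; once this dictionary is established, the convexity is a one-line consequence of the Subadditivity Theorem. Alternatively, one can run the identical argument through Theorem~\ref{thm:2.3}.(a) with $\Gamma = [0, e]$, staying in the language of $\nu$-quasilinearity and converting back to genuine quasilinearity only at the very end via the dense-case coincidence.
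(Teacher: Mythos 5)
Your proof is correct and follows essentially the same route as the paper: the paper likewise first identifies $\QL(X)$ with $\{\,Y \in \Ray(V) \mid \CS(X,Y) \leq e\,\}$ in the dense case (citing \cite[Theorem~6.7]{QF2} where you assemble the dictionary from \cite[Corollary~2.9]{QF2} and the excessiveness discussion) and then concludes convexity by subadditivity, invoking \cite[Theorem~7.7.a]{QF2} where you invoke Theorem~\ref{thm:2.1}, itself a consequence of the same subadditivity theorem.
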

\begin{proof} It follows from \cite[Theorem 6.7]{QF2} that \textit{now $\QL (X)$ is the set of all $Y \in \Ray (V)$ with $\CS (X, Y) \leq e$}. If $Y_1, Y_2$ are rays in $V$ with $\CS (X, Y_1) \leq e$, $\CS (X, Y_2) \leq e$, then for every $Z \in [X, Y]$ also $\CS (X, Z) \leq e$ due to \cite[Theorem~7.7.a]{QF2} (a special case of the subadditivity theorem \cite[Theorem~3.6.a]{QF2}). Thus $[Y_1, Y_2] \subset \QL (X)$. \end{proof}

We turn to the case that the nontrivial semifield $eR$ is discrete. Then the totally ordered set $eR$ contains a smallest element $c_0 > eR$, and, as known from \cite[Theorem~6.7]{QF2}, a pair of rays $(X, Y)$ is quasilinear, if either $\CS (X, Y) \leq e$, or $\CS (X, Y) = c_0$ and both rays $X, Y$ are $g$-isotropic, i.e., $q (X)$ and $q (Y)$ are subsets of $\mathcal{G}$. In the latter case the pair $(X, Y)$ is called \textbf{exotic quasilinear} \cite[Definition~6.6]{QF2}.

\begin{thm}\label{thm:13.2}
 The QL-stars of all $g$-anisotropic rays in $V$ are convex.
\end{thm}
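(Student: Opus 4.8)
The plan is to reduce Theorem~\ref{thm:13.2} to exactly the situation handled in Theorem~\ref{thm:13.1}, by showing that for a $g$-anisotropic ray $X$ the QL-star is again cut out by the single condition $\CS(X,\cdot)\leq e$, and then to run the subadditivity argument verbatim. First I would recall the characterization of quasilinear pairs in the discrete case from \cite[Theorem~6.7]{QF2}: a pair of rays $(X,Y)$ is quasilinear if and only if either $\CS(X,Y)\leq e$, or $\CS(X,Y)=c_0$ and both $X$ and $Y$ are $g$-isotropic (the exotic quasilinear alternative). Since $c_0$ is by definition the smallest element of $eR$ exceeding $e$, no value of a $\CS$-ratio can lie strictly between $e$ and $c_0$, so these two clauses really do describe all quasilinear pairs.

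Now suppose $X$ is $g$-anisotropic, that is $q(X)\not\subseteq\mathcal{G}$; then $X$ is not $g$-isotropic, so the exotic alternative, which demands that $X$ itself be $g$-isotropic, can never occur for any $Y$. Consequently, for such $X$,
\[ \QL(X) = \{\, Y \in \Ray(V) \ds\vert \CS(X,Y) \leq e \,\}, \]
which is precisely the description of the QL-star exploited in the dense case. With this identification the convexity argument is identical to that of Theorem~\ref{thm:13.1}: given $Y_1,Y_2 \in \QL(X)$, so that $\CS(X,Y_1)\leq e$ and $\CS(X,Y_2)\leq e$, I would take any $Z \in [Y_1,Y_2]$ and apply the subadditivity estimate \cite[Theorem~7.7.a]{QF2} with $X$ playing the role of the fixed ray, obtaining
\[ \CS(X,Z) \leq \CS(X,Y_1) + \CS(X,Y_2) \leq e, \]
since addition in $eR$ is the maximum of the two summands. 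Hence $Z \in \QL(X)$, so $[Y_1,Y_2] \subset \QL(X)$ and $\QL(X)$ is convex.

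The only genuinely delicate point is the first step, namely verifying that $g$-anisotropy of $X$ rules out the exotic quasilinear case and thereby collapses the two-clause criterion of \cite[Theorem~6.7]{QF2} into the single inequality $\CS(X,Y)\leq e$; this is where the hypothesis $e\mathcal{T}=\mathcal{G}$ and the ghost/tangible dichotomy for $q(X)$ enter. Once the star has been trimmed to the region $\{\CS(X,\cdot)\leq e\}$, the subadditivity theorem is insensitive to whether $eR$ is dense or discrete, so the remainder of the proof carries over from Theorem~\ref{thm:13.1} without change.
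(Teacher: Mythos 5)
Your proposal is correct and follows essentially the same route as the paper: the paper's proof likewise observes that $g$-anisotropy of $X$ excludes the exotic quasilinear alternative, so that $\QL(X) = \{\, Y \in \Ray(V) \mid \CS(X,Y) \leq e \,\}$, and then invokes the subadditivity argument of Theorem~\ref{thm:13.1} verbatim. You have merely written out explicitly the steps the paper compresses into its reference back to Theorem~\ref{thm:13.1}.
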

\begin{proof} If $X$ is an $g$-anisotropic ray in $V$, then there does not exist $Y \in \Ray (V)$ such that the pair $(X, Y)$ is exotic quaislinear. Thus
\[ \QL (X) = \{ Y \in \Ray (V) \vert \CS (X, Y) \leq e \}. \]
We conclude as in the proof of Theorem \ref{thm:13.1} that $\QL (X)$ is convex. \end{proof}

If $X$ is $g$-isotropic, it may happen that $\QL (X)$ is not convex.

\begin{examp}
\label{exmp:13.3} Assume that $\veps_1, \veps_2, \veps_3$ are rays in $V$ with $q (\veps_i) = e$, $b (\veps_i, \veps_j) = \gamma \in \mathcal{T}$ with $e \gamma = c_0$. $(1 \leq i < j \leq 3)$. (Note that this situation can be easily realised for $V$ a free module with base $\veps_1, \veps_2, \veps_3$.) Let $X_i : = \ray (\veps_i)$, $Y_i : = \ray (\veps_j + \veps_k)$, where $i, j, k$ is a permutation of $\{ 1, 2, 3 \}$. Then $Y_i \in [X_j, X_k]$. We compute
$$\begin{array}{lll}
   q (\veps_j + \veps_k)  = e + e + \gamma = \gamma, & &
    b (\veps_i, \veps_j + \veps_k) = \gamma + \gamma = c_0, \\[1mm]
    \CS (X_i, Y_i)  = \frac{e \gamma^2}{e \gamma} = c_0, \quad & &
    \CS (X_j, X_k)  = \frac{c_0^2}{e} = c_0.
\end{array}$$
Thus the pairs $(X_i, X_j)$ and $(X_i, X_k)$ are exotic quasilinear, and so $X_j, X_k \subset \QL (X_i)$, while~ $Y_i$ is $g$-anisotropic (i.e., not $g$-isotropic). Thus $Y_i \in \QL (X_i)$, but $Y_i \notin [X_j, X_k]$. We have the following picture with three non-convex QL-stars $\QL (X_1)$, $\QL (X_2)$, and $\QL (X_3)$.

\[\scalebox{0.7}{
\begin{pspicture}(-2,-1)(8,5)
\psline[arrowsize=0.2]{*-*}(7,0)(3,4.5)
\psline[arrowsize=0.2]{*-*}(-1,0)(3,4.5)
\psline[arrowsize=0.2]{*-*}(7,0)(-1,0)
\psline[arrowsize=0.2]{*-*}(7,0)(1.0,2.25)
\psline[arrowsize=0.2]{*-*}(-1,0)(5.0,2.3)
\psline[arrowsize=0.2]{*-*}(3,4.5)(3.0,0.0)
\rput[l](2.8,5){$X_3$}
\rput[l](0.3,2.5){$Y_2$}
\rput[l](5.3,2.5){$Y_1$}
\rput[l](-1.5,-0.4){$X_1$}
\rput[l](2.8,-0.4){$Y_3$}
\rput[l](7.1,-0.4){$X_2$}
\rput[l](2.5,1.0){$Z$}
\end{pspicture}}
\]
The three ray intervals $[X_1, Y_1]$, $[X_2, Y_2]$, and $[X_3, Y_3]$ meet at $Z = \ray (\veps_1 + \veps_2 + \veps_3)$.
\end{examp}
\begin{thm}\label{thm:13.4}
 Assume that $X_1, X_2, X_3$ are rays in $V$ where $X_1$ is  $g$-isotropic, $X_3$ is $g$-anisotropic, and
\[ \CS (X_1, X_2) \leq \CS (X_1, X_3) = c_0. \]
Then $\QL (X_1)$ is not convex, namely there exist $g$-isotropic rays $Y_1, Y_2$ and a $g$-anisotropic ray $Z$ with $Z \in [Y_1, Y_2] \subset [X_1, X_2[$ and $\CS (X_1, Y_1) = \CS (X_1, Y_2) = \CS (X_1, Z) = c_0$, whence $Y_1, Y_2 \in \QL (X_1)$, $Z \not\in \QL (X_1)$.
\end{thm}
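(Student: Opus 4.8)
The plan is to reduce the non-convexity of $\QL(X_1)$ to the construction of a single ``bad'' configuration on an interval, exactly in the spirit of Example \ref{exmp:13.3}, and then to produce that configuration from the given data by a dominant-term analysis. First I would record the description of $\QL(X_1)$ available here: since $eR$ is discrete and $X_1$ is $g$-isotropic, \cite[Theorem 6.7]{QF2} gives
\[ \QL(X_1) = \{ Y \ds\vert \CS(X_1,Y) \le e \} \cup \{ Y \ds\vert Y \text{ $g$-isotropic},\ \CS(X_1,Y) = c_0 \}. \]
Hence, to prove that $\QL(X_1)$ is \emph{not} convex it suffices to exhibit two $g$-isotropic rays $Y_1, Y_2$ and a $g$-anisotropic ray $Z$, all with CS-ratio $c_0$ against $X_1$ and with $Z \in [Y_1,Y_2]$: then $Y_1, Y_2 \in \QL(X_1)$ but $Z \notin \QL(X_1)$, so $[Y_1,Y_2] \not\subset \QL(X_1)$. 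The whole task is therefore the production of such a triple inside $[X_1,X_2[$.

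Second, I would fix tangible representatives $x_1 \in X_1$, $x_2 \in X_2$, $x_3 \in X_3$ and study $q$ and $\CS(X_1,\cdot)$ along the interval. By Proposition \ref{prop:1.3} the rays of $[X_1,X_2]$ are precisely the $\ray(\lambda x_1 + \mu x_2)$, and
\[ q(\lambda x_1 + \mu x_2) = \lambda^2 q(x_1) + \mu^2 q(x_2) + \lambda\mu\, b(x_1,x_2), \qquad b(x_1,\lambda x_1+\mu x_2) = \lambda\, b(x_1,x_1) + \mu\, b(x_1,x_2). \]
The supertropical dominance rule then reads off, as a function of the $\nu$-value of the quotient $\mu/\lambda$, which rays are $g$-isotropic (dominant term of $q$ ghost, or a $\nu$-tie of two dominant terms) and which are $g$-anisotropic (a unique dominant \emph{tangible} term), together with the value of $\CS(X_1,\cdot)$. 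The hypotheses enter twice: the inequality $\CS(X_1,X_2) \le c_0$, together with subadditivity \cite[Theorem~7.7.a]{QF2}, forces $\CS(X_1,W) \le c_0$ for \emph{every} $W \in [X_1,X_2]$, so the ratio never exceeds $c_0$ along the interval; and $X_3$, being $g$-anisotropic with $\CS(X_1,X_3)=c_0$, guarantees that the ``$c_0$-level'' is genuinely attained by a $g$-anisotropic ray, which is what certifies that $b(x_1,x_2)$ is tangible and of the critical magnitude needed for a tangible dominant term to occur on the interval.

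Third, with this in hand I would locate the triple. I would take $Z$ to be a ray on which the unique dominant term of $q$ is tangible and $\CS(X_1,Z)=c_0$ (the level pinned in step two), and take the flanking rays $Y_1,Y_2$ at the two neighbouring loci where two $\nu$-equal terms of $q$ collide, so that $q(Y_\ell)\in\mathcal{G}$ (hence $Y_\ell$ is $g$-isotropic) while the numerator $b(x_1,\cdot)^2$ stays at the same $\nu$-level, keeping $\CS(X_1,Y_\ell)=c_0$. One then verifies $Z \in [Y_1,Y_2] \subset [X_1,X_2[$ from the interval description of Proposition \ref{prop:1.3} and concludes as in the first paragraph.

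The hard part will be the bookkeeping underlying the second and third paragraphs: one must show \emph{simultaneously} that the CS-ratio sits at exactly $c_0$ — neither dropping to $e$ nor rising above $c_0$ — on a whole sub-segment, and that within that sub-segment the isotropy type genuinely switches, with the $g$-anisotropic $Z$ trapped between two $g$-isotropic rays. Because $eR$ is discrete, the window in which a tangible term can dominate has $\nu$-width of exactly one step, so the collisions producing $Y_1$ and $Y_2$ and the tangibility of $Z$ are governed by very tight inequalities; tracking the ghost/tangible status across these collisions — including the ``ghost-layer'' rays with non-tangible coefficients $\lambda,\mu$ that also populate the interval — is the delicate and error-prone step, and is precisely where the positioning supplied by the auxiliary ray $X_3$ has to be exploited.
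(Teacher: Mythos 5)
Your reduction in the first paragraph (via \cite[Theorem 6.7]{QF2}: exhibit $g$-isotropic $Y_1,Y_2$ and a $g$-anisotropic $Z\in[Y_1,Y_2]$, all at CS-level $c_0$ against $X_1$) is correct and agrees with the paper. The gap is in where you try to build this triple. You take the interval $[X_1,X_2[$ in the statement at face value and run the dominant-term analysis on rays $\ray(\lm x_1+\mu x_2)$; but that interval is a typo in the paper (its own proof constructs everything inside $[X_2,X_3[\,$), and read literally the claim is false, so no bookkeeping along $[X_1,X_2]$ can succeed. Concretely: take $V$ free with base $\veps_1,\veps_2,\veps_3$, $q(\veps_1)=q(\veps_2)=e$, $q(\veps_3)=\al_3\in\tT$ with $e\al_3=c_0$, $b(\veps_1,\veps_2)=b(\veps_2,\veps_3)=0$, $b(\veps_1,\veps_3)=\gm\in\tT$ with $e\gm=c_0$. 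All hypotheses hold ($X_1$ is $g$-isotropic, $X_3$ is $g$-anisotropic, $\CS(X_1,X_2)=0\le\CS(X_1,X_3)=c_0$), yet $q(\lm\veps_1+\mu\veps_2)=e(\lm^2+\mu^2)$ is ghost for all $\lm,\mu$, so \emph{every} ray of $[X_1,X_2]$ is $g$-isotropic and the required $Z$ does not exist there. This also refutes the pivotal sentence of your second paragraph: the fact that $X_3$ is $g$-anisotropic with $\CS(X_1,X_3)=c_0$ certifies nothing about $b(x_1,x_2)$ (in the example above $b(x_1,x_2)=0$), so there is no mechanism forcing a tangible dominant term of $q$ to appear on $[X_1,X_2]$.

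The actual role of $X_3$ is different: it is the \emph{direction} along which one must travel. The paper works in $[X_2,X_3[\,$, whose rays are $\ray(\veps_2+\lm\veps_3)$. Once $e\lm$ is large enough, the term $\lm^2\al_3$ dominates $q(\veps_2+\lm\veps_3)=\al_2+\lm\al_{23}+\lm^2\al_3$, and since $\al_3=q(\veps_3)\in\tT$ the isotropy type of $\ray(\veps_2+\lm\veps_3)$ is exactly the tangible/ghost type of $\lm$; simultaneously
\[ \CS(\veps_1,\veps_2+\lm\veps_3)\;=\;\CS(\veps_1,\veps_2)\cdot\frac{\al_2}{\lm^2\al_3}\;+\;\CS(\veps_1,\veps_3), \]
where the hypothesis $\CS(X_1,X_2)\le c_0$ keeps the first summand from exceeding $c_0$ (for $\lm^2\ge_\nu \al_2/\al_3$) and $\CS(X_1,X_3)=c_0$ pins the sum at exactly $c_0$. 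Choosing $\lm_1>_\nu\rho>_\nu\lm_2\ge_\nu\lm_0$ with $\lm_1,\lm_2\in\tG$ and $\rho\in\tT$ (possible since $e\tT=\tG$) then gives $Y_i=\ray(\veps_2+\lm_i\veps_3)$ exotic quasilinear with $X_1$, and $Z=\ray(\veps_2+\rho\veps_3)\in[Y_1,Y_2]$ excessive with $X_1$. So the isotropy switch is not something you detect on $[X_1,X_2]$ by a tightness argument; it is manufactured by alternating tangible and ghost coefficients in front of $\veps_3$, which is precisely why the $g$-anisotropic ray $X_3$ must enter the construction itself.
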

\begin{proof} We choose vectors $\veps_i \in V$ for which $X_i = \ray (\veps_i)$,
$$\begin{array}{ll}
 \al_1 : = q (\veps_1) \in \tG, & \al_2 : = q (\veps_2) \in R \setminus \{ 0 \},\\[1mm]
 \al_3 : = q (\veps_3) \in \tT, & \al_{23} : = b (\veps_2, \veps_3) \in R.
\end{array}
$$
The rays $X$ in $[X_2, X_3[$ have a presentation $X = \ray (\veps_2 + \lm \veps_3)$ with $\lm$ running through $R$. If~ $e \lm$ is big enough, then

$$ \begin{array}{ll}
  q (\veps_2 + \lm \veps_3) & = \al_2 + \lm \al_{23} + \lm^2 \al_3 = \lm^2 \al_3,\\[1mm]
\CS (\veps_1, \veps_2 + \lm \veps_3) & = \frac{\al_{12}^2 + \lm^2 \al_{13}^2}{\al_1 \lm^2 \al_3} = \frac{\al_{12}^2}{\lm^2 \al_1 \al_2} \frac{\al_2}{\al_3} + \frac{\al_{13}^2}{\al_1 \al_3}\\ &  = \CS (\veps_1, \veps_3) = c_0. \end{array}$$
More precisely, this holds if $\lm \geq_{\nu} \lm_0$ for some $\lm_0$ with
\[ \lm_0^2 \geq_{\nu} \max \left( \frac{\al_2}{\al_3}, \frac{\al_3^2}{\al_{23}^2} \right). \]
Now choose scalars $\lm_1 >_{\nu} \rho >_{\nu} \lm_2 \geq_{\nu} \lm_0$ where  $\lm_1, \lm_2 \in \tG$, $\rho \in \tT$, and take $Y_i = \ray (\veps_2 + \lm_i \veps_3)$, $i = 1, 2$, $Z = \ray (\veps_2 + \rho \veps_3)$. \end{proof}

\section{Enlargements of QL-paths, and bridges to find short QL-paths}\label{sec:14}

In this section we only assume, that the pair $(R, V)$ is ray-admissible (cf. \eqref{eq:3.1}, \eqref{eq:3.2}), and that $(q, b)$ is a quadratic pair on $V$ with $q$ anisotropic.

\begin{defn}\label{def:14.1} A \textbf{QL-path} in $\Ray(V)$ is a sequence $(X_0, X_1, \dots, X_n)$ of rays in $V$ such that every pair $(X_i, X_{i+1})$, $0 \leq i < n$, is quasilinear (equivalently, that the closed interval $[X_i, X_{i+1}]$ is quasilinear). We say that the QL-path $(X_0, \dots, X_n)$ has length $n$ and runs from $X_0$ to $X_n$. \end{defn}

This definition has a graph theoretic flavor.

\begin{defn}\label{def:14.2} We define a (simple, undirected) graph $\Gamma_{\QL} (V, q)$ to be the graph whose vertices are the rays in $V$, and its edges are the quasilinear pairs $(X, Y)$ of rays. For formal reasons we admit loops in $\Gamma_{\QL} (V, q)$. For every $X \in \Ray (V)$ we have a loop $(X, X)$ due to the fact that $\CS (X, X) \leq e$, cf. \cite[Eq. (1.9)]{QF1}.\footnote{
If $(q, b)$ is balanced, then $\CS (X, X) = e$ for every $X$ \cite[Eq. (1.10)]{QF1}.} We call $\Gamma_{\QL} (V, q)$ the \textbf{quasilinear graph} of ~$(V, q)$.
\end{defn}
\noi  Note that this graph does not depend on the choice of the companion $b$ of $q$, since the sets $\QL (X)$ are independent of the choice of $b$.

We define ``\textit{enlargements}'' of a given QL-path $(X_0, \dots, X_n)$ and use  them to develop procedures for replacing  $(X_0, \dots, X_n)$ by a path of shorter length from $X_0$ to $X_n$ under suitable conditions.

\begin{notation}\label{notat:14.3}

 We refine the graph $\Gamma_{\QL} (V, q)$ by replacing an edge $X \,\raisebox{0.15cm}{\hbox to 0.5cm{\hrulefill}}\, Y$ by an arrow $X \to Y$ in the case that $X \preceq_{\QL} Y$, i.e., $\QL (X) \subset \QL (Y)$, and consequently replace $X \,\raisebox{0.15cm}{\hbox to 0.5cm{\hrulefill}}\, Y$ by an arrow with two heads $X \leftrightarrow Y$, if $X \sim_{\QL} Y$, i.e., $\QL (X) = \QL (Y)$.
 But most often we then abusively identify  $X= Y$, since in all matters below a vertex $X$ can be replaced by a QL-equivalent ray.
 We call the new diagram the \textbf{decorated quasilinear graph} $\tlGm_{\QL} (V, q)$ of $(V, q)$. \end{notation}

\begin{defn}\label{def:14.4} Given two QL-paths $(X_0,X_1, \dots, X_n)$ and $(Y_0, Y_1,\dots, Y_n)$ of same length~ $n$, we say that $(Y_0, Y_1, \dots, Y_n)$ is an \textbf{enlargement} of $(X_0, X_1,\dots, X_n)$, if $\QL (Y_i) \supset \QL (X_i)$ for $0 \leq i \leq n$. Then we have the following subdiagram of $\tlGm_{\QL} (V, q)$:
\begin{equation}\label{eq:14.1}
\setlength{\arraycolsep}{0.2cm}
\begin{array}{ccccccccc}
\Rnode{a}{Y_0} & \raisebox{1mm}[4mm][2mm]{\Rnode{b}{\hbox to 0.6cm{\hrulefill}}} & \Rnode{c}{Y_1} & \raisebox{1mm}[4mm][2mm]{\Rnode{d}{\hbox to 0.6cm{\hrulefill}}} & \Rnode{e}{Y_2} &
\Rnode{g}{\raisebox{1mm}[4mm][2mm]{\hbox to 0.6cm{\hrulefill} \dots \hbox to 0.6cm{\hrulefill}}} & \Rnode{i}{Y_n}
\\[1.1cm]
\Rnode{j}{X_0} & \raisebox{1mm}[4mm][2mm]{\Rnode{k}{\hbox to 0.6cm{\hrulefill}}} & \Rnode{l}{X_1} & \raisebox{1mm}[4mm][2mm]{\Rnode{m}{\hbox to 0.6cm{\hrulefill}}} & \Rnode{n}{X_2} &
\Rnode{g}{\raisebox{1mm}[4mm][2mm]{\hbox to 0.6cm{\hrulefill} \dots \hbox to 0.6cm{\hrulefill}}} & \Rnode{r}{X_n}
\end{array}
\psset{nodesep=5pt,arrows=->} \everypsbox{\scriptstyle}
\ncLine{a}{j} \ncLine{c}{l} \ncLine{e}{n}
\ncLine{i}{r}
\end{equation}
\end{defn}
Thus,  by using the mother set $\{ Y_i, Y_{i+1} \}$, we have enlarged the quasilinear interval $[X_i, X_{i+1}]$ to the convex hull of $\{ X_i, X_{i+1}, Y_i, Y_{i+1} \}$. Note that  $X_i = Y_i$, if the associated disjoint mother set is $\{ Y_{i+1} \}$, and that $X_{i+1} = Y_{i+1}$ if this set is $\{ Y_i \}$.

In the diagram \eqref{eq:14.1} we can always enrich a square
$$\setlength{\arraycolsep}{0.1cm}
\begin{array}{ccc}
\Rnode{a}{Y_i} & \raisebox{1mm}[4mm][2mm]{\Rnode{b}{\hbox to 0.6cm{\hrulefill}}} & \Rnode{c}{Y_{i+1}} \\[1cm]
\Rnode{d}{X_i} & \raisebox{1mm}[4mm][2mm]{\Rnode{e}{\hbox to 0.6cm{\hrulefill}}} & \Rnode{f}{X_{i+1}} \\[0.11cm]
\end{array}
\psset{nodesep=5pt,arrows=->} \everypsbox{\scriptstyle}
\ncLine{a}{d} \ncLine{c}{f}
$$
to the subdiagram
\begin{equation}\label{eq:14.2}
\setlength{\arraycolsep}{0.1cm}
\begin{array}{ccc}
\Rnode{a}{Y_i} & \raisebox{1mm}[4mm][2mm]{\Rnode{b}{\hbox to 0.6cm{\hrulefill}}} & \Rnode{c}{Y_{i+1}} \\[1cm]
\Rnode{d}{X_i} & \raisebox{1mm}[4mm][2mm]{\Rnode{e}{\hbox to 0.6cm{\hrulefill}}} & \Rnode{f}{X_{i+1}} \\[0.1cm]
\end{array}
\psset{nodesep=5pt,arrows=->} \everypsbox{\scriptstyle}
\ncLine{a}{d} \ncLine{c}{f}
\psset{nodesep=5pt,arrows=-} \everypsbox{\scriptstyle}
\ncline{a}{f} \ncline{c}{d}
\end{equation}
of $\tlGm_{\QL} (V, q)$.

In other words, $X_i \in \QL (Y_{i+1})$ and $X_{i+1} \in \QL (Y_i)$. Moreover, it may happen, say, if $0 < i < n$, that there are indices $j < i-1$ and $k > i+1$ with $X_j \in \QL (Y_i)$, $X_k \in \QL (Y_i)$. Then we have the following subdiagram of $\tlGm_{\QL} (V, q)$ with $Y : = Y_i$

\begin{equation}\label{eq:14.3}
\setlength{\arraycolsep}{0.1cm}
\begin{array}{ccccccc}
               &      &      & \Rnode{a}{Y} &        &        &       \\[1cm]
\Rnode{b}{X_0} & \Rnode{c}{\raisebox{1mm}[4mm][2mm]{\hbox to 0.6cm{\hrulefill} \dots \hbox to 0.6cm{\hrulefill}}} & \Rnode{d}{X_j} & \qquad \Rnode{e}{X_i} \qquad & \Rnode{f}{X_k} &
\Rnode{g}{\raisebox{1mm}[4mm][2mm]{\hbox to 0.6cm{\hrulefill} \dots \hbox to 0.6cm{\hrulefill}}} & \Rnode{h}{X_n},  \\[0.1cm]
\end{array}
\psset{nodesep=5pt,arrows=->} \everypsbox{\scriptstyle}
\ncLine{a}{e}
\psset{nodesep=5pt,arrows=-} \everypsbox{\scriptstyle}
\ncline{a}{d} \ncline{a}{f}
\end{equation}

\noi
which gives  a QL-path $(X_0, \dots, X_j, Y, X_k, \dots X_n)$ from $X_0$ to $X_n$ of shorter length $j + 1 + n-k = n - (k-j-1)$. Also there may exist two different rays $Y', Y''$ with $\QL (Y') \supset \QL (X_i)$, $\QL (Y'') \supset \QL (X_i)$, so that we have an index $r < j$ and an index $s > k$ with $X_r \in \QL (Y')$, $X_s \in \QL (Y'')$. Then we have a subdiagram

\begin{equation}\label{eq:14.4}
\setlength{\arraycolsep}{0.1cm}
\begin{array}{ccccccccc}
        &    &    & \Rnode{a}{Y'} & \raisebox{1mm}[4mm][2mm]{\Rnode{b}{\hbox to 0.6cm{\hrulefill}}} & \Rnode{c}{Y''}  &   &   &   \\[1cm]
\Rnode{d}{X_0} & \Rnode{e}{\raisebox{1mm}[4mm][2mm]{\hbox to 0.6cm{\hrulefill} \dots \hbox to 0.6cm{\hrulefill}}} & \Rnode{f}{X_r} & \Rnode{g}{} & \Rnode{h}{X_i} & \Rnode{i}{} & \Rnode{j}{X_s} &
\Rnode{k}{\raisebox{1mm}[4mm][2mm]{\hbox to 0.6cm{\hrulefill} \dots \hbox to 0.6cm{\hrulefill}}} & \Rnode{l}{X_n}, \\[0.1cm]
\end{array}
\psset{nodesep=5pt,arrows=->} \everypsbox{\scriptstyle}
\ncLine{a}{h} \ncLine{c}{h}
\psset{nodesep=5pt,arrows=-} \everypsbox{\scriptstyle}
\ncline{a}{f} \ncline{c}{j}
\end{equation}

\noi
of $\tlGm_{\QL} (q)$ which gives a path from $X_0$ to $X_n$ of even smaller length $r + 2 + (n-s) = n - (s-r-2). < n- (k-j-1)$. In more imaginative terms, we have built ''bridges'' in \eqref{eq:14.3} and \eqref{eq:14.4} to span the subpaths $(X_j, X_{j+1}, \dots, X_k)$ and $(X_r, \dots, X_s)$ respectively by use of one or two rays in $\sat_{\QL} (X_i)$ as ``pillars''.

We are ready for a formal definition of a bridge. Assume that $(X_0, X_1, \dots, X_n)$ is any QL-path in $\Ray (V)$.

\begin{defn}\label{def:14.5} A \textbf{bridge over} $(X_0, X_1, \dots, X_n)$ (or \textbf{spanning} $(X_0, X_1, \dots, X_n)$) is a QL-path $(X_0, Y_1, \dots, Y_m, X_n)$ together with a sequence $0 \leq c (1) < c (2) < \dots < c (m) \leq n$ such that
\[ Y_r \in \sat_{\QL} (X_{c(r)}) \]
for $1 \leq r \leq m$, and furthermore $c(2) \geq 2$ in the case $c(1) = 0$, and $c (m-1) \leq n-2$, in the case $c(m) = n$. \end{defn}
Note that we do not exclude the possibility that $Y_r = X_{c(r)}$ for some indices $r$.

\begin{comm}\label{comm:14.6} In the case $0 < c(1), c(m) < n$, a bridge over $(X_0,X_1, \dots, X_n)$ is given by a diagram
\begin{equation}\label{eq:14.5}
\setlength{\arraycolsep}{0.05cm}
\begin{array}{ccccccccc}
       &    &  \Rnode{a}{Y_1} &  \Rnode{b}{\raisebox{1mm}[4mm][2mm]{\hbox to 1.8cm{\hrulefill}}}
        & \Rnode{c}{Y_2} & \Rnode{d}{\raisebox{1mm}[4mm][2mm]{\hbox to 0.6cm{\hrulefill} \dots \hbox to 0.6cm{\hrulefill}}}  & \Rnode{e}{Y_m}  &   &   \\[1cm]
\Rnode{f}{X_0} & 
\Rnode{g}{\raisebox{1mm}[4mm][2mm]{\hbox to 0.6cm{\hrulefill} \dots \hbox to 0.6cm{\hrulefill}}}
& \Rnode{h}{X_{c(1)}} & 
\Rnode{i}{\raisebox{1mm}[4mm][2mm]{\hbox to 0.6cm{\hrulefill} \dots \hbox to 0.6cm{\hrulefill}}}
& \Rnode{j}{X_{c(2)}} & \Rnode{k}{\raisebox{1mm}[4mm][2mm]{\hbox to 0.6cm{\hrulefill} \dots \hbox to 0.6cm{\hrulefill}}} & \Rnode{l}{X_{c(m)}} &
\Rnode{m}{\raisebox{1mm}[4mm][2mm]{\hbox to 0.6cm{\hrulefill} \dots \hbox to 0.6cm{\hrulefill}}} & \Rnode{n}{X_n}, \\[0.1cm]
\end{array}
\psset{nodesep=5pt,arrows=->} \everypsbox{\scriptstyle}
\ncLine{a}{h} \ncLine{c}{j} \ncline{e}{l}
\psset{nodesep=5pt,arrows=-} \everypsbox{\scriptstyle}
\ncline{a}{f} \ncline{e}{n}
\end{equation}
in $\tlGm_{\QL} (V, q)$, while, if say $c(1) = 0$, $c(m) < n$, we have a diagram

\begin{equation}\label{eq:14.6}
\setlength{\arraycolsep}{0.05cm}
\begin{array}{ccccccccc}
\Rnode{a}{Y_1} & \raisebox{1mm}[4mm][2mm]{\Rnode{b}{\hbox to 1.8cm{\hrulefill}}} & \Rnode{c}{Y_2} & \raisebox{1mm}[4mm][2mm]{\Rnode{d}{\hbox to 1.8cm{\hrulefill}}} & \Rnode{e}{Y_3} & \Rnode{f}{\raisebox{1mm}[4mm][2mm]{\hbox to 0.6cm{\hrulefill} \dots \hbox to 0.6cm{\hrulefill}}} & \Rnode{g}{Y_m}  &   &   \\[1cm]
\Rnode{h}{X_0} & \Rnode{i}{\raisebox{1mm}[4mm][2mm]{\hbox to 0.6cm{\hrulefill} \dots \hbox to 0.6cm{\hrulefill}}} & \Rnode{j}{X_{c(2)}} & \Rnode{k}{\raisebox{1mm}[4mm][2mm]{\hbox to 0.6cm{\hrulefill} \dots \hbox to 0.6cm{\hrulefill}}} & \Rnode{l}{X_{c(3)}} & \Rnode{m}{\raisebox{1mm}[4mm][2mm]{\hbox to 0.6cm{\hrulefill} \dots \hbox to 0.6cm{\hrulefill}}} & \Rnode{n}{X_{c(m)}} &
\Rnode{o}{\raisebox{1mm}[4mm][2mm]{\hbox to 0.6cm{\hrulefill} \dots \hbox to 0.6cm{\hrulefill}}} & \Rnode{p}{X_n}, \\[0.1cm]
\end{array}
\psset{nodesep=5pt,arrows=->} \everypsbox{\scriptstyle}
\ncLine{c}{j} \ncLine{e}{l} \ncline{g}{n}
\psset{nodesep=5pt,arrows=-} \everypsbox{\scriptstyle}
\ncline{a}{h} \ncline{g}{p}
\ncarc[arcangle=-30]{->}{a}{h}
\end{equation}
If we would allow here $c(2) = 1$, we could omit the ray $Y_1$ in the QL-path $(X_0, Y_1, \dots, Y_m, X_n)$ (cf. \eqref{eq:14.2}) and would obtain for free the shorter bridge
\begin{equation}\label{eq:14.7}
\setlength{\arraycolsep}{0.05cm}
\begin{array}{ccccccccc}
       &    &  \Rnode{a}{Y_2} & \Rnode{b}{\raisebox{1mm}[4mm][2mm]{\hbox to 1.8cm{\hrulefill}}} & \Rnode{c}{Y_3} & \Rnode{d}{\raisebox{1mm}[4mm][2mm]{\hbox to 0.6cm{\hrulefill} \dots \hbox to 0.6cm{\hrulefill}}}  & \Rnode{e}{Y_m}  &   &   \\[1cm]
\Rnode{f}{X_0} & \Rnode{b}{\raisebox{1mm}[4mm][2mm]{\hbox to 1.8cm{\hrulefill}}}
& \Rnode{h}{X_1} & \Rnode{i}{\raisebox{1mm}[4mm][2mm]{\hbox to 0.6cm{\hrulefill} \dots \hbox to 0.6cm{\hrulefill}}} & \Rnode{j}{X_{c(3)}} & \Rnode{k}{\raisebox{1mm}[4mm][2mm]{\hbox to 0.6cm{\hrulefill} \dots \hbox to 0.6cm{\hrulefill}}} & \Rnode{l}{X_{c(m)}} &
\Rnode{m}{\raisebox{1mm}[4mm][2mm]{\hbox to 0.6cm{\hrulefill} \dots \hbox to 0.6cm{\hrulefill}}} & \Rnode{n}{X_n}. \\[0.1cm]
\end{array}
\psset{nodesep=5pt,arrows=->} \everypsbox{\scriptstyle}
\ncLine{a}{h} \ncLine{c}{j} \ncline{e}{l}
\psset{nodesep=5pt,arrows=-} \everypsbox{\scriptstyle}
\ncline{a}{f} \ncline{e}{n}
\end{equation}
We want to discard this annoying triviality.
\end{comm}

\begin{comm}\label{comm:14.7}  Our formal definition of bridges does not include the ``bridge'' \eqref{eq:14.4} with a doubled pillar. But it includes an equivalent object. Assume that $X_{i-1} = X_i$ in a given path $(X_0, X_1, \dots, X_n)$, i.e., the QL-path $(X_0, \dots, X_n)$ which contains the loop $(X_i, X_i)$ of $\Gamma_{\QL} (V, q)$. Then the interval $[X_{i-1}, X_i]$ shrinks to the one-point set $\{ X_i \}$. The diagram \eqref{eq:14.4} shows in essence the same objects as the bridge
\begin{equation}\label{eq:14.8}
\setlength{\arraycolsep}{0.1cm}
\begin{array}{ccccccccc}
        &    &    & \Rnode{a}{Y'} & \raisebox{1mm}[4mm][2mm]{\Rnode{b}{\hbox to 0.8cm{\hrulefill}}} & \Rnode{c}{Y}  &   &   &   \\[1cm]
\Rnode{d}{X_0 \; \raisebox{1mm}[4mm][2mm]{\hbox to 0.6cm{\hrulefill} \dots \hbox to 0.6cm{\hrulefill}}} & \Rnode{e}{X_r} & \Rnode{f}{\qquad} & \Rnode{g}{X_{i-1}} & \raisebox{1mm}[4mm][2mm]{\Rnode{h}{\hbox to 0.6cm{\hrulefill}}} & \Rnode{i}{X_i} & \Rnode{j}{\qquad} &
\Rnode{k}{X_s} & \Rnode{l}{\raisebox{1mm}[4mm][2mm]{\hbox to 0.6cm{\hrulefill} \dots \hbox to 0.6cm{\hrulefill}} \; X_n}.  \\[0.1cm]
\end{array}
\psset{nodesep=5pt,arrows=->} \everypsbox{\scriptstyle}
\ncLine{a}{g} \ncLine{c}{i}
\psset{nodesep=5pt,arrows=-} \everypsbox{\scriptstyle}
\ncline{a}{e} \ncline{c}{k}
\end{equation}\end{comm}

We note an important fact, immediately obtained  from the definition 14.5 of bridges.

\begin{prop}\label{prop:14.8}
 Assume that $(X_0, Y_1, \dots, Y_m, X_n)$ is a bridge over $(X_0, X_1, \dots, X_n)$. Then any bridge $(X_0, Z_1, \dots, Z_q, X_n)$ spanning the QL-path $(X_0, Y_1, \dots, Y_m, X_n)$ is again a bridge over $(X_0, X_1, \dots, X_n)$. \end{prop}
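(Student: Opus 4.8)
The plan is to reduce everything to a single bookkeeping statement about index sequences, since the path $(X_0, Z_1, \dots, Z_q, X_n)$ is \emph{already} a QL-path from $X_0$ to $X_n$ (being a bridge over $(X_0, Y_1, \dots, Y_m, X_n)$). Thus the only thing to manufacture is an admissible index sequence $0 \le e(1) < \dots < e(q) \le n$ with $Z_s \in \sat_{\QL}(X_{e(s)})$ obeying the two boundary constraints of Definition~\ref{def:14.5}. First I would name the data: let $0 \le c(1) < \dots < c(m) \le n$ be the index sequence of the first bridge, with $Y_r \in \sat_{\QL}(X_{c(r)})$, and let $0 \le d(1) < \dots < d(q) \le m+1$ be that of the second, with $Z_s \in \sat_{\QL}(Y_{d(s)})$, after renumbering the spanned path as $(Y_0, Y_1, \dots, Y_{m+1})$ with $Y_0 := X_0$ and $Y_{m+1} := X_n$. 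I would then extend $c$ to $\tilde{c}$ on $\{0, \dots, m+1\}$ by $\tilde{c}(0) := 0$ and $\tilde{c}(m+1) := n$, so that reflexivity of $\preceq_{\QL}$ (cf. \eqref{eq:3.3}) gives $Y_j \in \sat_{\QL}(X_{\tilde{c}(j)})$ uniformly for $0 \le j \le m+1$.

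The conceptual core is then one line: set $e(s) := \tilde{c}(d(s))$. Translating the memberships $Z_s \in \sat_{\QL}(Y_{d(s)})$ and $Y_{d(s)} \in \sat_{\QL}(X_{e(s)})$ into the quasiordering via \eqref{eq:4.3} yields $Y_{d(s)} \preceq_{\QL} Z_s$ and $X_{e(s)} \preceq_{\QL} Y_{d(s)}$, and transitivity of $\preceq_{\QL}$ delivers $X_{e(s)} \preceq_{\QL} Z_s$, i.e. $Z_s \in \sat_{\QL}(X_{e(s)})$ again by \eqref{eq:4.3}. This step is immediate and uses nothing beyond that $\preceq_{\QL}$ is a quasiordering together with the characterization \eqref{eq:4.3} of $\sat_{\QL}$.

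The real work — and the step I expect to be the main obstacle — is verifying that $e$ is strictly increasing and respects the two endpoint constraints. The point is that $\tilde{c}$ is non-decreasing but fails to be strictly increasing exactly at the two junctions where it was glued: $\tilde{c}(0) = \tilde{c}(1) = 0$ precisely when $c(1) = 0$, and $\tilde{c}(m) = \tilde{c}(m+1) = n$ precisely when $c(m) = n$. Consequently $e = \tilde{c} \circ d$ can collapse only if $d$ takes the consecutive values $0, 1$ or the consecutive values $m, m+1$; and these are exactly the configurations outlawed by the boundary constraints of the \emph{second} bridge, namely $d(1) = 0$ forces $d(2) \ge 2$, and $d(q) = m+1$ forces $d(q-1) \le m-1$. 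So strict monotonicity is forced. For the boundary constraints of $e$ itself I would combine the two bridges' constraints: for instance if $e(1) = 0$ then either $d(1) = 0$, or $d(1) = 1$ with $c(1) = 0$, and in both cases the first bridge's constraint (or plain monotonicity of $c$, which already gives $c(2) > c(1) \ge 0$) yields $c(2) \ge 2$, whence $e(2) \ge c(2) \ge 2$; the condition at the far end follows symmetrically from the first bridge's clause $c(m) = n \Rightarrow c(m-1) \le n-2$. Assembling these observations shows that $(X_0, Z_1, \dots, Z_q, X_n)$, equipped with $e(1) < \dots < e(q)$, is a bridge over $(X_0, \dots, X_n)$. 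The only genuine care needed is this endpoint bookkeeping, where the boundary clauses of \emph{both} bridges must be invoked in tandem; everything else is formal.
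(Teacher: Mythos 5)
Your proof is correct and matches the paper's treatment: the paper states this proposition with no proof at all, presenting it as ``immediately obtained from the definition of bridges,'' and your argument --- composing the two index sequences via transitivity of $\preceq_{\QL}$ to obtain $Z_s \in \sat_{\QL}(X_{e(s)})$, and then observing that the boundary clauses of the two given bridges are exactly what rules out the collapses of $e = \tilde{c} \circ d$ and forces its own boundary clauses --- is precisely the unpacking of that claim. The one cosmetic slip is the parenthetical suggestion that monotonicity of $c$ alone yields $c(2) \geq 2$ (by itself it yields only $c(2) \geq 1$); what is actually needed, and what your case analysis implicitly supplies, is the dichotomy $c(1) = 0$ (invoke the first bridge's clause) versus $c(1) \geq 1$ (then monotonicity suffices).
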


We describe a procedure to shorten a given QL-path $(X_0, X_1, \dots, X_n)$ without yet using enlargements of $(X_0,X_1, \dots, X_n)$.\footnote{An analogous procedure can be performed in any simple graph.}

\begin{defn}\label{def:14.9}  A \textbf{basic reduction} of the QL-path $(X_0, \dots, X_n)$ arises as follows. Pick some $i \in [0, n]$.
\begin{enumerate}\ealph
  \item  If there exist indices $k > i + 1$ such that the pair $(X_i, X_k)$ is quasilinear, let $s$ denote the maximal one of these and replace $(X_0, \dots, X_n)$ by $(X_0, \dots, X_i, X_s, \dots, X_n)$ omitting all rays $X_p$ with $i < p < s$.
  \item If there exist indices $j < i-1$ such that $(X_j, X_i)$ is quasilinear, let $r$ denote the minimal of these, and replace $(X_0, \dots, X_n)$ by $(X_0, \dots, X_r, X_i, \dots, X_n)$, omitting all rays $X_p$ with $r < p < i$.

\end{enumerate}
More precisely we call a QL-path $(X_0, \dots, X_i, X_s, \dots, X_n)$ as in (a) an \textbf{f-basic reduction (= forward basic reduction)} of $(X_0, \dots, X_n)$ and a QL-path $(X_0, \dots, X_r, X_i, \dots, X_n)$ as in (b) a \textbf{b-basic reduction (= backward basic reduction)} of $(X_0, \dots, X_n)$. \end{defn}

\begin{defn}\label{def:14.10}  We say that a QL-path $(X_0, X_1, \dots, X_n)$ is \textbf{direct}, if $X_0 \ne X_n$ and there do not exist indices $i, j \in [0, n]$ with $\vert i - j \vert \geq 2$, such that the pair $(X_i, X_j)$ is quasilinear. (Note that this implies $X_i \ne X_{i+1}$ for $0 \leq i < n$.) \end{defn}

The following is obvious from Definitions \ref{def:14.9} and \ref{def:14.10}.

\begin{prop}\label{prop:14.11} $ $
\begin{enumerate} \ealph
  \item
 A QL-path $(X_0, \dots, X_n)$ with $X_0 \ne X_n$ is direct iff no forward basic reduction of $(X_0, \dots, X_n)$ exists, iff no backward basic reduction of $(X_0, \dots, X_n)$ exists.

  \item Any QL-path $(X_0, \dots, X_n)$ with $X_0 \ne X_n$ can be reduced to a direct QL-path by finitely many (at most $n-1$) such reductions.
\end{enumerate}
 \end{prop}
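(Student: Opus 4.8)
The plan is to treat the two parts separately: part (a) is a direct translation of the definitions, while part (b) rests on a strict length-decrease argument that guarantees termination and supplies the bound.

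For part (a), I would simply unwind Definition~\ref{def:14.9} and Definition~\ref{def:14.10}. By Definition~\ref{def:14.9}.(a), a forward basic reduction can be carried out at some index $i$ exactly when there is an index $k > i+1$ with the pair $(X_i, X_k)$ quasilinear. Hence ``no forward basic reduction exists'' says precisely that for no pair of indices $i < k$ with $k \geq i+2$ is $(X_i, X_k)$ quasilinear. Since quasilinearity of a pair of rays is symmetric, this is exactly the condition that there are no indices $i,j$ with $|i-j| \geq 2$ and $(X_i, X_j)$ quasilinear, which is the definition of directness (the hypothesis $X_0 \ne X_n$ being assumed throughout). Applying the same reading to Definition~\ref{def:14.9}.(b) (now with $j < i-1$, i.e. $i - j \geq 2$) shows that the existence of a backward basic reduction is likewise equivalent to the failure of directness. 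Thus all three conditions coincide.

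For part (b), the key observations are that every basic reduction again produces a QL-path, keeps the endpoints $X_0$ and $X_n$ fixed, and strictly shortens the path. First I would verify that a forward reduction at $i$, replacing $(X_0,\dots,X_n)$ by $(X_0,\dots,X_i,X_s,\dots,X_n)$, is a QL-path: each consecutive pair in the new sequence is either an old edge or the pair $(X_i,X_s)$, which is quasilinear by the choice of $s$; the symmetric check handles backward reductions. Because $X_0$ and $X_n$ are never deleted, the inequality $X_0 \ne X_n$ persists, and since $s > i+1$ (resp. $r < i-1$) at least one interior ray is removed, so the length drops by at least $1$ at each step. I would then run the reduction process: whenever the current path is not direct, part (a) supplies an available (forward, say) basic reduction, which I perform. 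As the length is a positive integer, at least $1$ because $X_0 \ne X_n$, that strictly decreases with each reduction, the process terminates after finitely many steps, and by part (a) it terminates precisely when the path has become direct. Starting from length $n$ and ending at a direct path of length $\geq 1$, with each reduction costing at least one unit of length, the number of reductions is at most $n-1$, which is the asserted bound.

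The proof is essentially bookkeeping, and the only point requiring genuine care is confirming that a basic reduction preserves both the QL-path property and the inequality $X_0 \ne X_n$, and in particular that the endpoints are never consumed in the boundary cases $i = 0$ or $s = n$ (and their backward analogues). Once this is settled, both the termination and the count of at most $n-1$ steps follow immediately from the strict length decrease, so I do not expect any substantive obstacle beyond this verification.
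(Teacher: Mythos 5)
Your proof is correct: the equivalences in (a) are indeed just the symmetry of quasilinearity applied to Definitions~\ref{def:14.9} and \ref{def:14.10}, and the termination argument in (b) (endpoints preserved, length strictly decreasing, hence at most $n-1$ steps down to a path of length at least $1$) is exactly the intended reasoning. The paper gives no proof at all here, declaring the proposition ``obvious from Definitions \ref{def:14.9} and \ref{def:14.10}'', so your write-up is simply the careful version of that same argument.
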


\begin{rem}\label{rem:14.12}
 It may  happen that $(X_0, Y_1, \dots, X_n)$ can be reduced in this way to different direct QL-paths. Assume for example that $n = 6$ and that $(X_0, X_2)$ and  $(X_1, X_5)$ are the only quasilinear pairs $(X_i, X_j)$ with $0 \leq i$, $j \leq n$ and $j-i \geq 2$. Then omitting the ray $X_1$ in $(X_0, \dots, X_6)$ gives us a direct path of length 5, while omitting $X_2, X_3, X_4$ gives us a direct path of length 3.

\[\scalebox{0.7}{
\begin{pspicture}(-2,0)(8,5)

\psline[arrowsize=0.2]{*-*}(4.0,4.5)(1.8,4.6)
\psline[arrowsize=0.2]{*-*}(0.5,3.5)(1.8,4.6)
\psline[arrowsize=0.2]{*-*}(0.5,3.5)(-0.4,2.0)
\psline[arrowsize=0.2]{*-*}(-0.4,2.0)(1.2,1.0)
\psline[arrowsize=0.2]{*-*}(4.0,4.5)(5.3,3.2)
\psline[arrowsize=0.2]{*-*}(5.3,3.2)(5.3,1.4)
\psline[arrowsize=0.2]{*-*}(5.3,3.2)(-0.4,2.0)
\psline[arrowsize=0.2]{*-*}(1.2,1.0)(0.5,3.5)
\rput[l](1.5,5){$X_3$}
\rput[l](4.0,4.8){$X_4$}
\rput[l](-0.5,3.7){$X_2$}
\rput[l](5.5,3.2){$X_5$}
\rput[l](-1.2,2.0){$X_1$}
\rput[l](5.5,1.4){$X_6$}
\rput[l](1.0,0.5){$X_0$}
\end{pspicture}
}
\]
\end{rem}

We now describe a procedure to shorten a QL-path by use of enlargements.

\begin{proc}\label{proc:14.13} Given a QL-path $(X_0, \dots, X_n)$ with $X_0 \ne X_n$, we pick an index $i \in [0, n]$ and choose a ray $Y$ in $\sat_{\QL} (X_i)$, i.e., with $\QL (Y) \supset \QL (X_i)$.
\begin{itemize}
\item[a)] If $i > 0$ and $X_k \in \QL (Y)$ for some $k > i + 1$, let $s$ denote the maximal index $\leq n$ with $X_s \in \QL (Y)$ and build
the bridge

\begin{equation}\label{eq:14.9}
\setlength{\arraycolsep}{0.1cm}
\begin{array}{ccccccc}
               &      &      & \Rnode{a}{Y} &        &        &       \\[1cm]
\Rnode{b}{X_0} & \Rnode{c}{\raisebox{1mm}[4mm][2mm]{\hbox to 0.6cm{\hrulefill} \dots \hbox to 0.6cm{\hrulefill}}} & \Rnode{d}{X_{i-1}} & \qquad \Rnode{e}{X_i} \qquad & \Rnode{f}{X_s} &
\Rnode{g}{\raisebox{1mm}[4mm][2mm]{\hbox to 0.6cm{\hrulefill} \dots \hbox to 0.6cm{\hrulefill}}} & \Rnode{h}{X_n} \\[0.1cm]
\end{array}
\psset{nodesep=5pt,arrows=->} \everypsbox{\scriptstyle}
\ncLine{a}{e}
\psset{nodesep=5pt,arrows=-} \everypsbox{\scriptstyle}
\ncline{a}{d} \ncline{a}{f}
\end{equation}
over $(X_0, \dots, X_n)$. This gives us a path $(X_0, \dots, X_{i-1}, Y, X_s, \dots, X_n)$ with $s > i + 1$ of length $n - (s-i-1)$. If $i = 0$, do the same, provided there is an index $k > 2$ with $X_k \in \QL (Y)$. This gives us a bridge

\begin{equation}\label{eq:14.10}
\setlength{\arraycolsep}{0.05cm}
\qquad
\begin{array}{ccccc}
\Rnode{a}{Y} &    &                &  &    \\[1cm]
\Rnode{b}{X_0} &  \qquad  & \Rnode{c}{X_s} & \Rnode{d}{\raisebox{1mm}[4mm][2mm]{\hbox to 0.6cm{\hrulefill} \dots \hbox to 0.6cm{\hrulefill}}} & \Rnode{e}{X_n} \\[0.1cm]
\end{array}
\psset{nodesep=5pt,arrows=-} \everypsbox{\scriptstyle}
\ncline{a}{b} \ncline{a}{c}
\ncarc[arcangle=-30]{->}{a}{b}
\end{equation}
and a path $(X_0, Y, X_s, \dots, X_n)$ with $s > 2$ of length $n - (s-2)$.
\item[b)] If $i < n$ and there exists an index $j < i - 1$ with $X_j \in \QL (Y)$, let $r$ denote the minimal index with $X_r \in \QL (Y)$ and build the bridge

\begin{equation}\label{eq:14.11}
\setlength{\arraycolsep}{0.1cm}
\begin{array}{ccccccc}
               &      &      & \Rnode{a}{Y} &        &        &       \\[1cm]
\Rnode{b}{X_0} & \Rnode{c}{\raisebox{1mm}[4mm][2mm]{\hbox to 0.6cm{\hrulefill} \dots \hbox to 0.6cm{\hrulefill}}} & \Rnode{d}{X_r} & \qquad \Rnode{e}{X_i} \qquad & \Rnode{f}{X_{i+1}} &
\Rnode{g}{\raisebox{1mm}[4mm][2mm]{\hbox to 0.6cm{\hrulefill} \dots \hbox to 0.6cm{\hrulefill}}} & \Rnode{h}{X_n} \\[0.1cm]
\end{array}
\psset{nodesep=5pt,arrows=->} \everypsbox{\scriptstyle}
\ncLine{a}{e}
\psset{nodesep=5pt,arrows=-} \everypsbox{\scriptstyle}
\ncline{a}{d} \ncline{a}{f}
\end{equation}
over $(X_0, \dots, X_n)$. This gives us a path $(X_0, \dots, X_r, Y, X_{i+1}, \dots, X_n)$ with $r < i-1$ of length $n - (i-1-r)$. If $i = n$, do the same, provided there exists an index $j < n-2$ with $X_j \in \QL (Y)$. This gives us the bridge

\begin{equation}\label{eq:14.12}
\setlength{\arraycolsep}{0.05cm}
\begin{array}{ccccc}
               &    &    &    & \Rnode{a}{Y}   \\[1cm]
\Rnode{b}{X_0} & \Rnode{c}{\raisebox{1mm}[4mm][2mm]{\hbox to 0.6cm{\hrulefill} \dots \hbox to 0.6cm{\hrulefill}}} & \Rnode{d}{X_r} & \qquad & \Rnode{e}{X_n} \\[0.1cm]
\end{array}
\psset{nodesep=5pt,arrows=-} \everypsbox{\scriptstyle}
\ncline{a}{e} \ncline{a}{d}
\ncarc[arcangle=30]{->}{a}{e}
\end{equation}
and the path $(X_0, \dots, X_r, Y, X_n)$ with $r < n-2$ of length $r + 2 < n$. \hfill $\square$

\end{itemize}
\end{proc}

\begin{defn}\label{def:14.14}
 We call the paths so obtained \textbf{elementary reductions} of $(X_0, \dots, X_n)$, more precisely, those obtained in a) \textbf{$f$-elementary} (= forward elementary) reductions, and those obtained in b) $b$-\textbf{elementary} (= backward elementary) reductions of $(X_0, \dots, X_n)$. We further call the bridges \eqref{eq:14.9}--\eqref{eq:14.12} \textbf{elementary bridges} over $(X_0, \dots, X_n)$ (or spanning $(X_0, \dots, X_n)$). \end{defn}

\begin{defn}\label{def:14.15}
 We call a QL-path $(X_0, X_1, \dots, X_n)$ \textbf{optimal}, if $n \geq 3 $, $X_0 \ne X_n$, and the path does not admit any elementary reduction.
\end{defn}
Observe that this implies $X_0 \ne X_n$, since in the case $X_0 = X_n$ we would have a bridge \eqref{eq:14.10} with $Y = X_0$ and $s = n$.
%
%
Since an elementary reduction of a QL-path has shorter length than the given one, it is plain that any QL-path $(X_0, \dots, X_n)$ with $n \geq 3$, $X_0 \ne X_n$ becomes either optimal or direct of length $\leq 2$.

\begin{prop}\label{def:14.16}
 Every optimal QL-path is a direct QL-path.
\end{prop}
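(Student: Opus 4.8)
The plan is to argue by contradiction, producing an elementary reduction whenever directness fails. Suppose $(X_0, X_1, \dots, X_n)$ is optimal in the sense of Definition~\ref{def:14.15}, so that $n \geq 3$ and $X_0 \neq X_n$, but is not direct. Since $X_0 \neq X_n$ already holds, the failure of Definition~\ref{def:14.10} must come from a pair of indices $i < j$ with $j - i \geq 2$ for which the pair $(X_i, X_j)$ is quasilinear. The observation that makes Procedure~\ref{proc:14.13} available is that each ray lies in its own QL-saturum: since $\preceq_{\QL}$ is reflexive, \eqref{eq:4.3} yields $X_i \in \sat_{\QL}(X_i)$, so we may always take the ``pillar'' $Y := X_i$, for which $\QL(Y) = \QL(X_i)$.

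I would then distinguish three cases according to the location of this offending pair. If $i \geq 1$, then $X_j \in \QL(X_i) = \QL(Y)$ with $j > i+1$, so the forward clause Procedure~\ref{proc:14.13}(a) applies at index $i$ and delivers an $f$-elementary reduction. If $i = 0$ and $j \geq 3$, then $X_j \in \QL(X_0) = \QL(Y)$ with $j > 2$, so the $i = 0$ subcase of Procedure~\ref{proc:14.13}(a) applies and again gives an $f$-elementary reduction. The remaining possibility $i = 0$, $j = 2$ is the delicate one, and I expect it to be the main obstacle: the forward reduction at index $0$ is deliberately blocked here, because Procedure~\ref{proc:14.13}(a) demands an index $k > 2$ (this is precisely the triviality discarded in Comment~\ref{comm:14.6}). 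To handle it I would reverse direction, putting $Y := X_2 \in \sat_{\QL}(X_2)$ and invoking the backward clause Procedure~\ref{proc:14.13}(b) at index $2$; its hypotheses hold since $X_0 \in \QL(X_2) = \QL(Y)$ with $0 < 2-1$ and $2 < n$, the latter inequality being exactly where the standing hypothesis $n \geq 3$ of optimality is used. This produces the $b$-elementary reduction $(X_0, X_2, X_3, \dots, X_n)$.

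In each case an elementary reduction of $(X_0, \dots, X_n)$ has been exhibited, contradicting optimality. Therefore no quasilinear pair with index gap at least $2$ can exist, and, together with $X_0 \neq X_n$, this is precisely the statement that the path is direct. One may equally package the conclusion through Proposition~\ref{prop:14.11}(a): the same case analysis shows that an optimal path admits neither a forward nor a backward basic reduction, since any such reduction is witnessed by a quasilinear pair of index gap $\geq 2$, and directness follows at once.
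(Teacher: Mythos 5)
Your proof is correct and follows essentially the same route as the paper's: both argue that a non-direct path of length $n\geq 3$ admits an elementary reduction, splitting into the cases $i\geq 1$ (or $i=0$ with a partner index $\geq 3$), handled by a forward elementary reduction with pillar $Y=X_i$, and the delicate case $i=0$, $j=2$, handled by the backward reduction $(X_0, X_2, \dots, X_n)$ with $Y=X_2$, $r=0$, exactly as in the paper. The only cosmetic difference is that the paper fixes $i$ and takes the maximal partner index $s$ up front, whereas you let Procedure~\ref{proc:14.13} do that bookkeeping internally.
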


\begin{proof} Let $(X_0, \dots, X_n)$ be a QL-path of length $n \geq 3$ which is not direct. We verify that $(X_0, \dots, X_n)$ admits an  elementary reduction, and then will be done.
There are indices $i, j \in [0, n]$ with $j \geq i + 2$ and $(X_i, X_j)$ quasilinear. Fixing $i$, let $s$ denote the maximal such index $j$. If $i > 0$, or if $i = 0$, $s \geq 3$, we have an $f$-elementary reduction obtained by a bridge \eqref{eq:14.9} or \eqref{eq:14.10} with $Y = X_i$. There remains the case $i = 0$, $s = 2$. Now $(X_0, X_2)$ is quasilinear, and so we have a $b$-elementary reduction $(X_0, X_2, \dots, X_n)$ by the bridge \eqref{eq:14.11}, there with $i =2$, $r = 0$, $Y = X_2$. \end{proof}

For any ray $X$ in $V$ let $\htQ (X)$ denote the union of all QL-stars containing $\QL (X)$. In other terms,
\begin{equation}\label{eq:14.13}
 \widehat{\QL} (X) : = \bigcup \{ \QL (Y) \ds  \vert Y \in \sat_{\QL} (X) \}. \end{equation}
It is obvious from the definition of elementary reductions and optimal paths (Definitions ~\ref{def:14.14} and  \ref{def:14.15}) that the following holds.

\begin{schol}\label{schol:14.17}
 A QL-path $(X_0, \dots, X_n)$ with $n \geq 3$,  $X_0 \ne X_n$ is optimal iff
\[ \widehat{\QL} (X_i) \cap \{ X_0, \dots, X_n \} = \{ X_{i-1}, X_i, X_{i-1}\}, \]

\noi
for $0 < i < n$, while
\[ \widehat{\QL} (X_0) \cap \{ X_0, \dots, X_n \} \subset  \{ X_0, X_1 \} \]
and
\[ \widehat{\QL} (X_n) \cap \{ X_0, \dots, X_n \} \subset  \{ X_{n-1}, X_n \} \]

\end{schol}

We look for a characterization of optimal paths by properties of their enlargements.

\begin{thm}\label{thm:14.18}
  Assume that $(X_0, \dots, X_n)$ is a QL-path with $X_0 \ne X_n$. The following are equivalent.
\begin{enumerate}\eroman
  \item
 $(X_0, \dots, X_n)$ is optimal.
\item For every $i \in [0, n]$ and $Y \in \sat_{\QL} (X_i)$ is $(X_0, \dots, X_{i-1}, Y, X_{i+1}, \dots, X_n)$ a direct QL-path.

\item For every $i \in [0, n]$ and $Y \in \sat_{\QL} (X_i)$ there exists a direct enlargement $(Z_0, \dots, Z_n)$ (i.e., an enlargement which is a direct QL-path) of $(X_0, \dots, X_n)$ with $\QL (Z_i) \supset \QL (Y)$.
\end{enumerate}

\end{thm}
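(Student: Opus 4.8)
The plan is to run the cycle (i) $\Rightarrow$ (ii) $\Rightarrow$ (iii) $\Rightarrow$ (i). Throughout I use that $n \geq 3$ (this is built into optimality, Definition \ref{def:14.15}; for $n \leq 2$ condition (i) simply fails, and the statement is to be read with this convention), together with two elementary facts about QL-stars: the symmetry $X \in \QL(Y) \Leftrightarrow Y \in \QL(X)$ of \eqref{eq:3.4}, and that an enlargement only enlarges stars, which is the defining property $\QL(Z_i) \supset \QL(X_i)$ of Definition \ref{def:14.4}. For (i) $\Rightarrow$ (ii), fix $i$ and $Y \in \sat_{\QL}(X_i)$ and put $Z_i := Y$, $Z_l := X_l$ for $l \neq i$. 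Since $\QL(X_i) \subset \QL(Y)$ and each neighbour $X_{i \pm 1}$ lies in $\QL(X_i)$, the new consecutive pairs $(X_{i-1}, Y)$ and $(Y, X_{i+1})$ are quasilinear, so $(Z_0, \dots, Z_n)$ is a QL-path. Suppose it were not direct, say $(Z_p, Z_q)$ is quasilinear with $|p - q| \geq 2$. If $i \notin \{p, q\}$ this pair already sits in the original path, contradicting its directness (Proposition \ref{def:14.16}). If $q = i$, then $Z_p = X_p \in \QL(Y) \subset \widehat{\QL}(X_i)$, so Scholium \ref{schol:14.17} forces $X_p \in \{X_{i-1}, X_i, X_{i+1}\}$ (respectively into $\{X_0, X_1\}$ or $\{X_{n-1}, X_n\}$ at the two endpoints); as the rays of a direct path are pairwise distinct, this gives $p \in \{i-1, i, i+1\}$, against $|p - i| \geq 2$.

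The step (ii) $\Rightarrow$ (iii) is immediate: the path $(X_0, \dots, X_{i-1}, Y, X_{i+1}, \dots, X_n)$ furnished by (ii) is itself an enlargement of $(X_0, \dots, X_n)$ (only the $i$-th star grows, from $\QL(X_i)$ to $\QL(Y)$), it is direct by hypothesis, and its $i$-th star is $\QL(Y) \supset \QL(Y)$. For (iii) $\Rightarrow$ (i) I prove the contrapositive. If the path is not optimal it admits an elementary reduction (Definitions \ref{def:14.14}, \ref{def:14.15}), so Procedure \ref{proc:14.13} supplies an index $i$, a ray $Y \in \sat_{\QL}(X_i)$, and an index $k$ with $|k - i| \geq 2$ and $X_k \in \QL(Y)$. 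For this $i$ and $Y$ I claim that no direct enlargement $(Z_0, \dots, Z_n)$ with $\QL(Z_i) \supset \QL(Y)$ can exist, which negates (iii). Indeed, given such a $Z$, from $X_k \in \QL(Y) \subset \QL(Z_i)$ and symmetry \eqref{eq:3.4} we obtain $Z_i \in \QL(X_k)$, and then $Z_i \in \QL(Z_k)$ since $\QL(X_k) \subset \QL(Z_k)$ by the enlargement property; thus $(Z_i, Z_k)$ is quasilinear with $|i - k| \geq 2$, contradicting directness of $(Z_0, \dots, Z_n)$.

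The one genuinely delicate point is this final step: it is precisely the interplay of monotonicity (stars only grow under enlargement) with the symmetry of the quasilinear relation that converts a distant quasilinear witness $X_k \in \QL(Y)$ into a forbidden chord $(Z_i, Z_k)$ inside every candidate direct enlargement, so that enlarging $\QL(X_i)$ up to $\QL(Y)$ is incompatible with directness. By comparison, (i) $\Rightarrow$ (ii) is mostly bookkeeping: the only care needed is to treat the endpoint indices $i \in \{0, n\}$ through the two-element clauses of Scholium \ref{schol:14.17}, and to use the distinctness of the rays of a direct path in order to pass from an equality of rays to the corresponding equality of indices.
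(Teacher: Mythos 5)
Your proof is correct and takes essentially the same approach as the paper's: the link between (i) and (ii) via Scholium \ref{schol:14.17}, the trivial step (ii)$\Rightarrow$(iii), and, for the remaining implication, exactly the paper's monotonicity-plus-symmetry chain $X_k \in \QL(Y) \subset \QL(Z_i) \Rightarrow Z_i \in \QL(X_k) \subset \QL(Z_k)$, which produces a forbidden chord in any direct enlargement. The only difference is packaging: the paper proves (iii)$\Rightarrow$(ii) directly and then invokes the Scholium for (i), whereas you close the cycle as (iii)$\Rightarrow$(i) by extracting the witness $(i, Y, k)$ from an elementary reduction via Procedure \ref{proc:14.13} --- the same unwinding of the definition of optimality.
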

\begin{proof} (i) $\Leftrightarrow$ (ii): Evident from Scholium \ref{schol:14.17}.\pSkip
(ii) $\Rightarrow$ (iii): Trivial, since $(X_0, \dots, X_{i-1}, Y, X_{i-1}, \dots, X_n)$ is an enlargement of $(X_0, \dots, X_n)$.\pSkip
(iii) $\Rightarrow$ (ii): Let $Y \in \sat_{\QL} (X_i)$. Suppose there exists $j \in [0, n]$ with $\vert j - i \vert > 1$ and $X_j \in \QL (Y)$. Choose a direct enlargement $(Z_0, \dots, Z_n)$ of $(X_0, \dots, X_n)$ with $\QL (Z_i) \supset \QL (Y)$. Then $X_j \in \QL (Z_i)$, and so $Z_i \in \QL (X_j)$. Since $\QL (X_j) \subset \QL (Z_j)$, this implies $Z_i \in \QL (Z_j)$, contradicting our hypothesis that $(Z_0, \dots, Z_n)$ is direct. \end{proof}

Theorem \ref{thm:14.18} can be stated in a more conceptual way by use of a quasiordering $\preceq_{\QL}$ on the set of all QL-paths of fixed length.

\begin{defn}\label{def:14.19} For any two QL-paths $(X_0, \dots, X_n)$, $(Y_0, \dots, Y_n)$, we say that $(Y_0, \dots, Y_n)$ \textbf{dominates} $(X_0, \dots, X_n)$, and write $(X_0, \dots, X_n) \preceq_{\QL} (Y_0, \dots, Y_n)$, if
$X_i \preceq_{\QL} Y_i$ for all $0 \leq i \leq n$, in other terms, $\QL (X_i) \subset \QL (Y_i)$ for every  $0 \leq i \leq n$. \end{defn}

\begin{thm}\label{thm:14.20}
 A QL-path $(X_0, \dots, X_n)$ is optimal iff the set of all direct enlargements of $(X_0, \dots, X_n)$ is cofinal in the set of all enlargements of  $(X_0, \dots, X_n)$, i.e.,  every enlargement $(Y_0, \dots, Y_n)$ of $(X_0, \dots, X_n)$ is dominated by some enlargement $(Z_0, \dots, Z_n)$ of $(X_0, \dots, X_n)$ which is a direct path.
\end{thm}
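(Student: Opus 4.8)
The plan is to read Theorem~\ref{thm:14.20} as a reformulation of Theorem~\ref{thm:14.18} through the product quasiordering $\preceq_{\QL}$ of Definition~\ref{def:14.19}, after one structural observation. First I would record a \emph{monotonicity lemma}: if $(Y_0,\dots,Y_n)\preceq_{\QL}(Z_0,\dots,Z_n)$ and the pair $(Y_i,Y_j)$ is quasilinear, then so is $(Z_i,Z_j)$. This is immediate from \eqref{eq:3.4} and the definition of $\preceq_{\QL}$: from $Y_j\in\QL(Y_i)\subset\QL(Z_i)$ we get $Z_i\in\QL(Y_j)\subset\QL(Z_j)$. Consequently, among the enlargements of $(X_0,\dots,X_n)$ the direct ones form a down-set for $\preceq_{\QL}$, so ``the direct enlargements are cofinal'' is equivalent to ``every enlargement of $(X_0,\dots,X_n)$ is itself a direct path.'' This is the one place where the failure of antisymmetry of $\preceq_{\QL}$ is used, and it reduces the theorem to the equivalence
\[\text{$(X_0,\dots,X_n)$ optimal}\iff\text{every enlargement of $(X_0,\dots,X_n)$ is a direct QL-path.}\]

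For the implication ``cofinal $\Rightarrow$ optimal'' I would argue directly. Given $i\in[0,n]$ and $Y\in\sat_{\QL}(X_i)$, the sequence $(X_0,\dots,X_{i-1},Y,X_{i+1},\dots,X_n)$ is a QL-path (since $X_{i\pm1}\in\QL(X_i)\subset\QL(Y)$) and an enlargement of $(X_0,\dots,X_n)$; by the reduction above it is direct. This is exactly condition~(ii) of Theorem~\ref{thm:14.18}, whence $(X_0,\dots,X_n)$ is optimal. (Directness already forces $X_0\ne X_n$; the standing hypothesis $n\ge3$ of Definition~\ref{def:14.15} is assumed throughout.)

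The substance lies in ``optimal $\Rightarrow$ every enlargement is direct,'' which I would prove by contradiction. Suppose $(Y_0,\dots,Y_n)$ is an enlargement with $(Y_i,Y_j)$ quasilinear for some $|i-j|\ge2$, say $i<j$. Applying Theorem~\ref{thm:14.18}(ii) at the index $i$ with the ray $Y_i\in\sat_{\QL}(X_i)$ shows $(X_0,\dots,Y_i,\dots,X_n)$ is direct, so $X_j\notin\QL(Y_i)$; symmetrically, applying it at $j$ with $Y_j$ gives $X_i\notin\QL(Y_j)$. The goal is then to convert the quasilinear pair $(Y_i,Y_j)$ of the two \emph{enlarged} rays into quasilinearity among the \emph{original} vertices, i.e.\ to show $X_j\in\widehat{\QL}(X_i)$, which contradicts the optimality criterion recorded in Scholium~\ref{schol:14.17}. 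Here I would exploit that $\sat_{\QL}(X_i)$ and $\sat_{\QL}(X_j)$ are convex and quasilinear (Theorem~\ref{thm:4.6}), giving $\sat_{\QL}(X_i)\subset\QL(Y_i)$ and $\sat_{\QL}(X_j)\subset\QL(Y_j)$, and then feed the ``triangle'' $X_i,Y_i,Y_j$ into Theorem~\ref{thm:3.6} to propagate quasilinearity back down to the original rays.

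I expect this last step to be the main obstacle. Because $\preceq_{\QL}$ is only a quasiordering and enlarging a ray can only \emph{add} quasilinear partners, a priori $(Y_i,Y_j)$ may be quasilinear while neither $(X_i,Y_j)$ nor $(X_j,Y_i)$ is, so the desired descent to $X_i,X_j$ does not follow formally. The real content is the strengthened separation $\widehat{\QL}(X_i)\cap\sat_{\QL}(X_j)=\emptyset$ for $|i-j|\ge2$, which is genuinely stronger than the statement $X_j\notin\widehat{\QL}(X_i)$ that Scholium~\ref{schol:14.17} supplies on the nose. I anticipate that closing this gap requires the convex geometry of $\Ray(V)$ --- convexity of the saturations together with the CS-ratio subadditivity underlying Theorem~\ref{thm:3.6} --- rather than the purely formal properties of $\preceq_{\QL}$, which is precisely why Theorem~\ref{thm:14.20} is placed after, and not deduced mechanically from, Theorem~\ref{thm:14.18}.
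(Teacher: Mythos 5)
Your preparatory work is correct and, for the easy direction, coincides with the paper's argument. Your monotonicity lemma is exactly Proposition~\ref{prop:14.21} (with the same proof); the resulting observation that direct paths form a down-set for the domination quasiordering of Definition~\ref{def:14.19}, so that cofinality of the direct enlargements is equivalent to \emph{every} enlargement being direct, is a sharpening the paper never states explicitly; and your deduction of ``cofinal $\Rightarrow$ optimal'' via condition (ii) of Theorem~\ref{thm:14.18} is the same one-line argument the paper runs through condition (iii).

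The hard direction you leave open, and here you should know that the obstacle you name is not a defect peculiar to your attempt: it is precisely the point at which the paper's own proof is incomplete. The paper argues as follows: given an enlargement $(Y_0,\dots,Y_n)$, condition (iii) of Theorem~\ref{thm:14.18} produces, for each index $i$ \emph{separately}, a direct enlargement whose $i$-th member $Z_i$ satisfies $\QL(Z_i)\supset\QL(Y_i)$; directness of that per-index enlargement gives $X_j\notin\QL(Z_i)$ for $\vert i-j\vert>1$; and then it is asserted, with a bare ``cf.\ Scholium~\ref{schol:14.17}'', that the assembled tuple $(Z_0,\dots,Z_n)$ is a \emph{direct} enlargement dominating $(Y_0,\dots,Y_n)$. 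Domination and the QL-path property are indeed clear, but directness is not: $Z_i$ and $Z_j$ come from \emph{different} direct enlargements, the property $X_j\notin\QL(Z_i)$ only forbids quasilinear pairs of the form $(Z_i,X_j)$, and quasilinearity of $(Z_i,Z_j)$ cannot be pushed down to $(Z_i,X_j)$ or $(X_i,Z_j)$, since monotonicity runs upward only --- your ``non-descent'' point exactly. So the paper also needs, and does not prove, your separation statement $\widehat{\QL}(X_i)\cap\sat_{\QL}(X_j)=\emptyset$ for $\vert i-j\vert\geq 2$. Moreover, your suspicion that no purely formal argument can close the gap is justified: consider eight abstract ``rays'' $X_0,\dots,X_3,Y_1,Y_3,A_0,A_3$ with QL-stars $\QL(X_0)=\{X_0,X_1,Y_1,A_0\}$, $\QL(X_1)=\{X_0,X_1,X_2,Y_1\}$, $\QL(X_2)=\{X_1,X_2,X_3,Y_1,Y_3\}$, $\QL(X_3)=\{X_2,X_3,Y_3,A_3\}$, $\QL(Y_1)=\{X_0,X_1,X_2,Y_1,Y_3\}$, $\QL(Y_3)=\{Y_1,X_2,X_3,Y_3,A_3\}$, $\QL(A_0)=\{A_0,X_0\}$, $\QL(A_3)=\{A_3,X_3,Y_3\}$. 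This system is symmetric, has $X_1\preceq_{\QL}Y_1$ and $X_3\preceq_{\QL}Y_3$, and the path $(X_0,X_1,X_2,X_3)$ satisfies all conditions of Scholium~\ref{schol:14.17}, hence is optimal and satisfies (i)--(iii) of Theorem~\ref{thm:14.18}; yet the enlargement $(X_0,Y_1,X_2,Y_3)$ contains the quasilinear pair $(Y_1,Y_3)$ and therefore, by your down-set remark, is dominated by no direct path, so cofinality fails. This does not by itself refute Theorem~\ref{thm:14.20}, because such a star pattern might not be realizable by an anisotropic quadratic pair on a ray-admissible module; but it shows that any correct proof must inject genuinely geometric input (convexity of saturations as in Theorem~\ref{thm:4.6}, or the subadditivity behind Theorem~\ref{thm:3.6}) exactly where you predicted, and neither your sketch nor the published proof does so.
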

\begin{proof} We verify the equivalence of this condition with condition (iii) in Theorem~\ref{thm:14.18}. It is plain that the new condition implies condition (iii). On the other hand, if an enlargement $(Y_0, \dots, Y_n)$ of $(X_0, \dots, X_n)$ is given and (iii) holds, we find for every $i \in [0, n]$ a ray $Z_i$ with $\QL (Z_i) \supset \QL (Y_i)$ and $X_j \not\in \QL (Z_i)$ for $\vert i-j \vert > 1$. Now $(Z_0, \dots, Z_n)$ is a direct enlargement of $(X_0, \dots, X_n)$ dominating $(Y_0, \dots, Y_n)$ (cf. Scholium \ref{schol:14.17}).
\end{proof}

We add an observation which enriches the picture around this theorem.

\begin{prop}\label{prop:14.21}
Assume that $(X_0, \dots, X_n)$ is a QL-path which is dominated by a direct QL-path $(Y_0, \dots, Y_n)$. Then $(X_0, \dots, X_n)$ itself is direct.
\end{prop}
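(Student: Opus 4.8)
The plan is to prove the contrapositive in a sharpened form: I will show that any quasilinear pair of rays occurring at index-distance $\ge 2$ in $(X_0,\dots,X_n)$ forces a quasilinear pair at the very same indices in the dominating path $(Y_0,\dots,Y_n)$. Since, for $n\ge 2$, directness of a QL-path in the sense of Definition~\ref{def:14.10} amounts exactly to the absence of any quasilinear pair $(Z_i,Z_j)$ with $|i-j|\ge 2$ (here the endpoint clause $Z_0\ne Z_n$ is automatic, because $Z_0=Z_n$ would itself supply the quasilinear loop $(Z_0,Z_0)$ at index-distance $n\ge 2$, cf.\ Definition~\ref{def:14.2}), this sharpened statement immediately delivers the proposition.

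First I would suppose that $(X_0,\dots,X_n)$ is \emph{not} direct and choose indices $i,j\in[0,n]$ with $|i-j|\ge 2$ for which the pair $(X_i,X_j)$ is quasilinear; equivalently $X_j\in\QL(X_i)$ by Definition~\ref{def:3.5}. The key step is then to transport this membership up the domination \emph{twice}, exploiting the symmetry \eqref{eq:3.4}. From $X_j\in\QL(X_i)$ together with the defining inclusion $\QL(X_i)\subset\QL(Y_i)$ of domination (Definition~\ref{def:14.19}) I obtain $X_j\in\QL(Y_i)$, which by \eqref{eq:3.4} reads $Y_i\in\QL(X_j)$. Applying now the second inclusion $\QL(X_j)\subset\QL(Y_j)$ yields $Y_i\in\QL(Y_j)$, i.e.\ the pair $(Y_i,Y_j)$ is quasilinear. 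As $|i-j|\ge 2$, this contradicts the directness of $(Y_0,\dots,Y_n)$, completing the contrapositive.

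The only delicate point -- and where I expect the genuine bookkeeping to lie, rather than in any deep idea -- is the endpoint clause $X_0\ne X_n$ for short paths. For $n\ge 2$ it is subsumed by the argument above via the loop observation, so nothing extra is required; for degenerate lengths one reads $\ne$ as $\not\sim_{\QL}$ under the standing convention of Notation~\ref{notat:14.3} that QL-equivalent rays are identified. Apart from this, the proof is a purely formal double application of \eqref{eq:3.4}: no CS-ratio estimate, subadditivity input, or convexity of QL-stars enters, which is exactly why the domination hypothesis alone suffices.
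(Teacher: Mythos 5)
Your proof is correct and is essentially the paper's own argument: the identical double transport, $X_j\in\QL(X_i)\subset\QL(Y_i)$, then $Y_i\in\QL(X_j)\subset\QL(Y_j)$ via the symmetry \eqref{eq:3.4}, contradicting directness of $(Y_0,\dots,Y_n)$, only phrased contrapositively. Your extra care with the endpoint clause $X_0\ne X_n$ (subsumed for $n\ge 2$ by the loop observation) is a minor refinement that the paper's proof silently omits.
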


\begin{proof} Assume that $i < j$ are indices in $[0, n]$ such that the pair $(X_i, X_j)$ is quasilinear, in other terms, $X_j \in \QL (X_i)$. We have $\QL (X_i) \subset \QL (Y_i)$ and $\QL (X_j) \subset \QL (Y_j)$, and so $X_j \in \QL (Y_i)$, whence $Y_i \in \QL (X_j)$ and then $Y_i \in \QL (Y_j)$. Since $(Y_0, \dots Y_n)$ is direct, it follows that $j = i + 1$, as desired. \end{proof}

%

It can happen that every enlargement of an optimal path $(X_0, \dots, X_n)$, $n \geq 3$, is again optimal, cf. \S\ref{sec:18} below.


\section{Interplay of the quasilinear ordering on the ray space with direct QL-paths}\label{sec:15}

In this section we study an interplay of the (partial) quasiordering $\preceq_{\QL}$ on $\Ray (V)$ with the direct QL-paths (in  particular the optimal QL-paths) in $V$. As in \S \ref{sec:14} we only assume that $(R, V)$ is ray-admissible and $(q, b)$ is a quadratic pair on $V$ with $q$ anisotropic.

Given a ray $X$ in $V$, we denote the upset and downset of $X$ w.r. to $\preceq_{\QL}$ by $X^\uparrow$ and $X^\downarrow$, i.e.,
\begin{equation}\label{eq:15.1}
X^{\uparrow} = \{ Z \in \Ray (V) \ds \vert \QL (X) \subset \QL (Z) \} = \sat_{\QL} (X)
\end{equation}
in previous terminoloy, and
\begin{equation}\label{eq:15.2}
 X^{\downarrow} = \{ Z \in \Ray (V) \ds \vert \QL (Z) \subset \QL (X) \}. \end{equation}
More generally we denote for any set $S$ in $\Ray (V)$ the up- and downsets of $S$ by $S^{\uparrow}$ and $S^{\downarrow}$, i.e.,
\begin{equation}\label{eq:15.3}
 S^{\uparrow} = \bigcup\limits_{X \in S} X^{\uparrow}, \quad S^{\downarrow} = \bigcup\limits_{X \in S} X^{\downarrow}. \end{equation}

\begin{thm}\label{thm:15.1}

 Assume that $(Z_0, \dots, Z_n)$ is an enlargement of an optimal QL-path \\ $(X_0, \dots, X_n)$ in $V$. Then the sets
\[ \{ Z_0, Z_1 \}^{\uparrow}, \  Z_2^{\uparrow}, \ \dots, \ Z_{n-1}^{\uparrow}, \ \{ Z_{n-1}, Z_n \}^{\uparrow} \]
are mutually disjoint, and so form a partition of the set $\{ Z_0, \dots, Z_n \}^{\uparrow}$.
\end{thm}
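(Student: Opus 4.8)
The plan is to reduce the whole statement to a single disjointness lemma about the optimal path $(X_0,\dots,X_n)$ and then transport it to the enlargement through the inclusion $Z_i^\uparrow \subseteq X_i^\uparrow$. First I would record that inclusion: since $(Z_0,\dots,Z_n)$ enlarges $(X_0,\dots,X_n)$ we have $\QL(X_i)\subseteq \QL(Z_i)$ (Definition~\ref{def:14.4}), so $\QL(Z_i)\subseteq \QL(W)$ forces $\QL(X_i)\subseteq \QL(W)$; in the notation of \eqref{eq:15.1} this reads $Z_i^\uparrow \subseteq X_i^\uparrow = \sat_{\QL}(X_i)$. Moreover the union of the listed sets is visibly $\bigcup_{i=0}^n Z_i^\uparrow = \{Z_0,\dots,Z_n\}^\uparrow$ by \eqref{eq:15.3}, and each block is nonempty (it contains the corresponding $Z_i$, as $Z_i\in Z_i^\uparrow$), so once disjointness is established the ``partition'' assertion is automatic.

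The heart of the matter is the following lemma about the optimal path: for $i\neq j$ with $\{i,j\}\notin\{\{0,1\},\{n-1,n\}\}$ one has $X_i^\uparrow \cap X_j^\uparrow = \emptyset$. I would prove it by contradiction. A ray $W$ in the intersection satisfies $W\in \sat_{\QL}(X_i)\cap \sat_{\QL}(X_j)$, and since $X_i\in\QL(X_i)$, $X_j\in\QL(X_j)$ by \eqref{eq:3.3}, also $X_i,X_j\in\QL(W)$; the goal is then to exhibit an elementary reduction of $(X_0,\dots,X_n)$ with $Y=W$, contradicting optimality (Definition~\ref{def:14.15}, Procedure~\ref{proc:14.13}). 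Writing $i<j$, the case $j\geq i+2$ is the routine one: if $i\geq 1$, then $X_j\in\QL(W)$ with $j>i+1$ yields an $f$-reduction at $i$; if $i=0$ and $j\leq n-1$, then $X_0\in\QL(W)$ with $0<j-1$ yields a $b$-reduction at $j$; and if $i=0$, $j=n$, then $X_n\in\QL(W)$ with $n>2$ yields an $f$-reduction at $0$.

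The decisive case is $j=i+1$ with $1\leq i\leq n-2$ (the adjacent pairs that are \emph{not} grouped). Here neither $X_i$ nor $X_{i+1}$ alone lies far enough along the path, so the trick is to push one step further: the edge $(X_{i+1},X_{i+2})$ gives $X_{i+2}\in\QL(X_{i+1})$, and since $\QL(X_{i+1})\subseteq\QL(W)$ we obtain $X_{i+2}\in\QL(W)$. Now $W\in\sat_{\QL}(X_i)$, $i\geq 1$, together with $X_{i+2}\in\QL(W)$ and $i+2>i+1$ produce an $f$-reduction at $i$, the desired contradiction. This is exactly where optimality bites, and where the two exceptional end-pairs arise: for $\{0,1\}$ the would-be $f$-reduction at $i=0$ requires an index $>2$, which the captured vertex $X_2$ does not supply, and symmetrically for $\{n-1,n\}$; hence these two pairs cannot be excluded and must be kept together in a single block.

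Finally I would assemble the partition. The index sets $\{0,1\},\{2\},\dots,\{n-2\},\{n-1,n\}$ of the blocks partition $\{0,\dots,n\}$, so a pair $(i,j)$ with $i,j$ lying in two \emph{distinct} blocks can never equal $\{0,1\}$ or $\{n-1,n\}$, since each of those sits inside a single block. The lemma then gives $X_i^\uparrow\cap X_j^\uparrow=\emptyset$, whence $Z_i^\uparrow\cap Z_j^\uparrow=\emptyset$ by the inclusion of the first paragraph, for every such cross pair; thus distinct blocks are disjoint. I expect the only real difficulty to be bookkeeping rather than conceptual: one must keep the boundary thresholds of Procedure~\ref{proc:14.13} straight (the asymmetry ``$k>i+1$'' versus ``$k>2$ when $i=0$'', and its backward mirror) so that precisely the two end-pairs, and no interior adjacent pair, survive.
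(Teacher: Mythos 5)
Correct, and essentially the paper's own argument: like the paper, you reduce to the optimal path via $Z_i^{\uparrow}\subseteq X_i^{\uparrow}$ and then show that a ray lying in $X_i^{\uparrow}\cap X_j^{\uparrow}$, for $\{i,j\}$ other than the two end-pairs, yields an elementary reduction (Procedure~\ref{proc:14.13}) contradicting optimality; the only difference is bookkeeping, since the paper builds the bridge $(X_0,\dots,X_{p-1},Y,X_{q+1},\dots,X_n)$ uniformly from the neighbours while you use $X_j$ directly when $j\ge i+2$ and push one step to $X_{i+2}$ in the adjacent interior case. Your block decomposition $\{0,1\},\{2\},\dots,\{n-2\},\{n-1,n\}$ (implicitly correcting the $Z_{n-1}^{\uparrow}$ misprint in the displayed list) is also exactly what the paper's proof establishes.
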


\begin{proof} Since $Z_i^{\uparrow} \subset X_i^{\uparrow}$, it suffices to verify this for the optimal QL-path $(X_0, \dots, X_n)$ itself. Suppose that there exists a ray $Y$ with $Y \in X_p^{\uparrow} \cap X_q^{\uparrow}$ for different indices $p, q$ in $[0, n]$. If $0 < p < q <n$ we would obtain a diagram

\[
\setlength{\arraycolsep}{0.1cm}
\begin{array}{cccccccc}
               &   &   &      & \Rnode{a}{Y} &        &        &       \\[1cm]
\Rnode{b}{X_0} & \Rnode{c}{\raisebox{1mm}[4mm][2mm]{\hbox to 0.6cm{\hrulefill} \dots \hbox to 0.6cm{\hrulefill}}} & \Rnode{d}{X_{p-1}} & \Rnode{e}{^{\hbox to 0.6cm{\hrulefill}}} & \Rnode{f}{X_p} \quad & \Rnode{g}{X_q \; ^{\hbox to 0.6cm{\hrulefill}}} &
\Rnode{h}{X_{q+1} \; \raisebox{1mm}[4mm][2mm]{\hbox to 0.6cm{\hrulefill} \dots \hbox to 0.6cm{\hrulefill}}} & \Rnode{i}{X_n}, \\[1mm]
\end{array}
\psset{nodesep=5pt,arrows=->} \everypsbox{\scriptstyle}
\ncLine{a}{f} \ncline{a}{g}
\psset{nodesep=5pt,arrows=-
} \everypsbox{\scriptstyle}
\ncline{a}{d} \ncline{a}{h}
\]
and so a QL-path $(X_0, \dots, X_{p-1}, Y, X_{q+1}, \dots, X_n)$ which is both an $f$-elementary and a $b$-elementary reduction of $(X_0, \dots, X_n)$ in contradiction to our assumption that $(X_0, \dots, X_n)$ is optimal. If $p = 0$, $q \geq 2$ we would obtain a diagram

\[
\setlength{\arraycolsep}{0.05cm}
\begin{array}{cccc}
\Rnode{a}{Y} &    &      &     \\[1cm]
\Rnode{b}{X_0} & \Rnode{c}{^{\hbox to 0.6cm{\hrulefill}} \; X_1 \; ^{\hbox to 0.6cm{\hrulefill} \; \dots}} & \Rnode{d}{X_q \; ^{\hbox to 0.6cm{\hrulefill}}} & \Rnode{e}{X_{q+1} \;  ^{\hbox to 0.6cm{\hrulefill} \; \dots} X_n}\\[1mm]
\end{array}
\psset{nodesep=5pt,arrows=->} \everypsbox{\scriptstyle}
\ncline{a}{b} \ncline{a}{d}
\psset{nodesep=5pt,arrows=-} \everypsbox{\scriptstyle}
\ncline{a}{e}
\]
and a QL-path $(X_0, Y, X_{q+1}, \dots, X_n)$ which is an $f$-elementary reduction of $(X_0, \dots, X_n)$ in contradiction to our assumption that $(X_0, \dots, X_n)$ is optimal. In the same way we see that $p \leq n-2$, $q = n$ is impossible. Thus any two of the sets listed in the theorem have empty intersection. \end{proof}

\begin{cor}\label{cor:15.2}
 Assume that $(Z_0, \dots, Z_n)$ is an enlargement of an optimal QL-path. Then the downset $Y^{\downarrow}$ of any ray $Y$ in $V$ meets each of the three sets $\{ Z_1, \dots, Z_{n-1} \}$, $\{ Z_0, Z_2, \dots, Z_n \}$, $\{ Z_0, \dots, Z_{n-2}, Z_n \}$ in at most one ray.
\end{cor}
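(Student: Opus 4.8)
The plan is to reduce the statement entirely to Theorem~\ref{thm:15.1} through the elementary duality between down-sets and up-sets. First I would record that, straight from the definitions \eqref{eq:15.1} and \eqref{eq:15.2}, for any ray $Y$ and any index $i$ one has
$$ Z_i \in Y^{\downarrow} \quad\Longleftrightarrow\quad \QL(Z_i) \subset \QL(Y) \quad\Longleftrightarrow\quad Y \in Z_i^{\uparrow}. $$
Hence the assertion "$Y^{\downarrow}$ meets a set $\{Z_i : i \in \Sigma\}$ in two distinct rays $Z_i \ne Z_j$" is literally the same as "$Y \in Z_i^{\uparrow} \cap Z_j^{\uparrow}$ for two distinct indices $i,j \in \Sigma$." So the whole corollary becomes a statement about which pairs of up-sets $Z_i^{\uparrow}, Z_j^{\uparrow}$ can have a common point.

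Next I would extract from Theorem~\ref{thm:15.1} the precise obstruction in index form. Since $(Z_0,\dots,Z_n)$ is an enlargement of an optimal QL-path, that theorem gives that the blocks $\{Z_0,Z_1\}^{\uparrow},\ Z_2^{\uparrow},\ \dots,\ Z_{n-2}^{\uparrow},\ \{Z_{n-1},Z_n\}^{\uparrow}$ are pairwise disjoint. I claim this means that for $i<j$ the intersection $Z_i^{\uparrow}\cap Z_j^{\uparrow}$ is empty unless $\{i,j\}=\{0,1\}$ or $\{i,j\}=\{n-1,n\}$. Indeed, if $\{i,j\}$ is neither end-pair, then $Z_i^{\uparrow}$ and $Z_j^{\uparrow}$ are contained in two \emph{different} blocks of the partition (using $Z_0^{\uparrow},Z_1^{\uparrow}\subset\{Z_0,Z_1\}^{\uparrow}$ and $Z_{n-1}^{\uparrow},Z_n^{\uparrow}\subset\{Z_{n-1},Z_n\}^{\uparrow}$ for the endpoints), and those blocks are disjoint. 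Consequently $Y^{\downarrow}\cap\{Z_0,\dots,Z_n\}$ always lies inside a single block: it is empty, a singleton $\{Z_i\}$, or one of the two admissible two-element sets $\{Z_0,Z_1\}$ and $\{Z_{n-1},Z_n\}$.

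The corollary then follows by intersecting this constrained set with each of the three listed sets. The decisive property is that each of them is a transversal of the two end-blocks, i.e.\ it contains at most one of $Z_0,Z_1$ and at most one of $Z_{n-1},Z_n$; equivalently $|\Sigma\cap\{0,1\}|\le 1$ and $|\Sigma\cap\{n-1,n\}|\le 1$ for the corresponding index set $\Sigma$. Granting this, suppose for contradiction that $Y^{\downarrow}$ met one of the three sets in two distinct rays $Z_i\ne Z_j$; by the previous paragraph their indices would have to form $\{0,1\}$ or $\{n-1,n\}$, which the transversal property forbids. Hence each of the three sets meets $Y^{\downarrow}$ in at most one ray, as claimed.

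The substantive content is entirely the index-wise reformulation of Theorem~\ref{thm:15.1}, so the main obstacle is purely bookkeeping: one must keep in mind that disjointness of the \emph{blocks} still permits $Z_0^{\uparrow},Z_1^{\uparrow}$ (respectively $Z_{n-1}^{\uparrow},Z_n^{\uparrow}$) to overlap, and it is exactly the transversal choice underlying the three sets that neutralises these two allowed overlaps. A harmless side remark worth including is that distinct indices may a priori determine the same ray; this can only decrease the number of \emph{distinct} rays in the intersection, so it never weakens the bound "at most one ray."
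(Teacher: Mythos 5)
Your overall route is exactly the paper's own: the duality $Z_i \in Y^{\downarrow} \Leftrightarrow Y \in Z_i^{\uparrow}$, followed by the index-wise reformulation of Theorem~\ref{thm:15.1}, and your silent correction of the misprint in that theorem's displayed list (reading $Z_{n-2}^{\uparrow}$ where the paper writes $Z_{n-1}^{\uparrow}$) is the right one: the only overlaps the theorem permits are $Z_0^{\uparrow} \cap Z_1^{\uparrow}$ and $Z_{n-1}^{\uparrow} \cap Z_n^{\uparrow}$. Up to that point your argument coincides in substance with the paper's two-line proof.

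The gap is in what you call the ``decisive property'', which you assert but never check (``Granting this\dots''): it is \emph{false} for the second and third sets as they are written. The set $\{ Z_0, Z_2, \dots, Z_n \}$ omits only $Z_1$, hence contains \emph{both} $Z_{n-1}$ and $Z_n$ (recall $n \geq 3$ for an optimal path); the set $\{ Z_0, \dots, Z_{n-2}, Z_n \}$ omits only $Z_{n-1}$, hence contains both $Z_0$ and $Z_1$. These are precisely the two pairs whose upsets Theorem~\ref{thm:15.1} does \emph{not} separate, and optimality cannot separate them: a ray $Y$ with $\QL(X_0) \cup \QL(X_1) \subset \QL(Y)$ forces only $X_0, X_1, X_2 \in \QL(Y)$, which produces no elementary reduction --- this is exactly why the theorem keeps $\{Z_0,Z_1\}^{\uparrow}$ and $\{Z_{n-1},Z_n\}^{\uparrow}$ as blocks instead of splitting them. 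So your argument establishes the conclusion only for $\{ Z_1, \dots, Z_{n-1} \}$; for the other two sets, as printed, the claim is strictly stronger than Theorem~\ref{thm:15.1} (it amounts to asserting that \emph{all} the upsets $Z_i^{\uparrow}$ are pairwise disjoint) and does not follow from it. In fairness, the paper's own proof (``this is excluded by Theorem~\ref{thm:15.1}, if $Z_i$ and $Z_j$ are elements of one of these three sets'') makes the identical unchecked claim, so the defect traces back to the statement itself: your proof, like the paper's, goes through only if the second and third sets are read as omitting one ray from \emph{each} end pair, e.g. $\{ Z_0, Z_2, \dots, Z_{n-2}, Z_n \}$, $\{ Z_0, Z_2, \dots, Z_{n-1} \}$, or $\{ Z_1, \dots, Z_{n-2}, Z_n \}$. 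Had you actually verified the transversal property instead of granting it, you would have caught this.
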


\begin{proof} If $Y^{\downarrow}$ contains two rays $Z_i, Z_j$ $(0 \leq i < j \leq n)$, then $Y \in Z_i^{\uparrow} \cap Z_j^{\uparrow}$.
This is excluded by Theorem~\ref{thm:15.1}, if $Z_i$ and $Z_j$ are elements of one of these three sets. \end{proof}

The set of enlargements of a given QL-path $(X_0, \dots, X_n)$ can be very rich, as is indicated by the following fact.

\begin{prop}\label{prop:15.3}
 Assume that $(Y_0, \dots, Y_n)$ is an enlargement of $(X_0, \dots, X_n)$. Then every sequence of rays $(Z_0, \dots, Z_n)$ with $Z_i \in [X_i, Y_i]$ for $0 \leq i \leq n$ is again an enlargement of $(X_0, \dots, X_n)$ (but often $(Y_0, \dots, Y_n)$ is not an enlargement of $(Z_0, \dots, Z_n)$).\end{prop}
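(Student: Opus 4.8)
The plan is to verify directly the two defining conditions of an enlargement (Definition~\ref{def:14.4}) for the sequence $(Z_0, \dots, Z_n)$: that it is a QL-path, and that $\QL(X_i) \subset \QL(Z_i)$ for every $i$. I would establish the second condition first, since the QL-path property will then drop out of Lemma~\ref{lem:4.2}.

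First I would show that $X_i \preceq_{\QL} Z_i$ for each $i$. Since $(Y_0, \dots, Y_n)$ is an enlargement of $(X_0, \dots, X_n)$, we have $\QL(X_i) \subset \QL(Y_i)$, that is $X_i \preceq_{\QL} Y_i$, which by \eqref{eq:4.3} means $Y_i \in \sat_{\QL}(X_i)$. Reflexivity of $\preceq_{\QL}$ also gives $X_i \in \sat_{\QL}(X_i)$. Now Theorem~\ref{thm:4.6} guarantees that $\sat_{\QL}(X_i)$ is convex, so the whole interval $[X_i, Y_i]$ is contained in $\sat_{\QL}(X_i)$. As $Z_i \in [X_i, Y_i]$ by hypothesis, I obtain $Z_i \in \sat_{\QL}(X_i)$, i.e. $X_i \preceq_{\QL} Z_i$, equivalently $\QL(X_i) \subset \QL(Z_i)$. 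This is exactly the second condition for an enlargement.

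Next I would check that $(Z_0, \dots, Z_n)$ is a QL-path. Fixing $i$ with $0 \leq i < n$, the pair $(X_i, X_{i+1})$ is quasilinear because $(X_0, \dots, X_n)$ is a QL-path, and I have just shown $X_i \preceq_{\QL} Z_i$ and $X_{i+1} \preceq_{\QL} Z_{i+1}$. These are precisely the hypotheses of Lemma~\ref{lem:4.2}, applied with $(X_1, X_2, X_1', X_2') = (X_i, X_{i+1}, Z_i, Z_{i+1})$, so $\conv(X_i, X_{i+1}, Z_i, Z_{i+1})$ is a quasilinear convex set; in particular the pair $(Z_i, Z_{i+1})$ is quasilinear. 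Since this holds for every $i$, the sequence $(Z_0, \dots, Z_n)$ is a QL-path, and combined with the previous step it is an enlargement of $(X_0, \dots, X_n)$.

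I do not expect a genuine obstacle here: the only point needing care is recognizing that the convexity of the QL-saturum (Theorem~\ref{thm:4.6}) is what forces each $Z_i$ into $\sat_{\QL}(X_i)$, after which Lemma~\ref{lem:4.2} delivers the quasilinearity of consecutive pairs for free. The parenthetical remark that $(Y_0, \dots, Y_n)$ need not be an enlargement of $(Z_0, \dots, Z_n)$ calls for no proof, as it merely records that the opposite inclusion $\QL(Y_i) \subset \QL(Z_i)$ will typically fail when $Z_i$ is a proper interior point of $[X_i, Y_i]$.
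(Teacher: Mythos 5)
Your proof is correct and follows essentially the same route as the paper: both arguments get $Z_i \in \sat_{\QL}(X_i) = X_i^{\uparrow}$ from the convexity of the QL-saturum (Theorem~\ref{thm:4.6}), and both obtain the quasilinearity of each pair $(Z_i, Z_{i+1})$ from the quadrilateral Lemma~\ref{lem:4.2} (the paper phrases this as $Z_i, Z_{i+1}$ lying in the quasilinear hull $\conv(X_i, X_{i+1}, Y_i, Y_{i+1})$, whereas you apply the lemma directly to $(X_i, X_{i+1}, Z_i, Z_{i+1})$ — a cosmetic difference only).
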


\begin{proof} $Z_i \in X_i^{\uparrow}$, since by Theorem \ref{thm:4.6} each set $X_i^{\uparrow}$ is convex. Furthermore, each pair $(Z_i, Z_{i+1})$, $0 \leq i < n$, is quasilinear, since the convex hull of $\{ X_i, X_{i+1}, Y_i, Y_{i+1} \}$ is convex, as we know for long. \end{proof}

\begin{thm}\label{thm:15.4}
 Assume that $(X_0, \dots, X_n)$ is a \textbf{direct} QL-path and $Y$ is a ray in $V$ with $Y^{\uparrow} \cap \{ X_0, \dots, X_n \} \ne \emptyset$. Then $Y^{\uparrow}$ meets the set $\{ X_0, \dots, X_n \}$ either in exactly one ray $X_p$ or in exactly two rays $X_p, X_{p+1}$. In the first case $X_i \not\in \QL (Y)$ for $\vert i - p \vert > 1$ if $0 < p < n$, while $X_i \not\in \QL (Y)$ for $i \geq 2$ if $ p = 0$, and $X_i \not\in \QL (Y)$ for $i \leq n-2$ if $p = n$. In the second case $X_i \not\in \QL (Y)$ for $i \not\in \{ p, p+1 \}$.
\end{thm}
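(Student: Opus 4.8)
The plan is to distill the entire statement into one elementary observation and then read off both assertions from it. First I would unwind the definition of the upset: by \eqref{eq:15.1}, $X_p \in Y^{\uparrow}$ means exactly $\QL(Y) \subset \QL(X_p)$. The crucial remark is then the following \emph{funnel principle}: if $X_p \in Y^{\uparrow}$ and $X_i \in \QL(Y)$ for some index $i$, then $X_i \in \QL(X_p)$. Indeed $X_i \in \QL(Y) \subset \QL(X_p)$ immediately. Since $X_i \in \QL(X_p)$ says that the pair $(X_i, X_p)$ is quasilinear, directness of $(X_0, \dots, X_n)$ (Definition~\ref{def:14.10}) forces $|i - p| \leq 1$.

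Next I would establish the first assertion. I first note that every ray of $Y^{\uparrow} \cap \{X_0, \dots, X_n\}$ automatically lies in $\QL(Y)$: if $X_p \in Y^{\uparrow}$ then $Y \in \QL(Y) \subset \QL(X_p)$ by \eqref{eq:3.3}, whence $X_p \in \QL(Y)$ by the symmetry \eqref{eq:3.4}. Consequently, if $X_p, X_q \in Y^{\uparrow}$ with $p \ne q$, I apply the funnel principle with $X_q$ in the role of $X_i$ (legitimate, since $X_q \in \QL(Y)$) to obtain $|p - q| \leq 1$, i.e. $q = p \pm 1$. Thus any two members of $Y^{\uparrow} \cap \{X_0, \dots, X_n\}$ carry adjacent indices; a third distinct member would be pairwise within distance $1$ of two consecutive indices, which is impossible for three distinct integers. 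Hence $Y^{\uparrow}$ meets the path either in exactly one ray $X_p$ or in exactly two adjacent rays $X_p, X_{p+1}$.

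Finally the \emph{moreover} clauses follow from the same principle, now applied to $\QL(Y)$ rather than $Y^{\uparrow}$. Fix any $X_p \in Y^{\uparrow} \cap \{X_0, \dots, X_n\}$. For every $X_i \in \QL(Y)$ the funnel principle gives $|i - p| \leq 1$, so $X_i \in \QL(Y)$ can occur only for $i \in \{p-1, p, p+1\}$. In the one-ray case with $0 < p < n$ this is precisely $X_i \notin \QL(Y)$ for $|i - p| > 1$; for $p = 0$ the admissible window shrinks to $\{0, 1\}$, giving $X_i \notin \QL(Y)$ for $i \geq 2$; and for $p = n$ it shrinks to $\{n-1, n\}$, giving $X_i \notin \QL(Y)$ for $i \leq n-2$. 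In the two-ray case I run the principle twice, once through $X_p$ and once through $X_{p+1}$, obtaining $|i - p| \leq 1$ and $|i - (p+1)| \leq 1$ simultaneously; the windows $\{p-1, p, p+1\}$ and $\{p, p+1, p+2\}$ intersect in $\{p, p+1\}$, so $X_i \notin \QL(Y)$ whenever $i \notin \{p, p+1\}$.

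I anticipate no serious obstacle here: the proof is purely formal. The only point demanding care is to keep the two relations strictly apart --- the strong membership $X_i \in Y^{\uparrow}$, meaning $\QL(Y) \subset \QL(X_i)$, versus the weak membership $X_i \in \QL(Y)$ --- and to use the inclusion $\QL(Y) \subset \QL(X_p)$ as the funnel through which an arbitrary $X_i \in \QL(Y)$ is pushed into $\QL(X_p)$, where directness finally bites.
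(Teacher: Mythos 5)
Your proof is correct and follows essentially the same route as the paper: your ``funnel principle'' is exactly the paper's key observation (that $X_p \in Y^{\uparrow}$ and $X_i \in \QL(Y)$ force $X_i \in \QL(X_p)$, whence $|i-p|\leq 1$ by directness), applied once for the window claims and twice for the two-ray case, with the same adjacency counting for the first assertion. The only difference is cosmetic: you spell out explicitly, via \eqref{eq:3.3} and \eqref{eq:3.4}, why $X_p \in Y^{\uparrow}$ implies $X_p \in \QL(Y)$, a step the paper uses implicitly in its ``all the more'' remarks.
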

\begin{proof} All assertions are immediate consequences of the following three observations.
\begin{itemize}
\item[a)] Assume that $X_p \in Y^{\uparrow}$ for some $p \in [0, n]$ and that $X_i \in \QL (Y)$ for some $i \in [0, n]$. Then $X_i \in \QL (X_p)$ because $\QL (Y) \subset \QL (X_p)$. Since the path $(X_0, \dots, X_n)$ is direct, this implies $\vert i - p \vert \leq 1$. Thus if $\vert i - p \vert > 1$, then $X_i \not\in \QL (Y)$ and all the more $\QL (X_i) \not\supset \QL (Y)$, i.e. $X_i \not\in Y^{\uparrow}$.
\item[b)] Assume that $X_p \in Y^{\uparrow}$ and $X_q \in Y^{\uparrow}$ for two indices $p < q$ in $[0, n]$, and that $X_i \in \QL (Y)$ for some $i \in [0, n]$. Then $X_i \in \QL (X_p)$ and $X_i \in \QL (X_q)$ because $\QL (Y) \subset \QL (X_p) \cap \QL (X_q)$. Since $(X_0, \dots, X_n)$ is direct this forces $\vert i - p \vert \leq 1$ and $\vert i - q \vert \leq 1$. Thus, if $\vert i - p \vert > 1$ or $\vert i - q \vert > 1$ then $X_i \not\in \QL (Y)$, and all the more $\QL (Y) \not\subset \QL (X_i)$, i.e. $X_i \not\in Y^{\uparrow}$.
\item[c)] Given a ray $X_p \in Y^{\uparrow}$ we conclude from a) that $X_i \in Y^{\uparrow}$ at most for $i = p-1$, $p, p+1$ if $0 < p < n$, while for $p = 0$ $X_i \in Y^{\uparrow}$ at most for $i = 0,1$ and for $p = 0$ at most for $i = n-1, n$. Thus $Y^{\uparrow}$ contains either none or one or two adjacent rays in $\{ X_0, \dots, X_n \}$.
\end{itemize} \vskip -5mm
\end{proof}

%

\begin{defn}\label{def:15.5} $ $
\begin{enumerate} \eroman
\item Suppose  $(X_0, \dots, X_n)$ is a direct $\QL$-path of length $n \geq 1$. We call two adjacent rays $X_p, X_{p+1}$ $(0 \leq p < n)$ \textbf{twins}, if $X_p^{\downarrow} \cap X_{p+1}^{\downarrow} \ne \emptyset$, and also say that $(X_p, X_{p+1})$ is a \textbf{twin pair}.  We say that a ray $X_q$ $(0 \leq q \leq n)$ is a \textbf{single} in the $\QL$-path $(X_0, \dots, X_n)$, if $X_q$ is not a twin, i.e., it is not part of a twin pair.

\item We call a ray $Y$ an \textbf{anchor} of a ray $X_i$ in $(X_0, \dots, X_n)$, if either $X_i$ has a twin $X_j$ $(\vert j - i \vert = 1)$ and $Y \in X_i^{\downarrow} \cap X_j^{\downarrow}$, or $X_i$ is a single and $Y \in X_i^{\downarrow}$. We then also say that $X_i$ is \textbf{anchored at} $Y$.
\end{enumerate}
\end{defn}

By Theorem~\ref{thm:15.4}, a path is single  iff $X_q^{\downarrow} \cap X_j^{\downarrow} = \emptyset$ for every $j$ with $\vert j-q \vert = 1$, which then holds for all $j \ne q$ in $\{ 0, \dots, n \}$.

\begin{proc}\label{proc:15.5}

 We choose an ordered set $S = (Y_0, \dots, Y_m)$ of anchors for the direct QL-path $T = (X_0, \dots, X_n)$ such that for each $i \in [0, n]$ the set of chosen anchors $(Y_0, \dots, Y_k)$ for $(X_0, \dots, X_i)$ is as small as possible, and then call $S$ an \textbf{anchor set of} $T$. More precisely we proceed as follows. We choose an anchor $Y_0$ of $X_0$. If $(X_0, X_1)$ is a twin pair, we choose the  anchor $Y_0$ also for $X_1$. Otherwise we choose for $X_1$ a new anchor $Y_1$. If anchors $(Y_0, \dots, Y_k)$ have been chosen for $(X_0, \dots, X_i)$, $i < n$, we choose for $X_{i+1}$ again the anchor $Y_k$, if $(X_i, X_{i+1})$ is a twin pair and $Y_k$ has not already been chosen twice in the anchor list for $(X_0, \dots, X_i)$, which means that $(X_{i-1}, X_i)$ is not a twin pair.\footnote{cf. \S\ref{sec:16} below about different twin-pairs which are not disjoint.} Otherwise we choose for $X_{i+1}$ a new anchor $Y_{k+1}$. Note that for different rays $X_i, X_j$ with $i < j$ we have in $S$ anchors $Y_k, Y_{\ell}$ with $k \leq \ell$.

 In particular  we can choose as anchor of a single $X_q$ the ray $X_q$ itself. An anchor set $S$ arising in this way is called a \textbf{special anchor set} of the direct $\QL$-path $(X_0, \dots, X_n)$.
\end{proc}

\begin{rem}\label{rem:15.6} $ $
\begin{itemize}
\item[a)] All anchor sets of $T =(X_0, \dots, X_n)$ have the same length $m$. Moreover $m \leq n \leq 2 m$, where  $n = m$ if $S$ contains only singles and $n = 2m$ if $T$ contains only twins, where no two twin pairs have a ray in common.
\item[b)] If $(Y_0, \dots, Y_m)$ is an anchor set of $(X_0, \dots, X_n)$, then every tuple of rays $(Y'_0, \dots, Y'_m)$ with $Y'_k \leq_{\QL} Y_k$ for $0 \leq k \leq m$ is again an anchor set of $(X_0, \dots, X_n)$.
\end{itemize}
All this is evident from Definition \ref{def:15.5}.
\end{rem}

We turn to the problem of  specifying  which finite ordered sets $(Y_0, \dots, Y_m)$ of rays in ~$V$ $(m \geq 1)$ can serve as an anchor set of a direct $\QL$-path. The following fact will be of help.

\begin{lem}\label{lem:15.7}
 Assume that $X_1, X_2, Y_1, Y_2$ are rays in $V$ with $Y_1 \in X_1^{\downarrow}$, $Y_2 \in X_2^{\downarrow}$, and  that the pair $(Y_1, Y_2)$ is quasilinear. Then $(X_1, X_2)$ is quasilinear.
\end{lem}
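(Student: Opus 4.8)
The plan is to unwind the definitions and then chain the two QL-star inclusions across the quasilinear pair, exactly mirroring the argument already used inside the proof of Lemma~\ref{lem:4.2}. First I would translate each hypothesis into a statement about QL-stars: by the definition of the downset \eqref{eq:15.2}, the relation $Y_1 \in X_1^{\downarrow}$ says $\QL(Y_1) \subset \QL(X_1)$, and likewise $Y_2 \in X_2^{\downarrow}$ says $\QL(Y_2) \subset \QL(X_2)$; moreover, by Definition~\ref{def:3.5}, the quasilinearity of the pair $(Y_1, Y_2)$ is precisely the containment $Y_2 \in \QL(Y_1)$. The target assertion, that $(X_1, X_2)$ is quasilinear, is again by Definition~\ref{def:3.5} the single containment $X_1 \in \QL(X_2)$ (equivalently $X_2 \in \QL(X_1)$).

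The core is then a two-step chain. From $Y_2 \in \QL(Y_1)$ together with $\QL(Y_1) \subset \QL(X_1)$ I would conclude $Y_2 \in \QL(X_1)$, that is, the pair $(X_1, Y_2)$ is quasilinear. Applying the symmetry \eqref{eq:3.4}, this reads $X_1 \in \QL(Y_2)$. Now feeding in the second inclusion $\QL(Y_2) \subset \QL(X_2)$ yields $X_1 \in \QL(X_2)$, which is exactly the statement that $(X_1, X_2)$ is quasilinear, completing the argument.

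There is essentially no obstacle here, since no information about $q$ or $b$ is needed beyond the purely formal behaviour of QL-stars and the quasiordering $\preceq_{\QL}$; indeed the claim is just the ``downward'' reformulation of the upward propagation of quasilinearity that Lemma~\ref{lem:4.2} already carries out. The one point that deserves care is the correct use of the symmetry \eqref{eq:3.4} to pass from $Y_2 \in \QL(X_1)$ to $X_1 \in \QL(Y_2)$: this reversal is what allows the two inclusions, which a priori both point ``upward'', to be spliced together across the given quasilinear pair $(Y_1, Y_2)$.
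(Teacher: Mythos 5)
Your proof is correct and is essentially identical to the paper's own argument: both translate the hypotheses into the inclusions $\QL(Y_1)\subset\QL(X_1)$, $\QL(Y_2)\subset\QL(X_2)$ together with $Y_2\in\QL(Y_1)$, deduce $Y_2\in\QL(X_1)$, flip via the symmetry \eqref{eq:3.4} to $X_1\in\QL(Y_2)$, and conclude $X_1\in\QL(X_2)$. No further comment is needed.
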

\begin{proof}
 We have $\QL (Y_1) \subset \QL (X_1)$, $\QL (Y_2) \subset \QL (X_2)$, and $Y_2 \in \QL (Y_1)$. From this we conclude that $Y_2 \in \QL (X_1)$, and then that $X_1 \in \QL (Y_2) \subset \QL (X_2)$, which proves that $(X_1, X_2)$ is quasilinear. \end{proof}

\begin{defn}\label{def:15.8}
 We call a pair $(Y_1, Y_2)$ of rays in $V$ \textbf{subquasilinear}, abbreviated \textbf{sql}, if there exists a quasilinear pair $(X_1, X_2)$ with  $Y_1 \in X_1^{\downarrow}$, $Y_2 \in X_2^{\downarrow}$, and say that the $\QL$-pair $(X_1, X_2)$ \textbf{covers} $(Y_1, Y_2)$. We  call a tuple of rays $(Y_0,  \dots, Y_m)$, $m \geq 1$, \textbf{subquasilinear}, if $(Y_i, Y_{i+1})$ is sql for $0 \leq i < m$, and say  that $(Y_0,  \dots, Y_m)$ is a \textbf{subquasilinear sequence} of \textbf{length} $m$. Finally, we say that an sql sequence $(Y_0, \dots, Y_m)$ is \textbf{direct}, if for any rays $Y_k, Y_{\ell}$ with $\vert k - \ell \vert > 1$ in the sequence  the pair $(Y_k, Y_{\ell})$ is \textbf{not}  quasilinear.
\end{defn}

\begin{thm}\label{thm:15.9}
Every anchor set $S = \{ Y_0, \dots, Y_m \}$ of a direct quasilinear path $(X_0, \dots, X_n)$ is a direct subquasilinear sequence.
\end{thm}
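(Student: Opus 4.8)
The plan is to reduce both assertions---that $S = (Y_0, \dots, Y_m)$ is subquasilinear and that it is direct---to the covering lemma (Lemma~\ref{lem:15.7}) together with the bookkeeping built into the anchor assignment of Procedure~\ref{proc:15.5}. First I would record the combinatorial skeleton of an anchor set. Procedure~\ref{proc:15.5} assigns to each ray $X_i$ a chosen anchor, whose position in $S$ I denote $\pi(i)$, giving a map $\pi : [0,n] \to [0,m]$. From the procedure I would check that $\pi$ is surjective and monotone nondecreasing, with $\pi(0)=0$ and $\pi(n)=m$, so that its fibres $\pi^{-1}(k)$ are nonempty consecutive intervals partitioning $[0,n]$; moreover $Y_{\pi(i)} \in X_i^{\downarrow}$ for every $i$, since $Y_{\pi(i)}$ is by definition an anchor of $X_i$ (Definition~\ref{def:15.5}).

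For subquasilinearity I would fix $k \in [0, m-1]$ and set $i := \max \pi^{-1}(k)$, so that $i+1 = \min \pi^{-1}(k+1)$ because consecutive fibres are adjacent. Then $Y_k \in X_i^{\downarrow}$ and $Y_{k+1} \in X_{i+1}^{\downarrow}$, while $(X_i, X_{i+1})$ is quasilinear as a pair of consecutive rays of the QL-path $T$. Hence $(X_i, X_{i+1})$ covers $(Y_k, Y_{k+1})$ in the sense of Definition~\ref{def:15.8}, so $(Y_k, Y_{k+1})$ is sql. Since $k$ is arbitrary, $S$ is a subquasilinear sequence; note this direction needs only the definitions and the QL-path property, not Lemma~\ref{lem:15.7}.

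For directness I would argue by contradiction, supposing there are $k < \ell$ with $\ell - k \ge 2$ such that $(Y_k, Y_\ell)$ is quasilinear. Picking $a \in \pi^{-1}(k)$ and $b \in \pi^{-1}(\ell)$, the nonempty intermediate fibre $\pi^{-1}(k+1)$ has all its indices strictly between $a$ and $b$, forcing $b \ge a+2$. Since $Y_k \in X_a^{\downarrow}$, $Y_\ell \in X_b^{\downarrow}$, and $(Y_k, Y_\ell)$ is quasilinear, Lemma~\ref{lem:15.7} yields that $(X_a, X_b)$ is quasilinear, contradicting the directness of $T$, which forbids quasilinear pairs $(X_a, X_b)$ with $\vert a-b \vert \ge 2$. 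Thus no such $(Y_k, Y_\ell)$ can be quasilinear, and $S$ is direct.

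The only genuinely delicate point, and the step I expect to be the main obstacle, is the index bookkeeping in the first paragraph: one must extract from Procedure~\ref{proc:15.5} that $\pi$ is monotone with consecutive nonempty fibres, and that traversing an intermediate fibre forces the ray indices $a$ and $b$ to differ by at least $2$. Once this is in place, both conclusions are immediate applications of Definition~\ref{def:15.8} and of Lemma~\ref{lem:15.7} respectively, with the directness of $T$ supplying the contradiction.
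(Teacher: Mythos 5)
Your proof is correct and follows essentially the same route as the paper's: both arguments rest on the monotone anchor bookkeeping of Procedure~\ref{proc:15.5} (consecutive rays receive the same or consecutive anchors), on covering adjacent anchor pairs $(Y_k,Y_{k+1})$ by the quasilinear pair $(X_i,X_{i+1})$ to get subquasilinearity, and on the contrapositive of Lemma~\ref{lem:15.7} together with directness of $T$ to rule out quasilinear pairs $(Y_k,Y_\ell)$ with $\ell - k \geq 2$. Your explicit map $\pi$ with consecutive nonempty fibres is just a cleaner formalization of the index argument the paper leaves implicit ("it is clear by Procedure~\ref{proc:15.5} that $\ell = k+1$").
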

\begin{proof}
 Let $Y_k$ and $Y_{\ell}$ be different rays in $S$ with $k < \ell$, and let $X_i$ and $X_j$ be rays in $(X_0, \dots, X_n)$ which are anchored at $Y_k$ and $Y_{\ell}$ respectively, $i < j$. Thus $Y_k \in X_i^{\downarrow}$ and $Y_{\ell} \in X_j^{\downarrow}$. If $j > i + 1$, then $(X_i, X_j)$ is not quasilinear, since the $\QL$-path $(X_0, \dots, X_n)$ is direct. It follows by Lemma~\ref{lem:15.7} that $(Y_k, Y_{\ell})$ is \textbf{not} quasilinear. Assume now that $j = i + 1$. Then it is clear by Procedure~\ref{proc:15.5} that $\ell = k + 1$. Since the $\QL$-pair $(X_i, X_{i+1})$ covers that pair $(Y_k, Y_{k+1})$, evidently, $(Y_0, \dots, Y_m)$ is a direct subquasilinear sequence. \end{proof}

\section{Minimal QL-paths and their anchor sets; the appearance of flocks }\label{sec:16}

Our first topic in this section is the case that in a direct QL-path there exist adjacent twin pairs, which are not disjoint.

\begin{defn}\label{def:16.1}
 We call a subsequence $(X_p, \dots, X_q)$ of a direct QL-path $T = (X_0, \dots, X_n)$ with $q - p \geq 2$ a \textbf{flock} in $T$, if $(X_i, X_{i+1})$ is a twin pair in $T$ for every $i$ in $[p, q-1]$, and so $T$ has an anchor set $S = (Y_0, \dots, Y_m)$, in which these twin pairs have anchors $Y_k, Y_{k+1}, \dots, Y_{k+t}$ for some $k \in [0, m]$ and $t = q - p -1$. In other terms, we have a diagram

\begin{equation}\label{eq:16.1}
\setlength{\arraycolsep}{0.1cm}
\begin{array}{ccccccccc}
\Rnode{a}{X_p} & \Rnode{b}{^{\hbox to 0.6cm{\hrulefill}}} & \Rnode{c}{X_{p+1}} & \Rnode{d}{^{\hbox to 0.6cm{\hrulefill}}}
 & \Rnode{e}{X_{p+2}} & \Rnode{f}{\raisebox{1mm}[4mm][2mm]{\hbox to 0.6cm{\hrulefill} \dots \hbox to 0.6cm{\hrulefill}}} & \Rnode{g}{X_{q-1}} & \Rnode{h}{^{\hbox to 0.6cm{\hrulefill}}} & \Rnode{i}{X_q} \\[1cm]
               & \Rnode{j}{Y_k} &      & \Rnode{k}{Y_{k+1}} &      & \Rnode{l}{Y_{k+2}} &      &
\Rnode{m}{Y_{k+t}} & \qquad .     \\
\end{array}
\psset{nodesep=5pt,arrows=->} \everypsbox{\scriptstyle}
\ncLine{a}{j} \ncLine{c}{j}
\ncline{c}{k} \ncline{e}{k}
\ncLine{e}{l} \ncLine{g}{m}
\ncline{i}{m}
\end{equation}
We call $q-p = t+1$ the \textbf{length} of the flock $(X_p, \dots, X_q)$. We further call a twin pair in $T$, which is not a member of a flock, an \textbf{isolated twin pair}.
\end{defn}

\begin{rem}\label{rem:16.2} $ $
\begin{itemize}
\item[a)] In a flock, as seen in diagram \eqref{eq:16.1}, the common ray of any two adjacent twin pairs has two anchors in the sequence $S$, while each ray in an isolated twin pair has only one anchor in $S$.
\item[b)] None of the adjacent pairs $(Y_{k+i-1}, Y_{k+i})$ in \eqref{eq:16.1} is quasilinear (but, of course, is ~sql), since otherwise the QL-path $(X_p, \dots, X_q)$ would not be direct. Indeed, if, say, $(Y_k, Y_{k+1})$ would be ql, then by Lemma~\ref{lem:15.7} the pair $(X_p, X_{p+2})$ would be ql.
\item[c)] For the same reason it cannot happen in $S$, that two non-adjacent rays form a quasilinear pair, as already stated in Theorem~\ref{thm:15.9}.
\end{itemize}
\end{rem}

\begin{lem}\label{lem:16.3}
 Assume that $T = (X_0, \dots, X_n)$ is a QL-path, and that for given $p, q \in [0, n]$ with $p+1 < q$ the sets $(X^{\downarrow}_p)^{\uparrow}$ and $(X^{\downarrow}_q)^{\uparrow}$ are not disjoint. We choose $Y_1 \in X_p^{\downarrow}$, $Y_2 \in X_q^{\downarrow}$ such that there is some $Z \in Y_1^{\uparrow} \cap Y_2^{\uparrow}$.

\begin{itemize}
  \item[a)] Then we have a QL-path $(X_0, \dots, X_p, Z, X_q, \dots, X_n)$ with a diagram
\[
\setlength{\arraycolsep}{0.1cm}
\begin{array}{ccccccc}
\Rnode{a}{\raisebox{1mm}[4mm][2mm]{\dots \hbox to 0.6cm{\hrulefill}}} & \Rnode{b}{X_p} & \Rnode{c}{^{\hbox to 0.6cm{\hrulefill}}} & \Rnode{d}{Z}
 & \Rnode{e}{^{\hbox to 0.6cm{\hrulefill}}} & \Rnode{f}{X_q} & \Rnode{g}{\raisebox{1mm}[4mm][2mm]{\dots \hbox to 0.6cm{\hrulefill}}} \\[1cm]
    &      & \Rnode{h}{Y_1} &      & \Rnode{i}{Y_2} &      &    \\
\end{array}
\psset{nodesep=5pt,arrows=->} \everypsbox{\scriptstyle}
\ncLine{b}{h} \ncLine{d}{h}
\ncline{d}{i} \ncline{f}{i}
\]

\item[b)] If the new path $T' = (X_0, \dots, X_p, Z, X_q, \dots, X_n)$ is direct, then $(X_p, Z)$ and $(Z, X_q)$ are twin pairs, and so $(X_p, Z, X_q)$ is a flock of length 2 in $T'$.
\item[c)] If $q = p + 2$, then $T'$ has again length $n$; otherwise $T'$ is shorter.

\end{itemize}
\end{lem}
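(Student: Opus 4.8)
The plan is to unwind the up/down-set notation and then reduce parts (a) and (b) to a single application of Lemma~\ref{lem:15.7}. The decisive preliminary remark is that, by \eqref{eq:15.1} and \eqref{eq:15.2}, the hypothesis $Z \in Y_1^{\uparrow} \cap Y_2^{\uparrow}$ translates into $\QL(Y_1) \subset \QL(Z)$ and $\QL(Y_2) \subset \QL(Z)$, that is, $Y_1 \in Z^{\downarrow}$ and $Y_2 \in Z^{\downarrow}$. Together with the given $Y_1 \in X_p^{\downarrow}$ and $Y_2 \in X_q^{\downarrow}$, this exhibits $Y_1$ as a common element of $X_p^{\downarrow}$ and $Z^{\downarrow}$, and $Y_2$ as a common element of $Z^{\downarrow}$ and $X_q^{\downarrow}$. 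This one observation drives everything that follows.

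For part (a) I would apply Lemma~\ref{lem:15.7} to the pair $(X_p, Z)$, choosing the two covering lower rays to coincide with $Y_1$: since $Y_1 \in X_p^{\downarrow}$, $Y_1 \in Z^{\downarrow}$, and the loop $(Y_1, Y_1)$ is quasilinear by reflexivity \eqref{eq:3.3} (cf. Definition~\ref{def:14.2}), the lemma yields that $(X_p, Z)$ is quasilinear. Applying the same argument to $(Z, X_q)$ with the common lower ray $Y_2$ shows that $(Z, X_q)$ is quasilinear. As the segments $(X_0, \dots, X_p)$ and $(X_q, \dots, X_n)$ are untouched portions of the original QL-path $T$, all their consecutive pairs remain quasilinear; hence $(X_0, \dots, X_p, Z, X_q, \dots, X_n)$ is a QL-path, which proves (a) together with its diagram.

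For part (b), assuming $T'$ is direct, the rays $X_p$ and $Z$ are adjacent in $T'$, and $Y_1 \in X_p^{\downarrow} \cap Z^{\downarrow}$ shows that this intersection is nonempty, so $(X_p, Z)$ is a twin pair in the sense of Definition~\ref{def:15.5}. Likewise $Y_2 \in Z^{\downarrow} \cap X_q^{\downarrow}$ makes $(Z, X_q)$ a twin pair. Two consecutive twin pairs sharing the ray $Z$ form, by Definition~\ref{def:16.1}, a flock; here the subsequence $(X_p, Z, X_q)$ runs over exactly two twin pairs, so it is a flock of length $2$ in $T'$.

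For part (c) I would simply count rays. Passing from $T$ to $T'$ deletes the $q - p - 1$ rays $X_{p+1}, \dots, X_{q-1}$ and inserts the single ray $Z$, so the number of rays changes by $1 - (q - p - 1)$ and the length of $T'$ equals $n - (q - p - 2)$. Thus $T'$ has length $n$ exactly when $q = p + 2$, and is strictly shorter when $q > p + 2$. The whole argument is essentially bookkeeping once Lemma~\ref{lem:15.7} is in hand; the only point requiring a little care---and the one I would flag as the main (mild) obstacle---is the recognition that the two covering lower rays in Lemma~\ref{lem:15.7} may be taken equal, using the reflexive loop, which is precisely what converts a common point of two down-sets simultaneously into the quasilinearity of part (a) and the twin relation of part (b).
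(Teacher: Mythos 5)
Your proposal is correct, and its overall structure matches the paper's proof exactly: establish that the two new adjacent pairs $(X_p,Z)$ and $(Z,X_q)$ are quasilinear, then get (b) straight from Definitions~\ref{def:15.5} and \ref{def:16.1}, and (c) by counting. The one genuine difference is the justification of the key step. The paper quotes Theorem~\ref{thm:4.6}: $Y_1^{\uparrow}$ and $Y_2^{\uparrow}$ are convex quasilinear sets, and each new pair lies inside one of them ($X_p, Z \in Y_1^{\uparrow}$, resp.\ $Z, X_q \in Y_2^{\uparrow}$), hence is quasilinear. You instead invoke Lemma~\ref{lem:15.7} with the two covering rays taken equal (both $Y_1$, resp.\ both $Y_2$), which requires the reflexive loop $(Y_1,Y_1)$ being quasilinear, i.e.\ \eqref{eq:3.3}. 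These are equivalent: unwinding Lemma~\ref{lem:15.7} in this degenerate case ($\QL(Y_1)\subset\QL(X_p)$, $\QL(Y_1)\subset\QL(Z)$, $Y_1\in\QL(Y_1)$, hence $X_p\in\QL(Y_1)\subset\QL(Z)$) is precisely the argument showing that any two rays of an up-set form a quasilinear pair, which is the only part of Theorem~\ref{thm:4.6} the paper actually uses here. Your route is marginally more economical, since it bypasses the convexity statement of Theorem~\ref{thm:4.6} entirely; the paper's citation is heavier than needed but makes the geometric picture (up-sets as quasilinear convex sets) more visible, which is the viewpoint reused throughout \S\ref{sec:16}--\S\ref{sec:17}. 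Your flag about the degenerate covering pair is exactly the right point of care, and it is legitimately covered by \eqref{eq:3.3}.
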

\begin{proof}
 The pairs $(X_p, Z)$ and $(Z, X_q)$ are quasilinear, since $Y_1^{\uparrow}$ and $Y_2^{\uparrow}$ are convex quasilinear sets (Theorem~\ref{thm:4.6}). Thus $T'$ is indeed a quasilinear path. Now assertions b) and c) are immediate by Definition~\ref{def:16.1} and an easy counting. \end{proof}

The crux in this lemma is that, even if we assume that $T$ is direct, in general there is no apparent way to decide whether $T'$ is direct or not.

\begin{defn}\label{def:16.4}
 We call a QL-path $(X_0, \dots, X_n)$ \textbf{minimal}, if there does not exist a QL-path from $X_0$ to $X_n$ of length $< n$.
\end{defn}

If in Lemma~16.3 the path $T$ is minimal, then $T'$ is again minimal (and $q = p+2$), and so~ $T'$ is certainly direct.

\begin{thm}\label{thm:16.5}
 Assume that $T = (X_0, \dots, X_n)$ is a minimal QL-path of length $n \geq 2$, and $S = (Y_0, \dots, Y_m)$ is an anchor set of $T$.
Assume further that $(Y_k, \dots, Y_{k+t+1})$ is a maximal subsequence of $S$ with $t \geq 0$, $k \geq 0$, and $k + t < m$, such that
\begin{equation}\label{eq:16.2}
 Y_i^{\uparrow} \cap Y_{i+1}^{\uparrow} \ne \emptyset \quad \mbox{for} \; k \leq i \leq k + t. \end{equation}
Choose rays $X_s$ and $X_r$ in $T$ such that $Y_k$ is an anchor of $X_s$ and $Y_{k+t+1}$ an anchor of $X_r$ in~ $S$. Finally choose rays $W_i \in Y_i^{\uparrow} \cap Y_{i+1}^{\uparrow}$ for $k \leq i \leq k+t$. Then
\[ T' := (X_0, \dots, X_s, W_k, \dots, W_{k+t}, X_r, \dots, X_n) \]

\noi
is again a minimal QL-path of length $n$ which admits $S$ as anchor set. The sequence $(X_s, W_k, \dots, W_{k+t}, X_r)$ is a flock in $T$ of length $t + 2$ with anchors $Y_k, \dots, Y_{k+t+1}$. This flock is maximal in $T'$, i.e., there is no flock of length $> t + 2$ in $T'$ which contains $(X_s, W_k, \dots, W_{k+t}, X_r)$.
\end{thm}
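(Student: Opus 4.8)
The plan is to realize $T'$ as the path obtained from $T$ by excising the internal block $X_{s+1},\dots,X_{r-1}$ and splicing in $W_k,\dots,W_{k+t}$, and then to check in turn that $T'$ is a QL-path, that the spliced segment is a flock of length $t+2$ carrying $S$ as anchor set, that $T'$ has length $n$ and is minimal, and finally that the flock is maximal. First I would verify the QL-path property. The only new adjacencies are $(X_s,W_k)$, the pairs $(W_i,W_{i+1})$ for $k\le i<k+t$, and $(W_{k+t},X_r)$, the flanking blocks $(X_0,\dots,X_s)$ and $(X_r,\dots,X_n)$ being inherited from $T$. Since an anchor of a ray lies in that ray's down-set (Definition~\ref{def:15.5}), we have $Y_k\in X_s^{\downarrow}$ and $Y_{k+t+1}\in X_r^{\downarrow}$, equivalently $X_s\in Y_k^{\uparrow}$ and $X_r\in Y_{k+t+1}^{\uparrow}$; together with $W_i\in Y_i^{\uparrow}\cap Y_{i+1}^{\uparrow}$ this places each new pair inside a single up-set ($X_s,W_k$ in $Y_k^{\uparrow}$; $W_i,W_{i+1}$ in $Y_{i+1}^{\uparrow}$; $W_{k+t},X_r$ in $Y_{k+t+1}^{\uparrow}$). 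By \eqref{eq:15.1} each $Y_j^{\uparrow}=\sat_{\QL}(Y_j)$ is a convex quasilinear set (\thmref{thm:4.6}), so any two rays lying in a common up-set form a quasilinear pair, and $T'$ is a QL-path from $X_0$ to $X_n$.

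The same bookkeeping yields the flock structure. Reading off common lower bounds, $Y_k\in X_s^{\downarrow}\cap W_k^{\downarrow}$, $Y_{i+1}\in W_i^{\downarrow}\cap W_{i+1}^{\downarrow}$ for $k\le i<k+t$, and $Y_{k+t+1}\in W_{k+t}^{\downarrow}\cap X_r^{\downarrow}$, so every consecutive pair of $(X_s,W_k,\dots,W_{k+t},X_r)$ is a twin pair (Definition~\ref{def:15.5}) anchored successively at $Y_k,Y_{k+1},\dots,Y_{k+t+1}$. By Definition~\ref{def:16.1} this exhibits a flock of length $t+2$ with exactly these $t+2$ anchors, and substituting this run for the anchors of the excised block shows that $S$ is again produced by Procedure~\ref{proc:15.5}, i.e. $S$ is an anchor set of $T'$.

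The crux is the length identity $r-s=t+2$, which forces $T'$ to have length $n$ and hence, as a QL-path from $X_0$ to $X_n$ of the minimal length of $T$, to be minimal. For the upper bound, the subpath $(X_s,\dots,X_r)$ of the minimal path $T$ is itself a minimal QL-path between its endpoints, while $(X_s,W_k,\dots,W_{k+t},X_r)$ is a competing QL-path of length $t+2$, giving $r-s\le t+2$. For the lower bound I would push the intersection hypotheses through $Y_i^{\uparrow}\subset (X_{a_i}^{\downarrow})^{\uparrow}$, where $X_{a_i}$ denotes a ray anchored at $Y_i$; the condition $Y_i^{\uparrow}\cap Y_{i+1}^{\uparrow}\neq\emptyset$ then places consecutive host rays in the situation of \lemref{lem:16.3}, and minimality of $T$ (the observation following Definition~\ref{def:16.4}, that a bridge across a minimal path forces $q=p+2$) pins each bridgeable gap to width exactly two. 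Chaining the $t+1$ steps from $Y_k$ to $Y_{k+t+1}$ shows that the anchor run is carried by a flock of length $t+2$ in $T$ with left end $X_s$ (anchored at $Y_k$) and right end $X_r$ (anchored at $Y_{k+t+1}$), so $r-s=t+2$.

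Finally, maximality of the flock in $T'$ is inherited from maximality of the run $(Y_k,\dots,Y_{k+t+1})$ in $S$: a left extension would make $(X_{s-1},X_s)$ a twin pair whose shared anchor is the preceding $Y_{k-1}$, whence $Y_{k-1},Y_k\in X_s^{\downarrow}$ gives $X_s\in Y_{k-1}^{\uparrow}\cap Y_k^{\uparrow}$, contradicting maximality of the run; the right-hand case is symmetric, using \lemref{lem:15.7} and \thmref{thm:15.4} to locate the anchors. I expect the genuine obstacle to be the lower bound of the third paragraph: making rigorous that distinct consecutive anchors with intersecting up-sets sit exactly two steps apart in the direct path $T$, simultaneously excluding a shorter spacing and the possibility that one anchor secretly serves three rays. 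This is precisely where minimality (via \lemref{lem:16.3} and Definition~\ref{def:16.4}) must be combined carefully with the directness and economy of the anchor set (\thmref{thm:15.4} and Procedure~\ref{proc:15.5}).
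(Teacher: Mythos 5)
Your proposal follows the paper's own architecture almost step for step: quasilinearity of the new adjacencies because each pair $(X_s,W_k)$, $(W_i,W_{i+1})$, $(W_{k+t},X_r)$ lies in a common up-set $Y_j^{\uparrow}=\sat_{\QL}(Y_j)$, which is convex and quasilinear by Theorem~\ref{thm:4.6}; the reading-off of the twin pairs, the flock, and $S$ as an anchor set of $T'$; and maximality of the flock from maximality of the run $(Y_k,\dots,Y_{k+t+1})$. (The paper assembles $T'$ by iterating Lemma~\ref{lem:16.3} to interleave the $W_i$ with rays of $T$ and then deleting the intermediate rays, but the underlying facts are identical.) Your upper bound $r-s\le t+2$, obtained from minimality of the subpath $(X_s,\dots,X_r)$, is correct and is exactly how minimality enters the paper's argument, where it appears as: $T'$ is a QL-path from $X_0$ to $X_n$ of length $\le n$, hence of length exactly $n$ and minimal.

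The genuine gap is the one you flag yourself: the lower bound $r-s\ge t+2$, i.e.\ that $T'$ does not come out \emph{longer} than $n$. Moreover, your proposed route to it cannot work as written: if each of the $t+1$ consecutive pairs of anchored rays were ``exactly two steps apart,'' chaining would give $r-s=2(t+1)$, contradicting your own upper bound $r-s\le t+2$ as soon as $t\ge 1$. The mechanism that actually makes the count close is that consecutive track anchors typically \emph{share} an anchored ray, as inside a flock ($Y_{k+i}\in X_{p+i}^{\downarrow}\cap X_{p+i+1}^{\downarrow}$ and $Y_{k+i+1}\in X_{p+i+1}^{\downarrow}\cap X_{p+i+2}^{\downarrow}$), each such step contributing span one, while Lemma~\ref{lem:16.3} together with minimality ($q=p+2$) governs only the steps whose anchored rays are disjoint. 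What has to be excluded is the configuration of two consecutive \emph{singles} $X_a,X_{a+1}$ whose anchors have intersecting up-sets: there the span is $1$, no choice of $X_s,X_r$ repairs it, and $T'$ would have length $n+1$. Neither your sketch nor, it should be said, the paper's own proof addresses this case explicitly -- the paper's diagram obtained by ``using Lemma~\ref{lem:16.3} iteratively'' simply presupposes the interleaved, flock-like pattern of anchored rays. So the obstacle you identify is real and is precisely where the work lies; closing it requires the combinatorics of Procedure~\ref{proc:15.5} and Scholium~\ref{schol:16.7} (which rays each $Y_j$ may serve, legal versus illegal anchors, Theorems~\ref{thm:16.8} and \ref{thm:16.9}) and not the bridge lemma alone.
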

\begin{proof}
  Using  Lemma \ref{lem:16.3} iteratively, we obtain a diagram
\begin{equation}\label{eq:str}
\setlength{\arraycolsep}{0.1cm}
\begin{array}{ccccccccccccc}
\Rnode{a}{^{\dots} \; \longrightarrow} &\Rnode{b}{X_s} & \Rnode{c}{^{\hbox to 0.6cm{\hrulefill}}} & \Rnode{d}{W_k} & \Rnode{e}{^{\hbox to 0.6cm{\hrulefill}}} & \Rnode{f}{X_{s+1}} & \Rnode{g}{^{\hbox to 0.6cm{\hrulefill}}} & \Rnode{h}{W_{k+1}} & \Rnode{i}{^{\hbox to 0.6cm{\hrulefill}}} & \Rnode{j}{X_{s+2}} & \Rnode{k}{\longrightarrow} & \Rnode{l}{W_{k+2}} & \Rnode{m}{^{\hbox to 0.6cm{\hrulefill} \; \dots}} \\[1cm]
    &    & \Rnode{n}{Y_k} &   &   & \Rnode{o}{Y_{k+1}} &   &    &   & \Rnode{p}{Y_{k+2}} &    &    &    \\
\end{array}
\psset{nodesep=5pt,arrows=->} \everypsbox{\scriptstyle}
\ncLine{b}{n} \ncLine{d}{n}
\ncline{d}{o} \ncline{f}{o}
\ncLine{h}{o} \ncLine{h}{p}
\ncline{j}{p} \ncline{l}{p}
\tag{$*$}\end{equation}
The upper row is a QL-path, in which the pairs $(W_k, W_{k+1})$, $(W_{k+1}, W_{k+2}), \dots$ are quasilinear, since they are contained in the convex quasilinear sets $Y_{k+1}^{\uparrow}$, $Y_{k+2}^{\uparrow}, \dots$. Thus ($\ast$) can be reduced to a diagram
\begin{equation}\label{eq:16.3}
\setlength{\arraycolsep}{0.1cm}
\begin{array}{ccccccccccc}
\Rnode{a}{^{\dots \; \hbox to 0.6cm{\hrulefill}}} & \Rnode{b}{X_s} & \Rnode{c}{^{\hbox to 0.6cm{\hrulefill}}} & \Rnode{d}{W_k} & \Rnode{e}{^{\hbox to 0.6cm{\hrulefill}}} & \Rnode{f}{W_{k+1}} & \Rnode{g}{^{\hbox to 0.6cm{\hrulefill} \dots}} & \Rnode{h}{W_{k+t}} & \Rnode{i}{^{\hbox to 0.6cm{\hrulefill}}} & \Rnode{j}{X_r} & \Rnode{k}{^{\hbox to 0.6cm{\hrulefill} \; \dots}} \\[1cm]
    &    & \Rnode{l}{Y_k} &      & \Rnode{m}{Y_{k+1}} &      & \Rnode{n}{Y_{k+t}} &      &
\Rnode{o}{Y_{k+t+1}} &  & \qquad ,     \\
\end{array}
\psset{nodesep=5pt,arrows=->} \everypsbox{\scriptstyle}
\ncLine{b}{l} \ncLine{d}{l}
\ncline{d}{m} \ncline{f}{m}
\ncLine{h}{n} \ncLine{h}{o}
\ncline{j}{o}
\end{equation}
where to the left of $W_k$ and $Y_k$, and to the right of $W_{k+t}$ and $Y_{k+t}$ there appear parts of the anchor diagram of $T$ over $S$. In the upper row of \eqref{eq:16.3} we have a QL-path
$$ T':= (X_0, \dots, X_s, W_k, \dots, W_{k+t}, X_r, \dots, X_n) $$
of length $\leq n$ starting at $X_0$ and ending at $X_n$, as does $T$. Thus $T'$ is minimal, whence direct, so that we can speak about twin pairs and flocks in $T'$, and $T'$ has again length $n$.

From diagram \eqref{eq:16.3} we read off that $(X_s, W_k, \dots, W_{k+t}, X_r)$ is a flock in $T'$ of length $t + 2$. Since $Y_{k-1}^{\uparrow} \cap Y_k^{\uparrow} = \emptyset$ and $Y_{k+t+1}^{\uparrow} \cap Y_{k+t+2}^{\uparrow} = \emptyset$, it is evident that $Y_{k-1}$ and $Y_{k+t+2}$ are not anchors of $X_s$ and $X_r$ respectively. It follows that $T'$ admits $S$ as an anchor set,\footnote{Recall Procedure~\ref{proc:15.5}.} and then, that the flock $(X_s, W_k, \dots, W_{k+t}, X_r)$ is maximal in $T'$. \end{proof}

Exploring properties of anchor sets beyond Theorem~\ref{thm:15.9}, we take a closer look at Procedure~\ref{proc:15.5}  to obtain such sets.

\begin{defn}\label{def:16.6}
 Assume that $T = (X_0, \dots, X_n)$ is a direct QL-path and $S = (Y_0, \dots, Y_n)$ is  an anchor set of $T$. If $X_i$ is a ray in $T$, we call the ray $Y_k$ chosen for $X_i$ as anchor in $S$ the \textbf{legal anchor} of $X_i$ in $S$. The other anchors of $X_i$ in $S$ are named \textbf{illegal anchors}. \end{defn}
\noi (In fact $X_i$ can have at most one illegal anchor in $S$, see below).

Looking at Procedure~\ref{proc:15.5}, the following is now easily verified.

\begin{schol}\label{schol:16.7}

$ $
\begin{enumerate}
  \item[ a)] A single $X_i$ in $T$ has only a legal anchor in $S$.

\item[b)] Both rays in an isolated twin pair $(X_i, X_{i+1})$ have one common legal anchor in $S$, and no illegal anchors.
\item[c)] If $(X_p, X_{p+1}, \dots, X_q)$ is a maximal flock in $T$, $q = p + t +1$ with $t \geq 0$, then each \textit{interior ray} $X_i$, $p < i < q$, of the flock has one legal and one illegal anchor in $S$, while the \textit{border rays} $X_p$ and $X_q$ have only legal anchors. These are $Y_k$ and $Y_{k+t+1}$. The legal anchors of $X_{p+1}, \dots, X_{p+t}$ are $Y_k, \dots, Y_{k+t}$, while the illegal anchors of these rays are $Y_{k+1}, \dots, Y_{k+t+1}$.
\item[d)] It follows that, if a ray $X_i$ has a legal anchor $Y_u$ and an illegal anchor $Y_w$ in $S$, then $w = u+1$.
    \end{enumerate}

    \end{schol}

\begin{thm}\label{thm:16.8}
 Assume that $S = (Y_0, \dots, Y_m)$ is an anchor set of a direct QL-path $T = (X_0, \dots, X_n)$. Then the downsets $Y_0^{\downarrow}, \dots, Y_m^{\downarrow}$ are pairwise disjoint.
\end{thm}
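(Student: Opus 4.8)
The plan is to argue by contradiction: suppose $Y_k^{\downarrow} \cap Y_{\ell}^{\downarrow} \neq \emptyset$ for two indices $k < \ell$, and fix a ray $W$ in this intersection. The first step is to transport $W$ down onto the path $T$ itself. Indeed, if $X_i$ is any ray of $T$ anchored at $Y_k$, then $Y_k \in X_i^{\downarrow}$ (Definition~\ref{def:15.5}), i.e. $\QL(Y_k) \subseteq \QL(X_i)$; combined with $\QL(W) \subseteq \QL(Y_k)$ this gives $\QL(W) \subseteq \QL(X_i)$, that is $W \in X_i^{\downarrow}$. Likewise $W \in X_j^{\downarrow}$ for any ray $X_j$ anchored at $Y_{\ell}$. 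Hence $W \in X_i^{\downarrow} \cap X_j^{\downarrow}$ for every such pair $(i,j)$.

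The second step records the key disjointness on the path. Since $W \in X_i^{\downarrow}$ is the same as $X_i \in W^{\uparrow}$, the upset $W^{\uparrow}$ contains both $X_i$ and $X_j$; because $T$ is direct, Theorem~\ref{thm:15.4} then shows that $X_i^{\downarrow} \cap X_j^{\downarrow} = \emptyset$ whenever $|i - j| \geq 2$ (two non-adjacent rays cannot both lie in $W^{\uparrow}$), and also whenever $|i-j| = 1$ but $(X_i, X_j)$ is not a twin pair, in particular whenever one of $X_i, X_j$ is a single (cf. Definition~\ref{def:15.5}). So it suffices to exhibit, for the given anchors $Y_k$ and $Y_{\ell}$, one ray $X_i$ anchored at $Y_k$ and one ray $X_j$ anchored at $Y_{\ell}$ with $X_i^{\downarrow}\cap X_j^{\downarrow} = \emptyset$; this will contradict $W \in X_i^{\downarrow} \cap X_j^{\downarrow}$.

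For the non-adjacent case $\ell \geq k+2$ I would shortcut the bookkeeping: from $W \in Y_k^{\downarrow} \cap Y_{\ell}^{\downarrow}$ we get $Y_k, Y_{\ell} \in \sat_{\QL}(W) = W^{\uparrow}$, which is a convex quasilinear set by Theorem~\ref{thm:4.6}, so the pair $(Y_k, Y_{\ell})$ is quasilinear; but by Theorem~\ref{thm:15.9} the anchor set $S$ is a direct subquasilinear sequence, so $(Y_k, Y_{\ell})$ is \emph{not} quasilinear, a contradiction. This leaves only consecutive anchors $\ell = k+1$, which I handle by the explicit choice demanded in step two. Here one invokes the economy of Procedure~\ref{proc:15.5} and the trichotomy of Scholium~\ref{schol:16.7}: the indices anchored at $Y_k$ form one or two adjacent values, all lying (weakly) below those anchored at $Y_{k+1}$, the only possible overlap being one ray shared in the interior of a flock. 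Taking $X_i$ to be the smallest ray anchored at $Y_k$ and $X_j$ the largest ray anchored at $Y_{k+1}$, one checks $j \geq i+2$ in every configuration except when each of $Y_k, Y_{k+1}$ anchors a single ray at adjacent indices $i$ and $i+1$; in that last case $X_i$ is a single, so $X_i^{\downarrow} \cap X_{i+1}^{\downarrow} = \emptyset$ directly. In all cases $W$ lands in an empty set, the desired contradiction.

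The main obstacle is precisely this last bit of bookkeeping for consecutive anchors: one must guarantee that the transition from $Y_k$ to $Y_{k+1}$ never forces the two chosen anchored rays to coincide or to be genuine twins, and this is exactly what the single / isolated-twin / flock classification of Scholium~\ref{schol:16.7}, together with the monotonicity of the anchor assignment in Procedure~\ref{proc:15.5}, supplies. Everything else is a formal manipulation of the quasiordering $\preceq_{\QL}$ and a single appeal to the directness of $T$ through Theorem~\ref{thm:15.4}.
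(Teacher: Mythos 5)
Your proof is correct, and it runs on the same engine as the paper's: the common ray (your $W$, the paper's $Z$) satisfies $Y_k^{\uparrow} \cup Y_{\ell}^{\uparrow} \subset W^{\uparrow}$, so every ray of $T$ anchored at $Y_k$ or $Y_{\ell}$ lies in $W^{\uparrow}$, while Theorem~\ref{thm:15.4} caps $W^{\uparrow} \cap T$ at two adjacent rays. Where you diverge is in how the contradiction is extracted. The paper picks the \emph{legal} anchors $X_p$ of $Y_k$ and $X_q$ of $Y_{\ell}$, concludes from Theorem~\ref{thm:15.4} that $(X_p, X_{p+1})$ is a twin pair with common anchor $Z$ (which forces $\ell = k+1$), and then invokes Scholium~\ref{schol:16.7}.b: a twin pair whose two rays have \emph{distinct} legal anchors cannot be isolated, hence sits inside a flock, which puts a third ray of $T$ into $Z^{\uparrow}$ --- contradiction (the paper's citation of Theorem~\ref{thm:15.9} at that final step is evidently a slip for Theorem~\ref{thm:15.4}). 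You instead split on $\ell - k$. Your non-adjacent case $\ell \geq k+2$ is genuinely different and cleaner: $Y_k, Y_{\ell} \in W^{\uparrow} = \sat_{\QL}(W)$, which is quasilinear convex by Theorem~\ref{thm:4.6}, so $(Y_k, Y_{\ell})$ would be quasilinear, contradicting the directness of the sql sequence $S$ (Theorem~\ref{thm:15.9}); this uses Theorem~\ref{thm:15.9} as a positive tool and needs no anchor bookkeeping at all, whereas the paper never splits and never uses that theorem (except via the misprinted citation). Your adjacent case $\ell = k+1$ then does the bookkeeping the paper's flock argument compresses: an enumeration, via Procedure~\ref{proc:15.5} and Scholium~\ref{schol:16.7}, of the configurations of rays anchored at $Y_k$ and $Y_{k+1}$ (single/single, single/twin, twin/single, overlapping twins in a flock, disjoint twins), checking that the extreme anchored rays are either at distance $\geq 2$ (excluded by Theorem~\ref{thm:15.4}) or an adjacent non-twin pair (excluded by Definition~\ref{def:15.5}). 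That enumeration is sound --- Procedure~\ref{proc:15.5} does guarantee that each anchor serves at most two consecutive rays and that the anchored index sets are weakly monotone in $k$ --- but it is the laborious half of your argument; the paper's isolated-versus-flock dichotomy settles the same case in one stroke. Net effect: you trade the paper's single unified thread for a shorter non-adjacent case and a longer adjacent one.
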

\begin{proof}
 Given rays $Y_k$ and $Y_{\ell}$ with $k < \ell$, suppose that there exists a ray $Z \in Y_k^{\downarrow} \cap Y_{\ell}^{\downarrow}$. We pick rays $X_p$ and $X_q$ in $T$, with legal anchors $Y_k$ and $Y_{\ell}$, respectively. Then $p < q$ and
\[ X_p \in Y_k^{\uparrow} \subset Z^{\uparrow}  , \quad X_q \in Y_{\ell}^{\uparrow} \subset Z^{\uparrow}. \]

\noi
Thus $Z^{\uparrow}$ meets $T$ in two rays $X_p$ and $X_q$. So $q = p+1$ and $(X_p, X_q)$ is a twin-pair with common anchor $Z$. This forces $\ell = k+1$. We infer from Scholium~\ref{schol:16.7}.b that the twin pair $(X_p, X_{p+1})$ is \textit{not isolated} in $T$, and thus is part of a maximal flock in $T$ which either extends to the left of $X_p$ or to the right of $X_{p+1}$ (or both). Thus we have a diagram

\[
\setlength{\arraycolsep}{0.1cm}
\begin{array}{ccccc}
\Rnode{a}{X_{p-1}} & \Rnode{b}{^{\hbox to 0.6cm{\hrulefill}}} & \Rnode{c}{X_p} & \Rnode{d}{^{\hbox to 0.6cm{\hrulefill}}} & \Rnode{e}{X_{p+1}}  \\[1cm]
    & \Rnode{f}{Y_k} &  &  & \Rnode{g}{Y_{k+1}}  \\[0.5cm]
    &  &  & \Rnode{h}{Z} & \\
\end{array}
\psset{nodesep=5pt,arrows=->} \everypsbox{\scriptstyle}
\ncLine{a}{f} \ncLine{c}{f}
\ncline{e}{g} \ncline{f}{h}
\ncLine{g}{h}
\quad \mbox{or} \quad
\setlength{\arraycolsep}{0.1cm}
\begin{array}{ccccc}
\Rnode{a}{\displaystyle X_p} & \Rnode{b}{^{\hbox to 0.6cm{\hrulefill}}} & \Rnode{c}{\displaystyle X_{p+1}} & \Rnode{d}{^{\hbox to 0.6cm{\hrulefill}}} & \Rnode{e}{\displaystyle X_{p+2}}  \\[1cm]
\Rnode{f}{\displaystyle Y_k} & & & \Rnode{g}{\displaystyle Y_{k+1}} & \\[0.5cm]
    &  & \Rnode{h}{\displaystyle Z} & . & \\
\end{array}
\psset{nodesep=5pt,arrows=->} \everypsbox{\scriptstyle}
\ncLine{a}{f} \ncLine{c}{g}
\ncline{e}{g} \ncline{f}{h}
\ncLine{g}{h}
\]

\noi
Both diagrams cannot exist, since then $Z^{\uparrow}$ would contain at least three rays of $T$, contradicting Theorem~\ref{thm:15.9}. Thus $Y_k^{\downarrow}$ and $Y_{\ell}^{\downarrow}$ are disjoint. \end{proof}

In a similar way we obtain a result about the upsets $Y_k^{\uparrow}$.

\begin{thm}\label{thm:16.9}
 Assume again that $S = (Y_0, \dots, Y_m)$ is an anchor set of a direct QL-path $T = (X_0, \dots, X_n)$. Given rays $Y_k, Y_{\ell}$ in $S$ with $k < \ell$, the intersection $Y_k^{\uparrow} \cap Y_{\ell}^{\uparrow}$ contains at most one ray of $T$, and in this case $\ell = k+1$.
\end{thm}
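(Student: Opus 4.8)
The plan is to prove the two assertions of the theorem separately, and the crux of both is a single observation valid for an anchor set $S$ of a \emph{direct} QL-path: a chosen anchor cannot lie below a ray of $T$ without actually anchoring it. Precisely, I would first record the following.

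\emph{Key Lemma.} If $Y \in S$ and $X_i \in Y^{\uparrow}$ (equivalently $Y \in X_i^{\downarrow}$), then $Y$ is an anchor of $X_i$ in the sense of Definition~\ref{def:15.5}. To prove it, observe that $Y$, being a member of the anchor set, is the chosen anchor of at least one ray $X_a$, so $X_a \in Y^{\uparrow}\cap T$; together with the hypothesis $X_i \in Y^{\uparrow}\cap T$ this makes the set $A := Y^{\uparrow}\cap T$ nonempty, whence by Theorem~\ref{thm:15.4} it equals either a single ray or a pair of adjacent rays. In the first case $X_i = X_a$ and $Y$ anchors $X_i$; in the second case $A = \{X_a, X_i\}$ is a twin pair with $Y \in X_a^{\downarrow}\cap X_i^{\downarrow}$, so $Y$ is again an anchor of $X_i$.

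Granting the Key Lemma, suppose $X_i \in Y_k^{\uparrow}\cap Y_\ell^{\uparrow}$ with $k < \ell$. Then $Y_k, Y_\ell \in X_i^{\downarrow}$ are two distinct members of $S$, and the Key Lemma shows that both are anchors of $X_i$. By Scholium~\ref{schol:16.7} a ray of $T$ has exactly one legal anchor and at most one illegal anchor in $S$, hence at most two anchors altogether; therefore $Y_k$ and $Y_\ell$ are precisely the legal and the illegal anchor of $X_i$, and by Scholium~\ref{schol:16.7}.d their indices are consecutive. As $k<\ell$ this yields $\ell = k+1$, which is the second assertion of the theorem.

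For the first assertion I argue by contradiction: assume $Y_k^{\uparrow}\cap Y_\ell^{\uparrow}$ contains two distinct rays $X_i \ne X_{i'}$ of $T$. The previous paragraph applies to each of them, so $\ell = k+1$ and $\{Y_k, Y_{k+1}\}$ is the pair consisting of the legal and the illegal anchor of $X_i$ and also of $X_{i'}$, with $Y_k$ legal and $Y_{k+1}=Y_\ell$ illegal in each case (Scholium~\ref{schol:16.7}.d). Now $X_i, X_{i'} \in Y_{k+1}^{\uparrow}\cap T$, so Theorem~\ref{thm:15.4} forces $Y_{k+1}^{\uparrow}\cap T = \{X_i, X_{i'}\}$. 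On the other hand $Y_{k+1}$, being a member of $S$, is the legal anchor of some ray $X_b$, and then $X_b \in Y_{k+1}^{\uparrow}\cap T = \{X_i, X_{i'}\}$; thus $Y_{k+1}$ is the legal anchor of $X_i$ or of $X_{i'}$, contradicting that it is the illegal anchor of both. Hence at most one ray of $T$ lies in $Y_k^{\uparrow}\cap Y_\ell^{\uparrow}$.

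The main obstacle is the Key Lemma, i.e. the passage from ``$Y$ lies below $X_i$'' to ``$Y$ anchors $X_i$'': this is exactly the point at which the hypotheses that $T$ is direct and that $S$ is a genuine anchor set (produced by Procedure~\ref{proc:15.5}, not merely an abstract subquasilinear sequence) enter, through the cardinality bound of Theorem~\ref{thm:15.4}. Once this is established, the rest is bookkeeping with the legal/illegal dichotomy of Scholium~\ref{schol:16.7}, entirely parallel to the companion argument in Theorem~\ref{thm:16.8}.
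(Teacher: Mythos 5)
Your proof is correct, but it reaches the contradiction by a genuinely different route than the paper. The paper's own proof assumes two rays $X_p,X_q$ lie in $Y_k^{\uparrow}\cap Y_{\ell}^{\uparrow}$, deduces via Theorem~15.4 that $(X_p,X_{p+1})$ is a twin pair and $\ell=k+1$, then invokes the detailed flock classification of Scholium~16.7(c) to conclude that $X_p,X_{p+1}$ must be \emph{interior rays of a maximal flock}; the flock's anchor diagram then forces a third ray of $T$ into $Y_k^{\uparrow}$ (resp.\ $Y_{k+1}^{\uparrow}$), contradicting Theorem~15.4. You avoid the flock machinery entirely: your contradiction comes from the observation that every member of the anchor set is, by Procedure~15.5, the \emph{legal} anchor of some ray of $T$, so $Y_{k+1}$ would have to be the legal anchor of $X_i$ or $X_{i'}$, while the legal/illegal dichotomy of Definition~16.6 and Scholium~16.7(d) already makes it the illegal anchor of both. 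Your Key Lemma is, in substance, Proposition~16.13 of the paper, which there appears \emph{after} Theorem~16.9 and is proved via Theorem~4.6; you prove it independently from Theorem~15.4, so there is no circularity. Your version also buys something the paper's proof leaves implicit: the assertion that $\ell=k+1$ in the case of exactly \emph{one} common ray is argued explicitly (via the Key Lemma and Scholium~16.7(d)), whereas the paper only derives $\ell=k+1$ inside the two-ray reductio. One cosmetic remark: in the Key Lemma the correct case split is $X_a=X_i$ versus $X_a\ne X_i$ rather than $\lvert Y^{\uparrow}\cap T\rvert=1$ versus $2$ (it can happen that $X_a=X_i$ while the intersection has two elements), but the conclusion that $Y$ anchors $X_i$ holds trivially in that overlap, so nothing is lost.
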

\begin{proof}
 Suppose that there exist two rays $X_p, X_q, p < q$, in $S$ which both are in $Y_k^{\uparrow} \cap Y_{\ell}^{\uparrow}$. Since $Y_k^{\uparrow}$ meets $S$ in two rays $X_p, X_q$ it is evident (cf. Theorem~\ref{thm:15.4}), that $q = p+1$ and $(X_p, X_q)$ is a twin-pair. This forces $\ell = k+1$. Both $Y_k$ and $Y_{k+1}$ are common anchors of $X_p$ and $X_{p+1}$ in $S$. We conclude from Scholium~\ref{schol:16.7}, that $Y_k$ is the legal anchor of both $X_p$ and $X_{p+1}$, while $Y_{k+1}$ is an illegal anchor of both $X_p$ and $X_{p+1}$, and then, that $X_p$ and $X_{p+1}$ are interior rays of a maximal flock in $T$. Thus we have a diagram
\[
\setlength{\arraycolsep}{0.1cm}
\begin{array}{ccccc}
\Rnode{a}{X_{p-1}} & \Rnode{b}{^{\hbox to 0.6cm{\hrulefill}}} & \Rnode{c}{X_p} & \Rnode{d}{^{\hbox to 0.6cm{\hrulefill}}} & \Rnode{e}{X_{p+1}}  \\[1cm]
  & \Rnode{f}{Y_k} & & & \\
\end{array}
\psset{nodesep=5pt,arrows=->} \everypsbox{\scriptstyle}
\ncLine{a}{f} \ncLine{c}{f}
\ncline{e}{f}
\]
and an analogous diagram involving $X_p, X_{p+1}, X_{p+2}, Y_{k+1}$. But such diagrams do not exist, since $Y_k^{\uparrow}$, and also $Y_{k+1}^{\uparrow}$, can meet $T$ in at most 2 rays (cf. Theorem~\ref{thm:15.4}). We conclude that $Y_k^{\uparrow} \cap Y_{k+1}^{\uparrow}$ contains at most one ray of $T$. \end{proof}

\begin{defn}\label{def:16.10}
 We call a pair $(T, S)$ consisting of a direct QL-path $T = (X_0, \dots, X_n)$ and an anchor set $S = (Y_0, \dots, Y_m)$ of $T$ \textbf{flocky}, if for any pair $(Y_i, Y_{i+1})$ in $S$ with $Y_i^{\uparrow} \cap Y_{i+1}^{\uparrow} \ne \emptyset$ there exists a ray $W$ in $T$ with $W \in Y_i^{\uparrow} \cap Y_{i+1}^{\uparrow}$. \end{defn}
\noindent Note that then $W$ is the unique such ray, as stated in Theorem~\ref{thm:16.9}.

We have an explicit description of all maximal flocks in a flocky pair as follows.

\begin{thm}\label{thm:16.11}
 Assume that $(T, S)$ is a flocky pair, $T = (X_0, \dots, X_n)$, $S = (Y_0, \dots, Y_m)$. Assume further that $(Y_k, \dots, Y_{k+t+1})$ is a maximal subsequence of $S$ with
\begin{equation}\label{eq:16.4}
 t \geq 0, k \geq 0, \; k + t < m, \; Y_i^{\uparrow} \cap Y_{i+1}^{\uparrow}\ne \emptyset \quad \mbox{for} \quad k \leq i \leq k + t. \end{equation}
Let $W_i$ denote the unique ray of $T$ contained in $Y_i^{\uparrow} \cap Y_{i+1}^{\uparrow} \; (k \leq i \leq k + t)$. We have $W_k = X_p$, $W_{k+t} = X_q$ with indices $p < q$ in $[0, n]$.
\begin{itemize}
\item[a)] The sequence $(W_k, \dots, W_{k+t})$ coincides with the subpath $(X_p, \dots, X_q)$ of $T$, in particular $q = p + t$, and this is a maximal subpath of $T$ with the property that each of its rays has two anchors in $S$. These are the legal anchors $Y_k, \dots, Y_{k+t}$ and the illegal anchors $Y_{k+1}, \dots, Y_{k+t+1}$. The subpath $(X_p, \dots, X_q)$ is a flock in $T$, perhaps not a maximal flock.
\item[b)] Assume that $p > 0$ and $q = p + t < n$. Then the unique anchor of $X_{p-1}$ in $S$ is either $Y_k$ or $Y_{k-1}$, and the unique anchor of $Y_{q+1}$ in $S$ is either $Y_{k+t}$ or $Y_{k+t+1}$. If $ X_{p-1}$ and $Y_{q+1}$ have the anchors $Y_k$ and $Y_{k+t}$, then the maximal flock containing $(X_p, \dots, X_q)$ is $(X_{p-1}, \dots, X_{q+1})$ while in the other cases we have to delete either $X_{p-1}$ or $X_{q+1}$ or both in the subpath $(X_{p-1}, \dots, X_{q+1})$ to obtain a maximal flock.
\item[c)] In the case $p = 0$, $q < n$, we have the maximal flock $(X_0, \dots, X_{q+1})$ if $X_{q+1}$ has anchor $Y_{k+t}$, and $(X_0, \dots, X_q)$ if $X_{q+1}$ has anchor $Y_{k+t+1}$. In the case $p > 0$, $q = n$ we have the maximal flock $(X_{p-1}, \dots, X_n)$ if $X_{p-1}$ has anchor $Y_k$, and $(X_p, \dots, X_n)$ if $X_{p-1}$ has anchor $Y_{k-1}$. In the trivial case $p = 0$, $q = n$, the path $T$ itself is a flock in $T$.
\end{itemize}
\end{thm}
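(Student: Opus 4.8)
The plan is to analyze the rays $W_i$ directly and show that, as $i$ runs through $[k,k+t]$, they trace out a consecutive subpath of $T$. First I would record that each $W_i$ is well defined: it exists because $(T,S)$ is flocky, and it is the \emph{unique} ray of $T$ in $Y_i^\uparrow\cap Y_{i+1}^\uparrow$ by Theorem~\ref{thm:16.9}. Since $W_i\in Y_i^\uparrow\cap Y_{i+1}^\uparrow$ means $Y_i,Y_{i+1}\in W_i^\downarrow$, each $W_i$ carries the two anchors $Y_i$ (legal) and $Y_{i+1}$ (illegal) in the sense of Scholium~\ref{schol:16.7}, so $W_i$ is an interior ray of a maximal flock.

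Next I would establish the geometry of the sequence $(W_k,\dots,W_{k+t})$. The $W_i$ are pairwise distinct: if $W_i=W_j$ with $i<j$, then this common ray lies in $Y_i^\uparrow\cap Y_{j+1}^\uparrow$, forcing $j+1=i+1$ by Theorem~\ref{thm:16.9}, a contradiction. Consecutive members are adjacent in $T$, because $W_i,W_{i+1}\in Y_{i+1}^\uparrow$ and, $T$ being direct, $Y_{i+1}^\uparrow$ meets $T$ in at most two (adjacent) rays by Theorem~\ref{thm:15.4}; distinctness then forces $W_i$ and $W_{i+1}$ to be exactly those two adjacent rays. A sequence of distinct vertices of the line graph $T$ in which consecutive vertices are adjacent must be strictly monotone (a turning point would repeat a vertex), and the monotone listing of anchors in Procedure~\ref{proc:15.5} fixes the increasing direction; hence $W_i=X_{p+(i-k)}$, so that $(W_k,\dots,W_{k+t})=(X_p,\dots,X_q)$ with $q=p+t$. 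Each pair $(X_{p+i},X_{p+i+1})$ shares the anchor $Y_{k+i+1}\in W_{k+i}^\downarrow\cap W_{k+i+1}^\downarrow$, so it is a twin pair and $(X_p,\dots,X_q)$ is a flock in the sense of Definition~\ref{def:16.1}. Finally, $X_{p-1}$ and $X_{q+1}$ cannot carry two anchors: a second anchor would produce a ray of $T$ in $Y_{k-1}^\uparrow\cap Y_k^\uparrow$ or in $Y_{k+t+1}^\uparrow\cap Y_{k+t+2}^\uparrow$, contradicting the maximality of the chain $(Y_k,\dots,Y_{k+t+1})$. Thus $(X_p,\dots,X_q)$ is the maximal subpath all of whose rays have two anchors, proving part~(a).

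For parts~(b) and~(c) I would exploit that consecutive rays of $T$ have legal anchors whose indices differ by at most $1$ (immediate from Procedure~\ref{proc:15.5}, which either reuses the current anchor or passes to the next one). Since the legal anchor of $X_p$ is $Y_k$ and that of $X_q$ is $Y_{k+t}$, this pins the single anchor of $X_{p-1}$ to $Y_{k-1}$ or $Y_k$ and that of $X_{q+1}$ to $Y_{k+t}$ or $Y_{k+t+1}$ (when $p>0$, $q<n$). I would then decide the maximal flock by testing whether the border edge is a twin edge: $(X_{p-1},X_p)$ is a twin pair exactly when $X_{p-1}$ shares $X_p$'s legal anchor $Y_k$, i.e.\ when $X_{p-1}$ is anchored at $Y_k$, and symmetrically $(X_q,X_{q+1})$ is a twin pair exactly when $X_{q+1}$ is anchored at $Y_{k+t}$. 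Reading these two binary conditions off the anchors gives the cases of~(b): both shared yields the maximal flock $(X_{p-1},\dots,X_{q+1})$, and otherwise $X_{p-1}$, $X_{q+1}$, or both must be dropped. Part~(c) is the same analysis with the missing neighbour suppressed at an endpoint $p=0$ or $q=n$, the case $p=0,\ q=n$ being the degenerate one in which $T$ itself is the flock.

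The main obstacle I anticipate is the border bookkeeping in the last paragraph: converting the \emph{maximality of the index chain in $S$} into the \emph{maximality of the flock in $T$}. The delicate point is that a twin pair is defined by a nonempty intersection of downsets, whereas what $S$ records is only the chosen (legal) anchor, so one must argue carefully---via Theorem~\ref{thm:15.4} (an upset meets $T$ in at most two adjacent rays) together with the reuse rule of Procedure~\ref{proc:15.5}---that the presence or absence of a shared legal anchor at $X_{p-1}$ and $X_{q+1}$ faithfully detects whether the corresponding border edge is a twin edge. Everything else is distinctness and monotonicity on the line graph $T$, which is routine once Theorems~\ref{thm:15.4} and~\ref{thm:16.9} are in hand.
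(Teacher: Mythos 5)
Your proposal is correct and takes essentially the same route as the paper's own proof: identify $(W_k,\dots,W_{k+t})$ with the consecutive subpath $(X_p,\dots,X_q)$ (using that consecutive $W_i$ lie in the convex quasilinear upsets $Y_{i+1}^{\uparrow}$ and that $T$ is direct), read off the legal and illegal anchors, get uniqueness of the border rays' anchors from maximality of the track, and settle (b), (c) according to whether $X_{p-1}$ (resp.\ $X_{q+1}$) is anchored at $Y_k$ (resp.\ $Y_{k+t}$). If anything you are more careful at the two spots you flag: the paper merely asserts that directness ``forces'' $(W_k,\dots,W_{k+t})=(X_p,\dots,X_q)$, where you supply distinctness via Theorem~\ref{thm:16.9} and adjacency/monotonicity via Theorem~\ref{thm:15.4}, and the paper dispatches the twin-pair versus shared-anchor equivalence with the same brevity you do, by appeal to Scholium~\ref{schol:16.7} and Procedure~\ref{proc:15.5}.
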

\begin{proof}
 We focus on the case $p > 0$, $q < n$. Every pair $(W_i, W_{i+1})$, $k \leq i \leq k + t$ is quasilinear since both $W_i$ and $W_{i+1}$ are rays in the convex quasilinear set $Y_{i+1}^{\uparrow}$. Thus $(W_k, \dots, W_{k+t})$ is a QL-path. All $W_i$ are rays in the QL-path $(X_p, \dots, X_q)$. Since this QL-path is direct, a pair $(X_i, X_j)$ with $p \leq i < j \leq q$ can be quasilinear only if $j = i + 1$. This forces
\[ (W_k, \dots, W_{k+t}) = (X_p, \dots, X_q) \]

\noi
and $q = p + t$. Obviously each $W_i$ has in $S$ two anchors, the legal anchor $Y_i$ and the illegal anchor $Y_{i+1}$, and $(W_k, \dots, W_{k+t})$ is a flock in $T$. $X_{p-1}$ cannot have two anchors in $S$, since these would be $Y_{k-1}$ and $Y_k$ (cf. Scholium \ref{schol:16.7}), contradicting the maximality of the family $(Y_k, \dots, Y_{k+t+1})$ with \eqref{eq:16.4}. Same for $X_{q+1}$. Thus the path $(X_p, \dots, X_q)$ is maximal in $T$ with the property, that each of its rays has two anchors in $S$. If $X_{p-1}$ has the anchor $Y_k$ then $(X_{p-1}, X_p)$ is a twin pair with anchor $Y_k$. If $X_{p-1}$ has the (only) anchor $Y_{k-1}$, then $(X_{p-1}, X_p)$ is not a twin pair. Thus is the first case $(X_{p-1}, \dots, X_q)$ is a flock, but in the second case not. Analogously $(X_p, \dots, X_{q+1})$ is a flock if and only if $X_{q+1}$ has the anchor ~$Y_{k+t}$. This proves all claims in the case $p > 0$, $q < n$. In the cases $p = 0$, $q < n$ and $p > 0$, $q = n$ the same arguments work, where only the rays $X_{q+1}$ and $X_{p-1}$, respectively, should be taken care of. \end{proof}

How much does the appearance of flock  depend on the choice of the anchor set $S$ of $T$?
In preparation for answering this question, we need an important general fact.

\begin{prop}\label{prop:16.12}
 Assume that $T = (X_0, \dots, X_n)$ is a direct QL-path and that $S = (Y_0, \dots, Y_m)$, $S' = (Y'_0, \dots, Y'_{m'})$ are two anchor sets of $T$. Then $m = m'$ and for any ray~ $X_p$ in $T$ the anchors of $X_p$ in $S$ correspond uniquely to the anchors of $X_p$ in $S'$. More precisely, if $Y_k$ is a legal (resp. illegal) anchor of $X_p$ in $S$,  then $Y'_k$ is a legal (resp. illegal) anchor of $X_p$ in $S'$.
\end{prop}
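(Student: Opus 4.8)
The plan is to isolate the purely combinatorial content of Procedure~\ref{proc:15.5}: although the concrete rays selected as anchors may well differ between $S$ and $S'$, the bookkeeping that attaches an \emph{index} $k$ to the legal anchor $Y_k$ of each ray $X_p$ is dictated solely by the twin-pair pattern of $T$, a datum intrinsic to $T$ and independent of any anchor set. Indeed, whether an adjacent pair $(X_i, X_{i+1})$ is a twin pair is decided by the condition $X_i^{\downarrow} \cap X_{i+1}^{\downarrow} \ne \emptyset$ of Definition~\ref{def:15.5}, which refers only to the rays of $T$. Hence the collection of twin pairs of $T$ is fixed, and so is the flock/single decomposition derived from it.

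First I would define a function $\kappa \colon [0,n] \to \mathbb{N}$ imitating the index growth in Procedure~\ref{proc:15.5}, namely $\kappa(0) = 0$ and, for $0 \le i < n$,
\[ \kappa(i+1) = \begin{cases} \kappa(i) & \text{if $(X_i, X_{i+1})$ is a twin pair and $(X_{i-1}, X_i)$ is not,} \\ \kappa(i)+1 & \text{otherwise,} \end{cases} \]
with the convention that $(X_{-1}, X_0)$ is not a twin pair. Since this recursion never mentions the concrete anchors and uses only the twin-pair relation of $T$, the function $\kappa$ is completely determined by $T$. I would then prove, by induction on $p$ that follows verbatim the case distinction of Procedure~\ref{proc:15.5}, that in \emph{every} anchor set $(Y_0, \dots, Y_m)$ of $T$ the legal anchor of $X_p$ is exactly $Y_{\kappa(p)}$: the procedure reuses the previous index precisely when the first branch of $\kappa$ applies (a twin pair $(X_i,X_{i+1})$ whose predecessor $(X_{i-1},X_i)$ is not a twin pair, equivalently $Y_{\kappa(i)}$ has not yet been used twice), and otherwise advances by one, so the index carried by the legal anchor of $X_p$ agrees with $\kappa(p)$ regardless of which rays were chosen. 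Applying this to both $S$ and $S'$ shows the legal anchor of $X_p$ is $Y_{\kappa(p)}$ in $S$ and $Y'_{\kappa(p)}$ in $S'$, with the \emph{same} index $\kappa(p)$; moreover each index $0,\dots,m$ occurs as a legal-anchor index and $\kappa$ is nondecreasing with $\kappa(n)$ maximal, so $m = \kappa(n) = m'$, which simultaneously yields the claimed index-preserving correspondence on legal anchors.

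For illegal anchors I would invoke Scholium~\ref{schol:16.7}. By parts (a)--(c), a ray $X_p$ possesses an illegal anchor if and only if it is an interior ray of a maximal flock of $T$, a condition again intrinsic to $T$ and hence shared by $S$ and $S'$; and by part (d), whenever $X_p$ has a legal anchor of index $u=\kappa(p)$ together with an illegal one, the illegal anchor has index $u+1$. Consequently $X_p$ has an illegal anchor in $S$ iff it has one in $S'$, and in that case these are $Y_{\kappa(p)+1}$ and $Y'_{\kappa(p)+1}$, matching indices as required. This completes the correspondence for all anchors.

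The main obstacle is not conceptual but one of careful bookkeeping: one must verify that the two clauses of the recursion for $\kappa$ capture \emph{exactly} the ``reuse versus fresh anchor'' dichotomy of Procedure~\ref{proc:15.5}, including the boundary behaviour at $i=0$ and the precise meaning of the ``already chosen twice'' clause (which is what translates into the requirement that $(X_{i-1}, X_i)$ be not a twin pair). Once this equivalence is pinned down, the crucial point---that $\kappa$ and the legal/illegal structure of Scholium~\ref{schol:16.7} ignore the ray values entirely---makes the independence from the choice of anchor set automatic.
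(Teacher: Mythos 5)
Your proof is correct and takes essentially the same route as the paper's: the paper likewise runs Procedure~\ref{proc:15.5} on both anchor sets in parallel and argues by induction on $i$ that the stage lengths $m(i)$ and the anchor assignments must agree, precisely because the reuse-versus-new-anchor branching depends only on the twin-pair pattern of $T$, which is intrinsic to $T$. Your explicit index function $\kappa$ and the appeal to Scholium~\ref{schol:16.7} for the illegal anchors simply make the paper's terse induction concrete.
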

\begin{proof}
  Using Procedure~\ref{proc:15.5},  we obtain step by step anchor sets $(Y_0, \dots, Y_{m(i)})$ of the subpaths $(X_0, \dots, X_i)$, $0 \leq i \leq n$, with $\{ 0 \} = m(0) \leq m(1) \leq \dots \leq m(n) = m$,  where the chosen anchors of a ray $X_p$ with $p \leq i$ remain the same when proceeding from $(X_0, \dots, X_i)$ to $(X_0, \dots, X_{i+1})$. Applying this procedure a second time, we obtain anchor sets $(Y'_0, \dots, Y'_{m (i)})$, $0 \leq i \leq n$ of the same lengths $m(i)$. By induction on $i$ for $i = 0, \dots, n$ we see that the last claim of the proposition holds for all rays $X_p$ with $p \leq i$, in $(Y_0, \dots, Y_{m(i)})$ and $(Y'_0, \dots, Y'_{m (i)})$, and thus for all rays $X_p$ in $T$. \end{proof}

\textit{Comment.} This uniqueness result for anchor sets is less trivial than it may appear at first glance. Recall that a point in the down set $X^{\downarrow}$ of a ray $X$ means a QL-star contained in $\QL(X)$. There can be many such stars for fixed $X$ which are widely unrelated.

\begin{prop}\label{prop:16.13}
 Given a direct QL-path $T = (X_0, \dots, X_n)$ and an anchor $(Y_0, \dots, Y_m)$ of ~$T$, if $Y_k \in X_p^{\downarrow}$ for some $k \in [0, m]$, $p \in [0, n]$, then $Y_k$ is an anchor of $X_p$ (Definition~\ref{def:15.5}.(ii)).
\end{prop}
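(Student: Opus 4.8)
The plan is to reduce everything to Theorem~\ref{thm:15.4}, which governs how an upset can meet a direct QL-path. The key observation is that both the given index $p$ and some index for which $Y_k$ serves as a chosen anchor must lie inside $Y_k^{\uparrow}$, after which Theorem~\ref{thm:15.4} pins down the geometry and forces the twin structure we need.

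First I would record the two memberships that drive the argument. Since $(Y_0, \dots, Y_m)$ is an anchor set of $T$, the ray $Y_k$ is by construction an anchor of at least one ray $X_a$ of $T$ (this is exactly what Procedure~\ref{proc:15.5} produces, by minimality no $Y_k$ is redundant); in particular $Y_k \in X_a^{\downarrow}$, that is $X_a \in Y_k^{\uparrow}$. The hypothesis $Y_k \in X_p^{\downarrow}$ likewise reads $X_p \in Y_k^{\uparrow}$. Hence both $X_a$ and $X_p$ lie in $Y_k^{\uparrow} \cap \{X_0, \dots, X_n\}$, which is therefore nonempty.

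Next I would invoke Theorem~\ref{thm:15.4} with $Y = Y_k$: since $T$ is direct, $Y_k^{\uparrow}$ meets $\{X_0, \dots, X_n\}$ either in a single ray or in exactly two adjacent rays. This yields a clean dichotomy. If $X_a = X_p$, then $Y_k$ is an anchor of $X_p = X_a$ by the choice of $X_a$, and we are done. If $X_a \ne X_p$, then $X_a$ and $X_p$ are two distinct rays of $Y_k^{\uparrow} \cap \{X_0, \dots, X_n\}$, so by Theorem~\ref{thm:15.4} they exhaust this intersection and are adjacent, say $\vert a - p\vert = 1$. From $X_a, X_p \in Y_k^{\uparrow}$ I read off $Y_k \in X_a^{\downarrow} \cap X_p^{\downarrow}$, which is thus nonempty; hence $(X_p, X_a)$ is a twin pair (Definition~\ref{def:15.5}.(i)). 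Since $Y_k$ lies in both $X_p^{\downarrow}$ and $X_a^{\downarrow}$, Definition~\ref{def:15.5}.(ii) shows at once that $Y_k$ is an anchor of $X_p$.

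The step I would flag as the genuine content is the case where $X_p$ belongs to a twin pair: being told merely that $Y_k \in X_p^{\downarrow}$ is a priori weaker than being a \emph{common} element of $X_p^{\downarrow} \cap X_j^{\downarrow}$ for the twin $X_j$, so Definition~\ref{def:15.5}.(ii) cannot be applied directly. The remedy is precisely to feed the auxiliary ray $X_a$—which exists only because $Y_k$ is a member of the anchor set—into Theorem~\ref{thm:15.4}; this is what upgrades ``$Y_k \in X_p^{\downarrow}$'' to ``$Y_k$ lies in both downsets of the twin pair.'' Without the hypothesis that $Y_k$ comes from the anchor set the statement would fail, so the appeal to Procedure~\ref{proc:15.5} to produce $X_a$ is not a formality but the crux of the proof.
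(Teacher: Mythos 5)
Your proof is correct and follows essentially the same route as the paper's: both arguments introduce the ray (your $X_a$, the paper's $X_q$) for which $Y_k$ was chosen as legal anchor, observe that $X_p$ and this ray both lie in $Y_k^{\uparrow}$, use directness of $T$ to force $|p-a|\leq 1$, and then conclude via the twin-pair clause of Definition~\ref{def:15.5}.(ii). The only cosmetic difference is that you package the adjacency step through Theorem~\ref{thm:15.4}, whereas the paper derives it directly from Theorem~\ref{thm:4.6} (quasilinear convexity of $Y_k^{\uparrow}$) together with directness --- the same underlying argument.
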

\begin{proof}
 We choose $q \in [0, n]$ such that $Y_k$ is the legal anchor of $X_q$. Then $X_p$ and $X_q$ are in~ $Y_k^{\uparrow}$, and thus $(X_p, X_q)$ is quasilinear by Theorem~\ref{thm:4.6}, whence $q \in \{ p-1, p+1 \}$. If $q = p$ then $X_p$ is the legal anchor of $Y_k$. If $q = p-1$ then $X_{p-1}$ and $X_p$ are in $Y_k^{\uparrow}$. Thus $(X_{p-1}, X_p)$ is a twin-pair in $T$, with anchor $Y_k$, and so $Y_k$ is an anchor of $X_p$. 
 If $q = p+1$ we conclude in the same way that $(X_p, X_{p+1})$ is a twin pair, and so $Y_k$ is an anchor of $X_p$. \end{proof}
\begin{cor}\label{cor:16.14}
 Assume that $(Y_0, \dots, Y_m)$ and $(Y'_0, \dots, Y'_m)$ are anchor sets of  a direct QL-path $(X_0, \dots, X_n)$. Let $p \in [0, n]$ and $k \in [0, m]$. Then $Y_k \in X_p^{\downarrow}$ iff $Y'_k \in X_p^{\downarrow}$.\end{cor}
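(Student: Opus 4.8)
The plan is to derive this corollary by chaining together the two preceding propositions, exploiting that both anchor sets share the fixed underlying direct QL-path $T = (X_0, \dots, X_n)$ and the common index range $[0,m]$ (the equality $m = m'$ being guaranteed by Proposition~\ref{prop:16.12}).

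First I would record the elementary equivalence, valid for any single anchor set $S = (Y_0, \dots, Y_m)$ of $T$, that
\[ Y_k \in X_p^{\downarrow} \quad \Longleftrightarrow \quad Y_k \text{ is an anchor of } X_p. \]
The reverse implication is immediate from Definition~\ref{def:15.5}.(ii): every anchor of $X_p$ -- whether $X_p$ is a single or a member of a twin pair -- lies in $X_p^{\downarrow}$ by the very definition of an anchor. The forward implication is precisely Proposition~\ref{prop:16.13}. Applying the same equivalence to the second anchor set $S' = (Y'_0, \dots, Y'_m)$, I obtain that $Y'_k \in X_p^{\downarrow}$ holds iff $Y'_k$ is an anchor of $X_p$ in $S'$.

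It then remains to match the two ``anchor'' conditions across $S$ and $S'$ with the index $k$ held fixed. This is exactly the content of Proposition~\ref{prop:16.12}, which asserts that $Y_k$ being a legal (respectively illegal) anchor of $X_p$ in $S$ forces $Y'_k$ to be a legal (respectively illegal) anchor of $X_p$ in $S'$. Since the notion of anchor comprises both the legal and the illegal anchors, and since the implication may be read in either direction by interchanging the roles of $S$ and $S'$, this yields the biconditional
\[ Y_k \text{ is an anchor of } X_p \text{ in } S \quad \Longleftrightarrow \quad Y'_k \text{ is an anchor of } X_p \text{ in } S'. \]
Concatenating the three equivalences gives $Y_k \in X_p^{\downarrow} \Leftrightarrow Y'_k \in X_p^{\downarrow}$, as claimed.

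I expect essentially no obstacle beyond careful bookkeeping, since all the substantive work has already been carried out in Propositions~\ref{prop:16.12} and~\ref{prop:16.13}. The one point demanding attention is that Proposition~\ref{prop:16.13} is what converts a downset-membership -- a relation about QL-stars, which the Comment following Proposition~\ref{prop:16.12} stresses can be highly non-unique for a fixed $X_p$ -- into the purely combinatorial notion of an anchor; it is precisely this conversion that makes the index-preserving correspondence of Proposition~\ref{prop:16.12} applicable at all. I would therefore invoke Proposition~\ref{prop:16.13} for both $S$ and $S'$ symmetrically, so that the final chain of equivalences is genuinely an ``iff'' and not merely a one-way implication.
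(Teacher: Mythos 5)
Your proof is correct and follows exactly the paper's route: the paper's own proof simply cites Propositions~\ref{prop:16.12} and~\ref{prop:16.13}, and your write-up spells out precisely how they combine (downset membership $\Leftrightarrow$ anchor via Proposition~\ref{prop:16.13} and Definition~\ref{def:15.5}.(ii), then the index-preserving legal/illegal anchor correspondence of Proposition~\ref{prop:16.12}, applied symmetrically to both anchor sets).
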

\begin{proof}
 Immediate from Propositions \ref{prop:16.12} and \ref{prop:16.13}. \end{proof}

\begin{thm}\label{thm:16.15}
 Assume again that $S = (Y_0, \dots, Y_m)$ and $S'=(Y'_0, \dots, Y'_m)$ are anchor sets of a direct QL-path $T = (X_0, \dots, X_n)$. Assume further that the pair $(T, S)$ is flocky. Then $(T, S')$ is flocky.
\end{thm}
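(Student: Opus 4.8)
The plan is to first strip flockiness down to a property of $T$ that is visibly independent of the chosen anchor set, and then to localize the remaining work to the ``non-twin'' boundaries. For a consecutive pair $(Y_k,Y_{k+1})$ of an anchor set, let $X_a$ be the last ray of $T$ with legal anchor $Y_k$ and $X_{a+1}$ the first ray with legal anchor $Y_{k+1}$; by Procedure~\ref{proc:15.5} these are adjacent in $T$, and the resulting partition of $T$ into ``legal blocks'' depends on $T$ alone (Proposition~\ref{prop:16.12}). I claim that $Y_k^{\uparrow}\cap Y_{k+1}^{\uparrow}$ contains a ray of $T$ iff $(X_a,X_{a+1})$ is a twin pair. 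A ray $X_p$ of $T$ lies in $Y_k^{\uparrow}\cap Y_{k+1}^{\uparrow}$ exactly when $Y_k,Y_{k+1}\in X_p^{\downarrow}$, i.e.\ both are anchors of $X_p$ (Proposition~\ref{prop:16.13}); by Scholium~\ref{schol:16.7} this forces $Y_k$ to be the legal and $Y_{k+1}$ the illegal anchor of a single interior flock ray, which is precisely the ray $X_a$ and the situation $(X_a,X_{a+1})$ twin. Since $X_p\in Y_k^{\uparrow}\iff Y_k\in X_p^{\downarrow}$ and, by Corollary~\ref{cor:16.14}, $Y_k\in X_p^{\downarrow}\iff Y'_k\in X_p^{\downarrow}$, the rays of $T$ inside $Y_k^{\uparrow}$ and inside $Y'^{\uparrow}_k$ are the same. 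Hence ``$Y_k^{\uparrow}\cap Y_{k+1}^{\uparrow}$ meets $T$'' is intrinsic to $T$; I will call such indices \emph{twin junctions}.

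Next I would dispose of the junctions. At a twin junction $k$ the boundary ray $X_a$ has $Y_k$ as legal and $Y_{k+1}$ as illegal anchor, so $X_a\in Y_k^{\uparrow}\cap Y_{k+1}^{\uparrow}$, and by the identical computation for $S'$ also $X_a\in Y'^{\uparrow}_k\cap Y'^{\uparrow}_{k+1}$. Thus at every twin junction the ray of $T$ demanded by Definition~\ref{def:16.10} is automatically present, for \emph{both} $S$ and $S'$. Consequently flockiness of $(T,S')$ can only fail at a \emph{non}-junction $i$, where $(X_a,X_{a+1})$ is not a twin; flockiness of $(T,S)$ tells us there that $Y_i^{\uparrow}\cap Y_{i+1}^{\uparrow}=\emptyset$, and the whole theorem reduces to proving $Y'^{\uparrow}_i\cap Y'^{\uparrow}_{i+1}=\emptyset$ at every such $i$.

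For this I would argue by contradiction. Assume $Z'\in Y'^{\uparrow}_i\cap Y'^{\uparrow}_{i+1}$ at a non-junction $i$. As $Y'_i$ is the legal anchor of $X_a$ we have $X_a\in Y'^{\uparrow}_i=\sat_{\QL}(Y'_i)$, which is convex and quasilinear by Theorem~\ref{thm:4.6}; hence $(X_a,Z')$ is quasilinear, and likewise $(X_{a+1},Z')$ is quasilinear, while $(X_a,X_{a+1})$ is quasilinear because the two rays are adjacent in $T$. Feeding the three pairwise quasilinear rays $X_a,X_{a+1},Z'$ into Theorem~\ref{thm:3.6} shows that $\conv(X_a,X_{a+1},Z')$ is a quasilinear convex set. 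The goal is then to combine this set with the $S$-data $Y_i\in X_a^{\downarrow}$, $Y_{i+1}\in X_{a+1}^{\downarrow}$ and the flockiness of $(T,S)$ to produce a ray dominating both $Y_i$ and $Y_{i+1}$ — contradicting $Y_i^{\uparrow}\cap Y_{i+1}^{\uparrow}=\emptyset$ — or, equivalently, to show that the mere existence of $Z'$ already forces $(X_a,X_{a+1})$ to be a twin, against the non-junction hypothesis. Here one expects to invoke the explicit flock description of Theorem~\ref{thm:16.11} for the flocky pair $(T,S)$ in order to pin down the shape of $T$ around the block boundary $i$.

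I expect this transfer to be the genuine obstacle. The anchors $Y_i$ and $Y'_i$ are linked only through their restriction to $T$ (Corollary~\ref{cor:16.14}), and the \emph{Comment} following Proposition~\ref{prop:16.12} warns that as QL-stars they may otherwise be entirely unrelated; so $Z'$ cannot simply be recycled as a common upper bound for $Y_i$ and $Y_{i+1}$, nor does a point of $\conv(X_a,X_{a+1},Z')$ give a common \emph{lower} bound of $X_a,X_{a+1}$ for free. The difficulty is precisely that a ``signal'' $Y_i^{\uparrow}\cap Y_{i+1}^{\uparrow}\ne\emptyset$ is not literally intrinsic — lowering anchors (Remark~\ref{rem:15.6}) enlarges up-sets — so the content of the theorem is that flockiness of one anchor set rigidifies the twin/flock skeleton of $T$ enough to forbid any other anchor set from manufacturing a fresh non-junction signal. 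Making that rigidity quantitative, by playing the quasilinear set coming from the $S'$-side against the flock decomposition coming from the $S$-side, is the step I anticipate to require the most care.
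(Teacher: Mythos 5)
Your first two paragraphs are, in substance, the paper's entire proof. The published argument is three lines long: given $k$ with $Y_k^{\uparrow} \cap Y_{k+1}^{\uparrow} \ne \emptyset$, flockiness of $(T,S)$ produces a ray $X_p$ of $T$ with $X_p \in Y_k^{\uparrow} \cap Y_{k+1}^{\uparrow}$, and Corollary~\ref{cor:16.14} places this same $X_p$ in $(Y'_k)^{\uparrow} \cap (Y'_{k+1})^{\uparrow}$; the paper then concludes ``Thus $(T,S')$ is flocky.'' Your twin-junction bookkeeping (via Proposition~\ref{prop:16.13}, Scholium~\ref{schol:16.7} and Proposition~\ref{prop:16.12}) is a correct refinement of this transfer, but nothing beyond flockiness of $(T,S)$ and Corollary~\ref{cor:16.14} is actually invoked in the paper.

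The obstacle you isolate in your last two paragraphs --- a ``fresh'' signal $Z' \in (Y'_i)^{\uparrow} \cap (Y'_{i+1})^{\uparrow}$ at an index $i$ where $Y_i^{\uparrow} \cap Y_{i+1}^{\uparrow} = \emptyset$ --- is not treated in the paper at all: the published proof verifies the defining condition of Definition~\ref{def:16.10} for $(T,S')$ only at indices which already carry a signal of $S$. Moreover, your reduction shows this missing case is not an optional refinement but is logically equivalent to the theorem: if such a $Z'$ existed, then by Corollary~\ref{cor:16.14} no ray of $T$ could lie in $(Y'_i)^{\uparrow} \cap (Y'_{i+1})^{\uparrow}$ (any such ray would transfer back into $Y_i^{\uparrow} \cap Y_{i+1}^{\uparrow}$), so $(T,S')$ would fail to be flocky and the statement would be false. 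So you should not expect to find the missing step by reading the paper more closely; it is not there. Relative to the published proof you have missed nothing --- rather, you have located the point where that proof silently assumes that non-emptiness of $Y_k^{\uparrow} \cap Y_{k+1}^{\uparrow}$ is intrinsic to $T$, independent of the anchor set, which is precisely what is in question. If your goal is to reproduce the paper's argument, stop at the end of your second paragraph; if your goal is a complete proof, the remaining work is exactly the rigidity statement you formulate, and the paper supplies no tool for it beyond those you have already deployed.
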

\begin{proof}
 Given $k \in [0, m-1]$ such that $Y_k^{\uparrow} \cap Y_{k+1}^{\uparrow} \ne \emptyset$ there is some $p \in [0, n]$ with $X_p \in Y_k^{\uparrow} \cap Y_{k+1}^{\uparrow}$, since $(T, S)$ is flocky. We conclude by Corollary~\ref{cor:16.14} that $X_p \in (Y'_k)^{\uparrow} \cap (Y'_{k+1})^{\uparrow}$. Thus $(T, S')$ is flocky. \end{proof}

\begin{defn}\label{def:16.16}
 In consequence of this theorem, a $\QL$-path $T$ is named a \textbf{flocky direct $\QL$-path}, if $(T, S)$ is flocky for any anchor set $S$ of $T$.
\end{defn}

Theorem \ref{thm:16.5} leads to a procedure that turns a minimal QL-path to a flocky minimal path.

\begin{defn}\label{def:16.17}
 Assume that $S = (Y_0, \dots, Y_m)$ is a subquasilinear sequence (cf. Definition~\ref{def:15.8}).
\begin{enumerate}\eroman
  \item  A \textbf{track} in $S$ is a subsequence $(Y_k, Y_{k+1}, \dots, Y_{k+t+1})$ with $t \geq 0$, $k + t < m$, such that $Y_i^{\uparrow} \cap Y_{i+1}^{\uparrow} \ne \emptyset$ for $k \leq i \leq k + t$.
\item A subsequence $A$ of $S$ is called  \textbf{trackless}, if no ray in $A$ is a member of a track in~ $S$.
\end{enumerate}
\end{defn}
\noi Note that $S$ is the disjoint union of its maximal tracks and its maximal trackless subsequences.

In this terminology Theorem \ref{thm:16.5} states that, given a maximal track $U = (Y_k, \dots, Y_{k+t+1})$ in an anchor set $S = (Y_0, \dots, Y_m)$ of a minimal QL-path $T = (X_0, \dots, X_n)$,  after choosing rays $W_i$ in $Y_i^{\uparrow} \cap Y_{i+1}^{\uparrow}$ for $k \leq i \leq t$, we obtain a new minimal QL-path
\begin{equation}\label{eq:16.5}
 T' = (X_0, \dots, X_s, W_k, \dots, W_{k+t}, X_r, \dots, X_n), \end{equation}
again of length $n$, which also has $S$ as an anchor set. If $k > 0$, then $X_s$ has the anchor $Y_k$ or $Y_{k-1}$ in $S$, but not both, since otherwise the track $U$ would not be maximal. If $k + t < m$, then for the same reason $X_r$ has the anchor $Y_{k+t}$ or $Y_{k+t+1}$, but not both. If $k = 0$, then $s = 0$, and of course, $Y_0$ is the unique anchor of $X_0$ in $S$, while if $k + t = m$ we have $r = n$, and $Y_m$ is the unique anchor of $X_n$.

It is now immediate from Theorem \ref{thm:16.11} that the subpath
\begin{equation}\label{eq:16.6}
 \Pi = (X_s, W_k, \dots, W_{k+t}, X_r) \end{equation}
of $T'$ is flocky with anchor set $U$ and has a unique maximal flock. This flock is $\Pi$ itself in the case that $X_s$ has anchor $Y_k$ and $X_r$ has anchor $Y_{k+t}$. Otherwise the maximal flock of $\Pi$ is obtained by omitting in $\Pi$ the ray $X_s$ or the ray $X_r$ or both.

\begin{defn}\label{def:16.18}
 We call $T'$ a \textbf{flock modification} of the minimal QL-path $T$ on the track~ $U$. Modifying $T$ successively on all maximal tracks in $S$ we obtain a flocky minimal QL-path $\htT$ of length $n$ with anchor set $S$, which we call a \textbf{total flock modification} of $T$.
\end{defn}

\begin{rem}\label{rem:16.19}
Given a flock modification $T' = (X_0, \dots, X_s, W_k, \dots, W_{k+t}, X_r, \dots, X_n)$ of~ $T$ on the track $U = (Y_k, \dots, Y_{k+t+1})$, we obtain all flock modifications of $T$ on $U$ by varying each ray $W_i$, $k \leq i \leq k + t$, within $Y_i^{\uparrow} \cap Y_{i+1}^{\uparrow}$. The point here is that the sets $Y_i^{\uparrow} \cap Y_{i+1}^{\uparrow}$ are convex and quasilinear in $\Ray (V)$.
\end{rem}

\section{Further observations on minimal QL-paths and their anchor sets}\label{sec:17}

Arguing similarly as often in \S\ref{sec:14}--\S\ref{sec:16}, we obtain the following facts about minimal QL-paths, mainly by exploiting Theorem~\ref{thm:4.6}.
\begin{thm}\label{thm:17.1}
 Assume that $(X_0, \dots, X_n)$ is a minimal QL-path.
\begin{itemize}
\item[a)] If $p, q \in [0, n]$ and $p+1 < q$, then the sets $(X_p^{\downarrow})^{\uparrow}$ and $(X_q^{\downarrow})^{\uparrow}$ are disjoint.
    \item[b)] If $p, q \in [0, n]$ and $p+1 = q$, the sets $(X_p^{\downarrow})^{\uparrow}$ and $X_q^{\uparrow}$ are disjoint, and the sets $X_p^{\uparrow}$ and $(X_q^{\downarrow})^{\uparrow}$ as well.
\end{itemize}
\end{thm}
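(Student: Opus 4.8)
The plan is to argue each case by contradiction, pitting the minimality of $T=(X_0,\dots,X_n)$ against a bridge-type $\QL$-path built via Theorem~\ref{thm:4.6}. First I would record the basic mechanism underlying everything: if a ray $Z$ lies in $(X_i^{\downarrow})^{\uparrow}$, then by definition there is some $Y\in X_i^{\downarrow}$ with $Z\in Y^{\uparrow}$; since $X_i\in Y^{\uparrow}$ as well and $Y^{\uparrow}=\sat_{\QL}(Y)$ is convex and quasilinear by Theorem~\ref{thm:4.6}, the pair $(X_i,Z)$ is automatically quasilinear. Thus membership of a ray in such a ``down-then-up'' set always yields an edge of $\Gamma_{\QL}(V,q)$ from that ray to the relevant $X_i$. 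I would also note that the second assertion of (b) is the first assertion applied to the reversed minimal path $(X_n,\dots,X_0)$, so it suffices to treat $(X_p^{\downarrow})^{\uparrow}\cap X_{p+1}^{\uparrow}$ together with the situation of (a).

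For part (a), suppose for contradiction that $Z\in(X_p^{\downarrow})^{\uparrow}\cap(X_q^{\downarrow})^{\uparrow}$ with $p+1<q$. Pick $Y_1\in X_p^{\downarrow}$ and $Y_2\in X_q^{\downarrow}$ with $Z\in Y_1^{\uparrow}\cap Y_2^{\uparrow}$. By the mechanism above both $(X_p,Z)$ and $(Z,X_q)$ are quasilinear, which is exactly the input of Lemma~\ref{lem:16.3}; it produces the $\QL$-path $T'=(X_0,\dots,X_p,Z,X_q,\dots,X_n)$ from $X_0$ to $X_n$. Counting edges gives that $T'$ has length $n-(q-p-2)$. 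Hence whenever $q\ge p+3$ the path $T'$ is strictly shorter than $n$, contradicting minimality, and so $(X_p^{\downarrow})^{\uparrow}$ and $(X_q^{\downarrow})^{\uparrow}$ are disjoint in this range.

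For part (b), assume $Z\in(X_p^{\downarrow})^{\uparrow}\cap X_{p+1}^{\uparrow}$. As before $(X_p,Z)$ is quasilinear, and from $Z\in X_{p+1}^{\uparrow}$ we get $\QL(X_{p+1})\subset\QL(Z)$, whence $X_{p+2}\in\QL(X_{p+1})\subset\QL(Z)$, so $(Z,X_{p+2})$ is quasilinear; this again yields a $\QL$-path through $Z$ replacing the single ray $X_{p+1}$. The goal of the argument here is to promote this replacement to a genuine shortening, i.e.\ to establish in addition that $(X_{p-1},Z)$ is quasilinear, so that $X_p$ and $X_{p+1}$ can be collapsed to the single ray $Z$, producing a path of length $n-1$ and the desired contradiction.

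The step I expect to be the main obstacle is precisely the \emph{tight} boundary: the case $q=p+2$ in (a) and the adjacent case in (b). There the bridge one constructs has length exactly $n$, so minimality does not exclude it by a naive length count — indeed $(X_p,Z,X_q)$ is then a twin configuration, and one is essentially staring at the flock mechanism of \S\ref{sec:16}. Resolving these cases cannot rely on directness alone; it must use minimality in an essential way, propagating the quasilinear relation one step further along the path so as to force a true $\QL$-shortcut $X_p\to X_{p+2}$ (resp.\ $X_{p-1}\to X_{p+1}$). I would attempt this propagation by iterating the reduction and controlling the anchors via Theorem~\ref{thm:15.4} and the disjointness of downsets (Theorem~\ref{thm:16.8}); converting the length-$n$ bridge into a strictly shorter one is the crux of the whole proof, and where the hypothesis ``minimal'' (rather than merely ``direct'') is indispensable.
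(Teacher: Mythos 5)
Your strategy is exactly the paper's: the down--up mechanism (any $Z\in(X_i^{\downarrow})^{\uparrow}$ yields a quasilinear pair $(X_i,Z)$, since $Z$ and $X_i$ both lie in some $Y^{\uparrow}=\sat_{\QL}(Y)$, which is quasilinear and convex by Theorem~\ref{thm:4.6}), the bridge $(X_0,\dots,X_p,Z,X_q,\dots,X_n)$ of Lemma~\ref{lem:16.3}, a length count against minimality, and the reversal trick for the second assertion of (b). For $q\geq p+3$ your argument is complete and coincides with the paper's.

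The real issue is the boundary, and here your count is right where the paper's is not. The bridge has length $n-(q-p-2)$, and the replacement path in (b) has length $n$, exactly as you computed; compare Lemma~\ref{lem:16.3}(c), which states explicitly that for $q=p+2$ the modified path ``has again length $n$''. Nevertheless the paper's proof of (a) asserts the bridge is ``shorter than $(X_0,\dots,X_n)$ if $q>p+1$'', and its proof of (b) asserts that $(X_0,\dots,X_p,Z,X_{p+2},\dots,X_n)$ has length $n-1$. Both counts are off by one, so the cases you single out as the crux -- $q=p+2$ in (a), and all of (b) -- are settled in the paper only by this arithmetic slip; the paper does not contain the missing idea you were looking for.

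Moreover, the propagation you propose (forcing a genuine shortcut $(X_p,X_{p+2})$, resp.\ $(X_{p-1},X_{p+1})$, to be quasilinear) cannot exist in general, because in these boundary cases the statement of Theorem~\ref{thm:17.1} is inconsistent with the twin/flock theory of \S\ref{sec:15}--\S\ref{sec:16}. If a minimal path contains a twin pair $(X_p,X_{p+1})$ with anchor $Y\in X_p^{\downarrow}\cap X_{p+1}^{\downarrow}$ (Definition~\ref{def:15.5}), then $X_{p+1}\in Y^{\uparrow}\subset (X_p^{\downarrow})^{\uparrow}$ and trivially $X_{p+1}\in X_{p+1}^{\uparrow}$, so part (b) fails; the middle ray of a flock $(X_p,X_{p+1},X_{p+2})$ lies in $(X_p^{\downarrow})^{\uparrow}\cap(X_{p+2}^{\downarrow})^{\uparrow}$, so part (a) fails at $q=p+2$. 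Yet the paper maintains that such configurations occur in minimal paths: the remark following Definition~\ref{def:16.4} says minimality forces $q=p+2$ in Lemma~\ref{lem:16.3} (not that this case is empty), Theorem~\ref{thm:16.5} produces flocks inside minimal QL-paths, and Theorem~\ref{thm:17.7} produces a minimal QL-path beginning with a twin pair. The flock diagram itself shows why no propagation argument can succeed: there the hypotheses of the negation hold while no non-adjacent pair is quasilinear (the path being direct). So either Theorem~\ref{thm:17.1} must be weakened -- (a) restricted to $q>p+2$, (b) dropped or reformulated -- or \S\ref{sec:16}--\S\ref{sec:18} are vacuous. In short: what you proved is all that this method can deliver, and your honest refusal to fake the boundary cases identifies a genuine error in the paper rather than a gap in your own understanding.
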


\begin{proof}
 a): Let $0 \leq p < q \leq n$. Assume that there exists a ray $Z$ in $(X_p^{\downarrow})^{\uparrow} \cap (X_q^{\downarrow})^{\uparrow}$. We choose $Y_1 \in X_p^{\downarrow}$ and $Y_2 \in X_q^{\downarrow}$ with $Z \in Y_1^{\uparrow} \cap Y_2^{\uparrow}$. Then we have the diagram

\begin{equation}\label{eq:17.1}
\setlength{\arraycolsep}{0.1cm}
\begin{array}{ccccccc}
    &      &     &  \Rnode{a}{Z} &      &       &     \\[0.5cm]
\Rnode{b}{^{\dots \; \hbox to 0.6cm{\hrulefill}}} & \Rnode{c}{X_p} & \Rnode{d}{^{\hbox to 0.6cm{\hrulefill}}} &  \Rnode{e}{^{\hbox to 0.6cm{\hrulefill}}}  &  \Rnode{f}{^{\hbox to 0.6cm{\hrulefill}}}  &\Rnode{g}{X_q} & \Rnode{h}{^{\hbox to 0.6cm{\hrulefill} \; \dots}} \\[0.5cm]
    &    & \Rnode{i}{Y_1} & & \Rnode{j}{Y_2} &     &     \\
\end{array}
\psset{nodesep=5pt,arrows=->} \everypsbox{\scriptstyle}
\ncLine{a}{i} \ncLine{a}{j}
\ncline{c}{i} \ncline{g}{j}
\psset{nodesep=5pt,arrows=-} \everypsbox{\scriptstyle}
\ncLine{a}{c} \ncLine{a}{g}
\end{equation}
and obtain a QL-path $(X_0, \dots, X_p, Z, X_q, \dots, X_n)$ which will be shorter than $(X_0, \dots, X_n)$ if $q > p + 1$. Thus $Z$ does not exist in this case, which proves that $(X_p^{\downarrow})^{\uparrow}$ and $(X_q^{\downarrow})^{\uparrow}$ are disjoint for $q > p+1$.\pSkip
b): Let again $0 < p < q \leq n$. Assume that there exists some $Z \in (X_p^{\downarrow})^{\uparrow} \cap X_q^{\uparrow}$. Then we have a diagram
\begin{equation}\label{eq:17.2}
\setlength{\arraycolsep}{0.1cm}
\begin{array}{ccccccccc}
    &      &     &      \Rnode{a}{Z} &  &  &    &       &     \\[0.5cm]
\Rnode{b}{^{\dots \; \hbox to 0.6cm{\hrulefill}}} & \Rnode{c}{X_p} & \Rnode{d}{^{\hbox to 0.6cm{\hrulefill}}} & \Rnode{e}{}  & \Rnode{f}{^{\hbox to 0.6cm{\hrulefill}}}  & \Rnode{g}{X_{p+1}} &  \Rnode{h}{^{\hbox to 0.6cm{\hrulefill}}} & \Rnode{i}{X_{p+2}} & \Rnode{j}{^{\hbox to 0.6cm{\hrulefill} \; \dots}} \\[0.5cm]
    &      &   &       \Rnode{k}{Y_1} &  & &   &       &     \\
\end{array}
\psset{nodesep=5pt,arrows=->} \everypsbox{\scriptstyle}
\ncLine{a}{g}
\ncLine{a}{k}
\ncline{c}{k}
\psset{nodesep=5pt,arrows=-} \everypsbox{\scriptstyle}
\ncLine{a}{c}
\ncLine{a}{i}
\ncline{g}{k}
\end{equation}
due to the fact that $X_{p+2} \in \QL (X_{p+1}) \subset \QL (Z)$. But this is impossible since \linebreak $(X_0, \dots, X_p, Z, X_{p+2}, \dots, X_n)$ would be  a QL-path of length $n-1$. We conclude that $(X_p^{\downarrow})^{\uparrow}$ and $X_q^{\uparrow}$ are disjoint. Switching to the opposite path $(X_n, \dots, X_0)$, which again is minimal, we obtain that $X_p^{\uparrow}$ and $(X_q^{\downarrow})^{\uparrow}$ are disjoint. \end{proof}

Given a direct QL-path $T = (X_0, \dots, X_n)$ and an anchor set $S = (Y_0, \dots, Y_m)$ of $T$, we call the full subdiagram of $\hat{\Gamma}_{\QL} (V, q)$ spanned by the rays $(X_0, \dots, X_n, Y_0, \dots, Y_m)$ the \textbf{anchor diagram of the pair} $(T, S)$. Most often we display the anchor diagram as a ``bipartite'' subdiagram of $\hat{\Gamma}_{\QL} (V, q)$, with an upper horizontal row containing the rays of $T$ and a lower horizontal row containing the rays of $S$. In the lower horizontal there do not occur any arrows ($\rightarrow$) in consequence of Theorem~\ref{thm:16.8}. But there can be edges ($\raisebox{1mm}[4mm][2mm]{\hbox to 0.5cm{\hrulefill}}$)
and so QL-subpaths of the sql sequence $(Y_0, \dots, Y_n)$, at which we now take a look. First note that all QL-subpaths of $S$ are direct, as stated in Theorem~\ref{thm:15.9}.

\begin{lem}\label{lem:17.2}
 Assume that $T = (X_0, \dots, X_n)$ is a direct QL-path and $S = (Y_0, \dots, Y_m)$ is an anchor set of $T$. Assume further that $(X_p, X_{p+1})$ is a twin pair with anchor $Y_k$ in $S$. Then neither $(Y_{k-1}, Y_k)$ (if $k > 0$) nor $(Y_k, Y_{k+1})$ (if $k < m$) is ql.
 \end{lem}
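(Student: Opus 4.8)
The plan is to re-express the twin-pair and anchor hypotheses in terms of the up-sets and down-sets of $\preceq_{\QL}$, locate the neighbouring anchors $Y_{k-1}, Y_{k+1}$ along the path $T$, and then contradict directness of $T$ by means of Lemma~\ref{lem:15.7}. To begin, I would note that by Definition~\ref{def:15.5} the hypothesis that $(X_p, X_{p+1})$ is a twin pair anchored at $Y_k$ says exactly that $Y_k \in X_p^{\downarrow} \cap X_{p+1}^{\downarrow}$, i.e.\ $X_p, X_{p+1} \in Y_k^{\uparrow}$. Since $T$ is direct, Theorem~\ref{thm:15.4} then forces $Y_k^{\uparrow}$ to meet $\{X_0, \dots, X_n\}$ in precisely these two adjacent rays, so that $Y_k \in X_i^{\downarrow}$ holds if and only if $i \in \{p, p+1\}$.

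The central step is to pin down where $Y_{k\pm 1}$ sit on $T$. Using the construction of an anchor set (Procedure~\ref{proc:15.5}) together with the legal/illegal anchor analysis of Scholium~\ref{schol:16.7}, the legal anchor of $X_{p+1}$ is $Y_k$; hence if $k < m$ then $p+1 < n$ and $Y_{k+1}$ is the legal anchor of $X_{p+2}$, giving $Y_{k+1} \in X_{p+2}^{\downarrow}$. Dually, if $k > 0$ then $p > 0$ and $Y_{k-1} \in X_{p-1}^{\downarrow}$: it is the legal anchor of $X_{p-1}$ when $(X_{p-1}, X_p)$ is not a twin pair, and the common anchor of the twin pair $(X_{p-1}, X_p)$ when $X_p$ is an interior ray of a flock.

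With these memberships in hand the conclusion is immediate. Suppose $k < m$ and $(Y_k, Y_{k+1})$ were quasilinear; since $Y_k \in X_p^{\downarrow}$ and $Y_{k+1} \in X_{p+2}^{\downarrow}$, Lemma~\ref{lem:15.7} would make $(X_p, X_{p+2})$ quasilinear, contradicting directness of $T$ because $|p-(p+2)| = 2$. Suppose $k > 0$ and $(Y_{k-1}, Y_k)$ were quasilinear; from $Y_{k-1} \in X_{p-1}^{\downarrow}$ and $Y_k \in X_{p+1}^{\downarrow}$, Lemma~\ref{lem:15.7} would make $(X_{p-1}, X_{p+1})$ quasilinear, again contradicting directness. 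The step I expect to be the real work is the middle one: one must make sure $Y_{k+1}$ genuinely anchors the ray $X_{p+2}$ two places away, rather than merely reappearing as an illegal anchor of $X_{p+1}$ inside a flock, and dually that $Y_{k-1}$ reaches back to $X_{p-1}$. This is precisely where the legal/illegal distinction and the consecutive-anchor pattern of flocks (Scholium~\ref{schol:16.7}) have to be invoked; once $Y_{k+1} \in X_{p+2}^{\downarrow}$ and $Y_{k-1} \in X_{p-1}^{\downarrow}$ are secured, the two applications of Lemma~\ref{lem:15.7} are routine.
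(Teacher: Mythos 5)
Your proof is correct and takes essentially the same route as the paper's: the paper's two diagrams encode precisely your memberships $Y_k \in X_p^{\downarrow} \cap X_{p+1}^{\downarrow}$, $Y_{k+1} \in X_{p+2}^{\downarrow}$, $Y_{k-1} \in X_{p-1}^{\downarrow}$, and the contradiction with directness is obtained by the same lifting of a hypothetical quasilinear anchor pair, via Lemma~\ref{lem:15.7}, to a forbidden quasilinear pair $(X_p, X_{p+2})$ or $(X_{p-1}, X_{p+1})$. If anything, your middle step (justifying through Procedure~\ref{proc:15.5} and Scholium~\ref{schol:16.7} that $Y_{k+1}$ really anchors $X_{p+2}$ and $Y_{k-1}$ really anchors $X_{p-1}$, including the flock versus non-flock cases) is spelled out more explicitly than in the paper, which leaves exactly that point implicit in its diagrams.
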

 \begin{proof}
 Otherwise we have a diagram
\begin{equation}\label{eq:str}
\setlength{\arraycolsep}{0.1cm}
\begin{array}{ccccc}
\Rnode{a}{X_{p}} & \Rnode{b}{^{\hbox to 0.6cm{\hrulefill}}} & \Rnode{c}{X_{p+1}} & \Rnode{d}{^{\hbox to 0.6cm{\hrulefill}}} & \Rnode{e}{X_{p+2}}  \\[1cm]
  &  \Rnode{f}{Y_k} & & \Rnode{g}{^{\hbox to 0.6cm{\hrulefill}}} & \Rnode{h}{Y_{k+1}} \\
\end{array}
\psset{nodesep=5pt,arrows=->} \everypsbox{\scriptstyle}
\ncLine{a}{f} \ncLine{c}{f}
\ncline{e}{h} \tag{$*$}
\end{equation}
or
\begin{equation}\label{eq:sstr}
\setlength{\arraycolsep}{0.1cm}
\begin{array}{ccccc}
\Rnode{a}{X_{p-1}} & \Rnode{b}{^{\hbox to 0.6cm{\hrulefill}}} & \Rnode{c}{X_{p}} & \Rnode{d}{^{\hbox to 0.6cm{\hrulefill}}} & \Rnode{e}{X_{p+1}}  \\[1cm]
\Rnode{f}{Y_{k-1}} & \Rnode{g}{^{\hbox to 0.6cm{\hrulefill}}} & & \Rnode{h}{Y_{k}} & \\
\end{array}
\psset{nodesep=5pt,arrows=->} \everypsbox{\scriptstyle}
\ncLine{a}{f} \ncLine{c}{h}
\ncline{e}{h}\tag{$**$}
\end{equation}
But ($\ast$) would imply that $(X_p, X_{p+2})$ is ql, and ($\ast\ast$) would imply that $(X_{p-1}, X_{p+1})$ is ql, in contradiction to the assumption that $T$ is direct.
\end{proof}

The lemma has the following immediate consequence

\begin{thm}\label{thm:17.3}
 Assume again that $T = (X_0, \dots, X_n)$ is a direct QL-path and $S = (Y_0, \dots, Y_m)$ is a anchor set of $T$. Assume further that a subsequence $(Y_k, Y_{k+1}, \dots, Y_{k+t})$ of $T$ with $k \geq 0$, $k + t \leq m$, $t \geq 1$, is a QL-path, necessarily direct. Then the part of the anchor diagram of $(T, S)$ lying over this sequence is an enlargement

\begin{equation}\label{eq:17.3}
\setlength{\arraycolsep}{0.1cm}
\begin{array}{ccccccc}
\Rnode{a}{X_{p}} & \Rnode{b}{^{\hbox to 0.6cm{\hrulefill}}} & \Rnode{c}{X_{p+1}} & \Rnode{d}{^{\hbox to 0.6cm{\hrulefill}}} & \Rnode{e}{\quad} & \Rnode{f}{^{\hbox to 0.6cm{\hrulefill}}} & \Rnode{g}{X_{p+t}}  \\[1cm]
\Rnode{h}{Y_{k}} & \Rnode{i}{^{\hbox to 0.6cm{\hrulefill}}} &  \Rnode{j}{Y_{k+1}} & \Rnode{k}{^{\hbox to 0.6cm{\hrulefill}}} & \Rnode{l}{\quad} & \Rnode{m}{^{\hbox to 0.6cm{\hrulefill}}} & \Rnode{n}{Y_{k+t}} \\
\end{array}
\psset{nodesep=5pt,arrows=->} \everypsbox{\scriptstyle}
\ncLine{a}{h} \ncLine{c}{j}
\ncline{g}{n}
\end{equation}
of this subsequence, consisting of singles $X_p, X_{p+1}, \dots, X_{p+t}$ of $T$, and their (legal) anchors $Y_k, Y_{k+1}, \dots, Y_{k+t}$.
\end{thm}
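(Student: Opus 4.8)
The plan is to read off \eqref{eq:17.3} from \lemref{lem:17.2} (whose contrapositive forbids twin-pair anchors), \lemref{lem:15.7} (which lifts quasilinearity from the anchors to the rays they anchor) and the directness of $T$. I will repeatedly use that, by Procedure~\ref{proc:15.5}, every entry of $S$ is the legal anchor of some ray of $T$, and that the assignment of legal anchors along $T$ is monotonic.

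First I would show that none of $Y_k, \dots, Y_{k+t}$ is the common anchor of a twin pair. Since $t \ge 1$ and $(Y_k, \dots, Y_{k+t})$ is a QL-path, each index $k+i$ ($0 \le i \le t$) occurs in at least one quasilinear adjacent pair, namely $(Y_{k+i}, Y_{k+i+1})$ when $i < t$ and $(Y_{k+i-1}, Y_{k+i})$ when $i > 0$. By the contrapositive of \lemref{lem:17.2}, $Y_{k+i}$ is then not the anchor of a twin pair. Now I invoke Scholium~\ref{schol:16.7}: the legal anchor of any ray lying in a twin pair---whether a ray of an isolated twin pair, or a border or interior ray of a flock---is the common anchor of a twin pair containing it, hence a twin-pair anchor. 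Consequently the ray that $Y_{k+i}$ legally anchors cannot be a twin, so it is a single $X_{b_i}$; and were $Y_{k+i}$ the legal anchor of two rays, these would form a twin pair by the reuse rule of Procedure~\ref{proc:15.5}, again making $Y_{k+i}$ a twin-pair anchor. Thus each $Y_{k+i}$ is the legal anchor of a unique single $X_{b_i}$ of $T$.

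Next I would place these singles inside $T$. Monotonicity of the legal-anchor map gives $b_0 < b_1 < \dots < b_t$. For each $i < t$ the pair $(Y_{k+i}, Y_{k+i+1})$ is quasilinear while $Y_{k+i} \in X_{b_i}^{\downarrow}$ and $Y_{k+i+1} \in X_{b_{i+1}}^{\downarrow}$, so \lemref{lem:15.7} shows that $(X_{b_i}, X_{b_{i+1}})$ is quasilinear. Because $T$ is direct (Definition~\ref{def:14.10}) and $b_i < b_{i+1}$, this forces $b_{i+1} = b_i + 1$. Writing $p := b_0$ we obtain the consecutive singles $X_p, X_{p+1}, \dots, X_{p+t}$, with $Y_{k+i}$ the legal anchor of $X_{p+i}$. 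Finally, $\QL(Y_{k+i}) \subset \QL(X_{p+i})$ since an anchor lies in the downset of the ray it anchors; as both rows $(Y_k, \dots, Y_{k+t})$ and $(X_p, \dots, X_{p+t})$ are QL-paths of equal length, the part of the anchor diagram over $(Y_k, \dots, Y_{k+t})$ is precisely the enlargement \eqref{eq:17.3} in the sense of Definition~\ref{def:14.4}, its top row the singles $X_p, \dots, X_{p+t}$ and its bottom row their legal anchors $Y_k, \dots, Y_{k+t}$.

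The step I expect to be delicate is the first one: one has to be certain that the legal anchor of a twin ray is genuinely a twin-pair anchor, so that \lemref{lem:17.2} applies, which means reading the flock/isolated-pair case distinction of Scholium~\ref{schol:16.7} carefully; and one must confirm the uniqueness of the anchored single (no sharing of a legal anchor outside a twin pair). Once singleness and uniqueness are in place, the transfer of quasilinearity through \lemref{lem:15.7}, the consecutiveness forced by directness, and the final identification as an enlargement are all routine.
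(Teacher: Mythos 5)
Your proof is correct and follows essentially the paper's own route: the paper presents Theorem~\ref{thm:17.3} as an immediate consequence of Lemma~\ref{lem:17.2}, whose contrapositive (as you use it) rules out twin-pair anchors among $Y_k,\dots,Y_{k+t}$, so that Scholium~\ref{schol:16.7} and Procedure~\ref{proc:15.5} force each of these rays to legally anchor a unique single. Your additional use of Lemma~\ref{lem:15.7} together with directness of $T$ to pin down that these singles are consecutive is exactly the detail the paper leaves implicit, so the two arguments coincide in substance.
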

\noi We call \eqref{eq:17.3} a \textbf{QL-block} (of length $t$) in the anchor diagram of $(T, S)$.

In contrast to the flocks in $T$, the subpaths of $T$ appearing as the upper horizontal of a QL-block strongly depend  on the choice of the anchor set $S$ of $T$. In particular we can choose for every single $X_p$ in $T$ as an anchor the ray $X_p$ itself, and then obtain a \textit{special anchor set} (Procedure~\ref{proc:15.5}), for which the QL-subpaths of $T$ consisting of singles give the QL-blocks of $(T, S)$; so, the maximal such subpaths of $T$ give all maximal QL-blocks of $(T, S)$.

\begin{prop}\label{prop:17.4}
 The upper horizontal of an anchor diagram $(T, S)$ of a direct QL-path $T = (X_0, \dots, X_n)$ with $n \geq 2$ does not contain arrows ($\rightarrow$) except those pointing to $X_0$ or ~$X_n$.\end{prop}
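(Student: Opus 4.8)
The plan is to derive everything from the single defining feature of a direct QL-path (Definition~\ref{def:14.10}): no pair $(X_i,X_j)$ with $|i-j|\ge 2$ is quasilinear. First I would record that, because $T$ is direct, the only edges of $\widehat{\Gamma}_{\QL}(V,q)$ joining two rays of $T$ are the path edges $\{X_i,X_{i+1}\}$, $0\le i<n$ (together with the loops at each vertex): an edge $\{X_i,X_j\}$ is present exactly when $(X_i,X_j)$ is quasilinear, and directness rules this out for $|i-j|\ge 2$. Thus every arrow of the upper horizontal lives on some edge $\{X_i,X_{i+1}\}$ and, by Notation~\ref{notat:14.3}, encodes a strict star inclusion $\QL(X_i)\subsetneq\QL(X_{i+1})$ (the arrow $X_i\to X_{i+1}$) or $\QL(X_{i+1})\subsetneq\QL(X_i)$ (the arrow $X_{i+1}\to X_i$).

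Next I would exclude both orientations by one and the same move: feeding the neighbouring path edge into the star inclusion to manufacture a forbidden gap-$2$ quasilinear pair. If the arrow is $X_i\to X_{i+1}$ and $i\ge 1$, the edge $\{X_{i-1},X_i\}$ gives $X_{i-1}\in\QL(X_i)\subset\QL(X_{i+1})$, so $(X_{i-1},X_{i+1})$ is quasilinear with $|(i-1)-(i+1)|=2$, contradicting directness; hence this orientation forces $i=0$. Symmetrically, if the arrow is $X_{i+1}\to X_i$ and $i+1\le n-1$, the edge $\{X_{i+1},X_{i+2}\}$ gives $X_{i+2}\in\QL(X_{i+1})\subset\QL(X_i)$, so $(X_i,X_{i+2})$ is quasilinear, again a gap-$2$ contradiction; hence this orientation forces $i+1=n$. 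Therefore the only edges that can be decorated are the two boundary edges $\{X_0,X_1\}$ and $\{X_{n-1},X_n\}$, carrying the arrows $X_0\to X_1$ and $X_n\to X_{n-1}$ respectively, i.e. exactly the arrows at the endpoints $X_0$ and $X_n$, which is the claim.

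The only genuinely delicate point is the boundary bookkeeping, and the hypothesis $n\ge 2$ is precisely what makes it work: it guarantees that in the two interior cases the needed far-side neighbour ($X_{i-1}$ when $i\ge 1$, resp. $X_{i+2}$ when $i+1\le n-1$) actually exists inside $[0,n]$, so the gap-$2$ pair can be formed and the contradiction reached. When the source of the arrow is an endpoint this neighbour is absent, so the argument correctly produces no obstruction, leaving the two boundary arrows as the admissible exceptions. Finally I would note that nothing here refers to the anchor set $S$: arrows among $X_0,\dots,X_n$ are determined solely by $\preceq_{\QL}$ (cf.~\eqref{eq:4.1}), so the conclusion is an intrinsic property of the direct path $T$, independent of the chosen $S$.
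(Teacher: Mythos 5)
Your argument is correct and is essentially the paper's own proof: an arrow sitting on an interior edge of $T$, composed with the adjacent path edge, produces a quasilinear pair at distance two, contradicting directness (the paper writes out only one orientation, leaving the other to symmetry, while you treat both explicitly). One caveat on orientation: in \S\ref{sec:14}--\S\ref{sec:18} the paper's diagrams and its proof of this very proposition use the convention opposite to the text of Notation~\ref{notat:14.3} --- there the arrow points toward the ray with the \emph{smaller} QL-star --- so the two admissible arrows are written $X_1 \to X_0$ and $X_{n-1} \to X_n$ (``pointing to'' the endpoints), whereas under your literal reading of Notation~\ref{notat:14.3} they come out as $X_0 \to X_1$ and $X_n \to X_{n-1}$; since both readings encode exactly the inclusions $\QL(X_0) \subset \QL(X_1)$ and $\QL(X_n) \subset \QL(X_{n-1})$, the discrepancy is purely notational and your proof establishes the intended statement.
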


\begin{proof}
  Suppose there is an arrow, say, $X_p \to X_{p+1}$ with $p + 1 < n$. Then $X_{p+2} \in \QL (X_{p+1}) \subset \QL (X_p)$, and so the pair $(X_p, X_{p+2})$ would be quasilinear, contradicting the assumption that~ $T$ is direct. \end{proof}
\begin{defn}\label{def:17.5}
 An arrow $X_{n-1} \to X_n$ gives us a diagram
\begin{equation}\label{eq:17.4}
\setlength{\arraycolsep}{0.1cm}
\begin{array}{cccccc}
\Rnode{a}{^{\hbox to 0.6cm{\hrulefill}}} & \Rnode{b}{X_{n-2}} & \Rnode{c}{\longrightarrow} & \Rnode{d}{X_{n-1}} & \Rnode{e}{\longrightarrow} & \Rnode{f}{X_n}  \\[0.8cm]
   &    &     &    & \Rnode{g}{X_n} &
\end{array}
\psset{nodesep=5pt,arrows=->} \everypsbox{\scriptstyle}
\ncLine{d}{g} \ncLine{f}{g}
\end{equation}
and thus $(X_{n-1}, X_n)$ is a twin pair. In the same way we obtain a twin pair $(X_0, X_1)$ from an arrow $X_1 \to X_0$. We call these twin pairs $(X_{n-1}, X_n)$ and $(X_0, X_1)$ \textbf{special twin pairs}.
\end{defn}

There are other possibilities to modify a minimal QL-path $(X_0, \dots, X_n)$, $n \geq 3$, to a path $(X_0, Y, X_2, \dots, X_n)$ or $(X_0, \dots, X_{n-2}, Y, X_n)$ which starts or ends with a twin pair $(X_0, Y)$ or $(Y, X_n)$. Recall from Scholium 14.17 that, given an optimal QL-path $(X_0, \dots, X_n)$ and a ray $Y \in X_0^{\uparrow}$, the intersection $\QL (Y) \cap \{ X_0, \dots, X_n \}$ is contained in $\{ X_0, X_1, X_2 \}$. Since trivially $X_1 \in \QL (Y)$, this intersection is either $\{ X_0, X_1 \}$ or $\{ X_0, X_1, X_2 \}$.

\begin{defn}\label{def:17.6}
 We say that the optimal QL-path $(X_0, \dots, X_n)$ has \textbf{narrow entrance} if $X_2 \not\in \QL (Y)$ for every $Y \in X_0^{\uparrow}$, and that $(X_0, \dots, X_n)$ has \textbf{wide entrance} otherwise. Analogously we say that $(X_0, \dots, X_n)$ has \textbf{narrow exit}, if $\QL (Y) \cap \{ X_0, \dots, X_n \} = \{ X_{n-1}, X_n \}$ for every $Y \in X_n^{\uparrow}$, and \textbf{wide exit} otherwise. \end{defn}

If $(X_0, \dots, X_n)$ has narrow entrance, it can nevertheless happen that $X_2 \in \QL (Y)$ for a ray $Y$ in $(X_0^{\downarrow})^{\uparrow}$. In this case we have a diagram
\begin{equation}\label{eq:17.5}
\setlength{\arraycolsep}{0.1cm}
\begin{array}{ccccccccc}
    &      &   \Rnode{a}{Y} &  &  &    &   &    &     \\[0.5cm]
\Rnode{b}{X_0} & \Rnode{c}{^{\hbox to 0.6cm{\hrulefill}}} & \Rnode{d}{} & \Rnode{e}{^{\hbox to 0.6cm{\hrulefill}}} & \Rnode{f}{X_1} & \Rnode{g}{^{\hbox to 0.6cm{\hrulefill}}}  & \Rnode{h}{X_2} &  \Rnode{i}{^{\hbox to 0.6cm{\hrulefill}} \; \cdots \; ^{\hbox to 0.6cm{\hrulefill}}} & \Rnode{j}{X_n}  \\[0.5cm]
    &      &   \Rnode{k}{Z} &  &   &   &   &    &
\end{array},
\psset{nodesep=5pt,arrows=->} \everypsbox{\scriptstyle}
\ncLine{b}{k}
\ncLine{a}{k}
\psset{nodesep=6pt,arrows=-} \everypsbox{\scriptstyle}
\ncLine{a}{b}
\ncLine{a}{h}
\end{equation}
and so a new QL-path $T': = (X_0, Y, X_2, \dots, X_n)$ of length $n$. We face the problem that $T'$ perhaps is not a direct path. This problem vanishes if $T$ is minimal, since then $T'$ is again minimal and so is direct. The following is now obvious.

\begin{thm}\label{thm:17.7}
 If $T = (X_0, \dots, X_n)$ is a minimal QL-path with narrow entrance, and if $X_2 \in \QL (Y)$ for a ray $Y$ in $(X_0^{\downarrow})^{\uparrow}$, then the minimal QL-path $T' = (X_0, Y, X_2, \dots, X_n)$ arising in the diagram \eqref{eq:17.5} starts with a twin pair $(X_0, Y)$. Analogously, if $T$ has narrow exit and $W$ is a ray in $(X_n^{\downarrow})^{\uparrow}$ with $X_{n-2} \in \QL (W)$, then $T'' = (X_0, \dots, X_{n-2}, W, X_n)$ is a minimal QL-path ending with a twin pair $(W, X_n)$.
\end{thm}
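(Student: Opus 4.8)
The plan is to confirm, in order, that $T'$ is a QL-path, that it is minimal and therefore direct, and that its initial pair $(X_0, Y)$ is a twin pair; the statement about $T''$ then follows by the reflection trick already employed in Theorem~\ref{thm:17.1}.

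First I would check that $T' = (X_0, Y, X_2, \dots, X_n)$ is a QL-path. The consecutive pairs $(X_2, X_3), \dots, (X_{n-1}, X_n)$ are quasilinear because $T$ is a QL-path, and $(Y, X_2)$ is quasilinear directly from the hypothesis $X_2 \in \QL(Y)$. For the remaining pair $(X_0, Y)$ I would unwind the membership $Y \in (X_0^{\downarrow})^{\uparrow}$: it produces a ray $Z$ with $Z \in X_0^{\downarrow}$ and $Y \in Z^{\uparrow}$, that is, $Z \preceq_{\QL} X_0$ and $Z \preceq_{\QL} Y$. This $Z$ is precisely the ray occurring in the lower row of diagram~\eqref{eq:17.5}. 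Hence $X_0$ and $Y$ both lie in $Z^{\uparrow} = \sat_{\QL}(Z)$, which is a quasilinear convex set by Theorem~\ref{thm:4.6}; in particular $(X_0, Y)$ is quasilinear, so $T'$ is indeed a QL-path.

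Next I would observe that $T'$ joins $X_0$ to $X_n$ and has length $n$, the same length as $T$. Since $T$ is minimal there is no QL-path of length $< n$ between these endpoints, so $T'$ is minimal as well. A minimal QL-path is necessarily direct, for any quasilinear pair $(X_i, X_j)$ with $|i - j| \geq 2$ would provide a basic reduction to a strictly shorter path (Definition~\ref{def:14.9}), contradicting minimality. Thus it is legitimate to speak of twin pairs in $T'$.

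Finally, the twin-pair assertion is immediate: the very ray $Z$ found above satisfies $Z \in X_0^{\downarrow} \cap Y^{\downarrow}$, so this intersection is nonempty, and $X_0, Y$ are adjacent in the direct path $T'$; by Definition~\ref{def:15.5} the pair $(X_0, Y)$ is a twin pair. For $T''$ I would run the identical argument on the opposite path $(X_n, \dots, X_0)$, which is again minimal, using the narrow-exit hypothesis and the ray $W$ in place of the narrow-entrance data and $Y$. No step here presents a genuine obstacle — the narrow-entrance assumption serves only to situate $Y$ in $(X_0^{\downarrow})^{\uparrow}$ rather than in $X_0^{\uparrow}$ — and the one point deserving care is to extract the witness $Z$ correctly from the definition of $(X_0^{\downarrow})^{\uparrow}$ and to invoke the convexity of $\sat_{\QL}(Z)$ from Theorem~\ref{thm:4.6} to secure the quasilinearity of $(X_0, Y)$.
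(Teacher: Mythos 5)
Your proposal is correct and follows essentially the same route as the paper: the paper's argument (given in the paragraph preceding the theorem, with the theorem itself declared ``now obvious'') likewise extracts the witness $Z \in X_0^{\downarrow}$ with $X_0, Y \in Z^{\uparrow}$, uses the quasilinear convexity of $\sat_{\QL}(Z)$ (Theorem~\ref{thm:4.6}) to get the edge $(X_0,Y)$, concludes that $T'$ is minimal because it has length $n$ with the same endpoints and hence is direct, and reads off the twin pair from $Z \in X_0^{\downarrow} \cap Y^{\downarrow}$, treating $T''$ by symmetry. Your only additions are explicit justifications (minimal $\Rightarrow$ direct via basic reductions, and the observation that narrow entrance plays no logical role in the deduction) that the paper leaves implicit.
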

\noi
These modifications $T'$ and $T''$ of $T$  have narrow entrance and narrow exit, respectively.

The following proposition indicates that minimal QL-paths with narrow entrance abound.

\begin{prop}\label{prop:17.8}
 Assume that $(X_0, \dots, X_n)$ is a minimal QL-path with $n > 3$. Then for any $r \in [1, n-1]$ the subpath $(X_r, \dots, X_n)$ is a minimal QL-path with narrow entrance.
\end{prop}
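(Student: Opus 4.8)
The plan is to establish two things about the tail $T_r:=(X_r,\dots,X_n)$: that it is again a minimal $\QL$-path, and that it has narrow entrance. First I would note that minimality is inherited by tails: were there a $\QL$-path $P$ from $X_r$ to $X_n$ of length $<n-r$, then prepending the head $(X_0,\dots,X_r)$ to $P$ would yield a $\QL$-path from $X_0$ to $X_n$ of length $<n$, contradicting the minimality of $(X_0,\dots,X_n)$ (Definition~\ref{def:16.4}). Hence $T_r$ is minimal. Since every elementary reduction strictly shortens a path, a minimal path with distinct endpoints and length $\geq 3$ admits no elementary reduction and is thus optimal (Definition~\ref{def:14.15}), hence direct (Proposition~\ref{def:14.16}). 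In particular narrow entrance is defined for $T_r$ as soon as $n-r\geq 3$, which for the shortest relevant tail $r=1$ is guaranteed by the hypothesis $n>3$.

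For the narrow-entrance claim I would unwind Definition~\ref{def:17.6} with the start of $T_r$ playing the role of $X_0$: it asserts that $X_{r+2}\notin\QL(Y)$ for every $Y\in X_r^{\uparrow}=\sat_{\QL}(X_r)$ (cf.\ \eqref{eq:15.1}). Argue by contradiction, assuming a ray $Y$ with $\QL(X_r)\subset\QL(Y)$ and $X_{r+2}\in\QL(Y)$. Because $r\geq 1$, the predecessor $X_{r-1}$ exists; the consecutive pair $(X_{r-1},X_r)$ is quasilinear, so $X_{r-1}\in\QL(X_r)\subset\QL(Y)$, which by the symmetry \eqref{eq:3.4} means that $(X_{r-1},Y)$ is quasilinear. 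Similarly $X_{r+2}\in\QL(Y)$ says that $(Y,X_{r+2})$ is quasilinear. Splicing gives the sequence
\[ (X_0,\dots,X_{r-1},\,Y,\,X_{r+2},\dots,X_n), \]
a $\QL$-path from $X_0$ to $X_n$ with $n$ rays, hence of length $n-1$, contradicting minimality. Therefore no such $Y$ exists and $T_r$ has narrow entrance.

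The conceptual crux, and the only spot requiring care, is that any ray $Y$ lying above $X_r$ in the quasiordering $\preceq_{\QL}$ automatically ``sees'' the left neighbour $X_{r-1}$ as well: from $X_{r-1}\in\QL(X_r)\subset\QL(Y)$ together with the wide-entrance hypothesis $X_{r+2}\in\QL(Y)$, the ray $Y$ becomes a single pillar of a bridge straddling $X_r$, collapsing a two-edge passage into another two-edge passage through $Y$ and saving one edge. The hypothesis $r\geq 1$ is precisely what makes $X_{r-1}$ available as the left pillar, which is why the statement concerns proper tails. The remaining verifications — that the spliced sequence is genuinely a $\QL$-path and that $r+2\leq n$ so that $T_r$ has an entrance at all — are routine and invoke nothing beyond the symmetry \eqref{eq:3.4}.
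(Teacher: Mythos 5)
Your proof is correct, but it takes a more self-contained route than the paper. The paper's own argument is two lines: minimality of the tail is inherited trivially, and then --- since a minimal QL-path of length $n>3$ is optimal --- Scholium~\ref{schol:14.17} applied to the \emph{full} path gives $\QL(Y)\cap\{X_0,\dots,X_n\}=\{X_{r-1},X_r,X_{r+1}\}$ for every $Y\in X_r^{\uparrow}$, hence $\QL(Y)\cap\{X_r,\dots,X_n\}=\{X_r,X_{r+1}\}$, which is narrow entrance. You instead re-derive the relevant special case of that scholium from scratch: assuming $X_{r+2}\in\QL(Y)$ for some $Y\in X_r^{\uparrow}$, you splice the sequence $(X_0,\dots,X_{r-1},Y,X_{r+2},\dots,X_n)$, check it is a QL-path of length $n-1$, and contradict minimality. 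Both arguments turn on the same mechanism --- $Y\in X_r^{\uparrow}$ forces $X_{r-1}\in\QL(X_r)\subset\QL(Y)$, so $Y$ can serve as a single pillar bridging $X_{r-1}$ to $X_{r+2}$ --- and your splice is an instance of the bridge construction of Procedure~\ref{proc:14.13}. What your version buys is independence from the optimality formalism: you use only minimality and the symmetry \eqref{eq:3.4}. What the paper's version buys is brevity and a slightly stronger conclusion: $\QL(Y)$ meets the tail in \emph{exactly} $\{X_r,X_{r+1}\}$, i.e., $X_j\notin\QL(Y)$ for all $j\geq r+2$, not merely $j=r+2$ (though your argument extends verbatim to any such $j$).

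One slip worth flagging, though it does not damage the proof: $r=1$ gives the \emph{longest} tail (length $n-1$), not the shortest, so your sentence about ``the shortest relevant tail $r=1$'' has the roles reversed. The delicate cases are at the opposite extreme, $r=n-2$ and $r=n-1$, where the tail has length $\leq 2$ and hence is not ``optimal'' in the sense of Definition~\ref{def:14.15}, so that narrow entrance (Definition~\ref{def:17.6}) is not literally defined; there the assertion must be read as the condition $X_{r+2}\notin\QL(Y)$ whenever $X_{r+2}$ exists, which is vacuous for $r=n-1$. This looseness is present in the paper's own statement and proof as well, and your contradiction argument covers every case in which $X_{r+2}$ exists, so nothing essential is lost.
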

\begin{proof}
 $\{ X_r, \dots, X_n \}$ is minimal, since $\{ X_0, \dots X_n \}$ is minimal. Let $Y \in X_r^{\uparrow}$. By Scholium~\ref{schol:14.17}
\[ \QL (Y) \cap \{ X_0, \dots, X_n \} = \{ X_{r-1}, X_r, X_{r+1} \} \]
and consequently
$ \QL (Y) \cap \{ X_r, \dots, X_n \} = \{ X_r, X_{r+1} \}. $
\end{proof}

If $T = (X_0, \dots, X_n)$ is a minimal QL-path with \textit{wide entrance} and $Y$ is a ray in $Z^{\uparrow}$ for some $Z \in X_0^{\downarrow}$ with $X_2 \in \QL (Y)$, then $T' = (X_0, Y, X_2, \dots, X_n)$ is again a minimal QL-path which starts with a twin pair $(X_0, Y)$, as indicated in diagram \eqref{eq:17.5}. But now we already have a ray $W \in X_0^{\uparrow}$ at hands with $X_2 \in \QL (W)$ and so a modification $(X_0, W, X_2, \dots, X_n) = \tlT$ of ~$T$ which is a minimal QL-path starting with a twin-pair $(X_0, W)$. We meet a situation which indicates more freedom in the choice of minimal modifications of $T$ than in the case of narrow entrance. $X_0, W, Y$ are rays in the quasilinear convex set $Z^{\uparrow}$, and so every $Y' \in [W, Y]$ is a ray with $(X_0, Y') \in Z^{\uparrow}$. Further all three pairs $(W, Y)$, $(W, X_2)$, $(Y, X_2)$ are quasilinear, and so the convex hull of $\{ W, Y, X_2 \}$ is quasilinear, which implies that $(Y', X_2)$ is quasilinear for every $Y' \in [W, Y]$. Summarizing we obtain

\begin{thm}\label{thm:17.9}
 Assume that $T = (X_0, X_1, \dots, X_n)$ is a minimal QL-path which has wide entrance, and so there exists a ray $W \in X_0^{\uparrow}$ with $X_2 \in \QL (W)$. Assume further that there is given a ray $Y \in (X_0^{\downarrow})^{\uparrow}$ with $X_2 \in \QL (Y)$. Then for every $Y' \in [W, Y]$ the sequence $(X_0, Y', X_2, \dots, X_n)$ is a minimal QL-path, which starts with a twin pair $(X_0, Y')$ and has wide entrance. \end{thm}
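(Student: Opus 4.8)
The plan is to reduce everything to the single observation that, for a suitable ray $Z$, the saturum $Z^{\uparrow} = \sat_{\QL}(Z)$ is a \emph{quasilinear convex} set (Theorem~\ref{thm:4.6}) which simultaneously contains all of $X_0$, $W$ and $Y$. First I would fix $Z \in X_0^{\downarrow}$ with $Y \in Z^{\uparrow}$, which exists precisely because $Y \in (X_0^{\downarrow})^{\uparrow}$. Reading the arrows as the quasiordering, $Z \in X_0^{\downarrow}$ means $Z \preceq_{\QL} X_0$ and $W \in X_0^{\uparrow}$ means $X_0 \preceq_{\QL} W$; transitivity of $\preceq_{\QL}$ then yields $Z \preceq_{\QL} W$, i.e. $W \in Z^{\uparrow}$, and also $X_0 \in Z^{\uparrow}$. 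Together with $Y \in Z^{\uparrow}$ this places the three rays $X_0, W, Y$ inside $Z^{\uparrow}$.

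Since $Z^{\uparrow}$ is convex, $[W,Y] \subset Z^{\uparrow}$, so every $Y' \in [W,Y]$ lies in $Z^{\uparrow}$. Because $Z^{\uparrow}$ is quasilinear and $X_0 \in Z^{\uparrow}$, the pair $(X_0, Y')$ is quasilinear. Moreover $Y' \in Z^{\uparrow}$ reads $Z \preceq_{\QL} Y'$, i.e. $Z \in Y'^{\downarrow}$; combined with $Z \in X_0^{\downarrow}$ this gives $Z \in X_0^{\downarrow} \cap Y'^{\downarrow} \ne \emptyset$, which is exactly the twin-pair condition of Definition~\ref{def:15.5}.

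Next I would show $(Y', X_2)$ is quasilinear. The three pairs $(W,Y)$, $(W,X_2)$, $(Y,X_2)$ are all quasilinear: the first because $W,Y \in Z^{\uparrow}$, the other two because $X_2 \in \QL(W)$ and $X_2 \in \QL(Y)$ are standing hypotheses. Hence $\{W,Y,X_2\}$ is a quasilinear set, so by Theorem~\ref{thm:3.3} the convex hull $\conv(W,Y,X_2)$ is quasilinear; as $Y' \in [W,Y] \subset \conv(W,Y,X_2)$ and $X_2 \in \conv(W,Y,X_2)$, the pair $(Y',X_2)$ is quasilinear. It follows that $T' := (X_0, Y', X_2, X_3, \dots, X_n)$ is a QL-path: the edges $(X_0,Y')$ and $(Y',X_2)$ are handled above, and the edges $(X_i,X_{i+1})$ for $2 \le i < n$ are inherited from $T$. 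Since $T'$ runs from $X_0$ to $X_n$ with the same length $n$ as the minimal path $T$, it is again minimal, hence direct (a minimal QL-path admits no chord, by Definition~\ref{def:14.10}). Being direct, its twin pairs are well defined, and the adjacent rays $X_0, Y'$ with $Z \in X_0^{\downarrow} \cap Y'^{\downarrow}$ form a twin pair. Finally $T'$ has wide entrance (Definition~\ref{def:17.6}), since $W \in X_0^{\uparrow}$ satisfies $X_2 \in \QL(W)$, and $X_2$ is the ray in position $2$ of $T'$.

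I expect the main obstacle to be purely the bookkeeping of the first paragraph: correctly unwinding $X_0^{\uparrow}$, $X_0^{\downarrow}$ and $Z^{\uparrow}$ as the relevant $\preceq_{\QL}$-cones and using transitivity to herd $X_0$, $W$ and $Y$ into one single saturum. Once all three rays are seen to sit in the quasilinear convex set $Z^{\uparrow}$ and $\{W,Y,X_2\}$ is recognised as quasilinear, each of the asserted conclusions — the quasilinearity of $(X_0,Y')$ and $(Y',X_2)$, the minimality and directness of $T'$, the twin pair $(X_0,Y')$, and the wide entrance — follows by direct appeal to Theorems~\ref{thm:4.6} and~\ref{thm:3.3} with no further computation.
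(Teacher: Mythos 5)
Your proposal is correct and follows essentially the same argument as the paper: the paper likewise fixes $Z \in X_0^{\downarrow}$ with $Y \in Z^{\uparrow}$, places $X_0, W, Y$ in the quasilinear convex set $Z^{\uparrow}$ (Theorem~\ref{thm:4.6}) to get the quasilinear twin pair $(X_0, Y')$, and uses quasilinearity of $\conv(W, Y, X_2)$ (Theorem~\ref{thm:3.3}) to get that $(Y', X_2)$ is quasilinear, with minimality of $T'$ forced by its length $n$. Your explicit remark that minimality of $T'$ yields directness, so that the twin-pair notion applies, is a point the paper leaves implicit but is handled correctly.
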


\section{Domination of minimal QL-paths}\label{sec:18}

Recall that, given two QL-paths $T = (X_0, \dots, X_n)$, $T' = (X'_0, \dots, X'_n)$ of same length~ $n$, we say that $T'$ \textbf{dominates} $T$, and write $T \preceq_{\QL} T'$, if $X_i \preceq_{\QL} X'_i$ for every $i$ in $[0, n]$ (Definition~\ref{def:14.19}). In \S\ref{sec:14} we studied the domination relation when $T$ and $T'$ are direct or optimal. When $T'$ is minimal, we can say more.

\begin{prop}\label{prop:18.1}
 Let $T = (X_0, \dots, X_n)$ and $T'= (X'_0, \dots, X'_n)$ be QL-paths with $T \preceq_{\QL}~ T'$. If $T'$ is minimal, then $T$ is minimal.
\end{prop}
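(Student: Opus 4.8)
The plan is to argue by contraposition: assuming $T$ is \emph{not} minimal, I will manufacture from a too-short QL-path between $X_0$ and $X_n$ a too-short QL-path between $X'_0$ and $X'_n$, contradicting the minimality of $T'$. The only features of the domination hypothesis $T \preceq_{\QL} T'$ that I expect to use are the two \emph{endpoint} inclusions $\QL(X_0)\subset\QL(X'_0)$ and $\QL(X_n)\subset\QL(X'_n)$; the interior inclusions $\QL(X_i)\subset\QL(X'_i)$ for $0<i<n$ will play no role.

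Concretely, suppose there is a QL-path $(Z_0,Z_1,\dots,Z_k)$ with $Z_0=X_0$, $Z_k=X_n$ and $k<n$. First I would swap the two endpoints, forming the sequence $(X'_0,Z_1,\dots,Z_{k-1},X'_n)$, which again has length $k<n$ and now runs from $X'_0$ to $X'_n$. It remains to check that every consecutive pair is quasilinear. The interior pairs $(Z_i,Z_{i+1})$ with $1\le i\le k-2$ are untouched, hence quasilinear by hypothesis on the original path. For the two new boundary pairs I would invoke the symmetry \eqref{eq:3.4}: since $(X_0,Z_1)$ is quasilinear we have $Z_1\in\QL(X_0)\subset\QL(X'_0)$, so $(X'_0,Z_1)$ is quasilinear; symmetrically, $(Z_{k-1},X_n)$ quasilinear gives $Z_{k-1}\in\QL(X_n)\subset\QL(X'_n)$, so $(Z_{k-1},X'_n)$ is quasilinear. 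Thus $(X'_0,Z_1,\dots,Z_{k-1},X'_n)$ is a QL-path of length $k<n$ from $X'_0$ to $X'_n$, contradicting the minimality of $T'$. Hence $T$ is minimal.

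The verification is essentially a one-line manipulation of the inclusions $\QL(X_i)\subset\QL(X'_i)$ together with the symmetry $X\in\QL(Y)\Leftrightarrow Y\in\QL(X)$, so I do not anticipate a genuine obstacle in the body of the argument; the substance of the proposition lies entirely in the two endpoint inclusions. The single point that needs a moment's care is the degenerate case $k=1$, where the "interior" $Z_1,\dots,Z_{k-1}$ is empty and the constructed sequence collapses to $(X'_0,X'_n)$. Here one checks directly that $(X_0,X_n)$ quasilinear forces $X_n\in\QL(X_0)\subset\QL(X'_0)$, hence $X'_0\in\QL(X_n)\subset\QL(X'_n)$, i.e.\ $(X'_0,X'_n)$ is quasilinear, yielding a path of length $1<n$ as required. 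Since loops are permitted in $\Gamma_{\QL}(V,q)$, no separate treatment of accidental coincidences among the $Z_i$ (or of a coincidence such as $X'_0=Z_1$) is needed, and the length count $k+1$ vertices, length $k$, is unaffected by them.
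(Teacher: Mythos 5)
Your proof is correct and is essentially identical to the paper's own argument: both replace the endpoints of a hypothetical too-short QL-path from $X_0$ to $X_n$ by $X'_0$ and $X'_n$, using only the endpoint inclusions $\QL(X_0)\subset\QL(X'_0)$ and $\QL(X_n)\subset\QL(X'_n)$ to see that the two new boundary pairs are quasilinear, contradicting the minimality of $T'$. Your explicit treatment of the degenerate case $k=1$ is a small point of care that the paper glosses over, but it changes nothing of substance.
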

\begin{proof}
 Suppose there exists a QL-path $(X_0, Z_1, \dots, Z_r, X_n)$ with $r < n-1$. Since $\QL (X'_0) \supset \QL (X_0)$ and $\QL (X'_n) \supset \QL (X_n)$, the pairs $(X'_0, Z_1)$ and $(Z_r, X'_n)$ are quasilinear, and so $(X'_0, Z_1, \dots, Z_r, X'_n)$ is a QL-path of length $< n$, contradicting the minimality of $T'$. Thus $T$ is minimal. \end{proof}

Given a minimal QL-path $T = (X_0, \dots, X_n)$ and a QL-path $T' = (X'_0, \dots, X'_n)$ with $T \preceq_{\QL} T'$, it is trivial that $T'$ is also minimal in the case that $X'_0 = X_0$ and $X'_n = X_n$, simply since $T'$ has length $n$. More interest deserves to dominate a \textit{subpath} of $T$ by a minimal QL-path as follows.

\begin{prop}\label{prop:18.2}
 Assume that $T = (X_0, \dots, X_n)$ is a minimal QL-path and $(X'_1, \dots, X'_{n-1})$ is a QL-path (of length $n-2$) dominating the subpath $(X_1, \dots, X_{n-1})$ of $T$. Then $(X'_1, \dots, X'_{n-1})$ is minimal.\end{prop}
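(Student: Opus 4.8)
The plan is to argue by contradiction: I will assume that $(X'_1, \dots, X'_{n-1})$ admits a strictly shorter QL-path between its endpoints, and then splice $X_0$ onto the front and $X_n$ onto the back to produce a QL-path from $X_0$ to $X_n$ of length $< n$, contradicting the minimality of $T$.

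First I would record the two ``connecting'' edges, which is the one place where both the end-edges of $T$ and the domination hypothesis are genuinely used, and which I expect to be the only subtle point. Since $(X_0, X_1)$ is an edge of $T$, we have $X_0 \in \QL (X_1)$; and domination at index $1$ gives $\QL (X_1) \subset \QL (X'_1)$, whence $X_0 \in \QL (X'_1)$, so the pair $(X_0, X'_1)$ is quasilinear. Symmetrically, the edge $(X_{n-1}, X_n)$ of $T$ gives $X_n \in \QL (X_{n-1})$, and domination at index $n-1$ gives $\QL (X_{n-1}) \subset \QL (X'_{n-1})$, so $X_n \in \QL (X'_{n-1})$ and the pair $(X'_{n-1}, X_n)$ is quasilinear.

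Now, supposing for contradiction there were a QL-path $(X'_1, Z_1, \dots, Z_r, X'_{n-1})$ of length $< n-2$, I would concatenate it with the two quasilinear pairs just obtained to form
\[ (X_0,\, X'_1,\, Z_1, \dots, Z_r,\, X'_{n-1},\, X_n), \]
which is a QL-path from $X_0$ to $X_n$ whose length exceeds that of the short path by exactly $2$, hence is $< (n-2) + 2 = n$. (Loops are admitted in $\Gamma_{\QL}(V,q)$, so no coincidences among the listed rays can spoil the bound, only shorten the path further.) This contradicts the minimality of $T$. Therefore no such short path exists, and $(X'_1, \dots, X'_{n-1})$ is minimal.
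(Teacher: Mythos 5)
Your proof is correct and is essentially the paper's own argument: both hinge on using the domination hypothesis to show that the connecting pairs $(X_0, X'_1)$ and $(X'_{n-1}, X_n)$ are quasilinear. The paper packages the conclusion slightly differently — it forms $T' = (X_0, X'_1, \dots, X'_{n-1}, X_n)$, notes that this QL-path has length $n$ and hence is minimal, and then invokes that a subpath of a minimal path is minimal — but that last fact rests on exactly the splice-and-contradict argument you carried out explicitly.
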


\begin{proof}
 The pairs $(X_0, X'_1)$ and $(X_n, X'_{n-1})$ are quasilinear, since $\QL (X'_1) \supset \QL (X_1)$ and $\QL (X'_{n-1}) \supset \QL (X_{n-1})$. Thus $T':= (X_0, X'_1, \dots, X'_{n-1}, X_n)$
is a QL-path of length $n$ from $X_0$ to $X_n$, and so is minimal. The subpath $(X'_1, \dots, X'_{n-1})$ of $T'$ is again minimal. \end{proof}

\begin{rem}\label{rem:18.3}
 Proposition \ref{prop:18.2} gives us a QL-path $T_0 = (X_1, \dots, X_{n-1})$ such that every QL-path dominating $T_0$ is minimal. Thus $T_0$ is an optimal path, all of whose enlargements are again optimal. \end{rem}

\begin{thm}\label{thm:18.4}
Assume that $T = (X_0, \dots, X_n)$ and $T'= (X'_0, \dots, X'_n)$ are minimal QL-paths with $T \preceq_{\QL} T'$ and that $S = (Y_0, \dots, Y_m)$ is an  anchor set of $T$.

\begin{itemize}
\item[a)] Then $S$ is also an anchor set of $T'$.
\item[b)] If $p \in [0, n-1]$ then $(X_p, X_{p+1})$ is a twin pair in $T$ iff $(X'_p, X'_{p+1})$ is a twin pair in ~ $T'$.
\item[c)] If $p \in [0, n]$ then $X_p$ is a single in $T$ iff $X'_p$ is a single in $T'$.
\item[d)] For each $p \in [0, n]$ have $X_p$ and $X'_p$ the same legal anchor and the same illegal anchors (if any) in $S$.
\end{itemize}
\end{thm}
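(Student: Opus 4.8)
The hypothesis $T \preceq_{\QL} T'$ unwinds, by Definition \ref{def:14.19}, to $\QL(X_i) \subset \QL(X'_i)$ for all $i$, i.e. $X_i \preceq_{\QL} X'_i$. Reading this through \eqref{eq:15.1}--\eqref{eq:15.2} gives the three facts I will use repeatedly:
\[ X_i^{\downarrow} \subset (X'_i)^{\downarrow}, \qquad (X'_i)^{\uparrow} \subset X_i^{\uparrow}, \qquad X_i \in (X'_i)^{\downarrow}. \]
The plan is to reduce all four assertions to a single \emph{incidence lemma}: for every anchor $Y_k \in S$ and every $p \in [0,n]$,
\[ Y_k \in X_p^{\downarrow} \Iff Y_k \in (X'_p)^{\downarrow}. \]
Granting this, the description of twin pairs through shared anchors (Procedure \ref{proc:15.5} together with Proposition \ref{prop:16.13}) transports verbatim from $T$ to $T'$, and the remaining parts follow formally.

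To prove the incidence lemma, the forward direction is the first inclusion above. For the converse, suppose $Y_k \in (X'_p)^{\downarrow}$. Since $Y_k \in S$ it is the legal anchor of some ray $X_q$ of $T$, so $Y_k \in X_q^{\downarrow} \subset (X'_q)^{\downarrow}$; thus $X'_p, X'_q \in Y_k^{\uparrow}$. As $T'$ is minimal, hence direct, Theorem \ref{thm:15.4} forces $|p-q| \le 1$. If $p = q$ we are done. The case $|p-q| = 1$ is the crux: here $Y_k$ lies genuinely in $X_q^{\downarrow}$ with $q = p\pm 1$, while $Y_k \preceq_{\QL} X'_p$ gives $X'_p \in Y_k^{\uparrow} \subset (X_q^{\downarrow})^{\uparrow}$, and simultaneously $X'_p \in X_p^{\uparrow}$; the disjointness recorded in Theorem \ref{thm:17.1}(b) for the minimal path $T$ then rules this out, so $p = q$ and $Y_k \in X_p^{\downarrow}$.

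With the incidence lemma in hand I would finish as follows. By Procedure \ref{proc:15.5} a pair $(X_p,X_{p+1})$ is a twin pair in $T$ exactly when some $Y_k \in S$ satisfies $Y_k \in X_p^{\downarrow} \cap X_{p+1}^{\downarrow}$; applying the lemma to both indices converts this into the condition $Y_k \in (X'_p)^{\downarrow} \cap (X'_{p+1})^{\downarrow}$, which characterises the twin pairs of $T'$, giving (b); part (c) is then immediate, since a ray is a single precisely when it belongs to no twin pair (Definition \ref{def:15.5}). Because the twin/single pattern of $T$ and $T'$ now coincides, the reuse decisions of Procedure \ref{proc:15.5} are identical for the two paths, and every choice $Y_k$ made for $X_i$ remains admissible for $X'_i$ (if $X_i$ is single, $Y_k \in X_i^{\downarrow} \subset (X'_i)^{\downarrow}$; if $(X_i,X_{i+1})$ is a twin pair, $Y_k \in X_i^{\downarrow} \cap X_{i+1}^{\downarrow} \subset (X'_i)^{\downarrow} \cap (X'_{i+1})^{\downarrow}$). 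Hence the same sequence $S$ is produced by the procedure on $T'$, which is (a), and the legal/illegal labelling---depending only on the shared reuse pattern---matches by Proposition \ref{prop:16.12}, Proposition \ref{prop:16.13} and Corollary \ref{cor:16.14}, giving (d).

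I expect the main obstacle to be exactly the $|p-q|=1$ step of the incidence lemma, equivalently the reverse implication of (b): enlarging the stars $\QL(X_i)$ enlarges the downsets $X_i^{\downarrow}$ and so could only threaten to \emph{create} new twin pairs in $T'$. Showing that minimality forbids this---by forcing any anchor of $S$ sitting below $X'_p$ to sit below $X_p$ as well---is where both the two-ray bound of Theorem \ref{thm:15.4} for $T'$ and the disjointness of up- and downsets of Theorem \ref{thm:17.1} for $T$ are genuinely needed; the delicate point is that the witnessing anchor must be drawn from $S$ (so that it truly lies in a downset of $T$) rather than being an arbitrary common lower bound of $X'_p$ and $X'_{p+1}$.
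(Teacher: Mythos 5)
Your overall strategy is genuinely different from the paper's (the paper proves a)--d) simultaneously by induction along the path, comparing the step-by-step output of Procedure~\ref{proc:15.5} on the subpaths $(X_0,\dots,X_i)$ and $(X'_0,\dots,X'_i)$), and that would be welcome --- but the crux of your incidence lemma is unsound. In the case $|p-q|=1$ you invoke the disjointness statements of Theorem~\ref{thm:17.1}(b), e.g.\ $X_p^{\uparrow}\cap(X_q^{\downarrow})^{\uparrow}=\emptyset$ for $q=p+1$. That statement, as printed, is incompatible with the very objects Theorem~\ref{thm:18.4}(b) is about: if $(X_p,X_{p+1})$ is a twin pair with anchor $W\in X_p^{\downarrow}\cap X_{p+1}^{\downarrow}$, then $X_p\in X_p^{\uparrow}$ and $X_p\in W^{\uparrow}\subset(X_{p+1}^{\downarrow})^{\uparrow}$, so the two sets meet. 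Twin pairs do occur in minimal QL-paths (Theorems~\ref{thm:16.5}, \ref{thm:17.7}, \ref{thm:17.9} construct them, and \S\ref{sec:18} presupposes them), so Theorem~\ref{thm:17.1}(b) cannot hold for adjacent indices; indeed the paper's own proof of it miscounts --- the path $(X_0,\dots,X_p,Z,X_{p+2},\dots,X_n)$ has length $n$, not $n-1$, so no conflict with minimality arises, and the argument is only valid when the index gap is at least $2$. Consequently your claim that the case $|p-q|=1$ is ``ruled out'' is structurally wrong, not just unproved: take $T'=T$ and a twin pair $(X_p,X_{p+1})$ whose common legal anchor is $Y_k$; choosing $q=p+1$ puts you exactly in that case with all premises true, and the desired conclusion $Y_k\in X_p^{\downarrow}$ holds there for a positive reason (the shared anchor), not by contradiction. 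Handling $|p-q|=1$ --- showing that an anchor of $X_{p+1}$ lying below $X'_p$ must already lie below $X_p$ --- is the actual content of part b), and your proof supplies no argument for it.

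There is a second, independent gap even if the incidence lemma is granted. To get b) you need the equivalence ``$(X'_p,X'_{p+1})$ is a twin pair in $T'$ iff some $Y_k\in S$ lies in $(X'_p)^{\downarrow}\cap(X'_{p+1})^{\downarrow}$''. The backward half is the definition, but the forward half (every twin pair of $T'$ is witnessed by an anchor \emph{from $S$}) is Proposition~\ref{prop:16.13} applied to the pair $(T',S)$, which requires $S$ to already be known to be an anchor set of $T'$ --- that is part a), which in your scheme is derived only afterwards from b) and c). A twin pair of $T'$ could a priori be certified only by a ray outside $S$, and eliminating that possibility (this is exactly where minimality of both paths must enter) is never addressed. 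So both the lemma and the deduction of b) from it need repair; as it stands the proposal does not establish the theorem.
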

\begin{proof}
  For every $i \in [0, n]$ we have the (minimal) subpaths $(X_0, \dots, X_i)$ and $(X'_0, \dots, X'_i)$ of~ $T$ and $T'$ with $(X_0, \dots, X_i) \preceq_{\QL} (X'_0, \dots, X'_i)$ and an anchor set $(Y_0, \dots, Y_{m(i)})$ of $(X_0, \dots X_i)$ with $0 = m(0) \leq m (1) \leq \dots \leq m(n) = m$. The claims a) -- d) can be verified successively for these sequences in a straightforward way by looking at Definitions~\ref{def:15.5} and \ref{def:16.6}. This proves the theorem. \end{proof}

\begin{cor}\label{cor:18.5}
 In the situation of Theorem \ref{thm:18.4} also the following holds.
\begin{itemize}
\item[a)] If $0 \leq p < p+t \leq n$, then $(X_p, \dots, X_{p+t})$ is a flock in $T$ iff $(X'_p, \dots, X'_{p+t})$ is a flock in $T'$.
\item[b)] A twin pair $(X_p, X_{p+1})$ in $T$ is isolated in $T$ iff $(X'_p, X'_{p+1})$ is isolated in $T'$.
\item[c)] The QL-path $T'$ is flocky iff $T$ is flocky.
\end{itemize}
\end{cor}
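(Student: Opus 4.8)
The plan is to deduce all three assertions directly from \thmref{thm:18.4}, which already transports the combinatorial structure of $S$ across the domination $T \preceq_{\QL} T'$. I first record the standing facts I will use: both $T$ and $T'$ are minimal, hence direct, so twin pairs, singles, flocks and flockiness are meaningful for each; by \thmref{thm:18.4}.(a) the sequence $S$ is a common anchor set of $T$ and $T'$; and the predicates in question split into two kinds. A flock and an isolated twin pair are defined (Definition~\ref{def:16.1}) purely through the twin-pair predicate, while flockiness (Definition~\ref{def:16.10}) is an incidence condition relating $S$ to the rays of the path. The former kind will be handled by \thmref{thm:18.4}.(b), the latter by \thmref{thm:18.4}.(d).

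For part (a), the substantive case is $t \geq 2$ (for $t = 1$ neither $(X_p,X_{p+1})$ nor $(X'_p,X'_{p+1})$ is a flock by Definition~\ref{def:16.1}, so the equivalence is vacuous). By Definition~\ref{def:16.1}, $(X_p,\dots,X_{p+t})$ is a flock in $T$ exactly when each adjacent pair $(X_i,X_{i+1})$, $p \leq i < p+t$, is a twin pair in $T$; by \thmref{thm:18.4}.(b) this holds iff each $(X'_i,X'_{i+1})$ is a twin pair in $T'$, i.e. iff $(X'_p,\dots,X'_{p+t})$ is a flock in $T'$. Part (b) runs in parallel: a twin pair $(X_p,X_{p+1})$ is isolated iff it is a twin pair which is not a member of a flock, equivalently iff $(X_p,X_{p+1})$ is a twin pair while neither $(X_{p-1},X_p)$ (when $p>0$) nor $(X_{p+1},X_{p+2})$ (when $p+1<n$) is a twin pair. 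All these twin-pair conditions transfer simultaneously between $T$ and $T'$ by \thmref{thm:18.4}.(b), so isolation transfers as well.

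For part (c), by \thmref{thm:16.15} together with Definition~\ref{def:16.16} it suffices to prove that $(T,S)$ is flocky iff $(T',S)$ is flocky, both being tested against the common anchor set $S$. The key is to match the rays of $T$ lying in $Y_i^{\uparrow} \cap Y_{i+1}^{\uparrow}$ with those of $T'$ lying in the same set. Suppose $X_p \in Y_i^{\uparrow} \cap Y_{i+1}^{\uparrow}$; unwinding the definitions this says $Y_i, Y_{i+1} \in X_p^{\downarrow}$, so by \propref{prop:16.13} both $Y_i$ and $Y_{i+1}$ are anchors of $X_p$ in $S$. By \thmref{thm:18.4}.(d) the ray $X'_p$ has exactly the same legal and illegal anchors in $S$, hence $Y_i, Y_{i+1} \in (X'_p)^{\downarrow}$, i.e. $X'_p \in Y_i^{\uparrow} \cap Y_{i+1}^{\uparrow}$. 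As this reasoning is symmetric in $T$ and $T'$, a pair $(Y_i,Y_{i+1})$ with $Y_i^{\uparrow}\cap Y_{i+1}^{\uparrow}\neq\emptyset$ is witnessed by a ray of $T$ precisely when it is witnessed by a ray of $T'$, so $(T,S)$ is flocky iff $(T',S)$ is flocky.

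I expect parts (a) and (b) to be essentially bookkeeping on top of \thmref{thm:18.4}.(b). The only genuine difficulty is in (c): one must convert the \emph{existence of a witness ray inside the path} (the flockiness condition) into a statement purely about \emph{anchors}, so that the rigid anchor-correspondence of \thmref{thm:18.4}.(d) can be invoked. This conversion is exactly what \propref{prop:16.13} provides, and the point requiring care is that the matching $X_p \leftrightarrow X'_p$ must preserve membership in the intersection $Y_i^{\uparrow}\cap Y_{i+1}^{\uparrow}$, not merely in a single upset $Y_i^{\uparrow}$; this is guaranteed because \thmref{thm:18.4}.(d) equates the \emph{full} anchor sets of $X_p$ and $X'_p$, not just their legal anchors.
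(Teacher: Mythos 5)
Your proof is correct, and its skeleton is the same as the paper's: all three claims are transported across the domination $T \preceq_{\QL} T'$ by Theorem~\ref{thm:18.4}, using that $S$ is a common anchor set. The two arguments differ in which clauses carry the load. For (a) and (b) the paper does not argue via twin pairs at all: it invokes the anchor-theoretic characterization of flocks and isolated twin pairs (Scholium~\ref{schol:16.7}) and then transfers anchors by Theorem~\ref{thm:18.4}.(d). You instead note that, by Definition~\ref{def:16.1}, a flock is exactly a run of adjacent twin pairs, so Theorem~\ref{thm:18.4}.(b) transfers it verbatim; this is more elementary and bypasses Scholium~\ref{schol:16.7} entirely. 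For (c) the two proofs agree in substance: the paper's characterization of flockiness (``some $X_p$ in $T$ has legal anchor $Y_k$ and illegal anchor $Y_{k+1}$'') tacitly uses that a ray of $T$ lying in $Y_k^{\uparrow} \cap Y_{k+1}^{\uparrow}$ has both $Y_k$ and $Y_{k+1}$ as anchors, which is precisely Proposition~\ref{prop:16.13}, cited explicitly by you; and your appeal to Theorem~\ref{thm:16.15} to reduce flockiness of $T$ and $T'$ (in the sense of Definition~\ref{def:16.16}) to the single common anchor set $S$ makes explicit a reduction the paper glosses over. Your closing remark --- that the witness transfer needs the full legal-plus-illegal anchor correspondence of Theorem~\ref{thm:18.4}.(d), not merely the legal one --- is exactly the point where care is required, and it is handled correctly.
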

\begin{proof}
 Claims a) and b) are evident by Theorem 15.4, since flocks and isolated twin pairs can be characterized in terms of the anchors in $S$ of the involved rays, as made explicit in Scholium~\ref{schol:16.7}. The same holds for the property ``flocky'', namely $T$ is flocky iff for any $k \in [0, m-1]$ with $Y_k^{\uparrow} \cap Y_{k+1}^{\uparrow}$ not empty there is a ray $X_p$ in $T$ which has the legal anchor $Y_k$ and the illegal anchor $Y_{k+1}$, and this happens iff $X'_p$ has these legal and illegal anchors. \end{proof}

We are aware that a large part of the contents of Theorem \ref{thm:18.4} and Corollary \ref{cor:18.5} can be proved without employing our elaborate theory of anchors. In particular this holds for assertions b) and c) in Theorem~\ref{thm:18.4}.

\end{document}